\documentclass{amsart}
\usepackage[english]{babel}

\usepackage[latin1]{inputenc}
\usepackage[T1]{fontenc}

\usepackage{amssymb}
\usepackage[arrow,matrix,cmtip,curve]{xy}
\usepackage[colorlinks=true, linkcolor=blue, citecolor=magenta,
urlcolor=green]{hyperref}
\usepackage{paralist}

\title[Anosov representations: Domains of discontinuity]{Anosov representations: Domains of
    discontinuity and applications}
\author{Olivier Guichard}
\address{CNRS, Laboratoire de Math\'ematiques d'Orsay, Orsay cedex, F-91405\\
Universit\'e Paris-Sud, Orsay cedex, F-91405}
\author{Anna Wienhard}
\address{Department of Mathematics, Princeton University\\
Fine Hall, Washington Road, Princeton, NJ 08540, USA}
\thanks{A.W. was partially supported by the National Science
  Foundation under agreement No. DMS-0803216 and DMS-0846408.
  O.G. was partially supported by the Agence Nationale de la
  Recherche under ANR's projects Repsurf (ANR-06-BLAN-0311) and
  ETTT (ANR-09-BLAN-0116-01) and by the National Science Foundation
  under agreement No. DMS-0635607.}
\date{\today}
\keywords{hyperbolic groups, surface groups, Hitchin component, maximal representations, Anosov representations, higher Teichm\"uller spaces, compact Clifford-Klein forms, discrete subgroups of Lie groups, convex cocompact subgroups, quasi-isometric embedding} 

\numberwithin{equation}{section}

\theoremstyle{plain}

\newtheorem{thm}[equation]{Theorem}
\newtheorem{prop}[equation]{Proposition}
\newtheorem{lem}[equation]{Lemma}

\newtheorem{cor}[equation]{Corollary}
\newtheorem*{thm*}{Theorem}
\newtheorem*{prob*}{Problem}
\theoremstyle{definition}

\newtheorem{defi}[equation]{Definition}

\newtheorem{nota}[equation]{Notation}

\theoremstyle{remark}

\newtheorem{remark}[equation]{Remark}
\newtheorem{remarks}[equation]{Remarks}
\newtheorem{examples}[equation]{Examples}
\newtheorem{example}[equation]{Example}

\newcommand{\NN}{\mathbf{N}}
\newcommand{\ZZ}{\mathbf{Z}}
\newcommand{\RR}{\mathbf{R}}
\newcommand{\CC}{\mathbf{C}}
\newcommand{\HH}{\mathbf{H}}
\newcommand{\KK}{\mathbf{K}}

\newcommand{\PP}{\mathbb{P}}
\newcommand{\Sphr}{\mathbb{S}}

\newcommand{\Xx}{\mathcal{X}}

\newcommand{\Ff}{\mathcal{F}}

\newcommand{\G}{\Gamma}
\newcommand{\g}{\gamma}

\newcommand{\vcd}{\mathrm{vcd}}
\newcommand{\wdg}[1]{{\textstyle{ \bigwedge^{#1}}}}

\newcommand{\Ll}{\mathcal{L}}
\newcommand{\Gr}{\mathrm{Gr}}
\newcommand{\dev}{\mathrm{dev}}
\newcommand{\hol}{\mathrm{hol}}
\newcommand{\PSL}{\mathrm{PSL}}
\newcommand{\SL}{\mathrm{SL}}
\newcommand{\Sp}{\mathrm{Sp}}
\newcommand{\SO}{\mathrm{SO}}

\newcommand{\diag}{\mathrm{diag}}

\renewcommand{\O}{\mathrm{O}}
\newcommand{\PGL}{\mathrm{PGL}}

\newcommand{\SU}{\mathrm{SU}}

\newcommand{\U}{\mathrm{U}}
\newcommand{\PSp}{\mathrm{PSp}}
\newcommand{\PSO}{\mathrm{PSO}}
\newcommand{\GL}{\mathrm{GL}}

\newcommand{\tr}{\mathrm{tr}}
\newcommand{\id}{\mathrm{Id}}

\newcommand{\pr}{\pi}

\newcommand{\Span}{\mathrm{Span}}

\renewcommand{\hom}{\mathrm{Hom}}
\newcommand{\stab}{\mathrm{Stab}}

\newcommand{\moins}{\smallsetminus}

\newcommand{\sep}{,\,}
\newcommand{\longto}{\longrightarrow}

\newcommand{\bqn}{\begin{equation*}}
\newcommand{\eqn}{\end{equation*}}

\newcommand{\bq}{\begin{equation}}
\newcommand{\eq}{\end{equation}}

\def\bal#1\eal{\begin{align}#1\end{align}}
\def\baln#1\ealn{\begin{align*}#1\end{align*}}

\newcommand{\homPano}{\hom_{P\textup{-Anosov}}}

\newcommand{\flow}{\widehat{\Gamma}}
\newcommand{\rank}{\mathrm{rk}}

\tolerance=1000

\begin{document}

\begin{abstract}
The notion of Anosov representations has been introduced by Labourie in 
his study of the Hitchin component for $\SL(n,\RR)$. 
Subsequently, Anosov representations have been studied mainly for
surface groups, in particular in the context of higher Teichm\"uller
spaces, and for lattices in $\SO(1,n)$.
In this article we extend the notion of Anosov representations to representations of arbitrary word hyperbolic groups and start the systematic study of their geometric properties. 
In particular, given an Anosov representation $\Gamma \to G$ we explicitly construct open subsets  of compact $G$-spaces, on which $\Gamma$ acts properly discontinuously and with compact quotient. 

As a consequence we show that higher Teichm\"uller spaces 
parametrize 
locally
homogeneous geometric structures on compact manifolds. 
We also obtain applications regarding
(non-standard) compact Clifford-Klein forms and compactifications of locally symmetric spaces of infinite volume.  
\end{abstract}

\maketitle

\tableofcontents{}

\section{Introduction}
\label{sec_intro}

The concept of Anosov structures has been introduced by Labourie \cite{Labourie_anosov}, and is, in some sense, a dynamical analogue of locally homogeneous geometric structures for manifolds 
 endowed with an Anosov flow. Holonomy representations of Anosov structures are called Anosov representations. They are defined for representations into any real semisimple Lie group $G$. Anosov representations have several important properties. For example, the set of Anosov representations is open in the representation variety, and any Anosov representation is a quasi-isometric embedding. 
The notion of Anosov representations has turned out to be very useful
in the study of surface group representations, in particular in the
study of higher Teichm\"uller spaces
\cite{Barbot_anosov, Burger_Iozzi_Labourie_Wienhard, Burger_Iozzi_Wienhard_survey,
  Guichard_Wienhard_InvaMaxi, Labourie_anosov}.  
  Here we put the concept of Anosov representations into the broader
  context of finitely generated word hyperbolic groups. In this
  framework Anosov representations are generalizations of convex
cocompact subgroups of rank one Lie groups to the context of discrete subgroups of Lie groups of higher rank.

 Let us recall that given a real semisimple Lie group $G$, a discrete subgroup $\Lambda < G$ is said to be convex cocompact if there exists a $\Lambda$-invariant convex subset in the symmetric space $G/K$, on which $\Lambda$ acts properly discontinuously with compact quotient. 
A representation $\rho:\Gamma \to G$ of a 
 group $\Gamma$ into $G$ is said to be convex cocompact if $\rho$ has finite kernel, and $\rho(\Gamma)$ is a convex cocompact subgroup of $G$. 

When $G$ is of real rank one, i.e.\ when $G/K$ has negative
sectional curvature, the set of convex cocompact representations is an
open subset of the representation variety $\hom(\Gamma, G)/G$. In that
case, being convex cocompact is equivalent to being a
quasi-isometric embedding \cite{Bourdon_conforme} (see Theorem~\ref{thm_intro:Ano_rk_one}). 
This implies in particular that $\Gamma$ is a word hyperbolic group. 
Important examples of convex cocompact representations are discrete embeddings of surface groups into $\PSL(2,\RR)$, quasi-Fuchsian representations of surface groups into $\PSL(2,\CC)$, embeddings of free groups as Schottky groups, or embeddings of uniform lattices. 
 Anosov
representations into Lie groups of rank one are exactly convex cocompact representations (see Theorem~\ref{thm_intro:Ano_rk_one})

For Lie groups $G$ of real rank $\geq 2$, a rigidity
result of Kleiner-Leeb \cite{Kleiner_Leeb} and Quint \cite{Quint_cc}
says that the class of convex cocompact subgroups of $G$ reduces to
products of uniform lattices and of convex cocompact subgroups in 
rank one Lie groups. On the other hand, there is an abundance of examples of Anosov representations of surface groups, uniform lattices in $\SO(1,n)$ or hyperbolic groups into Lie groups of higher rank. 
 
\medskip 

 In the first part of the article we extend the definition of Anosov
 representations to representations of arbitrary finitely generated
 word hyperbolic groups $\Gamma$ (into semisimple Lie groups) and
 establish their basic 
 properties. In the second part we develop a
 quite intriguing 
geometric picture for Anosov representations. In
 particular, given an Anosov representation $\rho: \Gamma \to G$ we
 construct a $\Gamma$-invariant open subset of a compact $G$-space, on
 which  $\Gamma$ acts properly discontinuously with compact quotient.  
In the third part we discuss several applications of this
construction, in particular to higher Teichm\"uller spaces, to the
existence of non-standard Clifford-Klein forms of homogeneous spaces,
and  to compactifications of locally symmetric spaces of infinite volume.

We now describe the contents of the three parts 
in more detail. 

\subsection{Anosov representations}
We quickly mention the definition of Anosov representations  in
the context of manifolds given in \cite{Labourie_anosov} (see Section~\ref{sec:defiAR} below for
details). 

Let $G$ be a semisimple Lie group and $(P^+,P^-)$ a pair of opposite parabolic subgroups of $G$. 
Let $M$ be a compact manifold equipped with an Anosov flow. A
representation $\rho:\pi_1(M) \to G$ is said to be a $(P^+,P^-)$-Anosov
representation if the associated  $G/(P^+ \cap P^-)$-bundle over $M$ admits
a section  $\sigma$ which is constant along the flow and with certain contraction
properties. 

When $N$ is a negatively curved
compact Riemannian manifold, and $M = T^1 N$ is its unit tangent
bundle equipped with the geodesic flow $\phi_t$, there
is a natural projection $\pi: \pi_1(M) \to \pi_1(N)$; 
a representation $\rho: \pi_1(N) \to G$ is said to be $(P^+, P^-)$-Anosov,
if $\rho \circ \pi$ is $(P^+, P^-)$-Anosov.

In order to extend the notion of Anosov representations to arbitrary finitely generated 
word hyperbolic groups $\G$, we replace $T^1 \widetilde{N}$ by the
flow space $\flow$, introduced by Gromov \cite{Gromov_hyp}
and developed by Champetier \cite{Champetier}, Mineyev
\cite{Mineyev_flow} and others. The flow space $\flow$ is a  proper
hyperbolic metric space with an action of $\Gamma \times \RR \rtimes
\ZZ/2\ZZ$, where the $\RR$-action (the flow) is free and such that $\flow/\RR$ is
naturally homeomorphic to $\partial_\infty \Gamma^{(2)}$. Here
$\partial_\infty \Gamma^{(2)}$ denotes the space of distinct points in
the boundary at infinity  $\partial_\infty \Gamma$ of $\Gamma$. 
In analogy to the above, a representation $\rho:\Gamma \to G$ is said to be $(P^+,P^-)$-Anosov
 if the associated  $G/(P^+ \cap P^-)$-bundle over $\G \backslash \flow$ admits
a section $\sigma$ which is constant along $\RR$-orbits and with certain contraction
properties. 

The conjugacy class of a pair of opposite parabolic subgroups $(P^+, P^-)$ is completely
determined by $P^+$. We thus say that a representation is $P^+$-Anosov
if it is 
$(P^+, P^-)$-Anosov. We say that a representation is Anosov if it is $P^+$-Anosov for some proper parabolic subgroup $P^+ < G$. 

\begin{examples}
\noindent
  \begin{asparaitem}
    \item Let $G$ be a split real simple Lie group and let $\Sigma$ be a closed connected orientable surface of genus $\geq 2$. Representations $\rho: \pi_1(\Sigma) \to G$ in the Hitchin component are $B$-Anosov, where $B< G$ is a Borel subgroup.

\item  The holonomy representation $\rho: \pi_1(M) \to \PGL(n+1, \RR)$ of a
  convex real projective structure on an $n$-dimensional orbifold  $M$ is
  $P$-Anosov, where $P<  \PGL(n+1, \RR)$  is the stabilizer of a line.
  
  For more details, see Section~\ref{sec:examples}. 
  \end{asparaitem}
\end{examples}

We extend Labourie's result on the stability of Anosov
representations to the more general framework of
word hyperbolic groups:

\begin{thm} (Theorem~\ref{thm:Ano_open})
Let $\Gamma$ be a finitely generated word hyperbolic group. Let $G$ be a semisimple Lie group and $P^+ < G$ be a parabolic subgroup. 
 The set of $P^+$-Anosov representations is open in $\hom(\Gamma, G)$. 
\end{thm}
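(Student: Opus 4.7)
The plan is to adapt Labourie's original structural-stability argument from the setting of geodesic flows on negatively curved manifolds to that of the Gromov flow space $\flow$. Fix a $P^+$-Anosov representation $\rho_0 \colon \Gamma \to G$, with flow-invariant section $\sigma_0$ of the flat $G/L$-bundle $E_{\rho_0} = \Gamma \backslash (\flow \times G/L)$, where $L = P^+ \cap P^-$. Lifted to $\flow$, this section is encoded by a $\rho_0$-equivariant transverse pair of continuous boundary maps $\xi^\pm_0 \colon \partial_\infty \Gamma \to G/P^\pm$, and the Anosov hypothesis is that the flow lifted to $E_{\rho_0}$ is, along $\sigma_0$, uniformly contracting on the vertical subbundle $E^+$ isomorphic to the unipotent radical $\mathfrak{n}^+$ of $P^+$, and uniformly dilating on the complementary subbundle $E^- \cong \mathfrak{n}^-$.

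For $\rho$ in a small neighborhood of $\rho_0$ in $\hom(\Gamma,G)$, I would look for a $\rho$-equivariant $\RR$-invariant section of the new flat bundle by parametrizing it in unipotent exponential coordinates relative to $\sigma_0$: tentative sections are of the form $\sigma_\rho(x) = \sigma_0(x) \cdot \exp(v^+(x)) \cdot \exp(v^-(x))$ with $v^\pm \colon \flow \to \mathfrak{n}^\pm$ small and continuous, now subject to the equivariance coming from $\rho$ rather than $\rho_0$. The $\RR$-invariance of $\sigma_\rho$ becomes a coupled fixed-point equation for $(v^+,v^-)$: in the forward-time direction the flow acts as a uniform contraction on $v^+$ (because $\phi_t$ contracts $\mathfrak{n}^+$), and in the backward-time direction as a uniform contraction on $v^-$ (because $\phi_t$ dilates $\mathfrak{n}^-$). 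A graph-transform argument, i.e.\ the Banach fixed point theorem applied to a suitable space of bounded continuous $\rho$-equivariant pairs $(v^+, v^-)$, then produces a unique solution depending continuously on $\rho$. The hyperbolicity of the perturbed flow along $\sigma_\rho$ follows by openness of uniform hyperbolicity, and the transverse continuous boundary maps $\xi^\pm_\rho$ extracted from $\sigma_\rho$ exhibit $\rho$ as $P^+$-Anosov.

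The main obstacle is that $\flow$ is only a proper hyperbolic metric space, not a smooth manifold, so the differentiable Hirsch--Pugh--Shub theory of normally hyperbolic invariant sections is not available. The entire graph transform must be set up and executed in the continuous category, with all uniform exponential estimates obtained by hand from the Gromov hyperbolicity of $\flow$ and from the cocompactness of the $\Gamma$-action on $\flow/\RR \cong \partial_\infty\Gamma^{(2)}$ (which compensates for the lack of compactness of $\Gamma\backslash\flow$ itself when the flow direction is included). A related technical point is that $v^\pm$ must simultaneously be continuous, $\Gamma$-equivariant, and small in an appropriate sup norm so that the resulting section descends to the quotient bundle; once this is in place, continuity of the fixed point in $\rho$ yields openness of the set of $P^+$-Anosov representations.
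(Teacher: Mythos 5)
Your proposal is correct in outline and belongs to the same family as the paper's proof --- a fixed-point argument for flow-invariant sections, carried out entirely in the continuous category, with all uniformity coming from compactness of the base --- but the implementation is genuinely different. The paper does not linearize: it first reduces to $G=\SL(V)$ and $P=Q_0^+$ via Proposition~\ref{prop:Anosov_Rep}, trivializes the flat $\PP(V)$- and $\PP(V^*)$-bundles over $U\times\G\backslash\flow$ for a small neighborhood $U$ of $\rho_0$ in $\hom(\G,G)$, and recovers $\xi^{+}_{\rho'}$ and $\xi^{-}_{\rho'}$ \emph{separately}, as the unique common fixed points of the contracting maps $\phi_{-t}$, $t\geq T_0$, acting on sections of metric ball bundles $B_\epsilon$; transversality of the resulting pair and the contraction along the associated section are then open conditions, and working over all of $U$ at once gives continuity of $\rho'\mapsto\xi^{\pm}_{\rho'}$ for free. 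Your route --- Labourie's graph transform in $\exp(\mathfrak{n}^{\pm})$-coordinates on the $G/L$-bundle --- avoids the reduction to $\SL(V)$ and has pointwise transversality built in, at the cost of a coupled nonlinear fixed-point equation for $(v^+,v^-)$; since the $G/L$-bundle sits inside the product of the two flag bundles, one can in fact decouple the two sides (as the paper does) and check transversality afterwards, which lightens the bookkeeping. Two points in your write-up need repair, though neither is fatal. First, the expression $\sigma_0(x)\exp(v^+(x))\exp(v^-(x))$ and the phrase ``equivariance coming from $\rho$ rather than $\rho_0$'' must be made precise through an identification of the flat bundles of $\rho$ and $\rho_0$ over the compact quotient (exactly the Steenrod trivialization the paper invokes), which converts the change of representation into a small perturbation of the fiberwise dynamics to be absorbed by the contraction. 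Second, your parenthetical about compactness is backwards: since every orbit map $\G\to\flow$ is a quasi-isometry, the quotient $\G\backslash\flow$ \emph{is} compact --- the paper uses this repeatedly --- whereas the $\G$-action on $\partial_\infty\G^{(2)}$ is not proper and its quotient is not Hausdorff, so it cannot serve as a substitute; happily this slip works in your favor, as the uniform exponential estimates you propose to obtain ``by hand'' follow from the compactness of $\G\backslash\flow$ in the same way as in Proposition~\ref{prop:mu_contrac}.
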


An immediate consequence of a representation $\rho:\Gamma \to G$ being
$P^+$-Anosov is the existence of continuous $\rho$-equivariant maps
$\xi^+: \partial_\infty \Gamma \to G/P^+$ and  $\xi^-: \partial_\infty
\Gamma \to G/P^-$. We call these maps \emph{Anosov maps}. They have the following properties:  
\begin{enumerate}
\item for all $(t,t')\in  \partial_\infty \Gamma^{(2)}$, the pair
  $(\xi^+(t), \xi^-(t'))$ is in the (unique) open $G$-orbit in $G/P^+ \times G/P^-$. 
\item for all $t \in \partial_\infty \Gamma$, the pair $(\xi^+(t),
  \xi^-(t))$ is contained in the (unique) closed $G$-orbit in  $G/P^+ \times G/P^-$.
\item they satisfy some contraction property with respect to the flow. 
\end{enumerate}

A representation admitting continuous equivariant maps $\xi^+, \xi^-$
with the above properties is easily seen to be $P^+$-Anosov. 
A consequence of this characterization of Anosov representations in terms of equivariant maps is 
\begin{cor}
Let $\Gamma$ be a finitely generated word hyperbolic group, and let $\Gamma'<\Gamma$ be of finite index. Then a representation $\rho: \Gamma \to G$ is $P^+$-Anosov if and only if the restriction of $\rho$ to $\Gamma'$ is $P^+$-Anosov. 
\end{cor}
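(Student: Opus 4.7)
The plan is to use the equivalent characterization of $P^+$-Anosov representations stated just above the corollary, namely the existence of continuous $\rho$-equivariant maps $\xi^{\pm}: \partial_\infty\Gamma \to G/P^\pm$ satisfying properties (i)--(iii). Two standard facts about hyperbolic groups will be used: a finite-index subgroup $\Gamma' < \Gamma$ of a word hyperbolic group is itself word hyperbolic with a canonical identification $\partial_\infty\Gamma' = \partial_\infty\Gamma$; and the Gromov flow space $\flow$ for $\Gamma$ serves equally as the flow space for $\Gamma'$ (via restriction of the action), with $\Gamma'\backslash\flow$ a finite cover of $\Gamma\backslash\flow$. In particular, the contraction condition (iii) refers to the same $\RR$-action whether one views $\flow$ as a flow space for $\Gamma$ or for $\Gamma'$.

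The forward implication is immediate: if $\rho$ admits equivariant Anosov maps $\xi^\pm$, the same maps satisfy (i)--(iii) verbatim for $\rho|_{\Gamma'}$. For the reverse implication, suppose $\rho|_{\Gamma'}$ is $P^+$-Anosov with Anosov maps $\xi^\pm : \partial_\infty\Gamma \to G/P^\pm$. To promote these to $\rho$-equivariant maps, fix $\gamma\in\Gamma$ and set $\tilde\xi^\pm(t) := \rho(\gamma)^{-1}\xi^\pm(\gamma\cdot t)$. Put $\Gamma'' := \Gamma' \cap \gamma\Gamma'\gamma^{-1}$, still a finite-index subgroup of $\Gamma$. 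Both $\xi^\pm$ and $\tilde\xi^\pm$ are continuous, $\rho|_{\Gamma''}$-equivariant, transverse boundary maps satisfying (i)--(iii) with respect to the same flow (the verification of (iii) for $\tilde\xi^\pm$ uses that $\Gamma$ acts on $\flow$ preserving the $\RR$-action). Invoking uniqueness of Anosov boundary maps attached to the $P^+$-Anosov representation $\rho|_{\Gamma''}$, we conclude $\tilde\xi^\pm = \xi^\pm$, which is exactly the $\rho$-equivariance $\xi^\pm(\gamma\cdot t) = \rho(\gamma)\xi^\pm(t)$. Applying the equivariant-maps characterization in the reverse direction then shows $\rho$ is $P^+$-Anosov.

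The main obstacle is the uniqueness of the Anosov boundary maps invoked in the last step. The expected argument is dynamical: the contraction property (iii) forces $\xi^+$ to send the attracting boundary fixed point of any infinite-order element $\gamma_0\in\Gamma''$ to the unique attracting fixed point of $\rho(\gamma_0)$ on $G/P^+$, and analogously for $\xi^-$ and the repelling fixed points. Since $\Gamma''$ is a non-elementary hyperbolic group, the attracting fixed points of infinite-order elements are dense in $\partial_\infty\Gamma$, and continuity then pins down $\xi^\pm$ uniquely. The elementary cases ($\Gamma$ finite or virtually cyclic) are trivial and can be dispatched by hand. Once this uniqueness is in place, the rest of the argument is formal bookkeeping.
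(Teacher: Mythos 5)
Your argument is correct and follows essentially the same route as the paper, which obtains this corollary directly from the uniqueness of the Anosov maps (Lemma~\ref{lem:uniqu}, proved via density of attracting fixed points and Lemma~\ref{lem:ano_proxi}): the restriction direction is immediate, and equivariance under all of $\Gamma$ is recovered by comparing $\xi^\pm$ with its $\gamma$-translate and invoking uniqueness for a finite-index subgroup, exactly as you do. The only nitpick is a convention slip: with $\tilde\xi^\pm(t)=\rho(\gamma)^{-1}\xi^\pm(\gamma\cdot t)$ the translated maps are equivariant under $\gamma^{-1}\Gamma'\gamma$, so the relevant subgroup is $\Gamma''=\Gamma'\cap\gamma^{-1}\Gamma'\gamma$, which changes nothing in the argument.
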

A corollary of this and \cite{Burger_Iozzi_Labourie_Wienhard} is (see
also Remark~\ref{rem:redmaxtosymp}) 
\begin{example}[\cite{Burger_Iozzi_Wienhard_anosov}] 
Let $G$ be a classical Lie group of Hermitian type, and let $\rho:\pi_1(\Sigma) \to G$ be a maximal representation, then $\rho$ is $P$-Anosov, where $P$ is the stabilizer of a point in the Shilov boundary of the bounded symmetric domain associated to $G$.
\end{example}

A pair of maps $\xi^+, \xi^-$ satisfying the first two of the above
properties is said to be \emph{compatible}.  
We show that for a generic representation the contraction property involving the flow
is satisfied by any pair of equivariant continuous compatible 
maps.

\begin{thm}\label{thm_intro:maps} (Theorem~\ref{thm:Ano_Zd})
  Let $\Gamma$ be a finitely generated word hyperbolic group. Let $G$ be a semisimple Lie
  group and $P^+<G$ a parabolic subgroup. 
  Let $\rho: \G \to G$ be a Zariski dense  representation. Suppose that
  $\rho$ admits a pair  of equivariant continuous compatible maps
  $\xi^+: \partial_\infty \G \to G/ P^+$, $\xi^-: \partial_\infty \G \to G/ P^-$.
   
  Then the representation $\rho$ is $P^+$-Anosov and $(\xi^+,
  \xi^-)$ are the associated Anosov maps.
\end{thm}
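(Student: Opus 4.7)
The compatibility of $(\xi^+, \xi^-)$ already provides a $\rho$-equivariant, flow-invariant section $\sigma : \flow \to \flow \times G/L$, where $L = P^+ \cap P^-$, obtained via the identification of the open $G$-orbit in $G/P^+ \times G/P^-$ with $G/L$. What remains, in order to exhibit $\rho$ as $P^+$-Anosov with associated Anosov maps $(\xi^+, \xi^-)$, is the contraction property for $\sigma$: uniform exponential contraction of the vector bundle $E^+$ over $\G \backslash \flow$ (with fiber $\mathfrak{g}/\mathfrak{p}^-$) under the positive flow, and dually for $E^-$ under the negative flow.

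My plan is to establish contraction first at periodic orbits of the flow, then to propagate it to all orbits. For an infinite order $\g \in \G$, the closed orbit in $\G \backslash \flow$ corresponding to the axis of $\g$ has holonomy $\rho(\g)$; by equivariance, $\rho(\g)$ fixes $\xi^+(\g^+)$ and $\xi^-(\g^-)$, hence preserves the Levi subgroup $L_\g = \mathrm{Stab}_G(\xi^+(\g^+), \xi^-(\g^-))$. The cocycle on $E^+$ around this orbit is therefore the linear action of $\rho(\g)$ on $\mathfrak{g}/\mathfrak{p}^-_\g$, and exponential contraction there amounts to $\rho(\g)$ being $P^+$-proximal with attractor $\xi^+(\g^+)$ and all eigenvalues on that quotient of modulus strictly less than $1$. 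This is where Zariski density enters, via Benoist's theorem on limit cones together with the Abels--Margulis--Soifer construction of proximal elements: these guarantee that $\rho(\G)$ contains many $P^+$-proximal elements, that the Jordan projections populate a cone with non-empty interior of a Weyl chamber, and that the attractors and repellers of proximal elements are Zariski dense in $G/P^+$ and $G/P^-$. Combined with the equivariance of $\xi^\pm$, this forces every infinite order $\rho(\g)$ to be $P^+$-proximal with attracting and repelling points precisely $\xi^\pm(\g^\pm)$, and yields the uniform spectral gap needed for contraction at all periodic orbits.

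To propagate contraction from periodic orbits to arbitrary flow lines, one uses that periodic points are dense in $\G \backslash \flow$ (a consequence of Mineyev's construction and hyperbolicity of $\G$), continuity of the cocycle, and the uniform spectral bound at periodic orbits, combining these via a Fekete/subadditive argument to obtain uniform exponential contraction along every orbit. I expect the main obstacle to be precisely this propagation step: $\flow$ is only a Gromov hyperbolic metric space and the cocycle is not a priori smooth, so standard Pesin or uniform hyperbolicity theory is unavailable and one must work with estimates adapted to the coarse geometry of Mineyev's flow space. Once the contraction property is in hand, uniqueness of equivariant maps satisfying it identifies $(\xi^+, \xi^-)$ with the Anosov maps of $\rho$.
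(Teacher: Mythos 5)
Your reduction of the problem to a contraction property for the flat section determined by $(\xi^+,\xi^-)$ is the right starting point, but both halves of your strategy have genuine gaps, and the second one is fatal as written. First, the elementwise statement you want at closed orbits is not established. Equivariance and the north--south dynamics of $\g$ on $\partial_\infty\G$ only give that $\rho(\g)^n$ attracts the \emph{compact} set $\xi^+\bigl(\partial_\infty\G \moins \{t^{-}_{\g}\}\bigr)$ to $\xi^+(t^{+}_{\g})$; this topological attraction on a thin (even if Zariski dense) set is perfectly compatible with $\rho(\g)$ having a nontrivial Jordan block or several top eigenvalues of equal modulus, in which case the derivative action at the fixed point is \emph{not} a contraction. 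Benoist's limit-cone theorem and the Abels--Margulis--So\u{\i}fer construction produce \emph{some} proximal elements in a Zariski dense subgroup, but you give no mechanism transferring proximality, with a spectral gap growing linearly in the translation length of $\g$ (which is what the Anosov condition at the closed orbit actually requires, cf.\ Lemma~\ref{lem:mu_contract} and Proposition~\ref{prop:mu_contrac}), to \emph{every} infinite-order element. Since that elementwise statement is itself a consequence of the theorem (Lemma~\ref{lem:ano_proxi}), assuming it as a first step without a new argument is close to circular.

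Second, even granting uniform spectral gaps at all periodic orbits, your propagation step does not go through: density of periodic orbits plus continuity of the cocycle does not imply uniform exponential contraction. Uniform contraction is controlled by the top Lyapunov exponents of \emph{all} flow-invariant measures, and recovering these from periodic-orbit data needs a closing lemma together with H\"older regularity of the cocycle (Kalinin-type Liv\v{s}ic theorems); here the lifted section is merely continuous and $\flow$ is only a coarse metric flow space, so that machinery is unavailable --- as you yourself concede, this is the load-bearing step, and it is missing. For contrast, the paper takes a different and much shorter route: by Proposition~\ref{prop:Anosov_Rep} one chooses an irreducible $G$-module $V$ with $P^\pm$ the stabilizers of a line and a complementary hyperplane, Zariski density of $\rho$ makes $\phi\circ\rho$ irreducible on $V$, and Proposition~\ref{prop:Comp_Maps} then proves the contraction directly by building an equivariant family of norms out of the boundary maps and finitely many auxiliary boundary points, the decay coming from properness and cocompactness of the $\G$-action on the space of distinct triples $\partial_\infty\G^{(3)}$ (Bowditch), with compactness of $\G\backslash\flow$ upgrading weak to uniform contraction; no periodic-orbit or spectral analysis is needed. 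I would encourage you to rework your argument along those lines, or else supply genuinely new proofs of both missing steps.
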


Note that the statement of Theorem~\ref{thm_intro:maps} does not
involve the flow space of the hyperbolic group, but only its boundary
at infinity. 

A consequence of Theorem~\ref{thm_intro:maps} is that any representation admitting a pair  of equivariant continuous compatible maps is Anosov as a representation into 
the Zariski closure of its image. 
This leads to the problem of determining when the composition of an
Anosov representation with a Lie group homomorphism is still Anosov.
 
More precisely,  if $\phi:G \to G'$ is an embedding and $\rho:\Gamma \to G$ is a $P^+$-Anosov
representation, when is the composition $\phi\circ \rho: \Gamma \to
G'$ an Anosov representation and with respect to 
which  parabolic subgroup $P^{\prime +}<G'$?

When $G$ is a Lie group of rank one,
an answer has been given by Labourie \cite{Labourie_anosov} (see also
Proposition~\ref{prop:inj_rk_one} below).
We provide an answer to this question for general semisimple Lie groups $G$ in Section~\ref{sec:homo}. 
 Here we just note one 
 consequence, which plays an important role for the construction of domains of discontinuity.

\begin{prop}\label{prop_intro:ano_Qzero}
  A representation $\rho: \G \to G$ is Anosov if and only if there
  exists a real vector space $V$ with a non-degenerate indefinite quadratic form
  $F$ and a homomorphism $\phi: G
  \to \O(F)$ such that $\phi \circ \rho$ is $Q_0$-Anosov, where
  $Q_0<\O(F)$ is the stabilizer of an $F$-isotropic line in
  $V$.
\end{prop}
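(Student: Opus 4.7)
The plan is to reduce both implications to a functoriality principle: if $\phi: G \to G'$ is a morphism of semisimple Lie groups and $P < G$, $P' < G'$ are parabolic subgroups with $\phi^{-1}(P') = P$ (and likewise for the opposites), and such that the induced map $G/P \times G/P^- \to G'/P' \times G'/P'^{-}$ sends the open $G$-orbit into the open $G'$-orbit and the closed $G$-orbit into the closed $G'$-orbit, then $\rho: \G \to G$ is $P$-Anosov if and only if $\phi \circ \rho$ is $P'$-Anosov. The ``only if'' direction is visible on boundary maps ($\xi^\pm$ composes with the flag variety embedding to give $\eta^\pm$, and contraction transfers through any linear representation of $G$); the ``if'' direction is a matter of lifting the equivariant maps and invoking the same fact that contraction pushes through $\phi$. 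This is the type of statement developed in Section~\ref{sec:homo}.

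For the forward implication, assume $\rho$ is $P^+$-Anosov. I build $\phi : G \to \O(F)$ as follows. Let $S$ be the set of simple roots of $G$ not contained in the Levi of $P^+$, and let $\lambda = \sum_{\alpha \in S} \varpi_\alpha$ be the sum of the corresponding fundamental weights. The stabilizer in $G$ of the highest weight line in the irreducible representation $V_\lambda$ is then exactly $P^+$. Form $W = V_\lambda \oplus V_\lambda^{*}$ equipped with the hyperbolic symmetric form $F\bigl((v, f), (v', f')\bigr) = f(v') + f'(v)$, which is non-degenerate, indefinite, and $G$-invariant; this defines $\phi : G \to \O(F)$. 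The highest weight line $L^+ \subset V_\lambda$, viewed inside $W$, is $F$-isotropic with stabilizer $P^+$; the dual line $L^- \subset V_\lambda^{*}$ is $F$-isotropic with stabilizer $P^-$, and $L^+ \oplus L^-$ is a hyperbolic plane. The resulting embeddings $G/P^\pm \hookrightarrow \O(F)/Q_0^\pm$ are $G$-equivariant and meet the compatibility hypothesis above, so the functoriality principle yields that $\phi \circ \rho$ is $Q_0$-Anosov.

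For the reverse implication, assume $\phi \circ \rho$ is $Q_0$-Anosov with Anosov maps $\eta^\pm : \partial_\infty \G \to \O(F)/Q_0^\pm$. The key task is to produce a parabolic $P^+ < G$ and lifts $\xi^\pm : \partial_\infty \G \to G/P^\pm$ of $\eta^\pm$. Fix a basepoint $t_0 \in \partial_\infty \G$; $\phi(\rho(\G))$-equivariance together with continuity of $\eta^+$ forces the image of $\eta^+$ to lie in a single $\phi(G)$-orbit, which can be identified with $\phi(G)/(\phi(G) \cap Q_0)$. Setting $P^+ := \phi^{-1}(\phi(G) \cap Q_0)$, which is parabolic in $G$ because $\phi$ is a homomorphism of semisimple Lie groups and the intersection is parabolic in $\phi(G)$, provides the desired lift $\xi^+$, and similarly $\xi^-$. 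Compatibility of $(\xi^+, \xi^-)$ is inherited from $(\eta^+, \eta^-)$, and the contraction property pushes back across $\phi$, so $\rho$ is $P^+$-Anosov.

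The principal obstacle is the representation-theoretic step in the forward direction: ensuring that the line stabilizer is precisely $P^+$ rather than a strictly larger parabolic. Choosing $\lambda$ with full support on the complementary simple roots is standard but must be verified; the trick of passing to $V_\lambda \oplus V_\lambda^{*}$ rather than $V_\lambda$ itself is what supplies an invariant non-degenerate symmetric bilinear form without imposing self-duality on $V_\lambda$.
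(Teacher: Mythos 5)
Your forward implication does not work as written, and the root of the problem is the ``functoriality principle'' you start from: it is \emph{not} true that the contraction property transfers through an arbitrary linear representation, even when $\phi^{-1}(P')=P$ and the induced map on flag varieties respects the open and closed orbits. Section~\ref{sec:ex_inj} of the paper gives an explicit counterexample (an Anosov representation into $\SL(2,\RR)\times\SL(2,\RR)$ whose composition with the inclusion into $\SL(4,\RR)$ is not $Q_0$-Anosov, although all your orbit conditions hold there); whether contraction passes through $\phi$ is a quantitative condition on weights, resp.\ Cartan projections (Lemma~\ref{lem:basisGmod}, Proposition~\ref{prop:injLieGr}), and your module $W=V_\lambda\oplus V_\lambda^{*}$ violates it. Indeed, the tangent space to $\mathcal{F}_0(W)$ at the highest weight line $L^+$ is $\hom\bigl(L^+,(L^+)^{\perp_F}/L^+\bigr)$, and $(L^+)^{\perp_F}$ contains the highest weight line of $V_\lambda^{*}$, whose weight is $\iota(\lambda)=-w_0\lambda$; contraction in that direction would force $(\lambda-\iota(\lambda))(\mu_{+,\Theta})\to+\infty$, for which there is no reason, and which is identically zero whenever $V_\lambda$ is self-dual. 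Concretely, take $G=\SL(2,\RR)$, $\rho$ convex cocompact, $\lambda=\varpi_\alpha$, so $W=\RR^2\oplus(\RR^2)^{*}$ and $\phi(g)=g\oplus {}^t\!g^{-1}$ in $\O(2,2)$: the two top singular values of $\phi(g)$ are always equal, so $\alpha_0(\mu(\phi\rho(\g)))\equiv 0$, the group $\phi\rho(\G)$ is not $\alpha_0$-divergent (compare Lemma~\ref{lem:anosov_kind}) and no element is proximal on $\mathcal{F}_0$, hence $\phi\circ\rho$ is not $Q_0$-Anosov. Your closing remark that doubling to $V_\lambda\oplus V_\lambda^{*}$ avoids imposing self-duality is precisely the step that destroys the argument: it introduces the weight $\iota(\lambda)$, which is not dominated by $\lambda$ through the simple roots in $\Delta\moins\Theta$.

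The correct route, which is the one the paper takes, first replaces the parabolic: by Lemma~\ref{lem:ThetaAn}, an Anosov representation is $P_\Theta$-Anosov for some $\Theta$ with $\iota(\Theta)=\Theta$. For such $\Theta$ there is an \emph{irreducible} self-dual $G$-module $(V,F)$ with an $F$-isotropic line whose stabilizer is $P_\Theta^{+}$, and irreducibility is exactly what makes Lemma~\ref{lem:basisGmod} available, so that Proposition~\ref{prop:Anosov_Rep} transfers the Anosov property to $\phi\circ\rho$; the form $F$ is then symmetric or skew-symmetric, and in the symplectic case one composes further with $V\mapsto V\otimes V$ (Lemma~\ref{lem:red_spe}) to land in an orthogonal group, giving Corollary~\ref{cor:allinO}. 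Your reverse implication is also not justified as stated: continuity and equivariance do not force the image of $\eta^{+}$ into a single $\phi(G)$-orbit, and the stabilizer $\phi^{-1}(Q_0\cap\phi(G))$ of a point of that image need not be a parabolic subgroup of $G$; this direction is the content of Proposition~\ref{prop:embedding} (a Cartan-projection comparison), which you should invoke or reprove rather than argue via orbits.
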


We introduce so called $L$-Cartan projections
as a new tool 
to study Anosov representations (see
Section~\ref{sec:lifting-section-gm} for details). These are
continuous maps from $\G \backslash \flow \times \RR$ with values in
the closure of 
a Weyl chamber  of  
$ L = P^+ \cap P^-$, which are defined whenever a representation
$\rho$ admits a continuous section $\sigma$ of the $G/L$-bundle over
$\G \backslash \flow$ that is flat along
$\RR$-orbits. 
The $L$-Cartan projections control the contraction properties of the section $\sigma$. 
 They play a central role 
 in the discussion  in Section~\ref{sec:homo}. 

We now turn our attention to the geometric properties of Anosov representations. 
The reader is referred to Sections~\ref{sec:discreteness} and \ref{sec:proximality} for definitions and details. 

\begin{thm}\label{thm_intro:geom_prop} (Theorems~\ref{thm:Ano_QIE} and \ref{thm:Ano_AMS})
Let $\Gamma$ be a finitely generated word hyperbolic group, $G$ a real semisimple Lie group
and $\rho:\Gamma \to G$ an Anosov representation.
Then 
\begin{enumerate}
\item the kernel of $\rho$ is finite and the image of $\rho$ is discrete. 
\item the map $\rho: \Gamma \to G$ is a quasi-isometric embedding,
  with respect to the word-metric on $\Gamma$ and any (left)
  $G$-invariant Riemannian metric on $G$.
\item the representation $\rho$ is well-displacing. 
\item the representation $\rho$ is (AMS)-proximal (Definition~\ref{defi:AMS}).
\end{enumerate}
\end{thm}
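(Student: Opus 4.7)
The plan is to first establish (i) and (ii) together by reducing, via Proposition~\ref{prop_intro:ano_Qzero}, to the case of a $Q_0$-Anosov representation into an orthogonal group, where the contraction dynamics translate cleanly into singular-value estimates. Then (iii) will be extracted from (ii) together with the geometry of word hyperbolic groups, and (iv) will follow from transferring the attracting/repelling dynamics on $\partial_\infty \Gamma$ to the flag variety via the equivariant boundary maps.

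For (i) and (ii), choose $\phi \colon G \to \O(F)$ such that $\phi \circ \rho$ is $Q_0$-Anosov, where $Q_0 < \O(F)$ stabilizes an $F$-isotropic line in $V$. The $Q_0$-Anosov condition yields a $(\phi\circ\rho)$-equivariant continuous decomposition $V = V_x^+ \oplus V_x^0 \oplus V_x^-$ for every $x \in \flow$, where $V_x^\pm = \xi^\pm(\tau^\pm)$ are the transverse isotropic lines determined by the endpoints $\tau^\pm \in \partial_\infty \Gamma$ of the flow line through $x$, and $V_x^0$ is their $F$-orthogonal complement. The associated section being flat along the flow and contracting provides constants $a, A > 0$ such that, relative to any continuous choice of norms on the fibres, the ratio of lengths of a vector in $V^+$ to one in $V^0$ decays at least like $A\,e^{-at}$ along a flow segment of duration $t$, and similarly for $V^0$ against $V^-$.

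The central step will be to convert this uniform exponential contraction into an inequality
\begin{equation*}
\log \frac{\mu_1(\phi \circ \rho(\gamma))}{\mu_3(\phi \circ \rho(\gamma))} \;\geq\; c\, |\gamma|_\Gamma - C
\end{equation*}
for all $\gamma \in \Gamma$, with $\mu_1 \geq \mu_2 \geq \mu_3$ the top three singular values. Fix a basepoint $x_0 \in \flow$; equivariance ensures that $\phi(\rho(\gamma))$ sends the decomposition at $x_0$ to the one at $\gamma x_0$. Choosing a flow segment of duration $t(\gamma)$ joining points close to $x_0$ and $\gamma x_0$, the uniform contraction along this segment yields the spectral-gap bound with $t(\gamma)$ in place of $|\gamma|_\Gamma$. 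The main obstacle is verifying that $t(\gamma)$ is comparable to $|\gamma|_\Gamma$; this is exactly where the Gromov--Mineyev construction of $\flow$ enters, since $\Gamma$ acts cocompactly on $\flow$ and the orbit map $\Gamma \to \flow$ is a quasi-isometry. Granting this, (i) and (ii) hold for $\phi\circ\rho$, and they descend to $\rho$ because $\phi$ is a closed embedding of Lie groups and hence bi-Lipschitz between any left-invariant Riemannian metrics on $G$ and its image in $\O(F)$.

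Part (iii) follows from (ii) combined with hyperbolicity of $\Gamma$: the stable translation length of $\gamma$ on the Cayley graph agrees, up to uniform additive error, with the minimal word length in its conjugacy class, and a quasi-isometric embedding into $G$ transfers this to the displacement of $\rho(\gamma)$ on the symmetric space $G/K$ up to multiplicative and additive constants. For (iv), given an infinite-order $\gamma \in \Gamma$ with attracting and repelling fixed points $\gamma^\pm \in \partial_\infty \Gamma$, equivariance of $\xi^\pm$ produces fixed points $\xi^\pm(\gamma^\pm) \in G/P^\pm$ of $\rho(\gamma)$, transverse by compatibility. Applying the contraction of the section along an axis for $\gamma$ in $\flow$ shows that $\rho(\gamma)$ contracts toward $\xi^+(\gamma^+)$ at an exponential rate on the complement of the repelling flag determined by $\xi^-(\gamma^-)$, which is precisely the (AMS)-proximality statement.
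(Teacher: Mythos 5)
Your treatment of (i)--(iii) is essentially sound and runs parallel to the paper's own argument: the paper packages the same contraction data in the $L$-Cartan projections of a lift of the Anosov section to $G/M$ (Proposition~\ref{prop:mu_contrac}), shows the lifted section is a uniform quasi-geodesic along every $\RR$-orbit, and then uses precisely the hyperbolicity fact you invoke (any two points of $\flow$ lie within uniformly bounded distance of a common flow line) together with cocompactness of the $\G$-action; your singular-value reformulation after passing to $\O(F)$ is an acceptable substitute, and your derivation of well-displacing from the quasi-isometric embedding via stable translation lengths is the paper's remark verbatim. One caveat on the reduction step: the homomorphism $\phi\colon G \to \O(F)$ provided by Proposition~\ref{prop:ano_Qzero} need not be a closed embedding --- if $P_\Theta$ contains a simple factor of $G$, the irreducible module is trivial on that factor --- so ``bi-Lipschitz onto its image'' is not available. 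What saves you is weaker and suffices: any Lie group homomorphism is Lipschitz for left-invariant metrics, so a lower bound on $d_{G_F}(1,\phi\circ\rho(\g))$ gives a lower bound on $d_G(1,\rho(\g))$, and $\ker\rho \subset \ker(\phi\circ\rho)$ is finite; discreteness then follows from the quasi-isometric embedding of $\rho$ itself.

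Part (iv), however, has a genuine gap. What your argument yields is only that each infinite-order element $\rho(\g)$ is proximal, with attracting point $\xi^+(t^{+}_{\g})$ and uniform contraction away from the non-transverse locus of $\xi^-(t^{-}_{\g})$; this is Lemma~\ref{lem:ano_proxi}, and it is \emph{not} the (AMS)-proximality statement. Definition~\ref{defi:AMS} demands a uniform property of the whole image group: constants $r,\epsilon_0>0$ and, for every $\epsilon<\epsilon_0$, a \emph{finite} set $S\subset\rho(\G)$ such that every $\delta\in\rho(\G)$ (including torsion elements and elements that are not proximal at all) becomes $(r,\epsilon)$-proximal after multiplication by some $s\in S$. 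Individual elements are not uniformly $(r,\epsilon)$-proximal: as $\g$ varies, $t^{+}_{\g}$ and $t^{-}_{\g}$ can be arbitrarily close in $\partial_\infty\G$, so the pair $(\xi^+(t^{+}_{\g}),\xi^-(t^{-}_{\g}))$ degenerates towards non-transversality, and no contraction estimate along the axis of a single element produces the correcting set $S$. The paper supplies this missing input via Theorem~\ref{thm:proxi_AMS}: one embeds $\mathcal{F}^\pm$ into $\PP(V)$ and $\PP(V^*)$, cuts down by the limit set (Lemmas~\ref{lem:prox_quot} and~\ref{lem:irred_maps}) to reach a strongly irreducible subgroup of $\SL(V)$, and then invokes the Abels--Margulis--So\u{\i}fer theorem (Theorem~\ref{thm:AMS}). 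Some argument of this kind is indispensable for (iv), and your proposal does not contain it.
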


As a consequence of this and \cite{Bourdon_conforme} we obtain the following characterization of Anosov representations in Lie groups of rank one: 
\begin{thm}\label{thm_intro:Ano_rk_one} (Theorem~\ref{thm:Ano_rk_one})
  Let $\Gamma$ be a  finitely generated word hyperbolic group and $G$ a real semisimple Lie group with  $\rank_\RR G= 1$. For a representation $\rho: \G \to G$  the following statements are equivalent:
  \begin{enumerate}
  \item $\rho$ is Anosov.
  \item There exists a continuous, $\rho$-equivariant, and 
    injective map $\xi: \partial_\infty \G \to G/P$. 
      \item $\rho: \Gamma \to G$ is a quasi-isometric embedding,  with
    respect to the word metric on $\Gamma$ and any (left)
    $G$-invariant Riemannian metric on $G$. 
  \item $\rho$ is convex cocompact.
  \end{enumerate}
\end{thm}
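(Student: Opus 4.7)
The plan is to close the cycle (i) $\Rightarrow$ (iii) $\Leftrightarrow$ (iv) $\Rightarrow$ (i), and to connect (ii) via (i) $\Rightarrow$ (ii) $\Rightarrow$ (iv). Three of these implications are short. The implication (i) $\Rightarrow$ (iii) is a direct application of Theorem~\ref{thm_intro:geom_prop}(ii). The equivalence (iii) $\Leftrightarrow$ (iv) is Bourdon's theorem \cite{Bourdon_conforme}, which is the classical rank-one characterization of convex cocompactness by the quasi-isometric embedding property. Finally, (i) $\Rightarrow$ (ii) uses the Anosov boundary maps $\xi^+, \xi^-: \partial_\infty \G \to G/P^\pm$: in rank one the two parabolic classes are conjugate, so after identifying $G/P^+ = G/P^- = G/P$ the closed $G$-orbit in $G/P \times G/P$ becomes the diagonal and the open orbit its complement. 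The compatibility condition then forces $\xi^+ = \xi^-$ on the diagonal and $\xi^+(t) \neq \xi^+(t')$ whenever $t \neq t'$, providing the required injective map.

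For (iv) $\Rightarrow$ (i), I would start from the fact that a convex cocompact $\rho(\G) \subset G$ is word hyperbolic, with Gromov boundary $\rho$-equivariantly homeomorphic to its limit set in $\partial_\infty (G/K) = G/P$. Taking $\xi^+ = \xi^- = \xi$ equal to this boundary homeomorphism, I would construct the flat section $\sigma$ of the $G/L$-bundle over $\G \backslash \widehat{\G}$ (with $L = P^+ \cap P^-$) whose value on each oriented flow-line is determined by the $L$-orbit compatible with the images under $\xi$ of its two endpoints. The contraction property for $\sigma$, phrased via the $L$-Cartan projections of Section~\ref{sec:lifting-section-gm}, follows from the uniform hyperbolicity of the geodesic flow on the convex core of $\rho(\G) \backslash G/K$, which translates into linear growth of the translation length of $\rho(\gamma)$ with the word length of $\gamma$.

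The main content is (ii) $\Rightarrow$ (iv). The strategy is to transport the convergence group action of $\G$ on $\partial_\infty \G$ across the injective equivariant map $\xi$ to a convergence group action of $\rho(\G)$ on $\xi(\partial_\infty \G) \subset G/P$, and then to invoke Bowditch's characterization of convex cocompact subgroups of rank-one Lie groups (limit set equals conical limit set). The intermediate steps I envision are: $\xi$ is a homeomorphism onto its image, being a continuous injection of a compact space; $\ker \rho$ is finite, because equivariance and injectivity force $\ker\rho$ to act trivially on $\partial_\infty \G$, whose kernel in a non-elementary hyperbolic group is finite; $\rho(\G)$ is discrete, because any subsequence $\rho(\gamma_n) \to g$ together with the north-south dynamics of $\gamma_n$ on $\partial_\infty \G$ transferred by $\xi$ would contradict continuity; and $\xi(\partial_\infty \G)$ equals the limit set of $\rho(\G)$, by minimality of both actions.

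The hardest step I expect is the final one: passing from topological conicality on the boundary to the geometric conicality needed to apply Bowditch, i.e.\ producing, for each $t \in \partial_\infty \G$ and a sequence $\gamma_n \to t$ in $\G$, a geodesic ray in $G/K$ ending at $\xi(t)$ along which $\rho(\gamma_n) \cdot x_0$ stays bounded. This is precisely where the rank-one hypothesis does essential work: the CAT$(-1)$ geometry of $G/K$, the compatibility of the visual metric on $G/P$ with the Gromov product on $\partial_\infty \G$, and the uniform north-south contraction of loxodromic isometries make the transfer possible. Once conicality is in hand, Bowditch's criterion yields convex cocompactness and closes the cycle of implications.
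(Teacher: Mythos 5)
Your architecture is logically complete, and in the two places where it does real work it follows a genuinely different route from the paper. The paper's own proof is a very short assembly: (i)$\Rightarrow$(iii) is Theorem~\ref{thm:Ano_QIE}; the equivalence (iii)$\Leftrightarrow$(iv) and the implication (iv)$\Rightarrow$(ii) are quoted from \cite{Bourdon_conforme} together with the standard facts that a convex cocompact subgroup is hyperbolic and that its boundary injects into $\partial_\infty(G/K)\cong G/P$; and the return to (i) is dispatched by appealing to ``the characterizations of Anosov representations'', i.e.\ the verification of the contraction property of Definition~\ref{defi:ARhyp} for such boundary maps is left to the reader. You instead prove (ii)$\Rightarrow$(iv) by transporting the convergence dynamics of $\G$ on $\partial_\infty\G$ through $\xi$ and invoking the conical-limit-point criterion for convex cocompactness, and (iv)$\Rightarrow$(i) by building the flat section explicitly and checking contraction through the $L$-Cartan projections of Section~\ref{sec:lifting-section-gm}. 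Your version is more self-contained (it actually supplies the verifications the paper omits), at the price of needing the dictionary between dynamical and geometric conicality; the paper's version is much shorter but gives essentially no argument for the implications leading back to (i).

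Two points need repair. First, in (iv)$\Rightarrow$(i) the contraction cannot be deduced from linear growth of translation lengths: that only controls periodic flow lines, whereas Proposition~\ref{prop:mu_contrac} requires $A_+(m,t)\geq ct-C$ uniformly over all $m\in\G\backslash\flow$ and $t\geq 0$. The correct input is that the orbit map of a convex cocompact group is a quasi-isometric embedding, so the lifted section sends $\RR$-orbits of $\flow$ to uniform quasi-geodesics of $G/K$; in the rank one (CAT$(-1)$) setting these fellow-travel the geodesics with endpoints $\xi(\tau^\pm(\hat m))$, hence the one-dimensional Cartan projection grows linearly in $t$ with uniform constants, and Lemma~\ref{lem:mu_contract} converts this into the contraction. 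Second, every step of your (ii)$\Rightarrow$(iv) (finiteness of $\ker\rho$, discreteness, minimality, and the implicit claim that each $\rho(\g)$ of infinite order is loxodromic with fixed points $\xi(t^{\pm}_{\g})$ --- which you should actually prove: the transferred north--south dynamics on the infinite compact set $\xi(\partial_\infty\G)$ rules out elliptic elements, which preserve a metric on $G/P$, and parabolic ones, which fix a single boundary point) uses that $\G$ is non-elementary. This is not cosmetic: for $\G=\ZZ$ mapped to an infinite-order elliptic rotation of $\HH^3$ about a geodesic, sending the two points of $\partial_\infty\ZZ$ to the endpoints of the axis gives a continuous, injective, equivariant map, yet $\rho$ is neither discrete nor Anosov nor a quasi-isometric embedding. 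So either restrict to non-elementary $\G$ (as the paper implicitly does) or strengthen (ii) to a dynamics-preserving map in the elementary cases; your write-up should state explicitly which option it takes.
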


\subsection{Domains of discontinuity}
The heart of this article is to construct, given an Anosov
representation $\rho:\Gamma \to G$, an open subset $\Omega$ of a
compact $G$-space, which  is $\Gamma$-invariant, and on which
$\Gamma$ acts properly discontinuously and with compact
quotient.

Let us recall that any real semisimple Lie group admits an Iwasa
decomposition $G= KAN$, where $K$ is a maximal compact subgroup and
$A$ the $\RR$-split part of a  
Cartan subgroup 
 and $N$ is the unipotent radical of a minimal parabolic subgroup $B$ containing $A$.

\begin{thm}\label{thm_intro:general_case}
  (Theorem~\ref{thm:dod_general_case})
Let $\rho: \G \to G$ be a $P$-Anosov representation for some proper parabolic subgroup $P<G$. Then there exists a $\G$-invariant open set $\Omega \subset G/AN$ such that 
\begin{enumerate}
\item the action of $\G$ on $\Omega$ is properly discontinuous, and 
\item the quotient $\G\backslash \Omega$ is compact. 
\end{enumerate}
\end{thm}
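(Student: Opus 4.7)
My plan is to first invoke Proposition~\ref{prop_intro:ano_Qzero} to obtain a real vector space $(V,F)$ with a non-degenerate indefinite quadratic form together with a homomorphism $\phi : G \to \O(F)$ such that $\psi := \phi \circ \rho : \G \to \O(F)$ is $Q_0$-Anosov, where $Q_0 < \O(F)$ is the stabilizer of an $F$-isotropic line. Since $\phi(AN)$ is solvable it lies, after possibly conjugating $Q_0$, inside $Q_0$, so there is a $G$-equivariant continuous (hence proper) map
\[
\pi : G/AN \longrightarrow \O(F)/Q_0 = \mathcal{L},
\]
where $\mathcal{L}$ is the compact space of $F$-isotropic lines of $V$. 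Because $\pi$ has compact fibres, both conclusions of the theorem pass from a $\psi(\G)$-invariant open set $\Omega' \subset \mathcal{L}$ to $\Omega := \pi^{-1}(\Omega')$, so it suffices to produce $\Omega'$ for $\psi$.

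\textbf{The candidate domain.} Let $\xi : \partial_\infty \G \to \mathcal{L}$ denote the Anosov boundary map of $\psi$, set $\Lambda := \xi(\partial_\infty \G)$, and define
\[
K_\Lambda \;:=\; \bigl\{\,\ell \in \mathcal{L} \,:\, \ell \subset \xi(t)^{\perp_F} \text{ for some } t \in \partial_\infty \G\,\bigr\}.
\]
This is closed in $\mathcal{L}$ (it is the image under projection along the compact factor of a closed subset of $\mathcal{L} \times \partial_\infty \G$), and I set $\Omega' := \mathcal{L} \setminus K_\Lambda$. The compatibility of $\xi$ (the pair $(\xi,\xi)$ sends $\partial_\infty \G^{(2)}$ into the open $\O(F)$-orbit in $\mathcal{L} \times \mathcal{L}$) means exactly that $\xi(t) \not\subset \xi(t')^{\perp_F}$ for $t \ne t'$, a transversality that drives the two remaining steps.

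\textbf{Proper discontinuity.} Let $\gamma_n \in \G$ satisfy $\gamma_n \to \infty$. By the quasi-isometric embedding property (Theorem~\ref{thm_intro:geom_prop}(ii)) we may, after extracting a subsequence, assume $\gamma_n \to t^+$ and $\gamma_n^{-1} \to t^-$ in $\partial_\infty \G$. The $Q_0$-Anosov structure produces North--South dynamics for $\psi(\gamma_n)$ on $\mathcal{L}$: any compact subset of $\mathcal{L} \setminus \xi(t^-)^{\perp_F}$ is translated uniformly into arbitrarily small neighbourhoods of $\xi(t^+)$. A compact $K \subset \Omega'$ is by construction disjoint from every $\xi(t)^{\perp_F}$, so $\psi(\gamma_n) \cdot K$ accumulates on $\{\xi(t^+)\} \subset K_\Lambda$ and eventually leaves $K$; proper discontinuity follows.

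\textbf{Cocompactness---the main obstacle.} The decisive and hardest step is cocompactness of $\G \backslash \Omega'$. The approach I would take is to build a $\G$-equivariant continuous proper map $\Phi : \Omega' \to \flow$ with compact fibres, from which compactness of $\G \backslash \Omega'$ propagates via compactness of $\G \backslash \flow$. Concretely, for each $\ell \in \Omega'$ one tries to extract a canonical ``shadow'' pair $(t^-(\ell), t^+(\ell)) \in \partial_\infty \G^{(2)}$ encoding the limiting attractor and repeller directions of the orbit $\psi(\G) \cdot \ell$ on $\mathcal{L}$---which should be well defined precisely because $\ell \not\subset \xi(t)^{\perp_F}$ for every $t$---together with an $\RR$-height parameter measured by an $L$-Cartan projection (see Section~\ref{sec:lifting-section-gm}). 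Showing that $\Phi$ is well-defined, continuous, proper, and has compact fibres reduces to a quantitative, uniform transversality estimate between compact subsets of $\Omega'$ and the stratified bad set $K_\Lambda$. Extracting this estimate from the contraction properties of the Anosov section $\sigma$ is the principal technical hurdle, and is essentially where all the quantitative content of the Anosov condition has to be invested.
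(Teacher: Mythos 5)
There is a genuine gap, and it is in the choice of the domain itself, not just in the unfinished cocompactness step. You build $\Omega'$ inside $\Ff_0$ (isotropic lines), removing the lines orthogonal to some limit point, whereas the paper's construction for a $Q_0$-Anosov representation takes place in $\Ff_1$ (maximal isotropic subspaces): one removes $K_\xi=\{P\in\Ff_1\mid \exists t,\ \xi(t)\subset P\}$ and then pulls back to $G/AN$ via $gAN\mapsto \phi(g)T$, where $T$ is an $AN$-invariant \emph{maximal} isotropic subspace whose existence is the nontrivial Lemma~\ref{lem:exist_iso} (your map through the highest-weight line only reaches $\Ff_0$, which is the wrong target). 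For your domain, cocompactness is false in general. Concretely, take $G=\O(2,2)$, $P=Q_0$, and $\rho:\ZZ\to \SO(1,2)\subset \SO(2,2)$ generated by a hyperbolic element; this is $Q_0$-Anosov and one may take $(V,F)=(\RR^{2,2},F)$, $\phi=\mathrm{id}$ in Proposition~\ref{prop_intro:ano_Qzero}. Identifying $\Ff_0\cong \PP^1\times\PP^1$, two isotropic lines are orthogonal exactly when they share a coordinate, and $\xi(\partial_\infty\ZZ)=\{(g^+,g^+),(g^-,g^-)\}$. Your $\Omega'$ is then $(\PP^1\setminus\{g^{\pm}\})\times(\PP^1\setminus\{g^{\pm}\})$ with the diagonal $\ZZ$-action: the action is proper, but the quotient is four open cylinders, not compact. (The paper's domain here sits in $\Ff_1\cong\PP^1\sqcup\PP^1$ and has compact quotient.) Structurally, the sets $\{\ell\mid \ell\subset\xi(t)^{\perp_F}\}$ for different $t$ overlap, so the complement of your $\Omega'$ is not a bundle over $\partial_\infty\G$; the disjointness coming from transversality ($P_1\cap P_2^{\perp_F}=0$ forces $K_{P_1}\cap K_{P_2}=\emptyset$, Proposition~\ref{prop:Omega}) is exactly what the paper's codimension count and compactness proof rely on, and your bad set can also simply exhaust $\Ff_0$ (e.g.\ for a cocompact surface group in $\SO(1,2)\subset\SO(2,2)$).

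Independently of the choice of domain, the strategy you propose for cocompactness does not get off the ground: the map $\Phi:\Omega'\to\flow$ via a ``canonical attractor--repeller pair of the orbit $\psi(\G)\cdot\ell$'' is not well defined, since for a fixed $\ell$ different divergent sequences in $\G$ produce different limit directions and the orbit closure typically meets the whole limit set; nothing in the Anosov condition singles out one pair $(t^-(\ell),t^+(\ell))$. The paper's proof of compactness is of a completely different nature: after stabilizing $(V,F)$ by Lemma~\ref{lem:red_spe} so that $K_\xi$ has large codimension and highly connected fibers over $\partial_\infty\G$, it compares long exact sequences in (\v{C}ech) cohomology of neighborhoods of $K_\xi$, and uses Poincar\'e duality, Lemma~\ref{lem:Bestvina_Mess} and a Rips complex to show that the top homology of $\G\backslash\Omega$ is $\RR$, hence the quotient is compact. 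Your properness sketch is in the right spirit, but the uniform contraction it needs is precisely what Theorems~\ref{thm:Ano_AMS} and~\ref{thm:dod_prox} (the (AMS)-proximality argument) supply, and it would have to be invoked or reproved rather than asserted via ``North--South dynamics''.
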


\begin{remarks}\label{rem:dod}
\noindent
\begin{asparaenum}
\item \label{empty}
The domain of discontinuity $\Omega$ is constructed explicitly using the Anosov maps associated to $\rho$, and depends on some additional combinatorial data. 
For specific examples the domain of discontinuity $\Omega$ might be
empty. 
 One such example is an Anosov representation $\rho: \pi_1(\Sigma) \to \PSL(2,\RR)$, where $\Sigma$ is a closed connected oriented surface of genus $\geq 2$. Then $\pi_1(\Sigma)$ acts minimally on $\PSL(2,\RR)/AN =\PP^1(\RR)$. 
See Remark~\ref{rem:examples_orth} for other examples.

We describe conditions for $\Omega$ to be nonempty in Section~\ref{sec:doma-disc-gh}, see also 
Corollary~\ref{cor:nonemptyVCD}, Theorem~\ref{thm_intro:dod_freegrp}
and Theorem~\ref{thm:dod_surf} below. The domain of discontinuity can
always be ensured to be nonempty by embedding $G$ into a larger Lie
group $G'$.
\item The domain of discontinuity $\Omega$ is not unique. For Anosov
  representations into $\SL(n,\KK)$, $\KK = \RR$ or $\CC$, we describe in Section~\ref{sec:desclim} several different domains of discontinuities in $G/AN$.

\item Recall that a minimal parabolic subgroup $B<G$ admits a decomposition as $B=
MAN$ with $M$ being the centralizer of $A$ in $K$. 
In particular, the compact
$G$-space $G/AN$ is a compact fiber bundle over $G/B$ and hence over
$G/Q$ for any parabolic subgroup $Q<G$. In many cases the domain of
discontinuity $\Omega$ is indeed the pull back of a domain of
discontinuity in some $G/Q$.
\end{asparaenum}
\end{remarks}

For free groups and for surface
groups we obtain 
\begin{thm}\label{thm_intro:free}\label{thm_intro:dod_freegrp}
Let $F_n$ be the free group on $n$ letters, and let $G$ be
a real semisimple Lie group. 
Assume that $\rho:F_n \to G$ is an Anosov representation. Then there
exists a nonempty $F_n$-invariant open subset $\Omega \subset
G/AN$ such that the action of $F_n$ on $\Omega$ is
properly discontinuous and cocompact. 
\end{thm}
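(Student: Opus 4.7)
The plan is to deduce Theorem~\ref{thm_intro:dod_freegrp} directly from Theorem~\ref{thm_intro:general_case}: the latter already provides an $F_n$-invariant open subset $\Omega \subset G/AN$ on which $F_n$ acts properly discontinuously and cocompactly, so only the \emph{nonemptiness} of $\Omega$ remains to be established. By Remark~\ref{rem:dod}(iii), $G/AN$ fibers over $G/Q$ for any parabolic $Q<G$, and the construction of $\Omega$ can be realised as the preimage of an open set $\Omega' \subset G/Q$ for a suitable $Q$ depending on the type of the Anosov representation $\rho$. Hence it suffices to produce a nonempty such $\Omega'$.

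By the general construction reviewed in Section~\ref{sec:doma-disc-gh}, the complement $K' := (G/Q) \setminus \Omega'$ is $\rho(F_n)$-invariant, closed, and of the form $K' = \bigcup_{t \in \partial_\infty F_n} C_{\xi^+(t)}$, where $\xi^+ \colon \partial_\infty F_n \to G/P^+$ is the Anosov map attached to $\rho$ and $C_x \subsetneq G/Q$ is a proper closed algebraic subvariety canonically associated to each $x \in G/P^+$. Thus $K'$ is the continuous image of the compact set $\partial_\infty F_n \times C$ (where $C$ is a model fiber of the family $x \mapsto C_x$) under a smooth evaluation map. Since $F_n$ is free on $n\geq 2$ letters (the case $n=1$ being trivial), its Gromov boundary $\partial_\infty F_n$ is homeomorphic to a Cantor set, and in particular has topological dimension zero.

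The key step is the dimension bound $\dim K' \leq \dim C < \dim G/Q$, from which I conclude that $K'$ has empty interior and hence $\Omega'\neq\emptyset$. Because $x\mapsto C_x$ is algebraic, a countable clopen cover of the Cantor set $\partial_\infty F_n$ together with the local Lipschitz character of the evaluation map allows a standard Hausdorff-dimension / Baire-category argument to yield the bound. Pulling back through the fibration $G/AN \to G/Q$ gives the required nonempty domain $\Omega \subset G/AN$.

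The main obstacle is precisely this dimension estimate: an uncountable continuous union of positive-dimensional submanifolds parametrised by a Cantor set can in principle fail to have small dimension, so the algebraicity of $x\mapsto C_x$ has to be used crucially to reduce to a countable Lipschitz family. A cleaner alternative I would pursue in parallel is to invoke a general cohomological criterion in the spirit of Corollary~\ref{cor:nonemptyVCD}, asserting that $\Omega$ is nonempty whenever $\vcd(\Gamma)$ is strictly less than a numerical invariant of the pair $(G,Q)$; since $\vcd(F_n)=1$, the hypothesis is satisfied essentially always, giving nonemptiness uniformly in $\rho$.
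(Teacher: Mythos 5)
Your skeleton --- reduce to the nonemptiness of the domain provided by Theorem~\ref{thm_intro:general_case}, write the complement as a union over $\partial_\infty F_n$ of ``fiber'' sets attached to the points of the Anosov curve, and conclude by a dimension count using that the boundary of a free group is zero-dimensional --- is indeed the strategy of the paper (Theorem~\ref{thm:dofFnGammag}). But the two steps you treat as given are exactly the substance of the proof, and your substitutes for them do not work. First, the claim that each set $C_{\xi^+(t)}$ is a \emph{proper} closed subset of positive codimension is not part of the general construction; it is the content of Lemma~\ref{lem:bruhat_dec} and Corollary~\ref{cor:longest}: the model fiber $\phi_{1}^{-1}(K_{V_\lambda})\subset G/AN$ is a finite union of Bruhat cells $BwAN$ with $w\in S_\phi$, and the longest element $w_0$ does not lie in $S_\phi$ because the module $(V,F)$ is self-dual, so $-w_0\cdot\lambda=\lambda$, $V_{w_0\cdot\lambda}\subset V_-$ and $V_-\cap T=\{0\}$; hence the open cell is omitted and the codimension is at least one. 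That this is not formal is visible in the $\vcd=2$ case, where an extra hypothesis on $\SL(2,\RR)$-factors is required. Moreover, the paper's $\Omega$ lives in $G/AN$ and its complement is only $B$- (in fact $P$-) invariant; by Remark~\ref{rem:dod} it is a pullback from some $G/Q$ only ``in many cases'', so your reduction to $G/Q$ is not available in general, although the same argument can be run directly in $G/AN$.

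Second, your passage from ``each fiber has positive codimension'' to ``the union has empty interior'' via Hausdorff dimension, a countable clopen cover and Baire category does not hold up: the union is over uncountably many $t$, so Baire category is not applicable, and the Hausdorff dimension of the limit curve $\xi(\partial_\infty F_n)$ is not controlled by the topological dimension $0$ of the Cantor set (Schottky-type limit sets can have Hausdorff dimension larger than $1$), so Lipschitz estimates give at best $\dim_H K'\le \dim_H \xi(\partial_\infty F_n)+\dim C$, which need not be smaller than $\dim G/Q$. The mechanism that actually makes the count work is covering (topological) dimension together with transversality of the Anosov map: the sets $K_{\phi_0\circ\xi(t)}$, $t\in\partial_\infty F_n$, are pairwise disjoint (as in Proposition~\ref{prop:Omega}), so their union, and likewise its $\phi_1$-preimage $K\subset G/AN$, maps to $\partial_\infty F_n$ by a continuous closed map with fibers of dimension at most $\dim G/AN-1$; hence $\dim K\le \dim\partial_\infty F_n+\dim G/AN-1=\dim G/AN-1$, and a closed subset of a manifold of strictly smaller covering dimension cannot exhaust it. Your fallback of invoking a general ``$\vcd$ versus a numerical invariant of $(G,Q)$'' criterion in the spirit of Corollary~\ref{cor:nonemptyVCD} begs the same question: identifying that invariant and showing it is at least $2$ for every choice of $(G,P,V,T)$ is precisely what Corollary~\ref{cor:longest} provides.
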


\begin{thm}\label{thm_intro:surface}\label{thm:dod_surf}
Let $\pi_1(\Sigma)$ be the fundamental group of a closed connected orientable surface of genus $\geq 2$, and let $G$ be a real semisimple Lie group with no (almost) factor being locally isomorphic to $\SL(2,\RR)$. 
Assume that $\rho: \pi_1(\Sigma) \to G$ is an Anosov representation. Then
there exists a nonempty $\pi_1(\Sigma)$-invariant open subset $\Omega
\subset G/AN$ such that the action of $\pi_1(\Sigma)$ on $\Omega$ is
properly discontinuous and cocompact. 
\end{thm}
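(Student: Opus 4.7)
My plan is to deduce this theorem from Theorem~\ref{thm_intro:general_case}: that theorem already produces a $\pi_1(\Sigma)$-invariant open $\Omega \subset G/AN$ with properly discontinuous, cocompact action; what remains is only to verify that $\Omega$ can be chosen nonempty under the stated hypothesis on $G$.

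To do so I would run a dimension count on the complement $K_\rho = (G/AN) \setminus \Omega$. From the explicit construction behind Theorem~\ref{thm_intro:general_case}, together with the bundle structure $G/AN \to G/P^+$ noted in Remarks~\ref{rem:dod}, $K_\rho$ will take the form
\[
K_\rho = \bigcup_{t \in \partial_\infty \pi_1(\Sigma)} K_t,
\]
where each $K_t$ is a closed real algebraic subvariety of $G/AN$ cut out from the Anosov data $\xi^\pm(t)$ (roughly, a finite union of fibers of $G/AN \to G/P^+$ over a subset of $G/P^+$ singled out by the combinatorial data). An upper-semicontinuity-of-dimension argument for such a continuous family over the compact one-dimensional parameter space $\partial_\infty \pi_1(\Sigma) \cong S^1$ then gives
\[
\dim_{\RR} K_\rho \leq 1 + \sup_{t} \dim_{\RR} K_t \leq 1 + \dim_{\RR} G/AN - \dim_{\RR} G/P^+,
\]
so $\Omega$ will be nonempty as soon as $\dim_{\RR} G/P^+ \geq 2$.

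I would then check that the hypothesis is precisely what secures this bound. Decomposing $G$ modulo its center as an almost-direct product of simple factors $G_1 \times \cdots \times G_r$, the parabolic $P^+$ decomposes correspondingly as $P_1^+ \times \cdots \times P_r^+$ and is proper in at least one factor $G_i$. For each simple real factor $G_i$ not locally isomorphic to $\SL(2,\RR)$, every proper parabolic quotient $G_i/P_i^+$ has real dimension at least $2$, since $\SL(2,\RR)$ is the only simple real Lie group admitting a one-dimensional generalized flag variety (namely $\PP^1(\RR)$). Under the hypothesis we therefore obtain $\dim G/P^+ \geq \dim G_i/P_i^+ \geq 2$, and the dimension count above closes the argument.

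The main obstacle will be keeping track of the precise structure of $K_t$ through the bundle map $G/AN \to G/P^+$ once the combinatorial data are chosen; the fiber pattern may be a bit more intricate than a single fiber, but the codimension estimate persists and the dimension count is robust. Sharpness of both the bound and the hypothesis is confirmed by the excluded case: for a factor locally isomorphic to $\SL(2,\RR)$, the flag variety $\PP^1(\RR)$ is a circle and the Anosov image $\xi^+(\partial_\infty \pi_1(\Sigma))$ covers it, leaving no room for $\Omega$, exactly as in Remark~\ref{rem:dod}(\ref{empty}).
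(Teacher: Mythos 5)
Your reduction to a codimension count over the one-dimensional boundary $\partial_\infty\pi_1(\Sigma)$ is the right general shape (it is also how the paper argues, via Equation~\eqref{eq:3}), but the key inequality you assert, $\dim_\RR K_t \leq \dim_\RR G/AN - \dim_\RR G/P^+$, is false, because $K_t$ is not a finite union of fibers of $G/AN \to G/P^+$ over points. In the paper's construction $K_t$ is a translate of $\phi_{1}^{-1}(K_{V_\lambda})$, which by Lemma~\ref{lem:bruhat_dec} is a union of Bruhat cells $BwAN$ with $w$ in the subset $S_\phi \subset W$; these are $P$-invariant sets that typically project onto positive-dimensional subvarieties of $G/P^+$. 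A concrete counterexample to your bound: for a $B$-Anosov (Hitchin) representation into $\SL(3,\RR)$, Section~\ref{sec:descSLt} shows $K_t$ consists of the full flags $(E_1,E_2)$ with $E_1=\xi_1(t)$ or $E_2=\xi_2(t)$, a one-dimensional set in the three-dimensional flag variety; its codimension is $2$, not $\geq \dim G/B = 3$ as your estimate would require. So the fact that $\dim G/P^+ \geq 2$ whenever no factor is locally isomorphic to $\SL(2,\RR)$ (which is correct) does not by itself give the needed codimension $\geq 2$.

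The missing content is exactly the analysis of which low-codimension Bruhat cells can occur in $S_\phi$. Codimension $\geq 1$ comes from Corollary~\ref{cor:longest}: self-duality of the chosen module forces $w_0 \notin S_\phi$. For codimension $\geq 2$ one must show, for every simple root $\alpha$, that either $\mathrm{codim}\, Bw_0 s_\alpha AN \geq 2$ or $w_0 s_\alpha \notin S_\phi$; this is where the hypothesis on $\SL(2,\RR)$ factors genuinely enters, through Lemma~\ref{lem:orderA}, a highest-weight/root-system argument valid only for simple factors of rank $\geq 2$ (it fails for $\SL(2,\RR)$, where $s_\alpha\cdot\lambda = -\lambda$), combined with Lemma~\ref{lem:bruhat_dec}.\eqref{it2bruhat}; rank-one factors are handled separately. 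Your proposal offers no substitute for this step, and the heuristic that "the codimension estimate persists" is precisely the point that needs proof — indeed, as the $\SL(3,\RR)$ example shows, the estimate that persists is the weaker codimension-$2$ bound established by the Weyl-group argument, not the fiber-dimension bound you rely on.
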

\begin{remarks}
\noindent
\begin{enumerate} 
\item In view of Remark~\ref{rem:dod}.\eqref{empty} the condition on $G$ in Theorem~\ref{thm_intro:surface} is optimal. 

\item Theorem~\ref{thm:dod_surf} was announced in \cite{Guichard_Wienhard_CRAS} in the form that $\Omega \subset G/B$. This is true in many cases, but in general our construction provides only a domain of discontinuity in $G/AN$ (which is a compact fiber bundle over $G/B$).

\item Theorem~\ref{thm_intro:dod_freegrp} holds more generally for
hyperbolic groups whose virtual cohomological dimension is one,
respectively Theorem~\ref{thm:dod_surf} holds for hyperbolic groups 
whose virtual cohomological dimension is two, see
Theorem~\ref{thm:dofFnGammag}.
\end{enumerate}
\end{remarks}

\medskip

We shortly describe the general strategy for the construction of the domain of discontinuity $\Omega$.

A basic example is when $\rho: \pi_1(\Sigma) \to \PSL(2,\CC)$
is a quasi-Fuchsian representation. Then, there is an equivariant
continuous curve $\xi: \partial_\infty \pi_1(\Sigma) \cong S^1 \to
\PP^1(\CC)$, whose image is a Jordan curve. The action of
$\pi_1(\Sigma)$ on the complement $\Omega = \PP^1(\CC) \moins
\xi(S^1)$  is free and properly discontinuous. The quotient
$\pi_1(\Sigma)\backslash\Omega$ has two connected components, both
of which are homeomorphic to the surface $\Sigma$.

When $\rho: \Gamma \to G$ is an Anosov representation into a Lie group
$G$ of rank one, the construction is a
straightforward generalization of this procedure. We consider the
equivariant continuous Anosov map $\xi: \partial_\infty (\Gamma) \to
G/P$, where $P$ is (up to conjugation) the unique proper parabolic
subgroup of $G$. In that case $\Omega=G/P \moins
\xi(\partial_\infty \Gamma)$.

In general, when $G$ is a Lie group of higher rank, the situation
becomes more complicated. 
We will have to consider the Anosov
map $\xi: \partial_\infty (\Gamma) \to G/P$ in order to construct a
domain of discontinuity in $G/P'$ with $P'<G$ being a different
parabolic subgroup.
 
  We start first with the case when $\rho: \G \to \O(F)$ is a
 $Q_0$-Anosov representation, where 
 $Q_0$ is the
stabilizer of an $F$-isotropic line in $V$. 
Let $\Ff_0 = G/Q_0$ be the space
of $F$-isotropic lines in $V$, let $\Ff_1 = G/Q_1$ be the space of
maximal $F$-isotropic subspaces of $V$, and let $\Ff_{01}$ be the space of
pairs consisting of an $F$-isotropic line and an incident maximal
$F$-isotropic space. There are two projections $\pr_i: \Ff_{01} \to
\Ff_i$, $i=0,1$.
Starting from the Anosov map $\xi: \partial_\infty \Gamma \to \Ff_0$, we consider the subset $K_\xi := \pr_1( \pr_0^{-1}(\xi(\partial_\infty \Gamma))) \subset \Ff_1$ and define 
$\Omega_\rho = \Ff_1 \moins K_\xi$. 

That the action of $\Gamma$ on $\Omega_\rho$ is properly discontinuous follows from the fact that $\rho(\G)$ has special dynamical properties. A more general construction of domains of discontinuity for discrete subgroups of $\O(F)$ with special dynamical behavior is described in Theorem~\ref{thm:dod_prox}. 
We deduce the compactness of the quotient $\Gamma\backslash \Omega_\rho$ from computations in homology.

Given an Anosov representation $\rho:\Gamma \to G$ into an arbitrary
semisimple Lie group, the first step is to choose an appropriate
finite dimensional (irreducible) representation of $G$ on a real
vector space $V$ preserving a non-degenerate indefinite symmetric
bilinear form $F$, such that the composition of $\rho$ with the
representation $\phi: G \to \O(F)$ is a $Q_0$-Anosov representation
$\phi\circ \rho: \Gamma \to \O(F)$; this is made possible by
Proposition~\ref{prop_intro:ano_Qzero}. Let $\Omega_{\phi\circ \rho}
\subset \Ff_1$ denote the domain of discontinuity whose construction
we just described. 
We show that there is always a maximal isotropic subspace $W \in \Omega_{\phi\circ \rho} \subset \Ff_1$ 
which is invariant by $AN$. The intersection of 
$\Omega_{\phi\circ \rho}$ with the $G$-orbit of $W$ is a domain of discontinuity $\Omega' \subset G/\stab_G(W)$, which can be lifted to obtain a domain of discontinuity $\Omega \subset G/AN$. 

\subsection{Applications} 

\subsubsection{Hitchin components, maximal representations and deformation spaces of geometric structures}

By the uniformization theorem, the Teichm\"uller space of a surface
$\Sigma$ can be identified with the 
moduli space of marked hyperbolic structures 
on $\Sigma$, and consequently with  a connected component of the space 
 $\hom(\pi_1(\Sigma), \PSL(2,\RR))/\PSL(2,\RR)$. In 1992
Hitchin defined the Teichm\"uller component, now called the
Hitchin component, a connected component of the space
$\hom(\pi_1(\Sigma), G)/G$ of representations into a split
real adjoint simple Lie group, which he proved to be homeomorphic to a ball
\cite{Hitchin}. Choi and Goldman \cite{Goldman_Choi} showed that the Hitchin component for
$\PSL(3,\RR)$ can be realized as the moduli space of convex real projective structures on $\Sigma$. 

Hitchin's work, using methods from the theory of Higgs bundles, does
not provide much information on the geometric significance of
representations in the Hitchin component. In recent years, due to
work of Labourie \cite{Labourie_anosov,
  Labourie_crossratio, Labourie_energy} 
 and of Fock and Goncharov \cite{Fock_Goncharov}, it has been shown that
 representations in Hitchin components have beautiful geometric
 properties, which generalize properties of  
classical Teichm\"uller space. 
Parallel to this, the study of maximal representations of $\pi_1(\Sigma)$
into Lie groups of Hermitian type
\cite{Bradlow_GarciaPrada_Gothen,Bradlow_GarciaPrada_Gothen_survey,
  Burger_Iozzi_Labourie_Wienhard, Burger_Iozzi_Wienhard_toledo,
  GarciaPrada_Gothen_Mundet, Gothen, Hernandez, Toledo_89}
showed that spaces of maximal representations also share several
properties with 
classical Teichm\"uller space, which itself is identified with
the space of maximal representations into $\PSL(2,\RR)$ \cite{Goldman_88}. Therefore, Hitchin components and spaces of maximal representations are also referred to as higher Teichm\"uller spaces.

Using the construction of domains of discontinuity for Anosov representations we can show that  Hitchin components parametrize connected components of 
deformation spaces of locally homogeneous $(G,X)$-structures on
compact manifolds. 
\begin{thm}\label{thm_intro:hitchin_other} (Theorem~\ref{thm:hitchin_other})
  Let $\Sigma$ be a closed connected orientable surface of negative Euler characteristic. Assume that $G$ is 
$\PSL(2n, \RR)$ ($n \geq 2$), 
 $\PSp(2n, \RR)$ ($n \geq
  2$), or 
 $\PSO(n,n)$ ($n\geq 3$), 
  and $X = \PP^{2n-1}(\RR)$; 
 or that $G$ is 
  $\PSL(2n+1, \RR)$ ($n \geq 1$), or 
 $\PSO(n,n+1)$ ($n\geq
  2$), and  $X = \mathcal{F}_{1,2n}(\RR^{2n+1}) = \{ (D,H) \in
  \PP^{2n}(\RR) \times \PP^{2n}(\RR)^* \mid D \subset H \}$.

  Then there exist a compact manifold $M$ and a connected component
  $\mathcal{D}$ of the deformation space of $(G,X)$-structures on $M$
  which is parametrized by the Hitchin component in $ \hom( \pi_1(\Sigma), G)/G$.
  \end{thm}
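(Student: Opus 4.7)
The plan is to realize the Hitchin component as the holonomy image of a connected component of $(G,X)$-structures, by combining the domain-of-discontinuity construction (Theorem~\ref{thm:dod_surf}) with the Ehresmann--Thurston principle. Fix a Hitchin representation $\rho_0: \pi_1(\Sigma) \to G$. By the examples, $\rho_0$ is $B$-Anosov, hence $P$-Anosov for every parabolic $P$ containing $B$; in particular it is $P^+$-Anosov for the stabilizers $P^+$ of the flags parametrized by $X$. Since $G$ has no factor isomorphic to $\PSL(2,\RR)$, Theorem~\ref{thm:dod_surf}, together with Remark~\ref{rem:dod}(iii), produces a nonempty $\pi_1(\Sigma)$-invariant open subset $\Omega_{\rho_0} \subset X$ on which $\pi_1(\Sigma)$ acts properly discontinuously and cocompactly. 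The key point here, and what needs to be verified case by case, is that the combinatorial data underlying the construction in Section~\ref{sec:doma-disc-gh} can be chosen so that $\Omega_{\rho_0}$ is actually the pullback to $X$ of a genuine domain in $X$; this uses the explicit form of the Anosov maps $\xi^\pm$ associated to a Hitchin representation.

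Next, I would verify that the action of $\pi_1(\Sigma)$ on $\Omega_{\rho_0}$ is free. By Theorem~\ref{thm_intro:geom_prop}(i) the kernel of $\rho_0$ is finite, and since $\pi_1(\Sigma)$ is torsion-free, $\rho_0$ is injective; combined with proper discontinuity, any point fixed by a nontrivial element would force the isotropy in $\rho_0(\pi_1(\Sigma))$ to be a nontrivial discrete subgroup of a compact stabilizer, hence trivial once we rule out the (finitely many) fixed flag varieties --- here one uses that Hitchin Anosov maps take values in a set of transverse flags, so elements act with source-target dynamics disjoint from $\Omega_{\rho_0}$. Consequently $M := \pi_1(\Sigma) \backslash \Omega_{\rho_0}$ is a compact manifold carrying a $(G,X)$-structure with holonomy conjugate to $\rho_0$ and developing map the inclusion $\Omega_{\rho_0} \hookrightarrow X$.

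Now I would vary $\rho$ in the Hitchin component. By Theorem~\ref{thm:Ano_open} the Anosov condition is open, and the construction of $\Omega_\rho$ depends continuously on $\rho$ (through the Anosov maps $\xi^\pm_\rho$, which themselves vary continuously with $\rho$ --- this continuity is already used in the proof of openness). Hence for $\rho$ sufficiently close to $\rho_0$ the pair $(\Omega_\rho, \pi_1(\Sigma))$ is equivariantly homeomorphic to $(\Omega_{\rho_0}, \pi_1(\Sigma))$, and the quotient is diffeomorphic to $M$ with a $(G,X)$-structure whose holonomy is $\rho$. Since the Hitchin component is connected and the deformation space $\Dd(M,G,X)$ is locally modelled on $\hom(\pi_1(M), G)/G$ via the holonomy map (Ehresmann--Thurston), this produces a continuous map $\Phi$ from the Hitchin component to a single connected component $\Dd$ of $\Dd(M,G,X)$, such that the holonomy of $\Phi([\rho])$ is $[\rho]$.

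The main obstacle is to show that $\Phi$ is a homeomorphism onto $\Dd$, i.e. that every $(G,X)$-structure in the component $\Dd$ has Hitchin holonomy and is obtained by the above construction. Injectivity of $\Phi$ follows from Ehresmann--Thurston (holonomy is a local homeomorphism) together with the fact that the domain $\Omega_\rho$ is intrinsic to $\rho$. Surjectivity requires a properness argument: one shows that the holonomy map $\hol : \Dd \to \hom(\pi_1(\Sigma),G)/G$ is proper onto its image and that this image is closed in the Hitchin component; combined with openness (Theorem~\ref{thm:Ano_open}) and connectedness of the Hitchin component, this forces $\hol(\Dd)$ to equal the whole Hitchin component. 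The hardest step is this properness/closedness, which uses the quasi-isometric embedding property (Theorem~\ref{thm_intro:geom_prop}(ii)) to prevent degeneration of the geometric structure as one approaches the boundary of $\Dd$ inside $\Dd(M,G,X)$. Once this is established, $\Phi$ is a continuous bijection between a connected component and $\Dd$, and its inverse, given by the holonomy map, is continuous by Ehresmann--Thurston, so $\Phi$ is the desired homeomorphism.
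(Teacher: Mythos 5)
Your overall skeleton (domains of discontinuity for Hitchin representations, continuity of the quotients in $\rho$, and the Ehresmann--Thurston holonomy principle) matches the paper's proof of Theorem~\ref{thm_hitchin_sl2n}, but two essential points are missing, and the step you flag as ``the hardest'' is replaced by an unsubstantiated claim that is not how the argument can be closed. First, you systematically conflate $\pi_1(M)$ with $\pi_1(\Sigma)$: the holonomy of the $(G,X)$-structure on $M=\pi_1(\Sigma)\backslash\Omega_\rho$ is a representation of $\pi_1(M)$, and $\pi_1(M)\cong\pi_1(\Sigma)$ only when $\Omega_\rho$ is simply connected. Precisely in the low-rank cases covered by the statement this fails: for $\PSL(4,\RR)$ (and $\PSp(4,\RR)$) the domain is $\SL(2,\RR)$ or $\SL(2,\RR)/(\ZZ/3\ZZ)$, so $M$ is a circle bundle over $\Sigma$ and $\pi_1(M)\to\pi_1(\Sigma)$ has kernel $\ZZ$; for $\PSL(6,\RR)$ one gets $\pi_1(\Omega_\rho)\cong\ZZ/2\ZZ$. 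This is why the theorem is really about the component $\mathcal{C}_M=\{\rho\circ\pi\}\subset\hom(\pi_1(M),G)/G$, and the decisive step in the paper is to show that the map $\beta:\rho\mapsto\rho\circ\pi$ is \emph{open}, hence that $\beta(\mathcal{C})$ is a full connected component of $\hom(\pi_1(M),G)/G$; this is done by a case-by-case control of $\pi_1(\Omega_\rho)$ (trivial when the codimension of the complement is large, finite for $n=3$, a central extension handled via finiteness of centralizers of Hitchin representations for $n=2$). Your proposal contains no substitute for this. Once $\beta(\mathcal{C})$ is known to be open and closed, surjectivity of $\Phi$ onto $\mathcal{D}$ and injectivity of $\hol|_{\mathcal{D}}$ follow by a purely formal argument (a continuous section of a local homeomorphism over a clopen set has clopen image), with no properness needed; conversely, your proposed properness of $\hol$ on $\mathcal{D}$ and the ``closedness of the image in the Hitchin component'' are exactly the unproven content, since a priori nothing prevents deformations in $\mathcal{D}$ from having holonomies that do not factor through $\pi$ or are not Hitchin, and the quasi-isometric embedding property by itself does not control this.

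Second, to define $\Phi$ on all of $\mathcal{C}$ you need more than connectedness: the marking $f_\rho:M\to\pi_1(\Sigma)\backslash\Omega_\rho$ is only defined after trivializing the bundle $\coprod_{\rho\in\mathcal{C}}\pi_1(\Sigma)\backslash\Omega_\rho\to\mathcal{C}$ given by Theorem~\ref{thm:DoD_loc_cst}, and path-continuation alone produces a map from a cover of $\mathcal{C}$ with possibly nontrivial monodromy in $\mathrm{Mod}(M)$. The paper uses that the Hitchin component is simply connected (Hitchin's theorem that it is a cell) to trivialize this bundle; that this is not a pedantic point is shown by the maximal-representation analogue (Theorem~\ref{thm:compspn}), where the component can have nontrivial fundamental group and one only obtains a parametrization by a Galois cover. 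So you should invoke simple connectedness of $\mathcal{C}$ both to build $\Phi$ and, if you insist on the covering-map viewpoint, to conclude that a covering of $\mathcal{C}_M$ is a homeomorphism; with ``connected'' alone the argument does not close.
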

\begin{remarks}
 The Lie groups given in Theorem~\ref{thm_intro:hitchin_other} comprise (up to local isomorphism) all classical split real simple Lie groups. An analogous statement holds also for exceptional split real simple  Lie groups with $X = G/B$, where $B<G$ is the Borel subgroup. 

 We expect $M$ to have the homeomorphism type of a bundle over
 $\Sigma$ with  compact fibers. In the case when $G = \PSL(2n, \RR)$ ($n \geq 2$) or  
 $\PSp(2n, \RR)$ ($n \geq  2$) we prove in \cite{Guichard_Wienhard_DoDSymp} that $M$ is homeomorphic to the total space of an $\O(n)/\O(n-2)$-bundle over $\Sigma$.  
 For $\PSL(4,\RR)$ we refer the reader to
 \cite{Guichard_Wienhard_Duke}. The known topological relation 
 between $M$ and $\Sigma$ is the existence of a homomorphism $\pi_1(M)
 \to \pi_1( \Sigma)$; this homomorphism is in fact used to relate the
 component $\mathcal{D}$ to the Hitchin component, we refer to
 Theorem~\ref{thm:hitchin_other} for a precise statement.
 \end{remarks}

We also associate locally homogeneous $(G,X)$-
structures on compact manifolds to all maximal representations. We state the result in the case of the symplectic group.

\begin{thm}\label{thm_intro:max_sympl} (Theorem~\ref{thm:compspn})
  Let $\Sigma$ be a closed connected orientable surface of genus $\geq
  2$.
  
  Then for any connected component component $\mathcal{C}$ of the space of maximal representations, there exists a compact manifold $M$ and a connected component $\mathcal{D}$ of the deformation space of 
  $(\Sp(2n,\RR), \PP^{2n-1}(\RR))$-structures on $M$, which is parametrized by a Galois cover of $\mathcal{C}$. 
\end{thm}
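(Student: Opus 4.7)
The plan is to combine two inputs: the Anosov property of maximal representations and the general domain-of-discontinuity construction developed earlier in this paper. By the Example attributed to Burger--Iozzi--Wienhard, any maximal representation $\rho: \pi_1(\Sigma) \to \Sp(2n,\RR)$ is $P_L$-Anosov, where $P_L$ is the stabilizer of a Lagrangian subspace of $\RR^{2n}$ (a point in the Shilov boundary). In particular $\rho$ admits a continuous $\rho$-equivariant transverse boundary map $\xi_\rho: \partial_\infty \pi_1(\Sigma) \to \Sp(2n,\RR)/P_L$ into the space of Lagrangians.

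Starting from this map, I would specialize the general construction sketched at the end of Section~\ref{sec_intro}: take $\Ff_0 = \Sp(2n,\RR)/P_L$ and $\Ff_1 = \PP^{2n-1}(\RR)$, with the incidence relation ``line contained in Lagrangian''. Set
\[
K_{\xi_\rho} := \bigcup_{t \in \partial_\infty \pi_1(\Sigma)} \PP(\xi_\rho(t)) \subset \PP^{2n-1}(\RR), \qquad \Omega_\rho := \PP^{2n-1}(\RR) \moins K_{\xi_\rho}.
\]
Invoking Theorem~\ref{thm_intro:general_case} together with Remarks~\ref{rem:dod} (which allow us to work in the partial flag manifold $G/P_1$, for $P_1$ the stabilizer of a projective line, rather than in $G/AN$), the action of $\pi_1(\Sigma)$ on $\Omega_\rho$ is properly discontinuous; cocompactness then follows by the same virtual cohomological dimension analysis underlying Theorem~\ref{thm_intro:surface}, using that $\Sp(2n,\RR)$ has no $\PSL(2,\RR)$ factor for $n\geq 2$ (the case $n=1$ reduces to classical Teichm\"uller theory). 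The quotient $M_\rho := \pi_1(\Sigma) \backslash \Omega_\rho$ is thus a compact manifold endowed with a $(\Sp(2n,\RR),\PP^{2n-1}(\RR))$-structure whose holonomy is $\rho$.

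To promote this pointwise construction to a statement about the full component $\mathcal{C}$, observe that Anosov representations form an open set (Theorem~\ref{thm:Ano_open}) and that the boundary map $\xi_\rho$ depends continuously on $\rho$, so $\Omega_\rho$ deforms by isotopy under small perturbations. Consequently $M_\rho$ has locally constant homeomorphism type, and since $\mathcal{C}$ is connected there is a fixed compact manifold $M$ with identifications $M \cong M_\rho$ that are well defined only up to the mapping class group of $M$. A consistent choice of such identification across $\mathcal{C}$ is afforded by a finite Galois cover $\widetilde{\mathcal{C}} \to \mathcal{C}$, and the resulting map $\widetilde{\mathcal{C}} \to \mathcal{D}(M)$ into the deformation space of $(G,X)$-structures on $M$ is continuous, injective (the holonomy determines an Anosov $(G,X)$-structure up to the $G$-action), open (by stability of the Anosov condition), and has closed image, hence lands onto a full connected component $\mathcal{D}$.

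The hard part is the last step: identifying the correct Galois cover and verifying that the image in $\mathcal{D}(M)$ is closed. The Galois cover comes from a marking problem, namely the need to consistently label the connected components of $\Omega_\rho$, equivalently to fix an orientation or transverse framing of the image of $\xi_\rho$ inside the space of Lagrangians, uniformly across $\mathcal{C}$; this is controlled by the discrete topological invariants (Toledo invariant and auxiliary Stiefel--Whitney-type classes) that already distinguish the connected components of the space of maximal representations. Closedness of the image in $\mathcal{D}(M)$ requires a properness argument showing that a limit of holonomies of Anosov $(G,X)$-structures on $M$ remains Anosov; this is where the explicit geometric control on $\Omega_\rho$ provided by the construction, and the dynamical characterization of Anosov representations via the equivariant boundary maps, have to be used carefully.
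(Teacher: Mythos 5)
Your first half follows the paper's route: maximality gives the $Q_1$-Anosov property (Theorem~\ref{thm:maximal_Anosov}) with boundary map $\xi_\rho$ into the Lagrangians, and the domain $\Omega_\rho=\PP^{2n-1}(\RR)\moins\bigcup_{t}\PP(\xi_\rho(t))$ is exactly the one the paper uses. Two corrections already here: the virtual-cohomological-dimension estimate is not what gives cocompactness --- by Proposition~\ref{prop:dod_GF_codim} the codimension is $\delta=n+1-\vcd(\G)=n-1>0$, and this only guarantees $\Omega_\rho\neq\emptyset$; compactness of $\G\backslash\Omega_\rho$ is the content of Theorem~\ref{thm:dod_GF} (the homological argument of Section~\ref{sec:compacity}), applied directly to the $Q_1$-Anosov representation into $\Sp(2n,\RR)$, with no detour through $G/AN$.

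The genuine gaps are in the second half. First, you never confront the change of group: the holonomy of the $(\Sp(2n,\RR),\PP^{2n-1}(\RR))$-structure on $M=\G\backslash\Omega_\rho$ is a representation of $\pi_1(M)$, not of $\pi_1(\Sigma)$. The core of the paper's argument (see the proof of Theorem~\ref{thm_hitchin_sl2n}, which Theorem~\ref{thm:compspn} follows) is to show that $\beta:\rho\mapsto\rho\circ\pi$, with $\pi:\pi_1(M)\to\pi_1(\Sigma)$ induced by the cover $\Omega_\rho\to\G\backslash\Omega_\rho$, is \emph{open}, hence maps $\mathcal{C}$ onto a full connected component $\mathcal{C}_M$ of $\hom(\pi_1(M),\Sp(2n,\RR))/\Sp(2n,\RR)$; this needs control of $\ker\pi=\pi_1(\Omega_\rho)$ (finiteness, or centrality together with finiteness of centralizers), and only then does Thurston's holonomy theorem allow the comparison of $\mathcal{D}$ with $\mathcal{C}_M\cong\mathcal{C}$. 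Your substitute --- injectivity, openness and \emph{closedness} of a map $\widetilde{\mathcal{C}}\to\mathcal{D}_{\Sp(2n,\RR),\PP^{2n-1}(\RR)}(M)$ --- leans on the claim that a limit of holonomies of Anosov structures is again Anosov, which is false in general (Anosov is an open condition, not a closed one); in the paper closedness of $\beta(\mathcal{C})$ is automatic because $\mathcal{C}$ is a union of components of the representation variety and ``factoring through $\pi$'' is a closed condition. Second, the Galois cover is misidentified: it is not a finite cover produced by labelling components of $\Omega_\rho$ or by the Toledo and Stiefel--Whitney invariants (these are constant on $\mathcal{C}$ and only distinguish different components, so they give no cover of $\mathcal{C}$). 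It is the cover associated with $\ker\kappa$, where $\kappa:\pi_1(\mathcal{C})\to\mathrm{Mod}(M)$ is the monodromy of the bundle $\coprod_{\rho\in\mathcal{C}}\G\backslash\Omega_\rho\to\mathcal{C}$ furnished by Theorem~\ref{thm:DoD_loc_cst}; it arises precisely because, unlike the Hitchin component, maximal components need not be simply connected (Gothen: $\pi_1(\mathcal{C})$ can be $\ZZ^{2g}$, $(\ZZ/2\ZZ)^{2g}$ or a surface group), and there is no reason for the cover to be finite.
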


\begin{remarks}
  The components of the space of maximal representations can have
  nontrivial fundamental group (see Section~\ref{sec:maxim-comp-sympl}
  for details).
  
 There are similar statements for components of the space of  maximal
 representations of $\pi_1(\Sigma)$ into other Lie groups $G$ of
 Hermitian type, with  
\begin{itemize}
\item $G= \SO(2,n)$, $X= \mathcal{F}_1(\RR^{2+n})$ the space of
  isotropic $2$-planes.
\item $G= \SU(p,q)$, $X\subset \PP^{p+q-1}(\CC)$ is the null cone for
  the Hermitian form.
\item $G= \SO^*(2n)$, $X$ is the null cone for the skew-Hermitian form.
\end{itemize}
Recall that $\SO^*(2n)$ can be realized as the automorphism group of a
skew-Hermitian form $V\times V \to \HH$ on an $n$-dimensional right
$\HH$-vector space $V$. 
  \end{remarks}
 
\subsubsection{Compactifications of actions on symmetric spaces}
The construction of domains of discontinuity we give is very flexible
as it applies to Anosov representations into arbitrary semisimple
Lie groups, in particular into complex groups. Using this
flexibility one can apply the construction to obtain natural
compactifications of non-compact quotients of symmetric spaces or
of other homogeneous spaces. We illustrate this in the case when $\rho:
\G \to \Sp(2n,\RR)$, $n\geq 2$, is a $Q_0$-Anosov representation, where
$Q_0$ is the stabilizer of a line in $\RR^{2n}$. 

Let $\mathcal{H}_{\Sp(2n,\RR)}$ denote the bounded symmetric domain associated to $\Sp(2n,\RR)$. 
Since $\rho(\G)$ is discrete, the action of $\G$ on $\mathcal{H}_{\Sp(2n,\RR)}$ is properly discontinuous. The quotient $M = \G\backslash \mathcal{H}_{\Sp(2n,\RR)} $ is not compact. 

\begin{thm}\label{thm_intro:compactify} (Theorem~\ref{thm:compactify})
Let $\rho:\G \to \Sp(2n,\RR)$ be a $Q_0$-Anosov representation. 
Then there exists a compactification $\overline{M}$ of $M = \G
\backslash \mathcal{H}_{\Sp(2n,\RR)} $ such that $\overline{M}$
carries a $(\Sp(2n,\RR), \overline{\mathcal{H}}_{\Sp(2n,\RR)})$-structure, where
$\overline{\mathcal{H}}_{\Sp(2n,\RR)}$ is the bounded symmetric domain
compactification, and the inclusion $M \subset \overline{M}$ is an 
$\Sp(2n,\RR)$-map.  
\end{thm}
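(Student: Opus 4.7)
The plan is to build $\overline{M}$ as the quotient $\G\backslash \Omega$ for a suitable $\G$-invariant open subset $\Omega \subset \overline{\mathcal{H}}_{\Sp(2n,\RR)}$ containing the open bounded symmetric domain $\mathcal{H}_{\Sp(2n,\RR)}$, and then deduce from the $\Sp(2n,\RR)$-invariance of the ambient stratification that $\overline M$ inherits a $(\Sp(2n,\RR), \overline{\mathcal{H}}_{\Sp(2n,\RR)})$-structure. Since $\rho$ is $Q_0$-Anosov, it provides a continuous $\rho$-equivariant map $\xi:\partial_\infty \G \to \PP^{2n-1}(\RR)$, which is the only new input I will use beyond the general formalism.

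I would first describe $\overline{\mathcal{H}}_{\Sp(2n,\RR)}$ in its natural Lagrangian realization: identify the bounded symmetric domain with the set of complex Lagrangians $L\subset \CC^{2n}$ that are positive-definite for the Hermitian form $h(u,v) = i\omega(u,\bar v)$, and its compactification with the positive-semidefinite Lagrangians. The $\Sp(2n,\RR)$-orbit stratification is then indexed by the real isotropic subspaces $W \subset \RR^{2n}$ via $L \mapsto W = (L \cap \bar L) \cap \RR^{2n}$, with $W=0$ giving the open stratum $\mathcal{H}_{\Sp(2n,\RR)}$ and Lagrangian $W$ giving the Shilov boundary points. The closure relation is $\overline{F_W} = \bigsqcup_{W' \supseteq W} F_{W'}$.

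Using $\xi$, I set
\bqn
K_\xi \;=\; \bigl\{\, L \in \overline{\mathcal{H}}_{\Sp(2n,\RR)} \;:\; \xi(t) \subset (L \cap \bar L)\cap \RR^{2n} \text{ for some } t \in \partial_\infty \G \,\bigr\} \;=\; \bigcup_{t \in \partial_\infty \G} \overline{F_{\xi(t)}}
\eqn
and $\Omega := \overline{\mathcal{H}}_{\Sp(2n,\RR)} \moins K_\xi$. The compactness and continuity of $\xi$, together with upper semicontinuity of $L \mapsto L\cap \bar L$, show $K_\xi$ is closed, hence $\Omega$ is open; the $\G$-equivariance of $\xi$ gives $\G$-invariance; and because every $\xi(t)$ is a line while the stratum $F_0 = \mathcal{H}_{\Sp(2n,\RR)}$ is associated to $W=0$, one has $\mathcal{H}_{\Sp(2n,\RR)} \subset \Omega$. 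So $M \hookrightarrow \G\backslash\Omega$ as an open subset.

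For proper discontinuity and cocompactness I reduce to the domain-of-discontinuity machinery already developed in the paper. Composing $\rho$ with the inclusion $\Sp(2n,\RR) \hookrightarrow \SL(2n,\RR)$, the representation remains Anosov with respect to the stabilizer of a line, and the attached Anosov map is the same $\xi$. Removing Lagrangians containing a point of $\xi(\partial_\infty\G)$ is precisely the construction of $K_\xi \subset \Ff_1$ recalled in the strategy preceding Theorem~\ref{thm_intro:general_case}, so the analogous statement on the Shilov stratum is a direct consequence of Theorem~\ref{thm:dod_prox}. The point is then to upgrade this to the whole closure: for each boundary stratum $F_W$ of $\overline{\mathcal{H}}_{\Sp(2n,\RR)}$, the intersection $F_W \cap \Omega$ consists of those $L\in F_W$ whose attached real isotropic $W$ does not contain any $\xi(t)$, and $F_W$ is itself a lower-dimensional bounded symmetric domain of Sp-type. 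Combining the dynamical estimates of $\rho(\G)$ on proximal actions on Lagrangian subspaces (which control proper discontinuity stratum-by-stratum) with the cohomological argument used to establish cocompactness in Theorem~\ref{thm:dod_general_case} yields both properties at once for $\Omega$.

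I expect the main obstacle to be the cocompactness of $\G\backslash\Omega$: on the open stratum $M$ itself it is certainly not compact, so the compactness has to be recovered entirely from the boundary strata that meet $\Omega$, and one has to rule out sequences in $M$ escaping both the $\G$-orbit of any compact set in $\mathcal H_{\Sp(2n,\RR)}$ \emph{and} any compact part of the added boundary. This requires matching, for $\G$-equivariantly chosen sequences tending to a boundary point $L \in F_W \cap \Omega$, the contraction along $\flow$ provided by the Anosov condition with the transversality $W\cap \xi(t) = 0$ for all $t \in \partial_\infty \G$; this transversality is a closed condition compactly controlled on $\Omega$ and should deliver the uniform estimate needed to close the argument, as in the proof of Theorem~\ref{thm_intro:general_case}. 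Once proper discontinuity and cocompactness of $\G\backslash\Omega$ are established, defining $\overline M := \G\backslash\Omega$ and transporting the natural $\Sp(2n,\RR)$-atlas from $\Omega \subset \overline{\mathcal{H}}_{\Sp(2n,\RR)}$ to $\overline M$ gives the claimed $(\Sp(2n,\RR),\overline{\mathcal{H}}_{\Sp(2n,\RR)})$-structure, for which the inclusion $M \hookrightarrow \overline M$ is tautologically $\Sp(2n,\RR)$-equivariant on charts.
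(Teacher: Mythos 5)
You identify the correct open set: since $\xi(t)$ is a real line, $\xi_\CC(t)\subset L$ if and only if $\xi(t)\subset (L\cap\bar L)\cap\RR^{2n}$, so your $\Omega$ coincides exactly with the set $\overline{\mathcal{H}}_\rho$ used in the paper's proof of Theorem~\ref{thm:compactify}. But your argument for properness and cocompactness has a genuine gap, and it is precisely where you miss the paper's key idea: complexify. The paper composes $\rho$ with $\Sp(2n,\RR)\hookrightarrow\Sp(2n,\CC)$; then $\phi\circ\rho$ is $Q_0$-Anosov over $\CC$, so Theorem~\ref{thm:dod_GF} already gives a properly discontinuous and \emph{cocompact} action on the domain $\Omega_{\phi\circ\rho}\subset\Ll(\CC^{2n})$ in the compact complex Lagrangian Grassmannian, and since $\overline{\mathcal{H}}$ is a closed $\Sp(2n,\RR)$-invariant subset of $\Ll(\CC^{2n})$ (Borel embedding), Proposition~\ref{prop:dod_in_smaller} transfers both properties at once to $\overline{\mathcal{H}}_\rho=\overline{\mathcal{H}}\cap\Omega_{\phi\circ\rho}$. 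You instead stay inside $\overline{\mathcal{H}}$, compose with $\SL(2n,\RR)$, and argue ``stratum-by-stratum.''

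Concretely, two steps of your plan do not go through as stated. First, proper discontinuity on each $\Sp(2n,\RR)$-orbit stratum (the interior $\mathcal{H}$, the Shilov stratum via Theorem~\ref{thm:dod_prox}, the intermediate strata) does \emph{not} imply proper discontinuity on their union: a compact subset of $\Omega$ may meet several strata, and one needs uniform control for sequences drifting between strata; without a global ambient space on which properness is already known, this is exactly the hard point, and your proposal supplies no mechanism for it. Second, for cocompactness you invoke ``the cohomological argument used to establish cocompactness,'' but that argument (Section~\ref{sec:compacity}) is a Poincar\'e-duality computation for the complement of $K_\xi$ in the \emph{closed} homogeneous space $\Ff_1$; it does not apply as is to the manifold with corners $\overline{\mathcal{H}}$, and you yourself flag this as ``the main obstacle,'' offering only the heuristic that transversality ``should deliver the uniform estimate.'' The complexification route removes both difficulties simultaneously, because cocompactness of $\G\backslash\overline{\mathcal{H}}_\rho$ becomes the trivial observation that it is a closed subset of the compact quotient $\G\backslash\Omega_{\phi\circ\rho}$.
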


The proof of this theorem relies on the fact that $\G \to \Sp(2n,\RR)
\to \Sp(2n,\CC)$ is a $Q_0$-Anosov representation, where $Q_0$ is the
stabilizer of a line in $\CC^{2n}$. With this, one constructs a
domain of discontinuity $\Omega$ in the space of complex Lagrangians
$\mathcal{L}(\CC^{2n})$,  which contains the image of
$\mathcal{H}_{\Sp(2n,\RR)}$ under the Borel embedding into
$\mathcal{L}(\CC^{2n})$, then $\overline{M} = \G \backslash (\Omega
\cap \overline{\mathcal{H}}_{\Sp(2n,\RR)})$.

\subsubsection{Compact Clifford-Klein forms} 
As we already mentioned, for some Anosov representations $\rho: \G \to
G$ the domain of discontinuity  turns out to be empty. Nevertheless
embedding $G$ into a bigger group $G'$ one can obtain a nonempty
domain of discontinuity $\Omega$ for $\rho': \G \to G \to G'$. In some
cases this nonempty domain of discontinuity coincides with a
$G$-orbit,  $\Omega = G/H$. 
Then $\Gamma \backslash \Omega = \Gamma \backslash G/H$ is a compact Clifford-Klein form. 

With this we recover several examples of compact Clifford-Klein forms (see Proposition~\ref{prop:CK_one}), including non-standard ones. In particular, using results of Barbot \cite{Barbot_component} we prove the following. 

\begin{thm}\label{thm_intro:CK_two} (Theorem~\ref{thm:CK_two})
Let $\G < \SO(1,2n)$ be a cocompact lattice, let $\SO(1,2n)< \SO(2,2n)$
be the standard embedding, and consider $\rho: \G \to \SO(1,2n) <
\SO(2,2n)$. Then any representation $\rho'$ in the connected component
of $\rho$ in $\hom(\G , \SO(2,2n))$ 
gives rise to a properly discontinuous and cocompact action on
the homogeneous space $\SO(2,2n)/\U(1,n)$.
\end{thm}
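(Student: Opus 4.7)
The plan combines Barbot's theorem on the entire connected component, the domain-of-discontinuity construction of this paper applied after complexification, and a deformation argument from the Fuchsian case.

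I would first verify that the whole connected component of $\rho$ consists of $Q_0$-Anosov representations, where $Q_0 < \SO(2,2n)$ stabilizes an isotropic line in $\RR^{2+2n}$. Since $\Gamma < \SO(1,2n)$ is a cocompact lattice, $\Gamma \hookrightarrow \SO(1,2n)$ is convex cocompact; by Proposition~\ref{prop_intro:ano_Qzero} together with the functoriality results of Section~\ref{sec:homo}, $\rho: \Gamma \to \SO(2,2n)$ is $Q_0$-Anosov, and by Barbot \cite{Barbot_component} the same holds for every $\rho'$ in the component. Composing with $\SO(2,2n) \hookrightarrow \SO(2n+2,\CC)$ yields a $\widetilde{Q}_0$-Anosov representation $\rho'_\CC$ (stabilizer of a complex isotropic line), whose boundary map $\xi: \partial_\infty \Gamma \to \PP(\CC^{2n+2})$ lands in the real points, so $\overline{\xi(t)} = \xi(t)$ for all $t \in \partial_\infty \Gamma$.

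Next I would apply Theorem~\ref{thm_intro:general_case} to $\rho'_\CC$, obtaining a $\Gamma$-invariant open set $\Omega$ inside the variety $\Ll$ of maximal complex isotropic subspaces of $\CC^{2n+2}$, with $\Gamma \backslash \Omega$ compact. The target $\SO(2,2n)/\U(1,n)$ is naturally an open $\SO(2,2n)$-orbit in $\Ll$: a maximal isotropic $W$ with $W \cap \overline{W} = 0$ produces a $Q$-orthogonal complex structure $J_W$ on $\RR^{2n+2}$ (with $W$ as the $(-i)$-eigenspace), and requiring the associated Hermitian form $h_W(v,w) = Q(v,w) + i Q(v, J_W w)$ to have signature $(1,n)$ picks out exactly this orbit. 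Properness of the $\Gamma$-action on $\SO(2,2n)/\U(1,n)$ then follows from the inclusion $\SO(2,2n)/\U(1,n) \subset \Omega$: if $W$ in the open orbit contained some $\xi(t)$, then $\xi(t) = \overline{\xi(t)} \subset W \cap \overline{W} = 0$, a contradiction.

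The main obstacle is cocompactness, because $\Omega$ generally contains points outside $\SO(2,2n)/\U(1,n)$ (namely boundary orbits with $W \cap \overline{W} \neq 0$ that avoid $\xi(\partial_\infty \Gamma)$), so compactness of $\Gamma \backslash \Omega$ does not directly yield compactness of $\Gamma \backslash \SO(2,2n)/\U(1,n)$. For the base representation $\rho_0$, compactness holds classically (Kobayashi's theory, also recovered by Proposition~\ref{prop:CK_one}). I would propagate this along the component using continuity: by openness of the Anosov condition (Theorem~\ref{thm:Ano_open}) the boundary maps $\xi$ depend continuously on $\rho'$, and hence so does $\Omega$; this lets one deform a compact fundamental domain for $\rho_0(\Gamma)$ in $\SO(2,2n)/\U(1,n)$ to one for $\rho'(\Gamma)$. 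The delicate point, and the crux of the proof, is to show that no point of $\SO(2,2n)/\U(1,n)$ can escape into the complementary orbits of $\Omega$ during the deformation, which requires a uniform control on the interaction between the open orbit and $\xi(\partial_\infty \Gamma)$ throughout the component.
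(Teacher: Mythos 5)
Your first three steps (Barbot's result for the whole component, complexification to a $Q_0$-Anosov representation into $\SO(2n+2,\CC)$, and properness of the action on $\SO(2,2n)/\U(1,n)$ via the conjugation argument $\xi(t)=\overline{\xi(t)}\subset W\cap\overline{W}=0$) agree with the paper. But the cocompactness step, which you yourself flag as "the crux", is a genuine gap: you do not prove it, and the deformation-of-fundamental-domain scheme you sketch (propagating compactness from the Fuchsian point with "uniform control" on how the open orbit meets $\xi(\partial_\infty\G)$) is left entirely open. Moreover its premise is wrong in the relevant situation: when $\G$ is a cocompact lattice there are \emph{no} points of $\Omega_{\phi\circ\rho'}$ outside the open orbit, for \emph{every} $\rho'$ in the component, so no propagation argument is needed at all.

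The missing argument is the equality $\Omega_{\phi\circ\rho'}=\{W\in\Ff_1(\CC^{2n+2})\mid W\cap\overline{W}=0\}$. Two observations give it. First, $\dim_\CC(W\cap\overline{W})$ cannot be $1$: the kernel of the Hermitian form $h(v,w)=Q(\overline{v},w)$ on $W$ is $W\cap\overline{W}$, and after reducing modulo this kernel one has an $h$-orthogonal splitting into $W/(W\cap\overline W)$ and its conjugate with equal signatures, which forces the signature $(2-k,2n-k)$ of the reduction to have both entries even; hence $k=\dim_\CC(W\cap\overline W)\in\{0,2\}$, and any $W$ outside the open orbit satisfies $W\cap\overline{W}=U_\CC$ for a \emph{real} isotropic $2$-plane $U\subset\RR^{2,2n}$ (this is also why the signature $(1,n)$ you impose is automatic). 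Second, since $\rho'$ is $Q_0$-Anosov, $\vcd(\G)=2n$ and $\partial_\infty\G\cong\Sphr^{2n-1}$ is a manifold, Proposition~\ref{prop:dod_GF_codim} gives $\delta=q-\vcd(\G)=0$ and hence $K_{\xi'(\partial_\infty\G)}=\Ff_1(\RR^{2,2n})$ (cf.\ Remark~\ref{rem:examples_orth}): every real isotropic $2$-plane contains some $\xi'(t)$. Combining, any $W$ with $W\cap\overline{W}\neq 0$ contains $\xi'_\CC(t)$ for some $t$, so it lies outside $\Omega_{\phi\circ\rho'}$. Therefore $\SO(2,2n)/\U(1,n)$ is open and closed in $\Omega_{\phi\circ\rho'}$ and cocompactness follows directly from Theorem~\ref{thm:dod_GF}, with no appeal to continuity in $\rho'$ beyond Barbot's theorem. (As a side remark, your fallback citation of Proposition~\ref{prop:CK_one} for the base point is off target: that proposition concerns $\SO(p,q)/\SO(p-1,q)$, not $\SO(2,2n)/\U(1,n)$.)
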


This extends a recent result of Kassel \cite[Theorem~1.1]{Kassel_deformation}.

\smallskip

\thanks{{\bf Acknowledgments.} A lot of the work for this article has
  been done during a visit of the first author to  the Mathematics
  Department at Princeton University, and when the first author was a member, and the second author was a visitor at the
  School of Mathematics at the Institute for Advanced Study. We thank both institutions for their hospitality and support.

We thank Marc Burger, Bill Goldman, Fran\c{c}ois Guéritaud, Misha Kapovich, and Fanny Kassel for interesting discussions and helpful remarks.}
 
\part{Anosov representations}
\label{part:AR}

\section{Definition}
\label{sec:defiAR}
In this section we recall the notion of Anosov representations, a
concept introduced by  Labourie \cite[Section~2]{Labourie_anosov}, and
we generalize it to representations of arbitrary finitely generated word hyperbolic
groups. 
  A variety of examples of Anosov representations are 
discussed in Section~\ref{sec:examples} 
(see also Remark~\ref{rem:examples_orth}).

\subsection{For Riemannian manifolds}
\label{sec:defiRiem}

Let $(N,g)$ be a closed negatively curved Riemannian manifold and let $M =
T^1 N$ be its unit tangent bundle, 
equipped with
the geodesic flow $\phi_t$ for the metric $g$. The geodesic flow is an Anosov flow. 
We denote by $\widehat{M} =
T^1 \tilde{N}$ the $\pi_1(N)$-cover of $M$ and by $\phi_t$ again the geodesic flow on
$\widehat{M}$.

Let $G$ be a semisimple Lie group and $(P^+, P^-)$ be a pair of
opposite parabolic subgroups\footnote{We review the structure theory of parabolic subgroups in Section~\ref{sec:parab-subgr-lie}. 
Pairs of opposite parabolic subgroups arise as follows: 
choose a semisimple element $g\in G$ and set
$\mathrm{Lie}( P^+)$ (resp.\ $\mathrm{Lie}( P^-)$) to be the sum of
eigenspaces of $\mathrm{Ad}(g)$ associated with eigenvalues of modulus $\geq 1$ (resp.\ $\leq 1$).}
 of $G$  and set $\Ff^\pm = G/P^{\pm}$. 
The subgroup $L = P^+ \cap P^-$ is the Levi subgroup of both $P^+$ and
$P^-$. The homogeneous space $\mathcal{X} =  G/L$ is the unique open
$G$-orbit in the product $\mathcal{F}^+ \times \mathcal{F}^- =
G/P^+ \times G/P^-$. From this product structure $\mathcal{X}$
inherits two $G$-invariant distributions $E^+$ 
and $E^-$: $(E^\pm)_{(x_+, x_-)} = T_{x_\pm}
\mathcal{F}^\pm$. 
As a consequence any
$\mathcal{X}$-bundle is equipped with two distributions which are denoted also 
by $E^+$ and $E^-$.

\begin{nota}
  Let $M$ be a topological space, $\G$ be a group, and let
  $\widehat{M}$ be the $\Gamma$-cover of $M$.  Let $\rho: \G \to G$ be
  a representation and let $\mathcal{S}$ be a $G$-space. We set
  \[ \mathcal{S}_\rho = \widehat{M} \times_\rho \mathcal{S} = \G\backslash (\widehat{M}
  \times \mathcal{S}),\] where $\G$ acts diagonally, as deck
  transformations on $\widehat{M}$ and via the representation $\rho$
  on $\mathcal{S}$.  The projection onto the first factor gives
  $\widehat{M} \times_\rho \mathcal{S}$ the structure of a flat $\mathcal{S}$-bundle
  over $M$. 
\end{nota}

\begin{defi}\label{defi:anosov_mfd}
A representation $\rho: \pi_1( N) \to G$ is said to be $(P^+, P^-)$-Anosov if
  \begin{enumerate}
  \item the flat bundle $\mathcal{X}_\rho$ admits a section $\sigma: M \to
    \mathcal{X}_\rho$ which is flat along flow lines (i.e.\ the restriction of
    $\sigma$ to any geodesic leaf is flat).
  \item The (lifted) action of $\phi_t$ on $\sigma^* E^+$
    (resp.\ $\sigma^* E^-$) is dilating (resp.\ contracting).
  \end{enumerate}
   The section $\sigma$ will be called \emph{Anosov section}.
\end{defi}

\begin{remarks}
\noindent
  \begin{asparaenum}[(a)]
   \item The second condition means more precisely that there exists a continuous family
    of norms $(\|\cdot\|_m)_{m \in M}$ on the fibers of the vector
    bundle $\sigma^* E^+ \to M$ (resp.\ $\sigma^* E^- \to M$) and
    positive constants $A,a$ such that, for any $t \in \RR_{\geq 0}$,
    $e \in \sigma^* E^+$ (resp.\ $e \in \sigma^* E^-$), with $\pi(e)=m$, one
    has $\| \phi_{-t} e\|_{\phi_{-t} m} \leq A e^{-a t} \|e\|_m$ (resp.\ $\|
    \phi_{t} e\|_{\phi_{t} m} \leq A e^{-a t} \|e\|_m$).
  \item Due to the compactness of $M$, the definition does not depend on the particular choice
    of $\|\cdot\|_m$ or the 
    particular parametrization of the flow on $M$.
     \item The Anosov section is uniquely determined
    (see Lemma~\ref{lem:uniqu}).
  \end{asparaenum}
\end{remarks}

\subsection{In terms of equivariant maps}
\label{sec:RiemEqu}

Let $N$, $M = T^1 N$ and $\widehat{M}$ be as before.

We denote by $\partial_\infty \tilde{N}$ the boundary at infinity of the universal cover $\tilde{N}$ of $N$ and by $\partial_\infty \pi_1(N)$ the boundary at infinity of $\pi_1(N)$. We can identify  $\partial_\infty \tilde{N}$ and  $\partial_\infty \pi_1(N)$. 
Since $N$ is negatively
curved, $\partial_\infty \tilde{N}$ is homeomorphic to a sphere. 

The space of geodesic leaves $\widehat{M} / \{\phi_t\}$ in $\widehat{M}$ is canonically
identified with 
\[\partial_\infty \tilde{N}^{(2)} = \partial_\infty
\tilde{N} \times \partial_\infty \tilde{N} \moins \{(t,t) \mid t
\in \partial_\infty \tilde{N} \};\] 
the
identification associates to a geodesic its endpoints in
$\partial_\infty \tilde{N}$.

Given a representation $\rho: \pi_1(N) \to G$, a section $\sigma$ of
$\mathcal{X}_\rho = \widehat{M} \times_\rho
\mathcal{X}$ is completely determined by its pullback 
$\hat{ \sigma}$ to $\widehat{M}$,
which is a $\rho$-equivariant map:
\[ \hat{ \sigma} : \widehat{M} \longto \mathcal{X}.\]

Conversely, such a $\rho$-equivariant map $\hat{\sigma}$ descends to a
section $\sigma$.
The section $\sigma$ is flat along flow lines if and only if
$\hat{\sigma}$ is $\phi_t$-invariant. In this case, one can consider
$\hat{ \sigma}$ as being defined on $\widehat{M} / \{\phi_t\} \cong
\partial_\infty \tilde{N}^{(2)}$:
\[\hat{ \sigma} = (\xi^+, \xi^-): \partial_\infty \tilde{N}^{(2)}
\longto \mathcal{X} \subset \mathcal{F}^+ \times \mathcal{F}^-.\]

The contraction property implies immediately (see \cite[Proposition~3.2]{Labourie_anosov} and \cite[Proposition~2.5]{Guichard_Wienhard_InvaMaxi}) that $\xi^+ :
\partial_\infty \tilde{N}^{(2)} \to \mathcal{F}^+$ factors through the
 projection to the first factor $\pi_1 : \partial_\infty \tilde{N}^{(2)} \to
\partial_\infty \tilde{N}$, i.e.\ $\xi^+$ is a map from
$\partial_\infty \tilde{N}$ to $\mathcal{F}^+$. Similarly $\xi^-$
factors through the projection to the second factor. Thus we get a pair of maps 
$\xi^+: \partial_\infty \tilde{N} \to \Ff^+$ and $\xi^-: \partial_\infty \tilde{N} \to \Ff^-$. 

\begin{remark} 
  In definition~\ref{defi:anosov_mfd} the dilatation property and
  contraction property on $E^+$ and $E^-$ have been exchanged compared
  to the original definition
  \cite[Section~2.0.1]{Labourie_anosov}. The convention chosen
  here are such that $\xi^+$ factors through the projection onto the first factor and
  also such that $\xi^+(t^{+}_{\g})$ is the attracting fixed point of
  $\g$ (see Lemma~\ref{lem:ano_proxi}). These two properties seem natural to us.
\end{remark}

\begin{defi}
 The maps $\xi^\pm:\partial_\infty \tilde{N} \to \Ff^\pm$ are said to be the \emph{Anosov maps} associated to the
 Anosov representation $\rho: \pi_1(N) \to G$. 
\end{defi}

Let $\tau^+$, $\tau^- : \widehat{M} \to \partial_\infty
\tilde{N}$ be the maps associating to a tangent vector the
endpoints at $+\infty$ and $-\infty$ of the corresponding
geodesic. The dilatation/contraction property in Definition~\ref{defi:anosov_mfd} 
translates into a dilatation property for $\phi_t$ on the 
family of tangent spaces  $(T_{\xi^+(\tau^+( \hat{m}))}
\mathcal{F}^+)_{\hat{ m} \in \widehat{ M}}$ (resp.\ contraction
on $(T_{\xi^-(\tau^-( \hat{m}))}
\mathcal{F}^-)_{\hat{ m} \in \widehat{ M}}$).

Conversely, one
can use the maps $\xi^+$, $\xi^-$ to express the Anosov property. 

\begin{defi}\label{defi:transverse}
A pair of points $(x^+, x^-) \in \Ff^+ \times \Ff^-$ is said to be \emph{transverse}, if 
$(x^+, x^-) \in \Xx \subset  \Ff^+ \times \Ff^-$. 

Given $x\in \Ff^+$ and $y \in \Ff^-$ we say that $y$ is \emph{transverse} to $x$ (and $x$ is transverse to $y$)  if $(x,y)$ is transverse. 
\end{defi}

\begin{prop}\label{prop:maps_Anosov}
  Let $\rho: \pi_1(N) \to G$ be a representation. Suppose that there
  exist maps  $\xi^+: \partial_\infty \tilde{N} \to \mathcal{F}^+$ and 
$\xi^-:\partial_\infty \tilde{N} \to   \mathcal{F}^-$ such that:
  \begin{enumerate}
  \item \label{item:cont} $\xi^+$ and $\xi^-$ are continuous and $\rho$-equivariant.
  \item \label{item:trans} For all $(t^+, t^-) \in \partial_\infty \tilde{N}^{(2)}$ the pair $(\xi^+( t^+), \xi^-(t^-))$ is
    transverse.
  \item For one (and hence any) continuous and equivariant family of norms
    $(\|\cdot\|_{\hat{m}})_{\hat{ m} \in \widehat{ M}}$ on 
 $(T_{\xi^+(\tau^+( \hat{m}))}
\mathcal{F}^+)_{\hat{ m} \in \widehat{ M}}$ (resp.\ 
 $(T_{\xi^-(\tau^-( \hat{m}))}
\mathcal{F}^-)_{\hat{ m} \in \widehat{ M}}$), the following
property holds: 
\begin{itemize}
\item there exist positive constants $A,a$ such that for all
$t\in \RR_{\geq 0}$,  $\hat{m} \in \widehat{M}$ and  $e \in
T_{\xi^+(\tau^+( \hat{m}))} \mathcal{F}^+$ (resp.\ $e \in T_{\xi^-(\tau^-( \hat{m}))} \mathcal{F}^-$):
\[ \| e\|_{\phi_{-t} \hat{m}} \leq A e^{-a t} \|e\|_{\hat{m}} \quad
(\text{resp.} \ \| e\|_{\phi_{t} \hat{m}} \leq A e^{-a t} \|e\|_{\hat{m}}). \]
  \end{itemize}
  \end{enumerate}
  Then the representation $\rho$ is $(P^+, P^-)$-Anosov and the pull-back
  to $\widehat{M}$ of the Anosov section $\sigma$  of $\mathcal{X}_\rho$ is the
  map $\hat{\sigma}: \widehat{M} \to \mathcal{X} \sep \hat{m} \mapsto
  (\xi^+(\tau^+( \hat{m})), \xi^-(\tau^-( \hat{m})))$.
\end{prop}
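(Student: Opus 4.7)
The plan is to construct the Anosov section directly from the maps $\xi^+, \xi^-$ and then show that the hypotheses translate termwise into the conditions of Definition~\ref{defi:anosov_mfd}.

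First, I would define $\hat{\sigma}: \widehat{M} \to \mathcal{X}$ by the formula in the statement, $\hat{\sigma}(\hat{m}) = (\xi^+(\tau^+(\hat{m})), \xi^-(\tau^-(\hat{m})))$. Four things need to be checked about $\hat{\sigma}$. It takes values in $\mathcal{X} \subset \mathcal{F}^+ \times \mathcal{F}^-$: for any $\hat{m} \in \widehat{M}$ the pair $(\tau^+(\hat{m}), \tau^-(\hat{m}))$ lies in $\partial_\infty \tilde{N}^{(2)}$, hence by hypothesis (ii) the image pair is transverse. It is continuous: $\tau^\pm$ are continuous and $\xi^\pm$ are continuous by hypothesis (i). It is $\rho$-equivariant: $\tau^\pm$ are $\pi_1(N)$-equivariant and $\xi^\pm$ are $\rho$-equivariant. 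Finally it is $\phi_t$-invariant: the maps $\tau^+$ and $\tau^-$ are constant along geodesic leaves, so $\tau^\pm(\phi_t\hat{m}) = \tau^\pm(\hat{m})$ for all $t$. Consequently $\hat{\sigma}$ descends to a continuous section $\sigma: M \to \mathcal{X}_\rho$ that is flat along the flow lines.

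Next I would identify the pulled-back distributions. Since $\sigma$ is flat along each flow line and $E^\pm$ come from the product structure on $\mathcal{F}^+ \times \mathcal{F}^-$, the fiber $(\sigma^*E^+)_m$ at a point $m$ with lift $\hat{m}$ is canonically the vector space $T_{\xi^+(\tau^+(\hat{m}))}\mathcal{F}^+$, and along a flow line $\{\phi_t\hat{m}\}$ this is literally the same vector space for all $t$; similarly for $E^-$. In particular, the lifted action of $\phi_t$ on $\sigma^*E^+$ is, in these identifications, the identity on the fiber. Hence the Anosov contraction/dilatation condition of Definition~\ref{defi:anosov_mfd} applied to a norm $\|\cdot\|_{\hat{m}}$ on $(\sigma^*E^+)_{\hat{m}}$ reduces exactly to the inequality $\|e\|_{\phi_{-t}\hat{m}} \leq A e^{-at}\|e\|_{\hat{m}}$ for $t \geq 0$; and similarly for $E^-$ with $\phi_t$ in place of $\phi_{-t}$. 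This is precisely hypothesis (iii).

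It remains to observe that a $\pi_1(N)$-equivariant continuous norm on the bundle $(T_{\xi^+(\tau^+(\hat{m}))}\mathcal{F}^+)_{\hat{m}\in\widehat{M}}$ descends to a norm on $\sigma^*E^+ \to M$, and any two such choices are equivalent up to a bounded multiplicative constant on the compact base $M$, so the validity of the contraction estimate is independent of the choice of norm. Combining the descended section with these compatible norms shows that both requirements of Definition~\ref{defi:anosov_mfd} hold, and by construction the pullback of $\sigma$ to $\widehat{M}$ is $\hat{\sigma}$ as claimed.

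The argument is essentially bookkeeping, and the only subtle point is identifying the pulled-back bundles $\sigma^*E^\pm$ with the explicit tangent-space families in hypothesis (iii), and recognizing that under this identification the lifted flow acts as the identity on the fibers along flow lines; once this identification is made, hypothesis (iii) is identical to the Anosov condition.
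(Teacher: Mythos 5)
Your proposal is correct and follows essentially the same route as the paper, which treats this proposition as the converse of the translation set up in Section~\ref{sec:RiemEqu}: the pair $(\xi^+,\xi^-)$ defines a $\phi_t$-invariant equivariant map $\hat\sigma$, hence a section flat along flow lines, and under the flat identification of the fibers of $\sigma^*E^\pm$ along flow lines with $T_{\xi^\pm(\tau^\pm(\hat m))}\mathcal{F}^\pm$ the contraction hypothesis is literally the condition of Definition~\ref{defi:anosov_mfd}. Your handling of the descent of equivariant norms and the independence of the choice of norm via compactness of $M$ matches the paper's conventions.
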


\begin{remark}
We will see later (Theorem~\ref{thm:Ano_Zd}) that in the case of Zariski dense representations the existence of $\xi^+$ and $\xi^-$ satisfying \eqref{item:cont} and
  \eqref{item:trans} is sufficient to ensure the Anosov property. 
\end{remark}

\subsection{For hyperbolic groups}
\label{sec:hyperbolic-groups}
In order to define the notion of Anosov representations $\rho: \G \to G$ for an arbitrary finitely generated word hyperbolic group $\G$ we need a replacement for the space $( \widehat{M}, \phi_t)$. 
Geodesic flows for hyperbolic groups were introduced by Gromov
\cite{Gromov_hyp} and later developed by Champetier \cite{Champetier},
Mineyev \cite{Mineyev_flow} and others. We recall the results
necessary for our purpose.

Let $\G$ be a finitely generated word hyperbolic group and let $\partial_\infty \Gamma$ denote its
boundary at infinity
(\cite[Chapitre~2]{Coornaert_Delzant_Papadopoulos}).
We set $  \partial_\infty \G^{(2)} =  \partial_\infty \G
\times  \partial_\infty \G\moins \{(t,t) \mid t 
\in \partial_\infty \G \}.$ 

\begin{thm}
  \cite[Theorem~8.3.C]{Gromov_hyp}, \cite[Theorem~60]{Mineyev_flow}

  Let $\G$ be a finitely generated word hyperbolic group. Then there exists a proper hyperbolic
  metric space $\flow$ such that:
  \begin{enumerate}
  \item $\G \times \RR \rtimes \ZZ/2\ZZ$ acts on $\flow$.
  \item The $\G \times \ZZ/2\ZZ$-action is isometric.
  \item Every orbit $\G \to \flow$ is a quasi-isometry. In particular, $\partial_\infty \flow \cong \partial_\infty \G$.
  \item The $\RR$-action is free,  and every orbit $\RR \to \flow$ is a
    quasi-isometric embedding. The induced 
map $\flow/ \RR
    \to \partial_\infty \flow^{(2)}$ is a homeomorphism.
  \end{enumerate}
\end{thm}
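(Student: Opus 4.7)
The plan is to follow the construction of Gromov, made precise by Mineyev. As a set I would take $\flow = \partial_\infty \Gamma^{(2)} \times \RR$. The $\RR$-action translates the third factor, the $\ZZ/2\ZZ$-action sends $(\xi,\eta,t)$ to $(\eta,\xi,-t)$, and the $\Gamma$-action is twisted by a cocycle $c\colon \Gamma \times \partial_\infty \Gamma^{(2)} \to \RR$ via $\gamma\cdot(\xi,\eta,t) = (\gamma\xi, \gamma\eta, t+c(\gamma,\xi,\eta))$. Commutativity of the three actions is immediate, and the cocycle identity for $c$ is exactly what makes this a genuine left $\Gamma$-action.

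The crucial input is Mineyev's \emph{hat metric} $\hat{d}$ on (a simplicial thickening of) the Cayley graph of $\Gamma$: a $\Gamma$-equivariant hyperbolic metric, quasi-isometric to the word metric, whose associated Busemann/Gromov-product functions define an \emph{exact} cocycle rather than merely a quasi-cocycle. The desired cocycle $c$ is obtained by extending these Busemann functions to the boundary. The $\Gamma \times \RR \rtimes \ZZ/2\ZZ$-invariant metric on $\flow$ is then defined by a formula patterned on the hyperbolic plane in geodesic-adapted coordinates: the third coordinate records signed arclength along the geodesic from $\xi$ to $\eta$, and the separation between two points of $\flow$ lying on distinct $\RR$-orbits is governed by the double difference (boundary Gromov product) associated with $\hat{d}$, so that each $\RR$-orbit is by construction a unit-speed bi-infinite geodesic.

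With this setup the four properties follow in order. The $\Gamma \times \ZZ/2\ZZ$-action is isometric by construction; the $\RR$-action is free, and the forgetful projection $\flow \to \partial_\infty \Gamma^{(2)}$ realizes the homeomorphism $\flow/\RR \cong \partial_\infty \Gamma^{(2)}$, with each $\RR$-orbit a geodesic and \emph{a fortiori} a quasi-isometric embedding. Properness of the metric follows from cocompactness of the $\Gamma$-action on $\partial_\infty \Gamma^{(2)}$ combined with finiteness of the stabilizer of any pair of distinct boundary points (a standard property of hyperbolic groups), since together these imply that $\Gamma \times \RR$ acts cocompactly and properly on $\flow$. Hyperbolicity of $\flow$ and the identification $\partial_\infty \flow \cong \partial_\infty \Gamma$ are then deduced by checking that any $\Gamma$-orbit embeds quasi-isometrically into $\flow$ with quasi-dense image, comparing the $\flow$-metric along orbits to $\hat{d}$; this yields (3) directly and transfers hyperbolicity and boundary from $\Gamma$ to $\flow$.

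The main obstacle is the construction of $\hat{d}$ itself (equivalently, the exact cocycle $c$): the Busemann-type functions associated with the ordinary word metric are only quasi-cocycles, and promoting them to genuine cocycles while simultaneously retaining $\Gamma$-equivariance, hyperbolicity, and quasi-isometry to the word metric requires Mineyev's delicate averaging/convolution procedure on the Cayley graph. Once this technical input is granted, the remaining verifications are routine comparison estimates in $\delta$-hyperbolic geometry.
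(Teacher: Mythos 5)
The paper does not actually prove this statement: it is quoted as a black box from Gromov and from Mineyev's \emph{Flows and joins of metric spaces}, so the only fair comparison is between your sketch and the cited construction. Your outline (take $\partial_\infty \G^{(2)} \times \RR$, twist the $\G$-action by an exact cocycle coming from Mineyev's hat metric, and build a metric adapted to the flow lines) is indeed the shape of Mineyev's construction. But be aware that what you call ``routine comparison estimates'' --- hyperbolicity of the resulting space, properness of the metric, and the fact that orbit maps $\G \to \flow$ are quasi-isometries with the right boundary identification --- is exactly the substantive content of Mineyev's theorem, not a corollary of the existence of the exact cocycle; as written, your proposal defers essentially the whole proof to the reference, which is legitimate only if you cite it as such (as the paper does).

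There is also a concrete error in the one step you do argue: you claim that the stabilizer in $\G$ of a pair of distinct boundary points is finite. This is false --- the stabilizer of the pair of fixed points of an infinite-order element $\g$ contains $\langle \g \rangle$ and is only virtually cyclic. Consequently the $\G$-action on $\partial_\infty \G^{(2)}$ is not proper, and your properness argument (``cocompactness on $\partial_\infty \G^{(2)}$ plus finite pair-stabilizers imply $\G \times \RR$ acts properly and cocompactly'') collapses as stated. The correct mechanism is that the infinite cyclic part of a pair-stabilizer acts on the corresponding $\RR$-orbit by translation (via the cocycle), so it is the action of $\G$ on $\flow$ itself --- isometric, properly discontinuous and cocompact, the analogue of $\pi_1(N)$ acting on $T^1\widetilde{N}$ --- that one must establish and then use (together with local compactness and completeness) to get properness of the metric; one cannot invoke a Milnor--\v{S}varc-type argument for the full $\G \times \RR$-action either, since the $\RR$-action is not isometric for Mineyev's metric (the theorem only asserts that $\RR$-orbits are quasi-isometrically embedded, not unit-speed geodesics as you assert).
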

In fact $\flow$ is unique up to a $\G \times \ZZ/2\ZZ$-equivariant quasi-isometry sending $\RR$-orbits to $\RR$-orbits. We shall denote by $\phi_t$ the $\RR$-action on $\flow$ and by $(\tau^+, \tau^-): \flow \to \flow/ \RR \cong \partial_\infty \G^{(2)}$ the maps associating to a point the endpoints of its $\RR$-orbit.

\begin{defi}\label{defi:ARhyp}
  A representation $\rho: \G \to G$ is said to be  \emph{$(P^+, P^-)$-Anosov} if there exist continuous $\rho$-equivariant maps $\xi^+: \partial_\infty \G \to \mathcal{F}^+$, $\xi^-: \partial_\infty \G \to 
  \mathcal{F}^-$ such that:
  \begin{enumerate}
 \item For all $(t^+ , t^-) \in \partial_\infty \G^{(2)} $ the pair $(\xi^+( t^+), \xi^-(t^-))$ is transverse.
  \item\label{contraction} For one (and hence any) continuous and equivariant family of norms
    $(\|\cdot\|_{\hat{m}})_{\hat{ m} \in \flow}$ on 
 $(T_{\xi^+(\tau^+( \hat{m}))}
\mathcal{F}^+)_{\hat{ m} \in \flow}$ (resp.\ 
 $(T_{\xi^-(\tau^-( \hat{m}))}
\mathcal{F}^-)_{\hat{ m} \in \flow}$), there exist $A,a>0$ such that for all
$t\geq 0$, $\hat{m} \in \flow$ and $e \in
T_{\xi^+(\tau^+( \hat{m}))} \mathcal{F}^+$ (resp.\ $e \in
T_{\xi^-(\tau^-( \hat{m}))} \mathcal{F}^-$):
\[ \| e\|_{\phi_{-t} \hat{m}} \leq A e^{-a t} \|e\|_{\hat{m}} \quad
(\text{resp.} \ \| e\|_{\phi_{t} \hat{m}} \leq A e^{-a t} \|e\|_{\hat{m}}). \]
  \end{enumerate}

  The maps $\xi^\pm$ are said to be  the \emph{Anosov maps} associated to $\rho$.
\end{defi}

\begin{remark}
  As explained in Section~\ref{sec:RiemEqu}, the definition here is equivalent to the existence of a section
  $\sigma$ of the $\mathcal{X}$-bundle $\mathcal{X}_\rho =  \flow \times_\rho
  \mathcal{X}$ over $\G \backslash \flow$ that is flat along $\RR$-orbits and such that the action of $\phi_t$ on the vector bundle
  $\sigma^* E^+$ (resp.\ $\sigma^* E^-$) is
  dilating (resp.\ contracting).
\end{remark}

\section{Controlling the Anosov section}
In this section we first recall some well known properties of Anosov
representations. Then we introduce $L$-Cartan projections which are
$\Gamma$-invariant continuous maps from  $\flow \times
\RR$ with values in a closed Weyl chamber  of  $ L = P^+ \cap P^-$. These $L$-Cartan projections 
provide a simple criterion 
for a section to satisfy the contraction property (see Definition~\ref{defi:ARhyp}.\eqref{contraction}). 

\subsection{Holonomy and uniqueness}
\label{sec:holon-uniq}
Any non-torsion element $\g\in\G$ has two fixed points in $\partial_\infty \G$. We denote the attracting fixed point by $t^{+}_{\g}$ 
and the repelling fixed point by $t^{-}_{\g}$.
From the definition of Anosov representation,
one deduces easily.

\begin{lem}\label{lem:ano_proxi}
  Let $\rho: \G \to G$ be a $(P^+, P^-)$-Anosov representation and 
  let $\xi^\pm: \partial_\infty \G \to \Ff^\pm$ 
  be the associated Anosov maps. Let $\g \in \G$ be a non-torsion
  element.

  Then $\xi^+( t^{+}_{\g})$ is the unique attracting fixed point of
  $\rho( \g)\in \mathcal{F}^+$. The basin of attraction is the set of
  all points in $\mathcal{F}^+$ that are transverse to
  $\xi^-(t^{-}_{\g})$.
   In particular the eigenvalues of $\rho( \g)$
  acting on $T_{\xi^+(t^{+}_{\g})} \mathcal{F}^+$ are all of modulus
  less than $1$.
\end{lem}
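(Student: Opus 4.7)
The plan is to exploit the contraction condition on $\sigma^{*}E^{+}$ together with $\rho$-equivariance along an axis of $\g$ inside $\flow$, and then read off the basin from parabolic (Bruhat) geometry.

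First, since the maps $\xi^{\pm}$ are $\rho$-equivariant and $\g$ fixes $t_{\g}^{\pm}$, the points $\xi^{+}(t_{\g}^{+}) \in \Ff^{+}$ and $\xi^{-}(t_{\g}^{-}) \in \Ff^{-}$ are fixed by $\rho(\g)$. The transversality property of the Anosov maps places the pair $(\xi^{+}(t_{\g}^{+}),\xi^{-}(t_{\g}^{-}))$ in the open $G$-orbit $\Xx = G/L$; after conjugation we may therefore assume $\rho(\g) \in L = P^{+} \cap P^{-}$, so that both the tangent space $T_{\xi^{+}(t_{\g}^{+})}\Ff^{+}$ and the unipotent radical $U^{-}$ of $P^{-}$ are preserved by $\rho(\g)$.

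Next I would extract exponential contraction on $T_{\xi^{+}(t_{\g}^{+})}\Ff^{+}$. Because $\g$ is non-torsion, there is a flow line in $\flow$ whose endpoints are $(t_{\g}^{+},t_{\g}^{-})$; since this line is $\g$-invariant and the $\G \times \RR$-action on $\flow$ identifies $\g$ with an isometry, $\g$ acts on this axis as translation by some $\ell(\g) > 0$. Pick $\hat{m}_{0}$ on the axis, so that $\g^{n}\hat{m}_{0} = \phi_{n\ell(\g)}\hat{m}_{0}$. The family of norms $\|\cdot\|$ on $\sigma^{*}E^{+}$ is $\rho$-equivariant, since it lives on the bundle over $\G\backslash\flow$, so for any $e \in T_{\xi^{+}(t_{\g}^{+})}\Ff^{+}$ one has $\|\rho(\g)^{n}e\|_{\phi_{n\ell(\g)}\hat{m}_{0}} = \|e\|_{\hat{m}_{0}}$. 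Combining this with the contraction in Definition~\ref{defi:ARhyp}.\eqref{contraction} applied at $\hat{m} = \phi_{n\ell(\g)}\hat{m}_{0}$ and $t = n\ell(\g)$ yields
\[
\|\rho(\g)^{n}e\|_{\hat{m}_{0}} \;\leq\; A\, e^{-a n \ell(\g)}\,\|\rho(\g)^{n}e\|_{\phi_{n\ell(\g)}\hat{m}_{0}} \;=\; A\, e^{-a n \ell(\g)}\,\|e\|_{\hat{m}_{0}}.
\]
This forces every eigenvalue of $\rho(\g)$ acting on $T_{\xi^{+}(t_{\g}^{+})}\Ff^{+}$ to have modulus $< 1$ and in particular shows that $\xi^{+}(t_{\g}^{+})$ is an attracting fixed point.

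Finally, to identify the basin I would invoke the standard parabolic picture: the open $P^{-}$-cell through $\xi^{+}(t_{\g}^{+})$ is precisely the set of points in $\Ff^{+}$ transverse to $\xi^{-}(t_{\g}^{-})$, and it is isomorphic to $U^{-}$ via $u \mapsto u\cdot\xi^{+}(t_{\g}^{+})$. Since $\rho(\g) \in L$ normalizes $U^{-}$ and the differential at the identity of the conjugation action of $\rho(\g)$ on $U^{-}$ is the contracting action on $T_{\xi^{+}(t_{\g}^{+})}\Ff^{+}$ already controlled above, $\mathrm{Ad}(\rho(\g)^{n})u \to 1$ for every $u \in U^{-}$; hence $\rho(\g)^{n}x = \bigl(\rho(\g)^{n} u \rho(\g)^{-n}\bigr)\cdot \xi^{+}(t_{\g}^{+}) \to \xi^{+}(t_{\g}^{+})$ for every $x = u\cdot\xi^{+}(t_{\g}^{+})$. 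Conversely, the complement of this open cell is a proper Zariski-closed $\rho(\g)$-invariant subset not containing $\xi^{+}(t_{\g}^{+})$, so no point outside the transverse cell can lie in the basin; this also gives uniqueness of the attracting fixed point. The main obstacle is the basin step: one must justify that conjugation of $U^{-}$ by $\rho(\g)^{n}$ collapses to the identity, which comes down to translating spectral contraction in the tangent representation into pointwise contraction on the unipotent group.
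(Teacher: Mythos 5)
Your argument is correct, and it is essentially the deduction the paper intends: the paper prints no proof (the lemma is stated as following easily from the definition), and your two steps --- using equivariance of the norm family along the $\g$-periodic flow line $\hat\g\subset\flow$ together with the contraction in Definition~\ref{defi:ARhyp} to bound the spectral radius of $\rho(\g)$ on $T_{\xi^+(t^+_\g)}\Ff^+$ by $e^{-aT}<1$, and then the $L_\Theta$-equivariant identification $T_{P^+}\,G/P^+\cong\mathfrak{n}^-_\Theta$ (recorded in Section~\ref{sec:parab-subgr-lie}) plus the parametrization of the cell of points transverse to $\xi^-(t^-_\g)$ by the unipotent radical to identify the basin --- are exactly the expected route, and the passage from spectral contraction on $\mathfrak{n}^-_\Theta$ to pointwise collapse of $\rho(\g)^n u\rho(\g)^{-n}$ is justified by $\exp$ and Gelfand's formula. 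The only points worth making explicit in a final write-up are that the translation parameter $T$ of $\g$ on its axis is nonzero (properness of the $\G$-action on $\flow$) and positive for the orientation with $\tau^+=t^+_\g$ (since $\g^n\hat m_0=\phi_{nT}\hat m_0$ must tend to the attracting fixed point), and that uniqueness of the attracting fixed point uses density of the open transverse cell; both are implicit in what you wrote.
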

An analogous statement holds for the action on $\mathcal{F}^-$.

\begin{cor}\label{cor:holonomy}
Let $\rho: \G \to G$ be a $(P^+, P^-)$-Anosov representation and $\g \in \Gamma$ a non-torsion element. 
Then $\rho(\g)$ is conjugate to an element of $L$, whose action is dilating on $T_{P^-} \mathcal{F}^-$ and 
contracting on
$T_{P^+} \mathcal{F}^+$.
\end{cor}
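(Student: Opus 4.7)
The plan is to deduce the corollary directly from Lemma~\ref{lem:ano_proxi} together with the transversality condition of Definition~\ref{defi:ARhyp} and basic facts about $G$-orbits in $\mathcal{F}^+ \times \mathcal{F}^-$.

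First I would use $\rho$-equivariance of the Anosov maps to locate two fixed flags of $\rho(\g)$: since $\g \cdot t^+_\g = t^+_\g$ and $\g \cdot t^-_\g = t^-_\g$, equivariance gives $\rho(\g) \cdot \xi^+(t^+_\g) = \xi^+(t^+_\g)$ in $\mathcal{F}^+$ and likewise $\rho(\g) \cdot \xi^-(t^-_\g) = \xi^-(t^-_\g)$ in $\mathcal{F}^-$. Since $(t^+_\g, t^-_\g) \in \partial_\infty \G^{(2)}$, the transversality condition ensures that the pair $\bigl(\xi^+(t^+_\g),\xi^-(t^-_\g)\bigr)$ lies in $\mathcal{X} = G/L$, the unique open $G$-orbit in $\mathcal{F}^+ \times \mathcal{F}^-$.

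Next I would choose $g\in G$ with $g\cdot\bigl(\xi^+(t^+_\g),\xi^-(t^-_\g)\bigr) = (P^+,P^-)$; such a $g$ exists because $G$ acts transitively on $\mathcal{X}$, and the stabilizer of the basepoint $(P^+,P^-)$ is exactly $P^+ \cap P^- = L$. Then $g\rho(\g)g^{-1}$ fixes $(P^+,P^-)$, hence belongs to $L$, which gives the desired conjugation statement.

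Finally, for the dilating/contracting assertion I would transport the eigenvalue information of Lemma~\ref{lem:ano_proxi} through $g$. That lemma (and its $\mathcal{F}^-$-analogue) asserts that all eigenvalues of $\rho(\g)$ on $T_{\xi^+(t^+_\g)}\mathcal{F}^+$ have modulus $<1$ and all eigenvalues of $\rho(\g)$ on $T_{\xi^-(t^-_\g)}\mathcal{F}^-$ have modulus $>1$. Since $g$ induces linear isomorphisms $T_{\xi^+(t^+_\g)}\mathcal{F}^+ \to T_{P^+}\mathcal{F}^+$ and $T_{\xi^-(t^-_\g)}\mathcal{F}^-\to T_{P^-}\mathcal{F}^-$ that intertwine the action of $\rho(\g)$ with that of $g\rho(\g)g^{-1}$, the conjugated element acts with contracting (resp.\ dilating) spectrum on $T_{P^+}\mathcal{F}^+$ (resp.\ $T_{P^-}\mathcal{F}^-$). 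There is no substantive obstacle here; the statement is essentially a repackaging of Lemma~\ref{lem:ano_proxi}, the only minor point requiring care being the simultaneous choice of a single $g$ normalising \emph{both} fixed flags, which is guaranteed by the openness of the $G$-orbit $\mathcal{X}$.
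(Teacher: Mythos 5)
Your argument is correct and is essentially the route the paper intends: the corollary is stated there as an immediate consequence of Lemma~\ref{lem:ano_proxi} (and its $\mathcal{F}^-$-analogue), and your write-up simply makes explicit the steps left implicit — transversality of $(\xi^+(t^+_\g),\xi^-(t^-_\g))$ places the pair in the open orbit $\mathcal{X}=G/L$, transitivity gives a single $g$ conjugating $\rho(\g)$ into $L=\stab(P^+,P^-)$, and conjugation transports the eigenvalue bounds to $T_{P^+}\mathcal{F}^+$ and $T_{P^-}\mathcal{F}^-$.
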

A refinement of this corollary  will be given in Lemma~\ref{lem:mu_contract}, providing a quantitative statement for the contraction. 

Lemma~\ref{lem:ano_proxi}, together with the density of the fixed
points $\{t^{+}_{\g}\}_{\g \in \G}$ in $\partial_\infty \Gamma$, has
the following consequence:

\begin{lem}\label{lem:uniqu}\cite[Proposition~2.5]{Guichard_Wienhard_InvaMaxi}
   Let $\rho: \G \to G$ be a $(P^+, P^-)$-Anosov representation. Then
   the maps $\xi^+: \partial_\infty \G \to \mathcal{F}^+$ and $\xi^-: \partial_\infty \G \to 
   \mathcal{F}^-$ satisfying the properties of 
   Definition~\ref{defi:ARhyp} are unique.
\end{lem}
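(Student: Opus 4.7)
The plan is to show that any Anosov map is determined on a dense subset of $\partial_\infty \Gamma$ and then invoke continuity.

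First, I would use Lemma~\ref{lem:ano_proxi} as the main input. For any non-torsion element $\gamma \in \Gamma$, that lemma forces $\xi^+(t^+_\gamma)$ to equal the unique attracting fixed point of $\rho(\gamma)$ acting on $\mathcal{F}^+$. Crucially, this attracting fixed point is intrinsic to $\rho(\gamma)$ and does not depend on the particular choice of Anosov map. Hence if $\xi^+_1, \xi^+_2 : \partial_\infty \Gamma \to \mathcal{F}^+$ are two maps satisfying Definition~\ref{defi:ARhyp} for the same representation $\rho$, they must coincide on the subset
\[ E^+ := \{ t^+_\gamma \mid \gamma \in \Gamma, \ \gamma \text{ non-torsion}\} \subset \partial_\infty \Gamma. \]

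Second, I would invoke the standard fact (see, e.g., \cite[Chapitre~2]{Coornaert_Delzant_Papadopoulos}) that in a non-elementary word hyperbolic group $\Gamma$, the set $E^+$ of attracting fixed points of non-torsion elements is dense in $\partial_\infty \Gamma$. (In the elementary cases $\partial_\infty \Gamma$ has at most two points and the statement is immediate, or $\Gamma$ is finite and has empty boundary, in which case the uniqueness claim is vacuous.) Since $\xi^+_1$ and $\xi^+_2$ are continuous by hypothesis and agree on the dense subset $E^+$, they must agree everywhere on $\partial_\infty \Gamma$.

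Finally, the identical argument applied to $\xi^-$, using the repelling fixed points $t^-_\gamma$ and the analogue of Lemma~\ref{lem:ano_proxi} for $\mathcal{F}^-$ (which holds by symmetry: swapping the roles of $P^+$ and $P^-$ exchanges dilation and contraction in Definition~\ref{defi:ARhyp}), yields uniqueness of $\xi^-$. There is no real obstacle here; the only point worth a sentence is confirming that Lemma~\ref{lem:ano_proxi} genuinely pins down $\xi^+(t^+_\gamma)$ uniquely, which follows because among all fixed points of $\rho(\gamma)$ on $\mathcal{F}^+$ there is exactly one attracting fixed point whose basin is the set of points transverse to $\xi^-(t^-_\gamma)$.
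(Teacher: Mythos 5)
Your proof is correct and is essentially the paper's own argument: the paper deduces uniqueness precisely from Lemma~\ref{lem:ano_proxi} together with the density of the attracting fixed points $\{t^+_\gamma\}_{\gamma\in\Gamma}$ in $\partial_\infty\Gamma$, followed by continuity, exactly as you do (with the symmetric argument for $\xi^-$).
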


The uniqueness of the Anosov maps $(\xi^+, \xi^-)$ gives the following corollaries:

\begin{cor}\cite[Proposition~2.8]{Guichard_Wienhard_InvaMaxi}\label{cor:finite_index}
  Let $\G' < \G$ be a finite index subgroup. A representation $\rho:
  \G \to G$ is $(P^+, P^-)$-Anosov if and only if $\rho|_{\G'}$ is
  $(P^+, P^-)$-Anosov. Furthermore with the canonical identification 
 $\partial_\infty \G \cong \partial_\infty \G'$ the Anosov maps
 $\xi^+$ and $\xi^-$ are the same for $\rho$ and for $\rho|_{\G'}$.
\end{cor}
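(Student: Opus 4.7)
The forward implication is essentially tautological: if $\rho$ is $(P^+,P^-)$-Anosov with Anosov maps $\xi^\pm$, then $\xi^\pm$ are in particular $\rho|_{\Gamma'}$-equivariant, the transversality condition only involves pairs of distinct points in $\partial_\infty\Gamma=\partial_\infty\Gamma'$, and the contraction condition is formulated on the flow space $\widehat{\Gamma}$, which can be used equally well for $\Gamma'$ (with the same underlying metric space and the same $\mathbb{R}$-action, only the acting group is restricted). So the same pair $(\xi^+,\xi^-)$ serves as Anosov maps for $\rho|_{\Gamma'}$.

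For the nontrivial direction, assume $\rho|_{\Gamma'}$ is $(P^+,P^-)$-Anosov with Anosov maps $\xi^\pm\colon\partial_\infty\Gamma'\to\mathcal{F}^\pm$. Using the canonical identification $\partial_\infty\Gamma'\cong\partial_\infty\Gamma$ (standard for quasi-isometric spaces, and $\Gamma'$ is quasi-isometric to $\Gamma$), I view $\xi^\pm$ as maps $\partial_\infty\Gamma\to\mathcal{F}^\pm$. The transversality and contraction conditions of Definition~\ref{defi:ARhyp} only refer to the hyperbolic boundary and the flow space, both of which are intrinsic to $\Gamma$ as well as to $\Gamma'$, so they are automatically satisfied. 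The only thing to verify is upgrading $\rho|_{\Gamma'}$-equivariance to full $\rho$-equivariance.

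To do so, fix $\gamma\in\Gamma$ and consider the conjugated pair
\[
  \xi^+_\gamma(t)=\rho(\gamma)\,\xi^+(\gamma^{-1}t),\qquad \xi^-_\gamma(t)=\rho(\gamma)\,\xi^-(\gamma^{-1}t).
\]
A direct computation shows that $(\xi^+_\gamma,\xi^-_\gamma)$ is $\rho|_{\gamma\Gamma'\gamma^{-1}}$-equivariant, and it trivially inherits continuity, transversality and the contraction property (the latter because the $\Gamma$-action on $\widehat{\Gamma}$ commutes with the flow, so precomposing by $\gamma^{-1}$ only reparametrizes flow orbits). The intersection $\Gamma''=\Gamma'\cap\gamma\Gamma'\gamma^{-1}$ is again of finite index in $\Gamma$, hence has the same boundary at infinity, and both pairs $(\xi^+,\xi^-)$ and $(\xi^+_\gamma,\xi^-_\gamma)$ are Anosov maps for the restriction $\rho|_{\Gamma''}$. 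The uniqueness statement Lemma~\ref{lem:uniqu}, applied to $\rho|_{\Gamma''}$, then forces $\xi^\pm_\gamma=\xi^\pm$, which is exactly the $\rho$-equivariance relation $\xi^\pm(\gamma s)=\rho(\gamma)\xi^\pm(s)$.

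The main subtlety I expect is making sure that Lemma~\ref{lem:uniqu} is actually applicable to $\rho|_{\Gamma''}$; this requires knowing that $\rho|_{\Gamma''}$ is itself Anosov, which follows from the forward implication (already proved) applied to the Anosov representation $\rho|_{\Gamma'}$ and its finite-index subgroup $\Gamma''$. Once the $\rho$-equivariance is established, all the hypotheses of Definition~\ref{defi:ARhyp} for the representation $\rho$ are in place, and the final assertion that the Anosov maps coincide is built into the construction.
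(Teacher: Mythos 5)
Your proof is correct and follows essentially the same route as the paper, which presents this corollary precisely as a consequence of the uniqueness of the Anosov maps (Lemma~\ref{lem:uniqu}); your conjugation trick with $\Gamma''=\Gamma'\cap\gamma\Gamma'\gamma^{-1}$ is the standard way to implement that uniqueness argument, and the remaining points (transferring the flow space and the norm families between $\Gamma$ and $\Gamma'$, using the ``one, hence any'' clause of Definition~\ref{defi:ARhyp} together with compactness of the quotients) are exactly the routine checks the paper leaves implicit.
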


\begin{cor}\label{cor:xi_cent}
  Let $\rho: \G \to G$ be a $(P^+, P^-)$-Anosov representation and let 
  $\xi^+: \partial_\infty \G \to \mathcal{F}^+$ and $\xi^-: \partial_\infty \G \to 
   \mathcal{F}^-$ be the corresponding Anosov maps.

Then any element $z\in Z_G(\rho(\G))$ in the centralizer of $\rho(\G)$ fixes $\xi^\pm (\partial_\infty \G) $ pointwise, i.e.\
for any $t\in \partial_\infty \Gamma$, $z \cdot
  \xi^\pm(t) = \xi^\pm(t)$.
\end{cor}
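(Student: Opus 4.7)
The cleanest route is to combine Lemma~\ref{lem:ano_proxi} with a density argument in $\partial_\infty \G$. The plan runs as follows. Fix $z \in Z_G(\rho(\G))$ and a non-torsion element $\g \in \G$. Since $z$ commutes with $\rho(\g)$, the action of $z$ on $\Ff^+$ permutes the set of fixed points of $\rho(\g)$. By Lemma~\ref{lem:ano_proxi}, $\xi^+(t^{+}_{\g})$ is the \emph{unique} attracting fixed point of $\rho(\g)$ on $\Ff^+$, so $z$ must fix it, i.e.\ $z \cdot \xi^+(t^{+}_{\g}) = \xi^+(t^{+}_{\g})$.

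Next I would invoke the standard fact that for a non-elementary word hyperbolic group the set $\{t^{+}_{\g} \mid \g \in \G \text{ non-torsion}\}$ is dense in $\partial_\infty \G$ (the elementary cases being trivial to treat separately). Using that $\xi^+$ is continuous and $z$ acts continuously on $\Ff^+$, the equality $z \cdot \xi^+(t) = \xi^+(t)$ extends from this dense subset to all of $\partial_\infty \G$. The analogous argument for the action on $\Ff^-$, using the repelling fixed points $t^{-}_{\g}$ and the $\Ff^-$-analogue of Lemma~\ref{lem:ano_proxi}, yields $z \cdot \xi^-(t) = \xi^-(t)$ for all $t \in \partial_\infty \G$.

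As a backup route I would keep the direct uniqueness argument in mind: the pair $(z \cdot \xi^+, z \cdot \xi^-)$ is continuous, $\rho$-equivariant (because $z$ commutes with $\rho(\G)$), and transverse (because $z$ preserves the unique open $G$-orbit $\xX \subset \Ff^+ \times \Ff^-$); the contraction condition of Definition~\ref{defi:ARhyp} is inherited by pushing the family of norms forward by $z$, which leaves the constants $A,a$ unchanged. Lemma~\ref{lem:uniqu} then forces $(z \cdot \xi^+, z \cdot \xi^-) = (\xi^+, \xi^-)$.

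I do not expect any serious obstacle: the whole argument hinges on a uniqueness principle (either the uniqueness of attracting fixed points, or the uniqueness of the Anosov maps themselves). The only conceptual point worth noting is that $Z_G(\rho(\G))$ is a priori contained in no particular parabolic, so it would be a mistake to try to locate $z$ inside $L$; the argument is entirely insensitive to the ambient position of $z$ and uses only that it centralizes $\rho(\G)$.
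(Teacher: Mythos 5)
Your proposal is correct and matches the paper: the paper deduces this corollary directly from the uniqueness of the Anosov maps (Lemma~\ref{lem:uniqu}), which is exactly your backup argument, since $(z\cdot\xi^+, z\cdot\xi^-)$ is again continuous, $\rho$-equivariant, transverse and satisfies the contraction property. Your primary route via Lemma~\ref{lem:ano_proxi} and density of attracting fixed points is not really different either, as that is precisely how the paper proves Lemma~\ref{lem:uniqu} itself.
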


\begin{cor}
  Let $\pi: \widehat{G} \to G$ be a covering of Lie groups,
  $(P^+, P^-)$ a pair of opposite parabolic subgroups of $G$ so that
  $(\widehat{P}^+, \widehat{P}^-)= (\pi^{-1}(P^+), \pi^{-1}(P^-))$
  is a pair of opposite parabolic subgroups of $\widehat{G}$.

  Then a representation $\rho: \G \to \widehat{G}$ is
  $(\widehat{P}^+, \widehat{P}^-)$-Anosov if and only if $\pi \circ
  \rho$ is $(P^+, P^-)$-Anosov.
\end{cor}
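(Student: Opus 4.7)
The key observation is that the covering map $\pi: \widehat{G} \to G$ induces a $\widehat{G}$-equivariant diffeomorphism $\bar{\pi}: \widehat{G}/\widehat{P}^{\pm} \to G/P^{\pm}$, where $\widehat{G}$ acts on $G/P^{\pm}$ through $\pi$. Indeed, $\bar{\pi}$ is well-defined and bijective precisely because $\widehat{P}^{\pm} = \pi^{-1}(P^{\pm})$ contains the (discrete, central) kernel of $\pi$; since $\pi$ is a local diffeomorphism, so is $\bar{\pi}$, and hence $\bar{\pi}$ is a global diffeomorphism. My plan is to use this identification to show that the data of an Anosov pair $(\hat{\xi}^+, \hat{\xi}^-)$ for $\rho$ is in bijection with the data of an Anosov pair $(\xi^+, \xi^-)$ for $\pi \circ \rho$, via $\xi^{\pm} = \bar{\pi} \circ \hat{\xi}^{\pm}$.

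I would then verify the three conditions of Definition~\ref{defi:ARhyp} in turn. Continuity is immediate since $\bar{\pi}$ is a homeomorphism. Equivariance follows from the relation $\bar{\pi}(\rho(\g)\cdot \hat{x}) = \pi(\rho(\g)) \cdot \bar{\pi}(\hat{x})$: the map $\hat{\xi}^{\pm}$ is $\rho$-equivariant if and only if $\bar{\pi} \circ \hat{\xi}^{\pm}$ is $\pi\circ\rho$-equivariant. For transversality, $\bar{\pi} \times \bar{\pi}$ is a $\widehat{G}$-equivariant diffeomorphism of $\widehat{G}/\widehat{P}^{+} \times \widehat{G}/\widehat{P}^{-}$ onto $G/P^{+} \times G/P^{-}$, so it sends the unique open $\widehat{G}$-orbit onto the unique open $G$-orbit; hence $(\hat{\xi}^+(t^+), \hat{\xi}^-(t^-))$ is transverse if and only if $(\xi^+(t^+), \xi^-(t^-))$ is transverse.

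Finally, for the contraction condition I would use that $d\bar{\pi}$ is a fiberwise linear isomorphism between the tangent bundles of $\widehat{G}/\widehat{P}^{\pm}$ and $G/P^{\pm}$, which is $\widehat{G}$-equivariant. Pulling back any continuous $\G$-equivariant family of norms $(\|\cdot\|_{\hat{m}})$ on $(T_{\xi^+(\tau^+(\hat{m}))}\Ff^+)_{\hat{m}\in\flow}$ via $d\bar{\pi}$ gives a continuous $\G$-equivariant family on $(T_{\hat{\xi}^+(\tau^+(\hat{m}))}\widehat{G}/\widehat{P}^{+})_{\hat{m}\in\flow}$ with identical contraction constants, and vice versa. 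By Definition~\ref{defi:ARhyp}.\eqref{contraction}, the choice of such a family is inessential, so the contraction property holds for one pair exactly when it holds for the other. Combining these three points proves both implications simultaneously. The argument is entirely formal; no serious obstacle is anticipated, the only subtlety being to confirm at the outset that $\bar{\pi}$ is a diffeomorphism, which rests on $\pi^{-1}(P^\pm) = \widehat{P}^\pm$.
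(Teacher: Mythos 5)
Your argument is correct, and it is essentially the intended one: the paper states this corollary without proof, as an immediate consequence of the fact that $\pi$ induces $\widehat{G}$-equivariant identifications $\widehat{G}/\widehat{P}^{\pm}\cong G/P^{\pm}$ (and $\widehat{G}/(\widehat{P}^+\cap\widehat{P}^-)\cong G/(P^+\cap P^-)$), under which the maps, transversality and the contraction condition of Definition~\ref{defi:ARhyp} correspond exactly. Your verification of the three conditions, including the remark that only one equivariant family of norms needs to be checked, is precisely why the statement is immediate.
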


\subsubsection{Lifting}
Even though we do not use it in this article, for future reference we describe 
when the maps $\xi^\pm: \partial_\infty \Gamma \to G/P^\pm$ 
can be lifted to maps $\xi^{\prime\pm}:\partial_\infty \G \to G/ P^{\prime\pm}$ where
$P^{\prime\pm} \subset P^\pm$ are finite index subgroups.

Recall that $P^+$ (and $P^-$) is the semi-direct product of its
unipotent radical by $L$. Hence $\pi_0( P^+) \cong \pi_0(L) \cong
\pi_0 (P^-)$, and a finite index subgroup $P^{\prime+} \subset
P^+$ corresponds to a finite index subgroup $\pi_0(P^{\prime+}) \subset
\pi_0( P^+) $, and hence to a finite index subgroup
$L^\prime \subset L$ as well as to a 
finite index subgroup $P^{\prime-} \subset P^-$. Using that the
unipotent radical of $P^+$ is contractible (and the classical fact that the
space of sections of a bundle with contractible fibers is nonempty and
contractible) one deduces:

\begin{lem}
  Let  $\rho: \G \to G$ be a $(P^+, P^-)$-Anosov representation with Anosov maps 
  $\xi^+, \xi^- : \partial_\infty \G \to \mathcal{F}^+, \mathcal{F}^-$. Let $\sigma$ be the corresponding section of
  $\mathcal{X}_\rho =  \flow \times_\rho G/L$. Then the following are equivalent:
  \begin{enumerate}
  \item \label{item:lemXilifts} $\xi^+$ lifts to a continuous $\rho$-equivariant map 
    $\partial_\infty \G \to G/ P^{\prime+}$.
  \item \label{item:lemSigmalifts} $\sigma$ lifts to a continuous section $\sigma'$ of $\mathcal{X}^{\prime}_{\rho} =  \flow \times_\rho G/L^\prime$.
  \item $\xi^-$ lifts to a continuous $\rho$-equivariant map 
    $\partial_\infty \G \to G/ P^{\prime-}$.
  \end{enumerate}
  In that case 
  the $\rho$-equivariant lift $\hat{\sigma}': \flow \to G/L'$ of the section 
  $\sigma'$  is of
  the form $(\xi^{\prime+} \circ \tau^+, \xi^{\prime-} \circ \tau^-)$, with maps  
  $\xi^{\prime+}, \xi^{\prime-}: \partial_\infty \G \to G/
  P^{\prime+}, G/ P^{\prime-}$.
\end{lem}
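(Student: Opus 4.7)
The plan is to exploit the fact that the unipotent radical $U^+$ of $P^+$ is connected (indeed contractible), so any finite index subgroup $P^{\prime+}\subset P^+$ must contain $U^+$. Consequently $P^{\prime+} = L' \cdot U^+$ with $L' = P^{\prime+}\cap L$, and there is a canonical $G$-equivariant homeomorphism
\[
G/L' \;\xrightarrow{\;\sim\;}\; G/L \times_{G/P^+} G/P^{\prime+},\qquad gL'\longmapsto (gL,\,gP^{\prime+}),
\]
and an analogous identification for $P^{\prime-}$; moreover $L' = P^{\prime+}\cap P^{\prime-}$ by the same reasoning. These identifications recast the equivalences as formal properties of fiber products of coverings.

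The implication \eqref{item:lemXilifts}$\Rightarrow$\eqref{item:lemSigmalifts} would then be direct: given $\xi^{\prime+}$, I would define $\hat\sigma':\flow\to G/L'$ by pairing $\hat\sigma(m)\in G/L$ with $\xi^{\prime+}(\tau^+(m))\in G/P^{\prime+}$ through the fiber product description above; this is well-defined precisely because both components project to the common map $\xi^+\circ\tau^+ : \flow\to G/P^+$. The resulting $\hat\sigma'$ is continuous, $\rho$-equivariant and $\phi_t$-invariant (each factor is), so it descends to the desired lift $\sigma'$ of $\sigma$.

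For \eqref{item:lemSigmalifts}$\Rightarrow$\eqref{item:lemXilifts}, I would pull $\sigma'$ back to $\hat\sigma':\flow\to G/L'$, project it to $G/P^{\prime+}$, and obtain a $\rho$-equivariant $\phi_t$-invariant continuous map $\eta:\flow\to G/P^{\prime+}$ covering $\xi^+\circ\tau^+$. The key step is to verify that $\eta$ factors through $\tau^+:\flow\to\partial_\infty\G$. I would invoke the asymptotic behaviour built into the Gromov--Mineyev flow: two orbits sharing a forward endpoint become arbitrarily close under the forward flow. Together with the discreteness of the cover $G/P^{\prime+}\to G/P^+$ and the continuity of $\eta$, this forces $\eta(\phi_s m)=\eta(\phi_s m')$ once $s$ is large enough whenever $\tau^+(m)=\tau^+(m')$; $\phi_t$-invariance then collapses this back to $\eta(m)=\eta(m')$, yielding the desired $\xi^{\prime+}$. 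The equivalence \eqref{item:lemSigmalifts}$\Leftrightarrow$(iii) follows by the symmetric argument using $P^{\prime-}$ and the backward flow.

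The final assertion on the form of $\hat\sigma'$ then falls out from $L' = P^{\prime+}\cap P^{\prime-}$, which yields a closed embedding $G/L'\hookrightarrow G/P^{\prime+}\times G/P^{\prime-}$; the two factor projections of $\hat\sigma'$ are exactly the lifts $\xi^{\prime\pm}\circ\tau^\pm$ produced above. I expect the main obstacle to be the factoring of $\eta$ through $\tau^+$: while purely formal for the forward direction, the reverse direction genuinely uses the asymptotic/Anosov character of Mineyev's flow together with the discreteness of the covering, and is the only step that requires more than diagram chasing about parabolic subgroups and fiber products.
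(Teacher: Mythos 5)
Your structural setup is the same as the paper's: the identification $G/L' \cong G/L \times_{G/P^+} G/P^{\prime+}$ (equivalently, $\mathcal{X}^{\prime}_{\rho}$ is the fiber product $\mathcal{X}_{\rho} \times_{\mathcal{F}^{+}_{\rho}} \mathcal{F}^{\prime+}_{\rho}$) is exactly the paper's key observation, and your proof of (i)$\Rightarrow$(ii) by pairing $\hat\sigma$ with $\xi^{\prime+}\circ\tau^+$ is the paper's argument. For the converse the paper simply asserts that $\xi^+$ lifts if and only if the induced section $\zeta^+$ of $\mathcal{F}^{+}_{\rho}$ lifts, whereas you attempt to prove the descent from $\flow$ to $\partial_\infty\G$ dynamically; that is where your write-up has gaps. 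First, a small one: in (ii)$\Rightarrow$(i) the $\phi_t$-invariance of $\hat\sigma'$ (hence of $\eta$) is not a hypothesis, since (ii) only asks for a continuous lift of $\sigma$; you need the (easy, but necessary) remark that a continuous lift through the covering $G/L' \to G/L$, whose fibers are discrete, of a map constant on the connected flow lines is itself constant on flow lines.

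The substantive gap is the step ``$\phi_s m$ and $\phi_s m'$ become arbitrarily close, so by discreteness of the cover and continuity of $\eta$ the values agree.'' As stated this inference is invalid: $\eta$ is merely continuous on the noncompact space $\flow$, and the points in question escape to infinity, so plain continuity gives no uniform threshold below which closeness of the points forces equality of the (finitely many) possible values over $\xi^+(\tau^+(m))$. To close this you need uniformity coming from cocompactness: pass to the section $\zeta^{\prime+}$ of the compact bundle $\mathcal{F}^{\prime+}_{\rho}$ over the compact base $\G\backslash\flow$, use its uniform continuity, a uniform lower bound on the displacement of the $\G$-action on $\flow$ (so that nearby points can be compared in a single trivialization), and a uniform separation constant between distinct points in the fibers of the finite covering $G/P^{\prime+}\to G/P^+$. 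Moreover, the property you invoke --- orbits with the same forward endpoint become \emph{arbitrarily} close under the forward flow --- is a feature of Mineyev's specific metric (exponential convergence of asymptotic flow lines), not of a general hyperbolic flow space, and is not among the properties of $\flow$ quoted in the paper; general hyperbolicity only gives uniformly \emph{bounded} distance, which does not beat the discreteness threshold, so this must be cited explicitly. You correctly identified the crux (the fibers of $\tau^+$, i.e.\ $\partial_\infty\G$ minus a point, may be disconnected, so local constancy alone does not suffice); with the uniformity ingredients above supplied, your dynamical argument does complete the proof, and it is in fact more detailed at this point than the paper's ``easy to see.''
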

 Note that the maps  $\xi^{\prime+}, \xi^{\prime-}$ in the above lemma
 are not necessarily unique. 
 
\begin{proof}
  Note that $\xi^+$ naturally defines a section $\zeta^+$ of
  $\mathcal{F}^{+}_{\rho} = \flow \times_\rho \mathcal{F}^+$. This
  section is the image of $\sigma$ under the map $\mathcal{X}_\rho \to
  \mathcal{F}^{+}_{\rho}$ induced by the projection to the first
  factor. The spaces $\mathcal{X}^{\prime}_{\rho}$ and $
  \mathcal{F}^{\prime+}_{\rho}$ are defined similarly, and it is easy
  to see that $\xi^+$ lifts if and only if $\zeta^+$
  lifts. Furthermore $\mathcal{X}^{\prime}_{\rho}$ is the fiber
  product $\mathcal{X}_{\rho} \times_{\mathcal{F}^{+}_{\rho}}
  \mathcal{F}^{\prime+}_{\rho}$ so that $\sigma$ lifts if and only if
  $\zeta^+$ does. This proves the equivalence of
  \eqref{item:lemXilifts} and \eqref{item:lemSigmalifts}; the
  remaining statements follow.
\end{proof}

\subsection{Structure of parabolic subgroups}
\label{sec:parab-subgr-lie}
In the following sections we will use the finer structure of parabolic subgroups. In order to fix notation we recall here the classical structure theory of parabolic subgroups. 

\begin{asparaitem} 
\item Let $G$ be a semisimple Lie groups and let $\mathfrak{g}$ be its Lie algebra. Let $K$ be a maximal compact subgroup of $G$ and $\mathfrak{k}$ its Lie algebra;  the decomposition  $\mathfrak{g} = \mathfrak{k} \oplus
  \mathfrak{k}^\perp$ is orthogonal with respect to the
  Killing form.
\item Let $\mathfrak{a}$ be a maximal abelian subalgebra contained
    in
  $\mathfrak{k}^\perp$; its action on $\mathfrak{g}$  gives rise to a
  decomposition into eigenspaces:
  \[ \mathfrak{g} =  \!\!\!\! \bigoplus_{\alpha \in \Sigma \cup \{0\}} \!\!\!\!
  \mathfrak{g}_\alpha, \text{ where }
  \mathfrak{g}_\alpha = \bigcap_{ a \in
  \mathfrak{a}} \ker(\mathrm{ad}(a) - \alpha(a)), \]
  where $
  \Sigma
  = \{ \alpha \in \mathfrak{a}^* \moins \{0 \} \mid
  \mathfrak{g}_\alpha \neq 0 \}$
   is the system of restricted roots of $\mathfrak{g}$.
\item Let $N_K(\mathfrak{a})$ and $Z_K( \mathfrak{a})$ be the
  normalizer and the centralizer of $\mathfrak{a}$ in $K$. The 
  \emph{Weyl  group} $W = N_K(\mathfrak{a}) / Z_K( \mathfrak{a})$ acts on
  $\mathfrak{a}$ and also  on $\Sigma$. 
\item Let $<_{\mathfrak{a}^*}$ (or simply $<$) 
  be a total ordering on the 
  group $\mathfrak{a}^*$. The sets $\Sigma^+ = \{ \alpha \in \Sigma \mid 
  \alpha > 0 \}$ and $\Sigma^- = \{ \alpha \in \Sigma \mid \alpha < 0 \}$
  are the \emph{positive} roots and the \emph{negative} roots, $\Sigma^- = -
  \Sigma^+$.
\item A positive root is \emph{decomposable} if it is
  the sum of two positive roots; it is called \emph{simple}
  otherwise. The set $\Delta \subset \Sigma^+$ is the set of simple roots.
\item The unique element $w_0\in W$ sending
  $\Sigma^-$ to $\Sigma^+$ induces an involution $\iota: \Sigma^+ \to
  \Sigma^+ \sep \alpha \mapsto -w_0 (\alpha)$, called the \emph{opposition
  involution}; $\iota( \Delta)= \Delta$. The opposition
  involution is also defined on $\mathfrak{a}$.
\item A \emph{Weyl chamber} is $\mathfrak{a}^+ = \{ a \in \mathfrak{a} \mid \alpha(a)
  > 0, \forall \alpha \in \Sigma^+\} = \{ a \in \mathfrak{a} \mid
  \alpha(a) > 0, \forall \alpha \in \Delta\}$. The involution $\iota$ sends $\mathfrak{a}^+$
  into itself. 
  \item Any element $g\in G$ can be written as a product
  \[ g = k \exp( \mu(g)) l, \quad \text{with } k,l \in K \text{ and } \mu(g) \in
  \overline{\mathfrak{a}}^+;\] 
  $\mu(g)$ is uniquely determined by $g$, it is called the
  \emph{Cartan projection} of $g$. The map $\mu: G \to
  \overline{\mathfrak{a}}^+$ is continuous.
  Another continuous 
  projection $\lambda: G \to  \overline{\mathfrak{a}}^+$ comes from
  the Jordan decomposition. The two projections are related by $\lambda(g) = \lim_{n\to \infty }
  \frac{\mu(g^n)}{n}$ \cite[Corollaire in Paragraph 2.5.]{Benoist}. 
\item The subalgebra $\mathfrak{n}^+ = \bigoplus_{\alpha \in \Sigma^+}
  \mathfrak{g}_\alpha$ is 
  nilpotent, $N = \exp( \mathfrak{n}^+)$ is unipotent.

The subgroup $A \ltimes N = \exp(
\mathfrak{a}) \ltimes \exp( \mathfrak{n}^+) < G$ will be denoted by
$AN$. 
\item The set $B^+ = Z_K( \mathfrak{a}) A N$ is a subgroup
  of $G$ called the \emph{minimal parabolic subgroup}. Its Lie algebra is
  $\mathfrak{b}^+ = \mathfrak{g}_0 \oplus \mathfrak{n}^+$. 
\item Similarly one defines $\mathfrak{n}^-$, $N^-$, $B^-$, $\mathfrak{b}^-$. The group $B^-$ is conjugate to $B^+$.
\end{asparaitem}

\emph{Parabolic subgroups} of $G$ are conjugate to subgroups containing
$B^+$. A pair of parabolic subgroups is said to be 
\emph{opposite} if their intersection is a reductive
group.

Conjugacy classes of parabolic subgroups are in one to one correspondence with subsets $\Theta \subset \Delta$. Given such a subset we set 
$\mathfrak{a}_\Theta = \bigcap_{\alpha \in \Theta} \ker \alpha$ and denote by $M_\Theta = Z_K( \mathfrak{ a}_\Theta)$ its centralizer in $K$.
  Then $P^{+}_{\Theta} = M_\Theta \exp( \mathfrak{a}) N$ and 
  $P^{-}_{\Theta} = M_\Theta \exp( \mathfrak{a}) N^-$ are parabolic
  subgroups, which are opposite. 
   A parabolic
  subgroup containing $B^+$ is of the form $P^{+}_{\Theta}$ for a
  uniquely determined $\Theta$. 
Any pair of opposite parabolic subgroups is conjugate to
  $(P^{+}_{\Theta}, P^{-}_{\Theta})$ for some $\Theta\subset \Delta$.  
  
  \begin{remark} 
 The conjugacy class of $(P^+, P^-)$ is 
  determined by the conjugacy class of $P^+$. In view of this, we will sometimes
  say that a representation is $P^+$-Anosov (or
  $L$-Anosov). 
  
  Note also that $P^{-}_{\Theta}$ is conjugate to $P^{+}_{\iota(\Theta)}$. 
In particular $P^{+}_{\Theta}$ is conjugate
to its opposite if and only if $\Theta = \iota( \Theta)$.
\end{remark}

The intersection $L_\Theta = P^{+}_{\Theta} \cap P^{-}_{\Theta}$ is
the common Levi component of $P^{+}_{\Theta} $ and $P^{-}_{\Theta}$. The group $M_\Theta$ is a maximal compact
subgroup of $L_\Theta$.  We denote the Weyl chamber of $L_\Theta$ by
$\mathfrak{a}_{L_\Theta}^{+} = \{a \in \mathfrak{a} \mid \alpha(a)
>0,\ \text{for all } \alpha \in \Theta \}$. The Cartan decomposition
for $L_\Theta$ is $L_\Theta = M_\Theta
\exp(\overline{\mathfrak{a}}^{+}_{L_\Theta} ) M_\Theta$.

 We denote by
$\Sigma_\Theta$ the roots in the span of $\Theta$: $\Sigma_\Theta =
\Span_\RR( \Theta) \cap \Sigma$. The Lie algebras of $P^{+}_{\Theta}$,
$P^{-}_{\Theta}$ and $L_\Theta$ are:
\[ \mathfrak{p}^{+}_{\Theta} = \hspace{-1.5em} \bigoplus_{\alpha \in \Sigma^+ \cup
  \Sigma_\Theta \cup \{ 0 \}} \hspace{-1.5em} \mathfrak{g}_\alpha, \
\mathfrak{p}^{-}_{\Theta} = \hspace{-1.5em} {\bigoplus_{\alpha \in \Sigma^- \cup
  \Sigma_\Theta \cup \{ 0 \}} \hspace{-1.5em} \mathfrak{g}_\alpha}, \
\mathfrak{l}_\Theta = \hspace{-1em} {\bigoplus_{\alpha \in \Sigma_\Theta \cup \{ 0
  \}} \hspace{-1em} \mathfrak{g}_\alpha}.\]
 The nilpotent radicals of
$\mathfrak{p}^{+}_{\Theta}$ and $\mathfrak{p}^{-}_{\Theta}$ are hence:
\[ \mathfrak{n}^{+}_{\Theta} = \underset{\alpha \in \Sigma^+ \moins
  \Sigma_\Theta }{\bigoplus\ \mathfrak{g}_\alpha}, \quad
\mathfrak{n}^{-}_{\Theta} = \underset{\alpha \in \Sigma^- \moins
  \Sigma_\Theta }{\bigoplus\ \mathfrak{g}_\alpha}.\]

 There are $L_\Theta$-equivariant identifications of the tangent space
of $G/ P^{+}_{\Theta}$  at
$P^{+}_{\Theta}$  with $\mathfrak{
  n}^{-}_{\Theta}$, and respectively of the tangent space
of $G/ P^{-}_{\Theta}$  at
$P^{-}_{\Theta}$  with $\mathfrak{
  n}^{+}_{\Theta}$.

An element $\exp(a)$ with $a\in \overline{
  \mathfrak{a}}^{+}_{L_\Theta}$ contracts on $\mathfrak{ n}^{-}_{\Theta}$
(resp.\ dilates 
  on $\mathfrak{n}^{+}_{\Theta}$) if and only if
$\alpha(a)>0$ for all $\alpha \in \Delta \moins \Theta$. Moreover one
has the quantitative statement:

\begin{lem}\label{lem:mu_contract}
  There is a positive constant $C$ such that:
  \begin{itemize}
  \item For any $k$ and $a \in \overline{ \mathfrak{a}}^{+}_{L_\Theta}$, if
    $\exp(a)$ is $k$-Lipschitz on $T_{P^{+}_{\Theta}} G/
    P^{+}_{\Theta}$ then, for all $\alpha \in \Delta \moins \Theta$,
    one has $\alpha(a) \geq - \log k$.
  \item For any $M \geq 0$ and $a\in \overline{ \mathfrak{a}}^{+}_{L_\Theta}$, if
    for all $\alpha \in \Delta \moins \Theta$, $\alpha(a) \geq M$ then
    $\exp(a)$ is $Ce^{-M}$-Lipschitz on $T_{P^{+}_{\Theta}}
    G/ P^{+}_{\Theta}$. 
  \end{itemize}
\end{lem}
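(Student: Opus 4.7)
The plan is to unfold the $L_\Theta$-equivariant identification $T_{P^{+}_{\Theta}}(G/P^{+}_{\Theta}) \cong \mathfrak{n}^{-}_{\Theta}$ recalled just above the lemma, and then reduce to a scalar computation on each root space. For $a \in \mathfrak{a}$ the element $\exp(a)$ fixes $P^{+}_{\Theta}$ (since $\mathfrak{a} \subset \mathfrak{l}_\Theta$), and its differential at that point is $\mathrm{Ad}(\exp(a))$ restricted to $\mathfrak{n}^{-}_{\Theta} = \bigoplus_{\alpha \in \Sigma^+ \setminus \Sigma_\Theta} \mathfrak{g}_{-\alpha}$. Since $\mathrm{ad}(a)$ acts on $\mathfrak{g}_{-\alpha}$ by the scalar $-\alpha(a)$, the operator $\mathrm{Ad}(\exp(a))$ acts on $\mathfrak{g}_{-\alpha}$ by $e^{-\alpha(a)}$. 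I fix once and for all an $M_\Theta$-invariant reference norm on $\mathfrak{n}^{-}_{\Theta}$ which is orthogonal with respect to the root-space decomposition; with respect to this norm $\mathrm{Ad}(\exp(a))$ is diagonal with spectrum $\{e^{-\alpha(a)} : \alpha \in \Sigma^+ \setminus \Sigma_\Theta\}$. Any other norm on $T_{P^{+}_{\Theta}}(G/P^{+}_{\Theta})$ is equivalent to this one, which contributes the constant $C$ in the statement.

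For the first bullet: if $\exp(a)$ is $k$-Lipschitz in some continuous family of norms, then its operator norm in the reference norm is bounded by $C_1 k$ for a fixed $C_1$, and in particular the eigenvalue on each subspace $\mathfrak{g}_{-\alpha}$ with $\alpha \in \Delta \setminus \Theta \subset \Sigma^+ \setminus \Sigma_\Theta$ satisfies $e^{-\alpha(a)} \leq C_1 k$. Taking logarithms gives $\alpha(a) \geq -\log k - \log C_1$, which is the claimed inequality (after absorbing $\log C_1$ into the constant $C$).

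For the second bullet: every $\alpha \in \Sigma^+ \setminus \Sigma_\Theta$ has a unique expansion $\alpha = \sum_{\beta \in \Delta} n_\beta(\alpha)\, \beta$ with $n_\beta(\alpha) \in \mathbb{Z}_{\geq 0}$, and because $\alpha \notin \Sigma_\Theta$ at least one $n_\beta(\alpha)$ with $\beta \in \Delta \setminus \Theta$ is positive. For $a \in \overline{\mathfrak{a}}^{+}_{L_\Theta}$ we have $\beta(a) \geq 0$ for every $\beta \in \Theta$, hence assuming $\beta(a) \geq M \geq 0$ for every $\beta \in \Delta \setminus \Theta$ we deduce
\[ \alpha(a) \;=\; \sum_{\beta \in \Theta} n_\beta(\alpha)\, \beta(a) \;+\; \sum_{\beta \in \Delta \setminus \Theta} n_\beta(\alpha)\, \beta(a) \;\geq\; M. \]
Therefore every eigenvalue of $\mathrm{Ad}(\exp(a))$ on $\mathfrak{n}^{-}_{\Theta}$ is bounded in modulus by $e^{-M}$, so the operator norm in the reference norm is $\leq e^{-M}$, and comparing with any other continuous family of norms inflates this by at most a uniform constant $C$.

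There is no real obstacle: the argument is a direct root-space computation once the identification $T_{P^{+}_{\Theta}}(G/P^{+}_{\Theta}) \cong \mathfrak{n}^{-}_{\Theta}$ and the diagonalization of $\mathrm{Ad}(\exp(a))$ on root spaces are in place. The only bookkeeping is the passage from an arbitrary Lipschitz norm to the preferred $M_\Theta$-invariant norm, which accounts for the single constant $C$ in the lemma.
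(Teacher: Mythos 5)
The paper states this lemma without proof (it is treated as a standard fact), and your root-space argument is exactly the computation it leaves implicit: identify $T_{P^{+}_{\Theta}}G/P^{+}_{\Theta}$ with $\mathfrak{n}^{-}_{\Theta}$, note that $\exp(a)$ acts there by $\mathrm{Ad}(\exp(a))$ with eigenvalue $e^{-\alpha(a)}$ on $\mathfrak{g}_{-\alpha}$, and control the simple roots in $\Delta\moins\Theta$ in one direction, all of $\Sigma^+\moins\Sigma_\Theta$ via nonnegative integral expansions in the other. So the proposal is essentially correct and is the intended argument.

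One small point in your first bullet: you compare norms and end up with $\alpha(a)\geq -\log k-\log C_1$, proposing to absorb $\log C_1$ into $C$ --- but the lemma's first item is stated without any constant, so there is nothing to absorb it into. The comparison is in fact unnecessary: since $e^{-\alpha(a)}$ is an eigenvalue of the differential of $\exp(a)$ at the fixed point $P^{+}_{\Theta}$, and the modulus of an eigenvalue is bounded by the operator norm with respect to \emph{any} norm on that tangent space, $k$-Lipschitz gives $e^{-\alpha(a)}\leq k$ directly, i.e.\ $\alpha(a)\geq-\log k$ exactly. The constant $C$ is only needed in the second bullet, precisely where you use it, to pass from the reference norm (in which $\mathrm{Ad}(\exp(a))$ is diagonal on the root spaces) to the given one.
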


In particular this lemma implies that an element of $\overline{
  \mathfrak{a}}^{+}_{L_\Theta}$ whose action is contracting on $T_{P^{+}_{\Theta}} G/
    P^{+}_{\Theta}$ is contained in the closed Weyl chamber $\overline{
      \mathfrak{a}}^{+}$ of $G$.

\subsection{Lifting sections and $L$-Cartan projections}
\label{sec:lifting-section-gm}

Let $(P^+,P^-) = (P^{+}_{\Theta}, P^{-}_{\Theta})$ be a pair of
opposite parabolic subgroups, and let $L_\Theta$, $M_\Theta$, etc.\ be
as in the preceding section.
We occasionally drop the subscript $\Theta$. 
We set $\mathcal{Y} = G/M$, this is an $L/M$-bundle over $\Xx = G/L$.

Let $\rho: \G \to G$ be a representation and let 
$\mathcal{X}_\rho = \flow \times_\rho \mathcal{X}$ and 
$\mathcal{Y}_\rho = \flow \times_\rho \mathcal{Y}$ be the associated flat bundles over $\G \backslash \flow$. Then $\pi :
\mathcal{Y}_\rho \to \mathcal{X}_\rho$ is an $L/M$-bundle and hence has
contractible (even convex) fibers. 
This implies 
\begin{lem}\label{lem:exi_lift}
  Let $\sigma$ be a section of $\mathcal{X}_\rho$, then there exists a
  section $\beta$ of $\mathcal{Y}_\rho$ such that $\pi \circ \beta = \sigma$.

\end{lem}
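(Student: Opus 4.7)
The plan is to build $\beta$ by gluing local lifts of $\sigma$ via a partition of unity on the base, using that the fibers of $\pi$ are Hadamard manifolds (and in particular contractible) with a canonical intrinsic notion of averaging.

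First I would recall the geometric structure of the fiber. Since $M=M_\Theta=Z_K(\mathfrak{a}_\Theta)$ is a maximal compact subgroup of the reductive Levi group $L=L_\Theta$, the quotient $L/M$ is the Riemannian symmetric space of $L$. Via, e.g., the Iwasawa decomposition of $L$ it is diffeomorphic to a Euclidean space, and the $L$-invariant symmetric Riemannian metric makes it a simply connected complete manifold of nonpositive sectional curvature. Because this metric is $L$-invariant, it descends to a well-defined Riemannian metric on each fiber $\pi^{-1}(\sigma(x))\subset\mathcal{Y}_\rho$, independently of trivialization, so every fiber is a Hadamard manifold.

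Next I would carry out the gluing. The base $\mathcal{X}_\rho$, being an associated bundle over $\G\backslash\flow$, is paracompact. Choose a locally finite open cover $\{U_i\}_{i\in I}$ of $\mathcal{X}_\rho$ over which $\pi:\mathcal{Y}_\rho\to\mathcal{X}_\rho$ trivializes, together with a subordinate partition of unity $\{\varphi_i\}$. On each $U_i$ the bundle is $U_i\times L/M$, so there is an obvious continuous local lift $\beta_i:U_i\to\pi^{-1}(U_i)$ of $\sigma|_{U_i}$. For each $x\in\mathcal{X}_\rho$, all $\beta_i(x)$ with $\varphi_i(x)>0$ lie in the single fiber $\pi^{-1}(\sigma(x))$ and the weights sum to $1$. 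Set
\[
 \beta(x)=\arg\min_{y\in\pi^{-1}(\sigma(x))}\sum_{i}\varphi_i(x)\, d(y,\beta_i(x))^{2},
\]
the Fr\'echet--Karcher barycenter of $\{\beta_i(x)\}$ with weights $\{\varphi_i(x)\}$ in that fiber. On a Hadamard manifold this function is strictly convex and proper, so the minimizer exists, is unique, and depends continuously on the weights and points. This yields a continuous section $\beta$ of $\mathcal{Y}_\rho$, and by construction $\pi\circ\beta=\sigma$.

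There is no substantive obstacle: the statement is a special case of the classical fact that a fiber bundle with contractible fibers over a paracompact base admits a section, and the Hadamard structure gives an explicit, canonical averaging procedure. The only point requiring a line of justification is that the barycenter is intrinsic to each fiber rather than trivialization-dependent, and this is guaranteed by the $L$-invariance of the symmetric metric on $L/M$.
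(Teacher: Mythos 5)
Your proof is correct and follows essentially the same route as the paper: the paper reduces to finding a section of the pulled-back $L/M$-bundle $\sigma^*\mathcal{Y}_\rho$ over the compact base $\G\backslash\flow$ and glues local sections with a partition of unity, using exactly the convexity of the fibers that your Fr\'echet--Karcher barycenter makes explicit. The only cosmetic point is that the partition of unity is most naturally taken on the base $\G\backslash\flow$ (equivalently, pull your cover of $\mathcal{X}_\rho$ back through $\sigma$), so that $\beta$ is a section over $\G\backslash\flow$ lifting $\sigma$, as in the statement.
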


\begin{proof}
  Indeed it is equivalent to finding a section of the $L/M$-bundle
  $\sigma^* \mathcal{Y}_\rho$ over $\G \backslash \flow$ (the pull
  back by
  $\sigma$ of the $L/M$-bundle $\mathcal{Y}_\rho \to \mathcal{X}_\rho$). This is a
  locally trivial bundle, thus local sections exist. Moreover $\G
  \backslash \flow$ is a compact metric space and admits partitions
  of unity, so that local sections can be glued together to a global section.
\end{proof}

Two different lifts are equal up to finite distance:

\begin{lem}\label{lem:bdd_distance}
Let $\beta$ and $\beta'$ be two lifts of $\sigma$, and denote by $\hat{\beta}, \hat{\beta}': \flow \to \mathcal{Y}$ their $\rho$-equivariant pull-backs. Then there exists $R>0$ such that $\forall m \in \flow$, $d_{G/M}(\hat{\beta}(m), \hat{\beta}'(m) ) \leq R$, where $d_{G/M}$ 
is a $G$-invariant Riemannian distance  $\mathcal{Y} = G/M$. 
\end{lem}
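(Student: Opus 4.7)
The plan is to exploit the compactness of $\Gamma\backslash\flow$ together with the $\Gamma$-invariance of the relevant distance function. Concretely, I would define the continuous function
\[ f: \flow \longto \RR_{\geq 0} \sep m \longmapsto d_{G/M}\bigl(\hat{\beta}(m), \hat{\beta}'(m)\bigr) \]
and show that it descends to a continuous function on $\Gamma \backslash \flow$; since the latter is compact, $f$ will be bounded and we can take $R$ to be its maximum.

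First I would observe that the definition of $f$ makes sense: since $\pi \circ \beta = \sigma = \pi \circ \beta'$, the pullbacks $\hat{\beta}(m)$ and $\hat{\beta}'(m)$ project to the same point $\hat{\sigma}(m)$ in $\Xx = G/L$, so they lie in the same $L/M$-fiber of $\pi: \mathcal{Y} \to \mathcal{X}$, and in particular in the same connected component of $\mathcal{Y}$; continuity of $f$ is then automatic from continuity of the two lifts. Next I would verify $\Gamma$-invariance: for any $\gamma \in \Gamma$ and $m \in \flow$, using $\rho$-equivariance of $\hat{\beta}$ and $\hat{\beta}'$ and $G$-invariance of $d_{G/M}$,
\[ f(\gamma \cdot m) = d_{G/M}\bigl(\rho(\gamma)\hat{\beta}(m), \rho(\gamma)\hat{\beta}'(m)\bigr) = d_{G/M}\bigl(\hat{\beta}(m), \hat{\beta}'(m)\bigr) = f(m). \]
Therefore $f$ factors through the quotient map $\flow \to \Gamma \backslash \flow$.

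Finally I would invoke compactness of $\Gamma \backslash \flow$ (which follows from the fact that orbit maps $\Gamma \to \flow$ are quasi-isometries, so $\Gamma$ acts cocompactly; this is the same compactness used in the proof of Lemma~\ref{lem:exi_lift}). A continuous real-valued function on a compact space attains its maximum, so $f$ is bounded above by some $R > 0$, which is the desired constant.

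The argument is quite short and the only mild subtlety is ensuring that $d_{G/M}$ is $G$-invariant on the possibly non-connected space $G/M$; choosing a genuine $G$-invariant Riemannian metric and defining $d_{G/M}(x,y) = +\infty$ for points in different components would break boundedness, so the main (minor) point to check is that $\hat{\beta}(m)$ and $\hat{\beta}'(m)$ remain in the same component of $G/M$ for all $m$. This is immediate from the fibered structure $\pi:G/M \to G/L$ since both lifts sit in the $L/M$-fiber over $\hat{\sigma}(m)$, which is connected (being the image of the connected group $L$ under $L \to L/M$ when one works component-wise, or alternatively one fixes a global section of $L/M$ over a fundamental domain and measures relative displacement).
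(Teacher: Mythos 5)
Your argument is correct and is precisely the (standard) argument the paper leaves implicit, as the lemma is stated there without proof: the function $m \mapsto d_{G/M}(\hat{\beta}(m), \hat{\beta}'(m))$ is continuous, finite-valued since both lifts lie in the same contractible fiber $L/M$ over $\hat{\sigma}(m)$, and $\Gamma$-invariant by $\rho$-equivariance and $G$-invariance of the metric, hence descends to the compact quotient $\Gamma\backslash\flow$ and is bounded. Your side remark about connected components is a reasonable point of care and is settled exactly as you say, since the fibers $L/M$ are connected.
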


Suppose now that $\sigma$ is flat along $\RR$-orbits. The section
$\beta$ corresponds to an equivariant continuous map $\hat{\beta}:
\flow \to \mathcal{Y}$ lifting $\hat{ \sigma}: \flow \to \mathcal{X}$,
the equivariant map corresponding to $\sigma$. As $\hat{ \sigma}$ is
$\RR$-invariant, for any $\hat{m}\in \flow$ and any $t \in \RR$,
$\hat{\beta}( \hat{m})$ and $\hat{\beta}( \phi_t \hat{m})$ project to
the same point in $\mathcal{X} = G/L$. Thus the pair $(\hat{\beta}( \hat{m}),
\hat{\beta}( \phi_t \hat{m}))$ is in the $G$-orbit of a unique pair of the
form $(\exp(\mu_{+,\Theta}(\hat{m},t)) M, M)$ with $\mu_{+,\Theta}(\hat{m},t)\in \overline{ \mathfrak{a}}^{+}_{L_\Theta}$; similarly it is in the $G$-orbit of a unique pair of the
form $( M, \exp(\mu_{-,\Theta}(\hat{m},t)) M)$ with $\mu_{-,\Theta}(\hat{m},t)\in \overline{ \mathfrak{a}}^+_{L_\Theta}$. 

\begin{defi}
Let $\sigma: \G\backslash \flow \to \mathcal{X}_\rho$ be a section which is flat along $\RR$-orbits, and let $\beta$ be a lift of $\sigma$ to a section of $\mathcal{Y}_\rho$.  The  maps 
\[\mu_{+,\Theta}, \mu_{-,\Theta}: \flow \times \RR
\to \overline{ \mathfrak{a}}^{+}_{L_\Theta}\] 
are called \emph{$L_\Theta$-Cartan projections}. 
They are continuous and
$\G$-invariant, and hence well defined on $\G \backslash \flow \times
\RR$.
\end{defi}

\begin{remarks}
\noindent
\begin{asparaenum}[(a)]

\item  The $L_\Theta$-Cartan projections take values in the closed Weyl chamber
  $\overline{ \mathfrak{a}}^{+}_{L_\Theta} = \{ a \in \mathfrak{a} \mid \alpha(a)
\geq 0,\ \text{for all } \alpha \in \Theta  \}$ which is a closed cone
of $\mathfrak{a}$ with nonempty interior.
\item 
It is possible to define maps into the closed Weyl chamber $\overline{
  \mathfrak{a}}^+$ of $\mathfrak{g}$, however doing this will lead to
a loss of information: we need to understand $G$-orbits of pairs of
points in $G/M$ that project to the same element in $G/L$; this is the
same as understanding $L$-orbits of pairs of points in $L/M$ which are
ultimately completely classified by the closed Weyl chamber of
$L$. The maps into $\overline{
  \mathfrak{a}}^+$ would amount to classifying $G$-orbits of pairs in
$G/K$ and cannot keep track of the action on
$\mathfrak{n}^{\pm}_{\Theta}$. 

\item The idea of lifting the section $\sigma$ of $\mathcal{X}_\rho$
  to a section of $\mathcal{Y}_\rho$ is already implicit in
  \cite{Labourie_anosov} and \cite{Burger_Iozzi_Labourie_Wienhard},
  where a specific metric on the bundle $\sigma^* E^\pm$ (see
  Section~\ref{sec:defiRiem}) is chosen in order to prove the
  contraction property. The choice of this specific metric corresponds
  to a choice of a lift $\beta$.

\item 
The classical Cartan projection $\mu: G \to \mathfrak{a}^+$ can be
used to define a refined Weyl chamber valued distance
function: if $\delta$ denotes the Weyl chamber valued distance on the
symmetric space $G/K$, then $\mu(g) = \delta(K, gK)$, see for example 
\cite{KapovichLeebMillson_triangle,Parreau_distance} for an account on this
subject and references therein. 
\end{asparaenum}
\end{remarks}

The dependence of the $L$-Cartan projection on the section $\sigma$ is crucial. 
However, their asymptotic behavior does not depend on the choice of the lift $\beta$:

\begin{lem}\label{lem:mu_beta}
Let $\sigma: \G\backslash \flow \to \mathcal{X}_\rho$ be a section
which is flat along $\RR$-orbits, and let $\beta$ and $\beta'$ be two
lifts. Let $\mu_{\pm , \Theta}$ and $\mu_{\pm , \Theta}^{\prime}$ be
the $L$-Cartan projections associated with $\beta$ and $\beta'$
respectively.

Then there exists a constant $C>0$ such that for all $(m,t) \in \flow \times \RR$ 
\[ d(\mu_{\pm , \Theta} (m,t),  \mu_{\pm , \Theta}^{\prime} (m,t))
\leq C, \]
where $d$ is the distance for some norm on the vector space $ \mathfrak{a}_{L_\Theta}$.
\end{lem}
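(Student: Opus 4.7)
The plan is to combine Lemma~\ref{lem:bdd_distance} --- which forces the two lifts $\hat\beta, \hat\beta'$ to differ pointwise by a bounded amount --- with the standard coarse bi-invariance of the Cartan projection of the reductive group $L = L_\Theta$ under left and right multiplication by a fixed compact set.

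Concretely, I would first fix $(\hat m, t) \in \flow \times \RR$ and choose $G$-representatives $g_1, g_1', g_2, g_2' \in G$ of $\hat\beta(\hat m), \hat\beta'(\hat m), \hat\beta(\phi_t \hat m), \hat\beta'(\phi_t \hat m) \in \mathcal{Y} = G/M$. Since $\pi \circ \beta = \pi \circ \beta' = \sigma$ and $\sigma$ is flat along $\RR$-orbits, all four cosets project to a single element of $G/L$; in particular $g_1^{-1} g_1', g_2^{-1} g_2', g_2^{-1} g_1, g_2'^{-1} g_1' \in L$. Lemma~\ref{lem:bdd_distance} then yields an $R > 0$, independent of $\hat m$ and $t$, such that the cosets $g_1^{-1} g_1' M$ and $g_2^{-1} g_2' M$ lie in the closed $R$-ball around $M \in G/M$; since $G \to G/M$ is proper, the preimage of this ball is compact in $G$, and its intersection with the closed subgroup $L$ is a fixed compact set $K \subset L$ containing $h_1 := g_1^{-1} g_1'$ and $h_2 := (g_2^{-1} g_2')^{-1}$, uniformly in $(\hat m, t)$. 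Unfolding the definition and writing $\mu_L : L \to \overline{\mathfrak{a}}^+_{L_\Theta}$ for the Cartan projection of $L$, one finds $\mu_{+,\Theta}(\hat m, t) = \mu_L(g_2^{-1} g_1)$ and
\[
\mu_{+,\Theta}^{\prime}(\hat m, t) = \mu_L(g_2'^{-1} g_1') = \mu_L\bigl(h_2 \cdot (g_2^{-1} g_1) \cdot h_1\bigr).
\]

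The lemma then reduces to the standard fact that for any compact $C \subset L$ there is a constant $C' > 0$ with $\|\mu_L(c_2 l c_1) - \mu_L(l)\| \leq C'$ for every $l \in L$ and $c_1, c_2 \in C$. I would deduce this by identifying $\mu_L$ with the Weyl-chamber-valued distance on the symmetric space $L/M_\Theta$ and invoking the inequality $\|\delta(p,q) - \delta(p',q')\| \leq d(p,p') + d(q,q')$ alluded to in the remark preceding the statement, compare \cite{KapovichLeebMillson_triangle, Parreau_distance}. The treatment of $\mu_{-,\Theta}$ is the same after swapping the two entries of the defining pair. The only non-routine ingredient is this coarse bi-invariance of $\mu_L$, which is classical; everything else is bookkeeping, and I do not expect any real obstacle.
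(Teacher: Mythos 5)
Your proposal is correct and is essentially the paper's own argument: the paper also combines Lemma~\ref{lem:bdd_distance} with the triangle inequality for the Weyl-chamber valued distance on the symmetric space $L/M$ from \cite{KapovichLeebMillson_triangle, Parreau_distance}, working directly with the four points in the fiber $L/M$ rather than with group representatives. Your reformulation via coarse bi-invariance of $\mu_L$ (plus the compactness argument pinning $h_1, h_2$ in a fixed compact subset of $L$) is just a slightly more explicit bookkeeping of the same step.
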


\begin{proof}
Denote by $\hat{\sigma}$, $\hat{\beta}$, and $ \hat{\beta}'$ the
$\rho$-equivariant lifts of $\sigma$, $\beta$ and $\beta'$. Let $(m,t) \in \flow \times \RR$,
since $\pi(\hat{\beta} (m))  = \pi(\hat{\beta} (\phi_t m))  =
\hat{\sigma} (m) =  \pi(\hat{\beta}' (\phi_t m) ) = \pi(\hat{\beta}'
(m))$,  the four points  $ \hat{\beta} (m)$,  $\hat{\beta}(\phi_t m)$,
$\hat{\beta}'(\phi_t m)$ and $\hat{\beta}'(m)$ lie in one fiber, which
we can assume to be $L/M$.  

Moreover by Lemma~\ref{lem:bdd_distance}  there exists $R>0$ such that $\forall m \in \flow$:  $d( \hat{\beta} (m), \hat{\beta'} (m)) \leq R$ and  $d( \hat{\beta} (\phi_t m), \hat{\beta'} (\phi_t m)) \leq R$. 

The statement now follows from the triangle inequality for the Weyl
chamber valued distance function on the symmetric space $L/M$
\cite{KapovichLeebMillson_triangle, Parreau_distance}. 
\end{proof}

Recall that $\lambda(g)$ denotes the hyperbolic part of the Jordan
decomposition of $g$ (Section~\ref{sec:parab-subgr-lie}):
\begin{lem}
Let  $\g \in \G$ be a  non-torsion element and  denote by $\hat{\g}
\subset \flow$ the corresponding $\g$-invariant $\RR$-orbit in $\flow$. 
Let $T$ be the period of this orbit (i.e.\ $\g$ acts as $\phi_T$ on $\hat{\g}$) and $\hat{m} \in \hat{\g}$. 
Assume that $\rho: \G \to G$ is an Anosov representation. 
Then 
\[\mu_{+, \Theta}(\hat{m},T) = \lambda (\rho(\g)),
\text{ and } \mu_{-, \Theta} ( \hat{m},T) = \lambda(\rho(\g)^{-1}).\]
\end{lem}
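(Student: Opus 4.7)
My plan is to exploit the periodicity to reduce the computation of $\mu_{\pm,\Theta}(\hat m,T)$ to a Cartan decomposition statement in the Levi $L_\Theta$, and then use Corollary~\ref{cor:holonomy} to match the Cartan projection with the Jordan projection in~$G$.

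\textbf{Step 1 (periodicity and equivariance).} Let $\hat\beta : \flow \to \mathcal{Y}=G/M$ be the $\rho$-equivariant pull-back of the lift $\beta$, and pick $h \in G$ with $\hat\beta(\hat m) = hM$. Since $\hat m$ lies on the periodic orbit $\hat\gamma$ one has $\phi_T \hat m = \gamma \cdot \hat m$, and $\rho$-equivariance of $\hat\beta$ gives
\bqn
\hat\beta(\phi_T \hat m) \;=\; \hat\beta(\gamma\cdot\hat m) \;=\; \rho(\gamma)\,\hat\beta(\hat m) \;=\; \rho(\gamma)h\,M.
\eqn
Thus the pair we need to put in standard form is $\bigl(hM,\,\rho(\gamma) hM\bigr)\in G/M\times G/M$.

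\textbf{Step 2 (reduction to an $L_\Theta$-Cartan projection).} Translating by $(\rho(\gamma)h)^{-1}$ puts the pair into the $G$-orbit of $\bigl((\rho(\gamma)h)^{-1}hM,\, M\bigr) = \bigl(h^{-1}\rho(\gamma)^{-1}h\, M,\, M\bigr)$. Since both coordinates of our pair project to the same point in $G/L_\Theta$, the element $h^{-1}\rho(\gamma)^{-1}h$ lies in $L_\Theta$; and writing the pair in standard form $(\exp(\mu_{+,\Theta})M,M)$ is, by the Cartan decomposition $L_\Theta = M_\Theta\exp(\overline{\mathfrak{a}}_{L_\Theta}^+)M_\Theta$, precisely the statement that $\mu_{+,\Theta}(\hat m,T)$ is the $L_\Theta$-Cartan projection of $h^{-1}\rho(\gamma)^{-1}h$. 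A symmetric computation using the standard form $(M,\exp(\mu_{-,\Theta})M)$ identifies $\mu_{-,\Theta}(\hat m,T)$ with the $L_\Theta$-Cartan projection of $h^{-1}\rho(\gamma)h$. These values depend only on the conjugacy classes of $\rho(\gamma)^{\pm 1}$ in $L_\Theta$, not on the particular representative $h$.

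\textbf{Step 3 (matching Cartan with Jordan).} By Corollary~\ref{cor:holonomy}, $\rho(\gamma)$ is conjugate in $G$ to an element of $L_\Theta$ whose action contracts on $T_{P^+_\Theta}\mathcal{F}^+$ and dilates on $T_{P^-_\Theta}\mathcal{F}^-$. By Lemma~\ref{lem:mu_contract} this forces its hyperbolic part to be $L_\Theta$-conjugate to $\exp(\lambda(\rho(\gamma)))$ with $\lambda(\rho(\gamma))\in \overline{\mathfrak{a}}^+_G \subset \overline{\mathfrak{a}}^+_{L_\Theta}$, and its (commuting) elliptic part to lie in $M_\Theta$. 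For a semisimple element $\ell\in L_\Theta$ of this shape, hyperbolic part and elliptic part both centralise $\exp(\mathfrak{a})$, so $\ell \in M_\Theta \exp(\lambda(\rho(\gamma))) M_\Theta$ and the $L_\Theta$-Cartan projection of $\ell$ coincides with $\lambda(\rho(\gamma))$; applying the analogous statement to $\rho(\gamma)^{-1}$ (which is also Anosov, for the opposite parabolic) gives the claim for $\mu_{+,\Theta}$, and combining Steps~2 and~3 yields the two equalities.

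The main obstacle is the last identification in Step~3: one has to show that for the specific semisimple element of $L_\Theta$ supplied by Corollary~\ref{cor:holonomy}, the $L_\Theta$-Cartan projection really does agree with the $G$-Jordan projection, which in turn forces a careful use of Lemma~\ref{lem:mu_contract} to place $\lambda(\rho(\gamma))$ inside $\overline{\mathfrak{a}}^+_{L_\Theta}$ (so that no extra Weyl-group element of $W_{L_\Theta}$ intervenes) and a verification that the possible elliptic commuting factor produced by Corollary~\ref{cor:holonomy} can indeed be taken inside $M_\Theta$. Everything else is then bookkeeping via the Cartan decomposition of $L_\Theta$.
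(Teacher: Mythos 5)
Steps 1--2 are the right reduction: $\mu_{\pm,\Theta}(\hat{m},T)$ is the relative position, inside the fiber $L_\Theta/M_\Theta$ over $\hat{\sigma}(\hat{m})$, of $\hat{\beta}(\hat{m})$ and $\rho(\g)\hat{\beta}(\hat{m})$, i.e.\ an $L_\Theta$-Cartan projection of $h^{-1}\rho(\g)^{\pm1}h$. The gap is the sentence closing Step 2: these values do \emph{not} depend only on the conjugacy class. The element $h$ is determined by the lift $\beta$ only up to right multiplication by $L_\Theta$ (at the single point $\hat{m}$ the lift may take any value in the fiber $L_\Theta/M_\Theta$, and any such choice extends to a global lift as in Lemma~\ref{lem:exi_lift}), so $h^{-1}\rho(\g)h$ is well defined only up to $L_\Theta$-conjugation, and the Cartan projection of $L_\Theta$ is bi-$M_\Theta$-invariant but not conjugation-invariant (singular values of a conjugate of $\mathrm{diag}(e^{s},e^{-s})$ are in general strictly larger than $e^{\pm s}$); this is precisely why Lemma~\ref{lem:mu_beta} only asserts a bounded difference. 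As written, your argument therefore proves the identity only for a lift adapted to $\rho(\g)$ along the periodic orbit; constructing or normalizing to such a lift is the actual content, and for a general lift only equality up to the bounded ambiguity of Lemma~\ref{lem:mu_beta} (or the asymptotic version $\mu_{+,\Theta}(\hat{m},nT)/n \to \lambda(\rho(\g))$) can be expected.

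Two further points in Step 3 need attention. First, Corollary~\ref{cor:holonomy} does not provide a \emph{semisimple} element: when $P_\Theta$ is not minimal, an Anosov holonomy may carry a nontrivial unipotent Jordan factor inside $L_\Theta$, and then no conjugate of $\rho(\g)$ lies in $M_\Theta\exp(\lambda(\rho(\g)))M_\Theta$ (a non-normal matrix never has singular values equal to its eigenvalue moduli), so exact equality fails for every lift; your proof silently assumes semisimplicity. Second, the final symmetry step ``apply the analogous statement to $\rho(\g)^{-1}$'' does not come for free: the conjugate $h^{-1}\rho(\g)^{-1}h$ is in good position for the \emph{opposite} pair of parabolics, and folding $-\lambda(\rho(\g))$ into $\overline{\mathfrak{a}}^{+}_{L_\Theta}$ uses only $W_{L_\Theta}$, producing $\iota_\Theta(\lambda(\rho(\g)))$ rather than $\lambda(\rho(\g)^{-1})=\iota(\lambda(\rho(\g)))$; already for $\SL(3,\RR)$ with $\Theta$ a single simple root these differ. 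So the matching of your Step 2 output with the two displayed identities (and with the $+/-$ ordering in the definition of $\mu_{\pm,\Theta}$) must be carried out explicitly; as it stands the bookkeeping does not close.
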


For $(m, t)\in \G \backslash \flow \times \RR$ we set 
\[ A_+(m,t) = \min_{\alpha \in \Delta \moins \Theta}
\alpha(\mu_{+,\Theta}(\hat{m},t)), \quad A_-(m,t) = \min_{\alpha \in \Delta \moins \Theta}
\alpha(\mu_{-,\Theta}(\hat{m},t)).\]

Note that  $\mu_{-,\Theta}(\hat{m},t) = \iota_\Theta(
\mu_{+,\Theta}(\hat{m},t))$ where $\iota_\Theta$ is the opposition 
involution for $L_\Theta$, and also that $\mu_{+,\Theta}(\hat{m},t) = 
\mu_{-,\Theta}(\phi_t \hat{m},-t)$. This means that dilatation on
$\sigma^* E^+$, which is governed by $\mu_{+,\Theta}$, is equivalent
to contraction on $\sigma^* E^-$, which is governed by
$\mu_{-,\Theta}$. Hence, in the next proposition, only one of the
$L$-Cartan projections needs to be considered:

\begin{prop}\label{prop:mu_contrac}
  Let $\rho: \G \to G$ be a representation, let $\sigma$ be a section
  of $\mathcal{X}_\rho$ which is flat along $\RR$-orbits.  Let $A_+,
  A_-$ be as above. The following are equivalent:
  \begin{enumerate}
  \item \label{item1:propMU} $\sigma$ is an Anosov section (and hence $\rho$ is $(P^+, P^-)$-Anosov).
  \item \label{item2:propMU} There exist positive constants $C$ and $c$ such that for all
    $t \geq 0$ and all $m\in \G \backslash \flow$, one has
    $A_+(m,t) \geq c t -C$ and $A_-(m,-t) \geq c t -C$.
  \item \label{item3:propMU} There exist positive constants $C$ and $c$ such that for all
    $t \geq 0$ and all $m\in \G \backslash \flow$, one has
    $A_+(m,t) \geq c t -C$. 
  \item \label{item4:propMU} $\lim_{t\to +\infty} \inf_{m \in \G \backslash \flow}
    A_+(m,t)= +\infty$.
  \end{enumerate}
\end{prop}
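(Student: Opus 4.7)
The implications $(2)\Rightarrow(3)\Rightarrow(4)$ are immediate from the definitions, so it suffices to prove $(1)\Leftrightarrow(2)$, $(3)\Rightarrow(2)$, and $(4)\Rightarrow(3)$.

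For $(1)\Leftrightarrow(2)$, the plan is to apply Lemma~\ref{lem:mu_contract} pointwise. Fix a lift $\beta$ of $\sigma$ (Lemma~\ref{lem:exi_lift}) and $M_\Theta$-invariant Euclidean norms on $\mathfrak{n}^{\pm}_{\Theta}$; transport via the lift then yields continuous $\Gamma$-invariant families of norms on $\sigma^* E^\pm$, and by cocompactness of $\Gamma\backslash\flow$ any two such families are equivalent, so it suffices to check the contraction condition with respect to these specific norms. A direct computation shows that, for $e\in E^+_{\hat m}$, the ratio $\|e\|_{\phi_t\hat m}/\|e\|_{\hat m}$ is controlled (above and below) by the operator norm, in the appropriate singular-value sense, of the element of $L_\Theta$ whose Cartan projection in $\overline{\mathfrak{a}}^{+}_{L_\Theta}$ equals $\mu_{+,\Theta}(\hat m,t)$ acting on $\mathfrak{n}^{-}_{\Theta}$; both halves of Lemma~\ref{lem:mu_contract} then trap this ratio between $C^{-1}e^{A_+(m,t)}$ and $C\,e^{A_+(m,t)}$ for a universal constant $C$. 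Taking logarithms, the Anosov dilation condition on $E^+$ is equivalent (up to adjusting constants) to the first inequality in (2); the parallel discussion with $\mu_{-,\Theta}$, $E^-$, and $\mathfrak{n}^{+}_{\Theta}$ yields the second inequality.

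For $(3)\Rightarrow(2)$, the plan is to use the identity $\mu_{+,\Theta}(\hat m,t) = \mu_{-,\Theta}(\phi_t\hat m,-t)$ recalled just before the proposition, which gives $A_+(m,t) = A_-(\phi_t m,-t)$. Applying the hypothesis of (3) at the point $\phi_{-t}m$ then yields the missing estimate $A_-(m,-t) = A_+(\phi_{-t}m,t)\geq ct-C$ for all $t\geq 0$.

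The implication $(4)\Rightarrow(3)$ is the key step, and the plan is to establish it by a sub-cocycle argument. The crucial inequality is that there exists a universal $C_0>0$ such that
\[ A_+(m,s+t) \;\geq\; A_+(m,s) + A_+(\phi_s m,t) - C_0 \qquad \text{for all } s,t\geq 0,\ m\in \Gamma\backslash\flow.\]
Exact subadditivity for $\mu_{+,\Theta}$ itself is \emph{false}; however, since $\phi_{s+t} = \phi_t\circ\phi_s$, the corresponding Lipschitz constants on $\sigma^*E^+$ are sub-multiplicative, and both halves of Lemma~\ref{lem:mu_contract} convert this sub-multiplicativity back into the displayed sub-additive inequality for $A_+$, with a universal additive error $C_0$ of size $2\log C$. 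Granted the inequality, use hypothesis (4) to pick $T_0>0$ with $\inf_m A_+(m,T_0) \geq C_0+1$; iterating yields $A_+(m,nT_0)\geq n$ for all $n\in\NN$ and all $m$. For arbitrary $t\geq 0$, write $t=nT_0+r$ with $r\in [0,T_0)$; the continuity of $A_+$ together with compactness of $\Gamma\backslash\flow \times [0,T_0]$ provides a uniform lower bound $A_+(\phi_{nT_0}m,r) \geq -C'$, and one more application of the sub-cocycle inequality gives $A_+(m,t) \geq (1/T_0)\, t - (1+C'+C_0)$, which is (3). The main obstacle is thus establishing this sub-cocycle inequality with a \emph{uniform} additive constant, bypassing the failure of exact subadditivity for $\mu_{+,\Theta}$; Lemma~\ref{lem:mu_contract}, with its built-in universal multiplicative constant $C$, is precisely designed to enable this.
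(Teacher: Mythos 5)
Your argument is correct and is essentially the paper's: Lemma~\ref{lem:mu_contract} serves as the two-sided dictionary between $A_\pm$ and the contraction rates of the flow on $\sigma^*E^\pm$ (computed in the norms induced by a lift $\beta$, which suffices by cocompactness), the implications \eqref{item2:propMU}$\Rightarrow$\eqref{item3:propMU}$\Rightarrow$\eqref{item4:propMU} are formal, and your passage from \eqref{item4:propMU} to a uniform linear estimate is the same compactness-plus-submultiplicativity bootstrap that the paper delegates to \cite[Section~6.1]{Labourie_anosov}; you simply run it at the level of $A_+$ and close the cycle through \eqref{item3:propMU} rather than through \eqref{item1:propMU}. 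One point to tighten: your sub-cocycle inequality with $C_0=2\log C$ applies the second half of Lemma~\ref{lem:mu_contract} with $M=A_+$ at both summand arguments, but that half is stated (and is only true) for $M\geq 0$; since $A_+$ is a minimum over the simple roots in $\Delta\moins\Theta$ only, while the Lipschitz constant of $\exp(a)$ on $T_{P^{+}_{\Theta}}G/P^{+}_{\Theta}$ is governed by the minimum of $\beta(a)$ over all $\beta\in\Sigma^+\moins\Sigma_\Theta$, the bound $-\log(\mathrm{Lip})\geq A_+-\log C$ can fail by an arbitrarily large amount when $A_+<0$. This is harmless in your iteration, where both summands satisfy $A_+\geq 0$, but in the remainder step the term $A_+(\phi_{nT_0}m,r)$ may be negative; replace that one application by the observation that the Lipschitz constants (equivalently $A_+$, up to a bounded error) are bounded below on the compact set $\G\backslash\flow\times[0,T_0]$ by continuity, after which your final estimate $A_+(m,t)\geq t/T_0-C''$ stands with an adjusted constant.
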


\begin{remark}\label{rem:in_a_plus}
Note that this implies that $\mu_{+,\Theta}(m,t)$ belongs to
$\overline{ \mathfrak{a}}^+$ for $t$ big enough.
\end{remark}
\begin{proof}
  Indeed \eqref{item1:propMU} $\Rightarrow$ \eqref{item2:propMU} follows
  from Lemma~\ref{lem:mu_contract}; the implications
  \eqref{item2:propMU} $\Rightarrow$ \eqref{item3:propMU} $\Rightarrow$
  \eqref{item4:propMU} are immediate. By Lemma~\ref{lem:mu_contract},
  condition \eqref{item4:propMU} implies weak dilatation of the flow on
  the bundle $\sigma^* E^+$, and thus also weak contraction for the flow
  on the bundle $\sigma^* E^-$, from the relations between $\mu_{+,
    \Theta}$ and $\mu_{-, \Theta}$ mentioned above. Uniform estimates follow from
  the compactness of $\G \backslash \flow$. For details on this last
  argument see \cite[Section~6.1]{Labourie_anosov}.
\end{proof}

\subsection{Consequences}
\label{sec:conseq}

Proposition~\ref{prop:mu_contrac} reduces the property of being Anosov
to the control of a few eigenvalues (or rather principal values). From
this one immediately deduces the following

\begin{lem}\label{lem:ThetaAn}
  Let $\rho: \G \to G$ be a representation.
  \begin{enumerate}
  \item If $\rho$ is $P_\Theta$-Anosov, then it is $P_{\iota( \Theta)}$-Anosov.
  \item If $\rho$ is $P_\Theta$-Anosov, then it is $P_{\Theta'}$-Anosov for
    any $\Theta' \supset \Theta$.
  \item If $\rho$ is $P_{\Theta_1}$-Anosov and $P_{\Theta_2}$-Anosov, then it
    is $P_{\Theta_1 \cap \Theta_2}$-Anosov.
  \item \label{itemF:lemThetAn} If $\rho$ is Anosov, then it is $P_\Theta$-Anosov for some
    $\Theta$ satisfying $\iota( \Theta) = \Theta$.
  \end{enumerate}
\end{lem}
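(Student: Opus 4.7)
The four statements will be deduced from Proposition~\ref{prop:mu_contrac} together with the structural identities
\[
P^{\pm}_{\Theta_1\cap\Theta_2}=P^{\pm}_{\Theta_1}\cap P^{\pm}_{\Theta_2},\qquad L_{\Theta_1\cap\Theta_2}=L_{\Theta_1}\cap L_{\Theta_2},\qquad M_{\Theta_1\cap\Theta_2}=M_{\Theta_1}\cap M_{\Theta_2},
\]
which follow from the root-space descriptions of Section~\ref{sec:parab-subgr-lie} together with $\Sigma_{\Theta_1}\cap\Sigma_{\Theta_2}=\Sigma_{\Theta_1\cap\Theta_2}$ (immediate from the fact that $\Delta$ is a basis). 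The plan is to establish (i), (ii), (iii) in turn, with (iv) as an easy consequence of (i) and (iii).

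Part (i) is immediate from the symmetry between $P^+$ and $P^-$ in Definition~\ref{defi:ARhyp}, together with the fact (recalled in Section~\ref{sec:parab-subgr-lie}) that $P^-_\Theta$ is conjugate to $P^+_{\iota(\Theta)}$.

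For (ii), the inclusion $\Theta\subset\Theta'$ gives $P^{\pm}_\Theta\subset P^{\pm}_{\Theta'}$, $L_\Theta\subset L_{\Theta'}$, and $M_\Theta\subset M_{\Theta'}$. I would compose the Anosov maps $\xi^{\pm}$ with the natural projections $G/P^{\pm}_\Theta\to G/P^{\pm}_{\Theta'}$; transversality is preserved because the open orbit $G/L_\Theta$ surjects onto $G/L_{\Theta'}$. To check the contraction property, project a lift $\beta_\Theta$ via $G/M_\Theta\to G/M_{\Theta'}$ to obtain a lift $\beta_{\Theta'}$ of the projected section. By Remark~\ref{rem:in_a_plus}, for $t$ large enough $\mu_{+,\Theta}(m,t)$ lies in $\overline{\mathfrak{a}}^+\subset \overline{\mathfrak{a}}^+_{L_{\Theta'}}$, so $\mu_{+,\Theta'}(m,t)=\mu_{+,\Theta}(m,t)$; since $\Delta\setminus\Theta'\subset\Delta\setminus\Theta$, we deduce $A^{\Theta'}_+(m,t)\geq A^\Theta_+(m,t)$, and condition~\eqref{item4:propMU} of Proposition~\ref{prop:mu_contrac} transfers from $\Theta$ to $\Theta'$.

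Part (iii) carries the main work. Using $P^+_{\Theta_1\cap\Theta_2}=P^+_{\Theta_1}\cap P^+_{\Theta_2}$, the $G$-equivariant map $G/P^+_{\Theta_1\cap\Theta_2}\hookrightarrow G/P^+_{\Theta_1}\times G/P^+_{\Theta_2}$ embeds the domain as a closed $G$-orbit (image of a compact set). The principal obstacle is to show that $(\xi^+_1(t),\xi^+_2(t))$ lies in this orbit for every $t\in\partial_\infty\G$. I would verify this on the dense subset $\{t^+_\g\}$: applying Proposition~\ref{prop:mu_contrac} to each $\Theta_i$ shows that the Jordan projection $\lambda(\rho(\g))$ is strictly positive on $(\Delta\setminus\Theta_1)\cup(\Delta\setminus\Theta_2)=\Delta\setminus(\Theta_1\cap\Theta_2)$. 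Conjugating the hyperbolic part of $\rho(\g)$ into $\exp(\overline{\mathfrak{a}}^+)$, this strict positivity combined with Lemma~\ref{lem:ano_proxi} forces $\xi^+_i(t^+_\g)=P^+_{\Theta_i}$ simultaneously for $i=1,2$, so the pair is at the basepoint of the closed orbit. Closedness plus continuity extends this to all of $\partial_\infty\G$ and defines $\xi^+_{12}:\partial_\infty\G\to G/P^+_{\Theta_1\cap\Theta_2}$; the analogous argument produces $\xi^-_{12}$ and establishes transversality. For the contraction condition, choose a lift $\beta_{12}$ via Lemma~\ref{lem:exi_lift}; since the projections $G/M_{\Theta_1\cap\Theta_2}\to G/M_{\Theta_i}$ push $\beta_{12}$ to lifts of $\sigma_i$, Lemma~\ref{lem:mu_beta} yields $\mu_{+,\Theta_1\cap\Theta_2}(m,t)=\mu_{+,\Theta_i}(m,t)+O(1)$ for $t$ large (where both lie in $\overline{\mathfrak{a}}^+$). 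Therefore
\[
A^{\Theta_1\cap\Theta_2}_+(m,t)\;\geq\;\min\bigl(A^{\Theta_1}_+(m,t),\,A^{\Theta_2}_+(m,t)\bigr) - O(1)\;\longto\;+\infty,
\]
and Proposition~\ref{prop:mu_contrac} concludes. Finally, (iv) follows by combining (i) and (iii): if $\rho$ is $P_\Theta$-Anosov, then it is $P_{\iota(\Theta)}$-Anosov by (i), hence $P_{\Theta\cap\iota(\Theta)}$-Anosov by (iii), and $\Theta\cap\iota(\Theta)$ is $\iota$-invariant because $\iota$ is an involution.
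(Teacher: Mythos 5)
Your overall strategy is the paper's: reduce everything to the $L$-Cartan projection criterion of Proposition~\ref{prop:mu_contrac} (the paper's own proof of (ii)--(iv) is a one-line appeal to it, and it attributes (i) to the $\ZZ/2\ZZ$-action on $\flow$). Parts (ii) and (iv) are correct as written. In (i), note that Definition~\ref{defi:ARhyp} is not literally symmetric in $P^+,P^-$: setting $\eta^\pm=\xi^\mp$ you need contraction along $\xi^-\circ\tau^+$ rather than $\xi^-\circ\tau^-$, and the conversion is precisely composition with the flow-reversing involution of $\flow$ (this is the whole content of the paper's argument for (i)), so "symmetry of the definition" should be replaced by that remark.

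The genuine gap is in (iii). First, transversality of the combined maps: your claim that "the analogous argument \dots establishes transversality" cannot work, because transversality is an \emph{open} condition, and the closed-orbit/density argument only propagates closed conditions from the set of pairs of fixed points. What is actually needed is the group-theoretic fact that a pair in $G/P^+_{\Theta_1\cap\Theta_2}\times G/P^-_{\Theta_1\cap\Theta_2}$ is transverse as soon as both of its projections to $G/P^\pm_{\Theta_i}$, $i=1,2$, are transverse; this holds because $P^+_{\Theta}P^-_{\Theta}=\coprod_{w\in W_{\Theta}}B^+wB^-$ (write $P^+_\Theta P^-_\Theta=N^+_\Theta L_\Theta N^-_\Theta$ and use the Bruhat decomposition of $L_\Theta$) together with $W_{\Theta_1}\cap W_{\Theta_2}=W_{\Theta_1\cap\Theta_2}$. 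With this lemma, transversality at \emph{all} distinct pairs $(t^+,t^-)$ follows directly from the known transversality of $(\xi^+_i,\xi^-_i)$, and no density is needed for this step. Second, the assertion $\mu_{+,\Theta_1\cap\Theta_2}(m,t)=\mu_{+,\Theta_i}(m,t)+O(1)$ "where both lie in $\overline{\mathfrak{a}}^+$" presupposes the conclusion: a priori $\mu_{+,\Theta_1\cap\Theta_2}$ only lies in $\overline{\mathfrak{a}}^+_{L_{\Theta_1\cap\Theta_2}}$, and the pushed-forward lift has $L_{\Theta_i}$-Cartan projection equal to the $\overline{\mathfrak{a}}^+_{L_{\Theta_i}}$-representative in the $W_{\Theta_i}$-orbit of $\mu_{+,\Theta_1\cap\Theta_2}$. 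The fix is routine but should be said: for $\alpha\in\Delta\moins\Theta_i$ and $w\in W_{\Theta_i}$ one has $w\alpha=\alpha+\sum_{\beta\in\Theta_i}c_\beta\beta$ with $c_\beta\geq 0$, whence $\alpha(\mu_{+,\Theta_1\cap\Theta_2}(m,t))\geq \alpha(\mu_{+,\Theta_i}(m,t))-O(1)$ (using Lemma~\ref{lem:mu_beta}), which gives your displayed lower bound for $A^{\Theta_1\cap\Theta_2}_+$ and lets Proposition~\ref{prop:mu_contrac} conclude. By contrast, your density/closed-orbit construction of $\xi^+_{12}$ itself is sound, provided you add the small observation that the elliptic and unipotent Jordan factors of $\rho(\g)$ centralize the hyperbolic part and hence lie in $L_{\Theta_1\cap\Theta_2}\subset P^+_{\Theta_i}$, which is what makes the identification $\xi^+_i(t^+_\g)=P^+_{\Theta_i}$ hold simultaneously for $i=1,2$.
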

The first statement is a consequence of the fact that the
$\ZZ/2\ZZ$-action on $\flow$ anti-commutes with the action of
$\RR$. The other consequences follow immediately from
Proposition~\ref{prop:mu_contrac}.

\section{Lie group homomorphisms and equivariant maps}
In this section we first describe how the property of being Anosov behaves with respect to compositions with Lie group homomorphisms. Then we show that for irreducible or Zariski dense representations the existence of equivariant maps readily implies the contraction property. 

  \subsection{Lie group homomorphisms}\label{sec:homo}
Let $\phi: G \to G'$ be a homomorphism of semisimple Lie groups and let $\rho: \G \to G$ be a $P_\Theta$-Anosov representation. For several
arguments we give later it will be essential to determine the
parabolic subgroup $P' < G'$ such that the composition $\phi\circ
\rho: \G \to G'$ is $P'$-Anosov. Whereas this has a rather simple
answer when $G$ is of rank one (see
Proposition~\ref{prop:inj_rk_one}), it is more complicated when $G$ is
of higher rank (see Section~\ref{sec:ex_inj} for
examples). In this section we give an explicit construction of $P'$.  

Let $\phi: G \to G'$ be a homomorphism between semisimple Lie groups. We can assume that the
maximal compact subgroup $K'< G'$ and the Cartan algebra
$\mathfrak{a}'$ 
are chosen
to be compatible with $\phi$, i.e.\ $\phi( K) \subset K'$,
$\phi_*(\mathfrak{a}) \subset \mathfrak{a}'$ (\cite{Karpelevic53}, \cite[Theorem~6]{Mostow_dec}). 
The set of simple roots of $G'$ relative to $\mathfrak{a}'$ is
denoted by $\Delta'$. We shall usually denote with primes the objects
associated with $G'$.

\begin{prop}\label{prop:embedding}
  Let $\rho: \G \to G$ be a
  representation. Suppose that $\phi \circ \rho$ is $P^{\prime}_{\Theta'}$-Anosov
  then $\rho$ is $P_\Theta$-Anosov where $\Theta = \{ \alpha \in \Delta \mid
    \alpha |_{\phi^{-1}(\mathfrak{a}^{\prime}_{ \Theta'})} \text{ is
      zero}\}$.
\end{prop}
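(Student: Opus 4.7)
The plan is to prove the proposition in three main steps, using Proposition~\ref{prop:mu_contrac} to reduce the Anosov condition to control on the $L$-Cartan projections.

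\textbf{Step 1: Compatibility of parabolics.} I would first verify that the defining condition of $\Theta$ is equivalent to the inclusion $\phi_*^{-1}(\mathfrak{a}'_{\Theta'}) \subset \mathfrak{a}_\Theta$, and then combine this with the standing compatibility $\phi_*(\mathfrak{a}) \subset \mathfrak{a}'$, $\phi(K) \subset K'$ to deduce, by a root-space computation (for $\alpha\in\Sigma$, $\phi_*(\mathfrak{g}_\alpha)$ sits in root spaces $\mathfrak{g}'_{\alpha'}$ with $\alpha'|_{\phi_*(\mathfrak{a})} = \alpha|_{\phi_*(\mathfrak{a})}$), that $\phi(P^\pm_\Theta) \subset P^{\prime\pm}_{\Theta'}$ and $\phi(L_\Theta) \subset L'_{\Theta'}$. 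This produces $G$-equivariant maps $\bar\phi: G/P^\pm_\Theta \to G'/P^{\prime\pm}_{\Theta'}$ and $G/L_\Theta \to G'/L'_{\Theta'}$. In fact the $\Theta$ of the statement is the largest subset of $\Delta$ for which such inclusions hold, which accounts for the specific formula.

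\textbf{Step 2: Constructing the Anosov boundary maps.} Let $\xi^{\prime\pm}: \partial_\infty \Gamma \to G'/P^{\prime\pm}_{\Theta'}$ denote the Anosov maps of $\phi\circ\rho$. For any non-torsion $\gamma\in\Gamma$, Lemma~\ref{lem:ano_proxi} identifies $\xi^{\prime+}(t^+_\gamma)$ as the attracting fixed point of $\phi(\rho(\gamma))$ on $G'/P^{\prime+}_{\Theta'}$. Using the identity $\lambda(\phi(g)) = \phi_*(\lambda(g))$ (which follows from the compatibility of Cartan decompositions) together with the definition of $\Theta$, one checks that $\rho(\gamma)$ itself acts proximally on $G/P^+_\Theta$, with a unique attracting fixed point $\xi^+(t^+_\gamma) \in G/P^+_\Theta$ that projects via $\bar\phi$ to $\xi^{\prime+}(t^+_\gamma)$. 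This defines $\xi^+$ on the dense subset $\{t^+_\gamma\}_{\gamma\in\Gamma}$ of $\partial_\infty \Gamma$; continuity and $\rho$-equivariance on all of $\partial_\infty \Gamma$ follow by a standard extension argument using compactness of the fibers of $\bar\phi$ and the North-South dynamics of proximal elements. Transversality of $(\xi^+(t^+), \xi^-(t^-))$ is inherited from the transversality of $(\xi^{\prime+}(t^+), \xi^{\prime-}(t^-))$, since $\bar\phi \times \bar\phi$ sends non-transverse pairs to non-transverse pairs.

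\textbf{Step 3: Verifying the contraction.} The maps $\xi^\pm$ determine a section $\sigma$ of $\mathcal{X}_\rho$ flat along $\RR$-orbits. Lift $\sigma$ to a section $\beta$ of $\mathcal{Y}_\rho$ (Lemma~\ref{lem:exi_lift}); then $\phi\circ\beta$ (composed with the natural map $G'/\phi(M_\Theta) \to G'/M'_{\Theta'}$) gives a lift of the Anosov section $\sigma' = \bar\phi\circ\sigma$ of $\mathcal{X}'_{\phi\circ\rho}$. Since $\phi$ intertwines the Cartan decompositions of $L_\Theta$ and $L'_{\Theta'}$, direct computation gives $\mu'_{\pm,\Theta'}(\hat m, t) = \phi_*(\mu_{\pm,\Theta}(\hat m, t))$ up to a bounded additive error (cf.\ Lemma~\ref{lem:mu_beta}). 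Applying Proposition~\ref{prop:mu_contrac}\eqref{item4:propMU} to $\phi\circ\rho$ yields $\min_{\alpha'\in\Delta'\setminus\Theta'} \alpha'(\mu'_{+,\Theta'}(\hat m, t)) \to +\infty$ uniformly in $m$ as $t\to+\infty$. A linear-algebraic comparison, built into the definition of $\Theta$ and using that $\phi_*$ preserves positive Weyl chambers, transfers this to $\min_{\alpha\in\Delta\setminus\Theta} \alpha(\mu_{+,\Theta}(\hat m, t)) \to +\infty$. Proposition~\ref{prop:mu_contrac} then gives that $\rho$ is $P_\Theta$-Anosov.

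\textbf{Main obstacle.} I expect the delicate point to be the final linear-algebraic comparison in Step~3: showing that control of the minimum over $\Delta'\setminus\Theta'$ of $\alpha'\circ\phi_*$ on the positive Weyl chamber controls the minimum over $\Delta\setminus\Theta$ of $\alpha$ itself. This rests on a careful analysis of the pullback map $\phi^*: (\mathfrak{a}')^* \to \mathfrak{a}^*$ on simple roots, exploiting the maximality of $\Theta$ in the definition. Step~2 also requires care in producing a \emph{continuous}, equivariant lift of $\xi^{\prime\pm}$ through $\bar\phi$, rather than merely a set-theoretic one on fixed points; here the proximality of $\rho(\gamma)$-actions and the compactness of fibers of $\bar\phi$ are the key tools.
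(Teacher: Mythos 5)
The decisive problem is the claim at the heart of your Step 1: that the definition of $\Theta$ (equivalently $\phi_*^{-1}(\mathfrak{a}'_{\Theta'})\subset\mathfrak{a}_\Theta$), together with $\phi(K)\subset K'$ and $\phi_*(\mathfrak{a})\subset\mathfrak{a}'$, yields $\phi(P^{\pm}_{\Theta})\subset P^{\prime\pm}_{\Theta'}$ and $\phi(L_\Theta)\subset L'_{\Theta'}$. This is false in general. Take $G=\SL(2,\RR)$, $\phi:G\to G'=\SL(3,\RR)$ the irreducible $3$-dimensional representation, and $P'_{\Theta'}=Q_0$ the stabilizer of a line: then $\mathfrak{a}'_{\Theta'}=\RR\,\diag(2,-1,-1)$ while $\phi_*(\mathfrak{a})=\RR\,\diag(1,0,-1)$, so $\phi_*^{-1}(\mathfrak{a}'_{\Theta'})=\{0\}$, hence $\Theta=\Delta$ and $P_\Theta=G$, and your inclusion would assert that $\phi(\SL(2,\RR))$ fixes a line in $\RR^3$ (the same failure occurs for $\Sp(4,\RR)\subset\SL(4,\RR)$). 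The root-space observation you invoke only says that $\phi_*(\mathfrak{g}_\alpha)$ lies in root spaces $\mathfrak{g}'_{\alpha'}$ with $\alpha'\circ\phi_*=\alpha$ on $\mathfrak{a}$; nothing makes such $\alpha'$ positive, and indeed no compatibility of Weyl chambers is assumed here — an inclusion of parabolics of this kind is exactly what the extra hypothesis on $\phi_*\big(\overline{\mathfrak{a}}^+\moins\bigcup_{\alpha\in\Theta}\ker\alpha\big)$ in Proposition~\ref{prop:injLieGr}, or the weight analysis of Lemma~\ref{lem:basisGmod}, is there to guarantee, and only in the opposite direction $G\to G'$. Since the maps $\bar\phi$ and the intertwining of the Cartan data of $L_\Theta$ and $L'_{\Theta'}$ are the foundation of your Steps 2 and 3, the argument has no starting point.

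Beyond this, the two points you yourself flag as delicate are not finishing touches but the actual content, and neither is supplied. Within this paper's framework Proposition~\ref{prop:mu_contrac} can only be applied once a section flat along flow lines — i.e.\ the pair $(\xi^+,\xi^-)$ — exists; producing continuous equivariant limit maps is never a "standard extension argument" from their values at the fixed points $t^{+}_{\gamma}$ (compare Proposition~\ref{prop:Comp_Maps} and Theorem~\ref{thm:Ano_Zd}, where the maps are hypotheses and irreducibility or Zariski density is needed even then). And the final comparison, that divergence of $\alpha'(\mu'_{+,\Theta'})$ for all $\alpha'\in\Delta'\moins\Theta'$ forces divergence of $\alpha(\mu_{+,\Theta})$ for all $\alpha\in\Delta\moins\Theta$, is asserted to be "built into the definition of $\Theta$" but never derived; note also that $\lambda'(\phi(g))$ is only the dominant $W'$-representative of $\phi_*(\lambda(g))$, not $\phi_*(\lambda(g))$ itself, so even the proximality transfer in your Step 2 already presupposes this missing estimate. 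For context: the paper states Proposition~\ref{prop:embedding} without proof, and the proofs it does give (Propositions~\ref{prop:Anosov_Rep} and~\ref{prop:injLieGr}) treat the direction from $G$ to $G'$ under hypotheses ensuring $\phi(P^{\pm}_{\Theta})\subset w'P^{\prime\pm}_{\Theta'}w'^{-1}$; your outline in effect assumes that this can be reversed for free, which it cannot.
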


For the other direction we first describe in detail the case when $G' = \SL(V)$
for some irreducible $G$-module $V$. 
\begin{lem}\label{lem:basisGmod}
  Let $V$ be an irreducible $G$-module and denote by $\phi: G \to
  \SL(V)$ the corresponding homomorphism and by $\phi_*: \mathfrak{g}
  \to \mathfrak{sl}(V)$ the Lie algebra homomorphism.

  Suppose that there exists a line $D \subset V$ that is
  $P_\Theta$-invariant for some $\Theta \subset \Delta$.

  Then the following holds: 
   \begin{asparaenum}
  \item there exists a basis $(e_1, \dots, e_n)$ of $V$ with $e_1 \in D$
    and consisting of eigenvectors for $\mathfrak{a}$:
    \[ \phi_*(a) \cdot e_i = \lambda_i(a) e_i, \ \forall a \in
    \mathfrak{a}, \text{ with } \lambda_i \in \mathfrak{a}^*.\]
  \item For all $i>1$, $\lambda_1 - \lambda_i$ is a linear combination
    with integer coefficients of the elements of $\Delta$:
    \[ \lambda_1 - \lambda_i = \sum_{\alpha \in \Delta} n_{\alpha,i}
    \alpha, \ n_{\alpha,i} \in \NN,\]
  \item with the property that $(n_{\alpha, i})_{\alpha \in \Delta \moins
      \Theta}$ are not simultanously zero.
  \end{asparaenum}
\end{lem}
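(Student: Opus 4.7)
The assumption that $D$ is $P_\Theta$-invariant forces $D$ to be $B^+$-invariant (since $B^+ \subset P_\Theta$), and since $N = \exp(\mathfrak{n}^+)$ acts unipotently on the one-dimensional space $D$, it acts trivially; thus $\mathfrak{n}^+ \cdot e_1 = 0$ and $\mathfrak{a}$ acts on $D$ by a character $\lambda_1$. In other words $e_1$ is a highest weight vector of $V$, and since $V$ is irreducible, $\lambda_1$ is the highest weight and occurs with multiplicity one. Completing $e_1$ into a basis of $\mathfrak{a}$-eigenvectors using the weight space decomposition of $V$ gives part (i). Part (ii) is then the standard fact that in an irreducible highest weight module every weight $\lambda_i$ satisfies $\lambda_1 - \lambda_i \in \mathbb{N}\Delta$.

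For part (iii), I use the fact that $D$ is also $L_\Theta$-invariant. Since $D$ is one-dimensional, the semisimple part $[L_\Theta, L_\Theta]$ acts trivially on $D$; in particular $\mathfrak{g}_{-\alpha} \cdot e_1 = 0$ for every $\alpha \in \Theta$. Combined with $\mathfrak{n}^+ \cdot e_1 = 0$, this shows that the $L_\Theta$-submodule of $V$ generated by $e_1$ is $D$ itself.

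The key step is then to separate $\lambda_1$ from the other weights by an element of the center of $\mathfrak{l}_\Theta$. Choose $H \in \mathfrak{a}_\Theta$ with $\alpha(H) > 0$ for all $\alpha \in \Delta \moins \Theta$ (so $\alpha(H) = 0$ exactly for $\alpha \in \Theta$). The top $H$-eigenspace $V_{\lambda_1(H)} = \bigoplus_{\mu(H) = \lambda_1(H)} V_\mu$ is an $L_\Theta$-submodule of $V$, since $H$ is central in $\mathfrak{l}_\Theta$. Moreover any $L_\Theta$-highest weight vector in $V_{\lambda_1(H)}$ is annihilated by all of $\mathfrak{n}^+$ (the positive root spaces inside $\mathfrak{l}_\Theta$ act trivially on $L_\Theta$-highest weight vectors, while the positive root spaces outside, i.e.\ $\mathfrak{n}_\Theta^+$, strictly increase the $H$-eigenvalue); by irreducibility of $V$ the only such line is $D$. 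Hence $V_{\lambda_1(H)}$ is the irreducible $L_\Theta$-submodule generated by $e_1$, which we just saw equals $D$. In particular, for $i > 1$ we have $\lambda_i(H) < \lambda_1(H)$, so
\[ 0 < (\lambda_1 - \lambda_i)(H) = \sum_{\alpha \in \Delta \moins \Theta} n_{\alpha,i}\, \alpha(H), \]
which forces at least one $n_{\alpha,i}$ with $\alpha \in \Delta \moins \Theta$ to be positive.

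The main subtle point is step three, namely identifying the top $H$-eigenspace with the $L_\Theta$-submodule generated by $e_1$; the rest is standard highest weight theory plus the observation that $P_\Theta$-invariance of a line is a very strong condition that kills the lowering operators $\mathfrak{g}_{-\alpha}$ for $\alpha \in \Theta$.
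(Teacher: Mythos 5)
Your reductions are sound up to the last step: the observation that $P_\Theta$-invariance forces $\mathfrak{n}^+\cdot e_1=0$ and $\mathfrak{g}_{-\alpha}\cdot e_1=0$ for $\alpha\in\Theta$, and the idea of separating weights with a central element $H\in\mathfrak{a}_\Theta$ satisfying $\alpha(H)>0$ for $\alpha\in\Delta\moins\Theta$, are both correct, as is the remark that an $L_\Theta$-highest weight vector in the top $H$-eigenspace is annihilated by all of $\mathfrak{n}^+$. The gap is in the sentence ``by irreducibility of $V$ the only such line is $D$''. Here $\Delta$ consists of \emph{restricted} roots for the maximal split algebra $\mathfrak{a}$ of a real group, and irreducibility of $V$ alone does \emph{not} imply that the space of weight vectors killed by $\mathfrak{n}^+$ is a line: in general $V^{\mathfrak{n}^+}$ is only an irreducible module for $\mathfrak{g}_0=\mathfrak{m}\oplus\mathfrak{a}$ and can have dimension $>1$ (take $\SL(2,\CC)$ viewed as a real group acting on its Lie algebra: $V^{\mathfrak{n}^+}=\mathfrak{g}_\alpha$ is two-dimensional, and every real line in it is killed by $\mathfrak{n}^+$). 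The same issue makes your earlier parenthetical claim that $\lambda_1$ ``occurs with multiplicity one'' unjustified at the point where you assert it -- indeed $\dim V_{\lambda_1}=1$ is essentially part of what conclusion (iii) says, so assuming it there is close to circular. As written, the crucial inequality $\lambda_i(H)<\lambda_1(H)$ for $i>1$, i.e.\ the identification of the top $H$-eigenspace with $D$, is therefore asserted rather than proved.

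The gap is repairable in two ways. Either invoke the standard fact that for an irreducible $G$-module the space $V^{\mathfrak{n}^+}$ is irreducible under $\mathfrak{g}_0$, and then use that $D\subseteq V^{\mathfrak{n}^+}$ is a nonzero $\mathfrak{g}_0$-stable subspace (this is where the hypothesis on $D$ must be used a second time), forcing $V^{\mathfrak{n}^+}=D$; or argue directly as the paper does: since $\mathfrak{p}^{+}_{\Theta}\cdot e_1\subseteq D$, the Poincar\'e--Birkhoff--Witt decomposition gives $V=U(\mathfrak{g})\cdot e_1=U(\mathfrak{n}^{-}_{\Theta})\cdot D$, so every weight of $V$ other than $\lambda_1$ equals $\lambda_1$ minus a nonempty sum of roots in $\Sigma^+\moins\Sigma_\Theta$, each of which has a strictly positive coefficient on some simple root outside $\Theta$; this yields (ii), (iii) and $\dim V_{\lambda_1}=1$ in one stroke and makes the detour through the $L_\Theta$-module structure of the top $H$-eigenspace unnecessary.
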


\begin{proof}
  For that proof we use standard tools from representation theory (see
  \cite{Fulton_Harris} for example).
  By irreducibility $V = \mathfrak{n}^- \cdot D$. Also if $v \in V$ is
  an eigenvector for $\mathfrak{a}$ with eigenvalue $\lambda$ and if
  $n \in \mathfrak{n}_{\beta}$, for $\beta \in \Sigma$, then the vector
  $w = n \cdot v$, if nonzero, is an eigenvector with
  eigenvalue $\lambda + \beta$. These two remarks give the two first
  conclusions of the lemma (starting with $e_1 = v \in D$). To prove
  the third conclusion, one only has to note that the hypothesis implies $\mathfrak{n}_{-\alpha} \cdot e_1 = 0$ for all $\alpha
  \in \Theta$.
\end{proof}

\begin{prop}\label{prop:Anosov_Rep}
  Let $\phi: G \to G' =  \SL(V)$ be an irreducible finite dimensional linear
  representation of $G$. Let $V = D \oplus H$ be a decomposition of $V$  into a
  line and a hyperplane, and set  $Q^{+}_{0} = \stab(D)$ and
  $Q^{-}_{0} = \stab(H)$. Suppose that $(P^+, P^-)= ( \stab_G(D), \stab_G(H))$ is a pair of opposite parabolic
  subgroups.
  
  Then a representation $\rho: \G \to G$ is $(P^+, P^-)$-Anosov if and
  only if $\phi \circ \rho: \G \to \SL(V)$ is $(Q^{+}_{0},
  Q^{-}_{0})$-Anosov. Furthermore the Anosov maps satisfy $\phi^\pm
  \circ \xi_\rho^\pm = \xi^{\pm}_{\phi \circ \rho}$, where $\phi^+:
  \Ff^+ \to \PP(V)$ and $\phi^-: \mathcal{F}^- \to \PP(V^*)$ are the
  maps induced by $\phi$.
\end{prop}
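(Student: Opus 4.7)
The strategy is to apply Proposition~\ref{prop:mu_contrac} on both sides, translating between the $L_\Theta$-Cartan projections of $\rho$ and the $L_{Q^+_0}$-Cartan projections of $\phi\circ\rho$ via the weight data from Lemma~\ref{lem:basisGmod}. Here $\Theta \subset \Delta$ is the (unique) subset with $P_\Theta = \stab_G(D)$, and the compatibility $\phi_*(\mathfrak{a}) \subset \mathfrak{a}'$ together with Remark~\ref{rem:in_a_plus} ensures that for $t$ large enough, $\phi_*(\mu_{+,\Theta}(m,t))$ lies in the positive Weyl chamber of $\SL(V)$.

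For the forward direction, set $\xi^\pm_{\phi\circ\rho} := \phi^\pm \circ \xi^\pm_\rho$; continuity and $\phi\circ\rho$-equivariance are automatic, and transversality holds because every transverse pair in $G/P^+ \times G/P^-$ is $G$-conjugate to $(D,H)$, and $V = D \oplus H$ puts $(D,H)$ in the open $\SL(V)$-orbit in $\PP(V) \times \PP(V^*)$. For the contraction property, I would lift the Anosov section of $\Xx_\rho$ to a section $\beta$ of $\mathcal{Y}_\rho$ via Lemma~\ref{lem:exi_lift}; then $\phi \circ \beta$ serves as a lift on the $\SL(V)$-side whose $L_{Q^+_0}$-Cartan projection coincides with $\phi_*(\mu_{+,\Theta}(m,t))$ up to bounded error (Lemma~\ref{lem:mu_beta}). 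Consequently $A^{\SL(V)}_+(m,t) = \min_{i > 1}(\lambda_1 - \lambda_i)(\mu_{+,\Theta}(m,t))$. Since by Lemma~\ref{lem:basisGmod} one has $\lambda_1 - \lambda_i = \sum_{\alpha \in \Delta} n_{\alpha,i}\alpha$ with $n_{\alpha,i} \in \NN$ and at least one nonzero coefficient on $\Delta \setminus \Theta$, and since $P_\Theta$-Anosov gives $\alpha(\mu_{+,\Theta}) \geq ct - C$ on $\Delta \setminus \Theta$ while $\alpha(\mu_{+,\Theta}) \geq 0$ on $\Theta$, this yields $A^{\SL(V)}_+(m,t) \geq ct - C'$, so Proposition~\ref{prop:mu_contrac} gives $(Q^+_0, Q^-_0)$-Anosov.

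For the reverse direction, each $\alpha^* \in \Delta \setminus \Theta$ is realized as a single weight jump: $\mathfrak{g}_{-\alpha^*} \cdot e_1 \neq 0$ produces an eigenvector $e_{i^*}$ of weight $\lambda_{i^*} = \lambda_1 - \alpha^*$, so $\alpha^* = \lambda_1 - \lambda_{i^*}$. Assuming $\phi \circ \rho$ is $(Q^+_0, Q^-_0)$-Anosov, Proposition~\ref{prop:mu_contrac} gives $\min_{i>1}(\lambda_1 - \lambda_i)(\mu_{+,\Theta}(m,t)) \geq ct - C$ (up to the bounded lift comparison of Lemma~\ref{lem:mu_beta}). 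Specializing to $i = i^*$ for each $\alpha^* \in \Delta \setminus \Theta$ yields $\alpha^*(\mu_{+,\Theta}(m,t)) \geq ct - C$, so Proposition~\ref{prop:mu_contrac} forces $\rho$ to be $P_\Theta$-Anosov. The identification $\phi^\pm \circ \xi^\pm_\rho = \xi^\pm_{\phi\circ\rho}$ is then a consequence of uniqueness of Anosov maps (Lemma~\ref{lem:uniqu}); alternatively one could derive the reverse direction from Proposition~\ref{prop:embedding} after verifying that the $\Theta$ it produces coincides with the one cutting out $\stab_G(D)$.

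The main technical obstacle is the bookkeeping around the Cartan projections: one must verify that $\phi \circ \beta$ is a legitimate lift on the $\SL(V)$-side (tracking the image of $M_\Theta$ inside the standard Levi of $Q^+_0$) and that $\mu^{\SL(V)}_+(m,t) = \phi_* \mu_{+,\Theta}(m,t)$ up to bounded error even though $\phi_*(\mu_{+,\Theta}(m,t))$ might momentarily leave the intended Weyl chamber for small $t$. Remark~\ref{rem:in_a_plus} resolves the latter for large $t$, and uniform control for small $t$ follows from compactness of $\G \backslash \flow$; the crucial use of the nonnegativity of the coefficients $n_{\alpha,i}$ in Lemma~\ref{lem:basisGmod}(ii) is what prevents contributions from $\alpha \in \Theta$ (where we only control the sign of $\alpha(\mu_{+,\Theta})$) from spoiling the linear growth.
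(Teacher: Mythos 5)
Your forward implication follows the paper's own route: fix the eigenbasis of Lemma~\ref{lem:basisGmod}, compare Cartan projections via Proposition~\ref{prop:mu_contrac}, and use that the coefficients $n_{\alpha,i}$ are nonnegative with at least one nonzero coefficient outside $\Theta$ to get $\min_{i>1}(\lambda_1-\lambda_i)(\mu_{+,\Theta})\geq\min_{\alpha\in\Delta\moins\Theta}\alpha(\mu_{+,\Theta})$. One point is stated imprecisely: the relation between $\mu^{\SL(V)}_{+}$ and $\phi_*\mu_{+,\Theta}$ is not a bounded-error statement. Rather, $\mu^{\SL(V)}_{+}(m,t)$ is the unique point of the $W'_{\Delta'\moins\{\alpha'\}}$-orbit of $\phi_*\mu_{+,\Theta}(m,t)$ lying in the Weyl chamber of the Levi of $Q^{+}_{0}$, i.e.\ one must reorder the last $n-1$ diagonal entries; this reordering is needed for \emph{all} $t$ (the entries $\lambda_i(\mu_{+,\Theta})$, $i\geq 2$, carry no fixed order on $\overline{\mathfrak{a}}^{+}_{L_\Theta}$), not only for small $t$, and it is exactly what produces the identity $A^{\SL(V)}_{+}(m,t)=\min_{i>1}(\lambda_1-\lambda_i)(\mu_{+,\Theta}(m,t))$ that you then correctly exploit. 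Lemma~\ref{lem:mu_beta} only absorbs the ambiguity in the choice of lift $\beta$, which is a separate and harmless issue.

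Your primary argument for the reverse implication has a genuine gap: to invoke Proposition~\ref{prop:mu_contrac} for $\rho$ you must already have a section of $\mathcal{X}_\rho$ that is flat along $\RR$-orbits --- equivalently, continuous transverse $\rho$-equivariant maps $\xi^{\pm}_{\rho}:\partial_\infty\G\to G/P^{\pm}$ --- since $\mu_{+,\Theta}$ is only defined relative to such a section. Producing these maps from the Anosov data of $\phi\circ\rho$ (showing that $\xi^{+}_{\phi\circ\rho}$ takes values in $\phi^+(G/P^+)$ and pulls back, with the correct transversality) is the actual content of this direction, and it is precisely what Proposition~\ref{prop:embedding} supplies; the paper disposes of the ``if'' part by citing exactly that proposition. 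So your fallback route is the one to use, and your added remark that each $\alpha^*\in\Delta\moins\Theta$ occurs as a weight difference $\lambda_1-\lambda_{i^*}$ (because $\stab_G(D)=P_\Theta$ forces $\mathfrak{g}_{-\alpha^*}\cdot e_1\neq 0$) is correct but only needed if one insists on re-deriving that direction by hand. The identification of the Anosov maps via Lemma~\ref{lem:uniqu} is fine.
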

  
\begin{proof}
  The if part is a consequence of Proposition~\ref{prop:embedding}.

  For the converse statement, we can assume (up to conjugating in $G$)
  that $P^+ = P^{+}_{\Theta}$. Let $(e_1, \dots, e_n)$ denote the basis
  obtained in Lemma~\ref{lem:basisGmod} and
  $\lambda_i$ the corresponding eigenvalues. The Cartan
  subalgebra $\mathfrak{a}' \subset \mathfrak{sl}(V)$ is chosen to be the
  set of matrices that are diagonal with respect to the basis $(e_i)$ and the Weyl
  chamber $\mathfrak{a}^{\prime +}$ is the set of diagonal matrices
  $\diag(t_1, \dots, t_n )$ with $t_1 > \cdots > t_n$.
  
   With these choices the root $\alpha'\in \Delta'$ such
  that $Q^{+}_{0} = P^{\prime +}_{\Delta' \moins \alpha'}$ is $\diag(t_1, \dots, t_n )
  \mapsto t_1 -t_2$. The Weyl chamber $\mathfrak{a}^{ \prime +}_{
    L^{\prime}_{\Delta' \moins \alpha'}}$ of the Levi component
  $L^{\prime}_{\Delta' \moins \alpha'}$ is the set of diagonal matrices $\diag(t_1,
  \dots, t_n )$ with $t_2 > \cdots > t_n$, the Weyl group $W'_{\Delta' \moins \alpha'}$ of
  $L^{\prime}_{\Delta' \moins \alpha'}$ acts on $\mathfrak{a}^{ \prime}_{
    L^{\prime}_{\Delta' \moins \alpha'}} \subset  \mathfrak{a}^{
    \prime}$  as the group of
  permutations on the last $(n-1)$-th diagonal coefficients.

  The maps $\phi^+:
  \Ff \to \PP(V)$ and $\phi^-: \mathcal{F}^- \to \PP(V^*)$ induced by
  $\phi$, give rise to a map $\phi^\mathcal{X} : \mathcal{X} \to
  \mathcal{X}'$ with 
  \[\mathcal{X}' = \{ (L,P) \in \PP(V) \times
  \PP(V^*) \mid V = L \oplus P \} = \SL(V) / L^{\prime}_{\Delta' \moins  \alpha'}.
  \] 
  Let $\hat{ \sigma} : \flow \to
  \mathcal{X}$ be the lift of the Anosov section for $\rho$. We want
  to prove that $\phi^\mathcal{X} \circ \hat{\sigma}$ is the lift of
  an Anosov section for $\phi \circ \rho$.

  For this let $\hat{\beta}': \flow \to \mathcal{Y}'$ be an
  equivariant lift of $\hat{\sigma}' = \phi^\mathcal{X} \circ
  \hat{\sigma}$ where $\mathcal{Y}' = \SL(V) / M^{\prime}_{\Delta' \moins  \alpha'}$,
  with $M^{\prime}_{\Delta' \moins  \alpha'} = L^{\prime}_{\Delta' \moins  \alpha'} \cap \SO(V)$ the
  maximal compact subgroup of $L^{\prime}_{\Delta' \moins  \alpha'}$. Let
  $\mu^{\prime}_{+} : \G \backslash \flow \times \RR \to
  \mathfrak{a}^{\prime+}_{L^{\prime}_{\Delta' \moins \alpha'}}$ the Cartan projection
  associated to $\hat{ \beta}'$
  (Section~\ref{sec:lifting-section-gm}). By
  Proposition~\ref{prop:mu_contrac} it is enough to prove that $\lim_{t\to +\infty} 
  \alpha'( \mu^{\prime}_{+}(m,t))= +\infty$.

  There is also a natural map $\phi^\mathcal{Y} : \mathcal{Y} \to
  \mathcal{Y}'$ induced by $\phi$. If $\hat{\beta}$ is an equivariant
  lift $\hat{\sigma}$, one can suppose that $\hat{\beta}' =
  \phi^\mathcal{Y} \circ \hat{\beta}$. Let $\mu_+$ the Cartan
  projection associated with $\hat{ \beta}$.  By
  Proposition~\ref{prop:mu_contrac} we have that, for all
  $\alpha \in \Delta \moins \Theta$, $\lim_{t\to +\infty} 
  \alpha( \mu_{+}(m,t))= +\infty$. Furthermore
  $\mu^{\prime}_{+}(m,t)$ is the unique element of the orbit of
  $\phi_*( \mu_+(m,t))$ under the Weyl group $W'_{\Delta' \moins \alpha'}$ of
  $L^{\prime}_{\Delta' \moins \alpha'}$ that belongs to the Weyl chamber $\mathfrak{a}^{ \prime +}_{
    L^{\prime}_{\Delta' \moins \alpha'}}$. From this fact and the above description
  of the Weyl chamber and the Weyl group we get that
  \[ \alpha'( \mu^{\prime}_{+}(m,t)) = \min_{i=2, \dots, n} (\lambda_1
  - \lambda_i) ( \mu_+(m,t)). \]
  The third conclusion of the previous lemma implies then:
  \[ \alpha'( \mu^{\prime}_{+}(m,t)) \geq \min_{\alpha \in \Delta
    \moins \Theta} \alpha( \mu_+(m,t)).\]
  This last inequality proves: $\lim_{t\to +\infty} 
  \alpha'( \mu^{\prime}_{+}(m,t))= +\infty$.
\end{proof}

  The above proof relies on the fact that we are able to estimate $\mu^{\prime}_{+}$ in terms of $\mu_+$. More precisely, we used the fact that $\mu_+(m,t)$ belongs to $\overline{ \mathfrak{a}}^+ \moins \bigcup_{ \alpha \in \Theta} \ker( \alpha)$ and that the image of this last cone by $\phi_*$ is contained in $W^{\prime}_{ \Theta'} \cdot \big ( \overline{ \mathfrak{a}}^{\prime+} \moins \bigcup_{ \alpha' \in \Theta'} \ker( \alpha') \big)$, which is a reformulation of the properties deduced in Lemma~\ref{lem:basisGmod}. This is precisely the condition in the general statement: 
    
\begin{prop}\label{prop:injLieGr}
  Let $\phi: G \to G'$ be a Lie group homomorphism as above. 
Let $\Theta \subset \Delta$ and suppose that there exist $w'$ in $W'$ and $\Theta' \subset \Delta'$ such that
  \[ \phi_* \big ( \overline{ \mathfrak{a}}^+ \moins \bigcup_{ \alpha \in \Theta} \ker( \alpha) \big ) \subset w' \cdot W^{\prime}_{ \Theta'} \cdot \big ( \overline{ \mathfrak{a}}^{\prime+} \moins \bigcup_{ \alpha' \in \Theta'} \ker( \alpha') \big). \]
  
  Then, for any $P^{+}_{\Theta}$-Anosov representation $\rho: \G \to G$, the representation $\phi \circ \rho $ is $P^{\prime +}_{\Theta'}$-Anosov. Furthermore $\phi(P^{\pm}_{\Theta}) \subset w' P^{\prime \pm}_{\Theta'} w^{\prime -1}$ and hence there are maps $\phi^+: \mathcal{F}^{+}_{\Theta} \to \mathcal{F}^{\prime +}_{\Theta'} $ and $\phi^-: \mathcal{F}^{-}_{\Theta} \to \mathcal{F}^{\prime -}_{\Theta'} $. If $\xi^\pm$ are the Ansosov maps associated to $\rho$, the Anosov maps for $\phi \circ \rho$ are $\phi^\pm \circ \xi^\pm$.
  \end{prop}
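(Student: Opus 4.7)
The plan is to verify the Anosov condition for $\phi\circ\rho$ via the $L$-Cartan projection criterion of Proposition~\ref{prop:mu_contrac}, by transferring the growth of the $L_\Theta$-Cartan projection of $\rho$ to growth of an $L'_{\Theta'}$-Cartan projection for $\phi\circ\rho$. The argument generalizes the proof of Proposition~\ref{prop:Anosov_Rep}, with the abstract Cartan-level hypothesis playing the role of the explicit basis supplied by Lemma~\ref{lem:basisGmod}.

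First I would establish $\phi(P^\pm_\Theta)\subset w'P^{\prime\pm}_{\Theta'}w^{\prime -1}$ at the Lie algebra level. Choose $a$ in the open face of $\overline{\mathfrak{a}}^+$ determined by $\Theta$, so that $\mathfrak{p}^+_\Theta$ is precisely the sum of non-negative eigenspaces of $\mathrm{ad}(a)$. The hypothesis writes $\phi_*(a)=\mathrm{Ad}(w'u)(a')$ with $u\in W'_{\Theta'}$ and $a'$ in the analogous face of $\overline{\mathfrak{a}}^{\prime+}$; since $\mathfrak{p}^{\prime +}_{\Theta'}$ is the sum of non-negative eigenspaces of $\mathrm{ad}(a')$ and $W'_{\Theta'}$ normalizes $\mathfrak{p}^{\prime +}_{\Theta'}$, one obtains $\phi_*(\mathfrak{p}^+_\Theta)\subset\mathrm{Ad}(w')\mathfrak{p}^{\prime +}_{\Theta'}$. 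The same argument applied to $-a$ (or via the opposition involution) yields the inclusion for $P^-_\Theta$, and intersecting gives $\phi(L_\Theta)\subset w'L'_{\Theta'}w^{\prime -1}$. The induced maps $\phi^\pm:\mathcal{F}^\pm_\Theta\to\mathcal{F}^{\prime\pm}_{\Theta'}$ are thus well-defined, and $\xi^{\prime\pm}:=\phi^\pm\circ\xi^\pm$ gives a pair of continuous, $(\phi\circ\rho)$-equivariant maps whose image pairs are transverse on $\partial_\infty\Gamma^{(2)}$, since transversality is a $G$-orbit membership condition preserved by the inclusion above.

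To verify the contraction property of Definition~\ref{defi:ARhyp}, lift the Anosov section $\sigma$ of $\mathcal{X}_\rho$ to a section $\beta$ of $\mathcal{Y}_\rho$, compose with the induced map $\mathcal{Y}\to\mathcal{Y}'$ (suitably conjugated by $w^{\prime -1}$) to obtain a lift $\beta'$ of $\sigma':=\phi^{\mathcal{X}}\circ\sigma$, and form the associated $L$-Cartan projections $\mu_{+,\Theta}$ and $\mu'_{+,\Theta'}$. By construction, $\mathrm{Ad}(w^{\prime -1})\phi_*(\mu_{+,\Theta}(m,t))$ and $\mu'_{+,\Theta'}(m,t)$ lie in a common $W'_{\Theta'}$-orbit in $\mathfrak{a}'$. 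Remark~\ref{rem:in_a_plus} and Proposition~\ref{prop:mu_contrac} applied to $\rho$ give $\mu_{+,\Theta}(m,t)\in\overline{\mathfrak{a}}^+$ with $\min_{\alpha\in\Delta\moins\Theta}\alpha(\mu_{+,\Theta}(m,t))\geq ct-C$ for large $t$, so the hypothesis places $\phi_*(\mu_{+,\Theta}(m,t))$ inside the target cone. Converting this containment into linear lower bounds $\alpha'(\mu'_{+,\Theta'}(m,t))\geq c't-C'$ for each $\alpha'\in\Delta'\moins\Theta'$ then concludes via Proposition~\ref{prop:mu_contrac} applied to $\phi\circ\rho$, and identifies $\phi^\pm\circ\xi^\pm$ as the Anosov maps.

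The main obstacle is exactly this last quantitative transfer through the $W'_{\Theta'}$-reduction. The hypothesis is a statement of cone inclusion and so, by homogeneity, yields uniform linear estimates: a compactness argument on the intersection of the source cone with the unit sphere in $\mathfrak{a}$ produces a constant $c'>0$ such that $\alpha'(\mu'_{+,\Theta'}(m,t))\geq c'\min_{\alpha\in\Delta\moins\Theta}\alpha(\mu_{+,\Theta}(m,t))$ up to a bounded additive error, which is the abstract analogue of the concrete inequality $\alpha'(\mu'_+(m,t))\geq\min_{\alpha\in\Delta\moins\Theta}\alpha(\mu_+(m,t))$ derived from Lemma~\ref{lem:basisGmod} at the end of the proof of Proposition~\ref{prop:Anosov_Rep}.
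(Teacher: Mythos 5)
Your route is the same as the paper's: the paper also first deduces $\phi(P^{\pm}_{\Theta})\subset w'P^{\prime\pm}_{\Theta'}w^{\prime-1}$ from the hypothesis on $\phi_*$ and then states that the rest ``follows the same lines as the proof of Proposition~\ref{prop:Anosov_Rep}'', which is exactly your scheme: push forward the Anosov section and a lift of it, compare the two $L$-Cartan projections through the $W'_{\Theta'}$-reduction, and conclude with Proposition~\ref{prop:mu_contrac}. Your implicit reading of the cones --- positivity of the roots of $\Delta\moins\Theta$, resp.\ $\Delta'\moins\Theta'$ --- is the one under which Lemma~\ref{lem:mu_contract}, Remark~\ref{rem:in_a_plus} and the proof of Proposition~\ref{prop:Anosov_Rep} operate, and the one needed for applications such as Proposition~\ref{prop:inj_rk_one}, so that interpretation is the right one. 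The only real divergence is the first step: the paper argues with stabilizers of endpoints of rays $(\exp(ta)K)_{t\geq 0}$ in the visual boundary of $G/K$, whereas you argue infinitesimally with eigenspaces of $\mathrm{ad}(a)$.

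Two steps of your write-up need repair. First, a containment ``at the Lie algebra level'' only yields $\phi\bigl((P^{+}_{\Theta})_\circ\bigr)\subset w'P^{\prime+}_{\Theta'}w^{\prime-1}$; parabolic subgroups are disconnected in general (already the Borel subgroup of $\SL(2,\RR)$), so the component group must be treated separately --- for instance write $P^{+}_{\Theta}=Z_K(\mathfrak{a})\,(P^{+}_{\Theta})_\circ$ and observe that $\phi(Z_K(\mathfrak{a}))$ centralizes $\phi_*(a)$, hence lies in the Levi of the parabolic determined by $\phi_*(a)$ --- or simply use the boundary-point characterization, which captures the whole group at once. (Relatedly, the hypothesis only provides $a'$ with $\alpha'(a')>0$ for $\alpha'\in\Delta'\moins\Theta'$, not $a'$ in the face where the $\Theta'$-roots vanish; this is harmless, since the sum of non-negative eigenspaces of $\mathrm{ad}(a')$ is then $\mathfrak{p}^{\prime+}_{\Theta'_{a'}}$ with $\Theta'_{a'}\subset\Theta'$, still contained in $\mathfrak{p}^{\prime+}_{\Theta'}$, which suffices.) Second, your final uniform estimate is correct but does not follow from ``compactness on the unit sphere'': the intersection of the (open) source cone with the sphere is not compact, and at its limit points where $\min_{\alpha\in\Delta\moins\Theta}\alpha$ vanishes the ratio degenerates; continuity, homogeneity and positivity alone do not force a linear lower bound (on the first quadrant, $f(x,y)=y^{2}/(x+y)$ is continuous, homogeneous of degree one, positive wherever $y>0$, yet $f/y$ has infimum $0$). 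What rescues the claim is that both $\mu\mapsto\min_{\alpha\in\Delta\moins\Theta}\alpha(\mu)$ and $\mu\mapsto\alpha'$ evaluated on the $L'_{\Theta'}$-dominant representative of $\phi_*\mu$ are positively homogeneous and piecewise linear on $\overline{\mathfrak{a}}^{+}$: subdivide $\overline{\mathfrak{a}}^{+}$ into finitely many simplicial cones on which both are linear, check the inequality on the generating rays (where the hypothesis gives strict positivity of the target root whenever the minimum is positive), and take the worst constant. With these two repairs your argument is complete and coincides in substance with the paper's.
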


\begin{remark}
  It can happen that $ \Theta' = \Delta' $, i.e.\ that $P^{\prime +}_{\Theta'} = G'$.
\end{remark}

\begin{proof}
  Up to changing the Weyl chamber of $G'$ one can suppose that $w'=1$.
  
  We first prove that $\phi( P^{\pm}_{\Theta}) \subset P^{\prime \pm}_{\Theta'}$. For this note that $P^{+}_{\Theta}$ contains the stabilizer of any point of the visual compactification of the symmetric space $G/K$ that is the endpoint at infinity of a geodesic ray $(\exp(t a) \cdot K)_{t \in \RR_{\geq 0}}$ with $a\in \overline{ \mathfrak{a}}^+ \moins \bigcup_{ \alpha \in \Theta} \ker( \alpha)$. This geometric characterization and the hypothesis on $\phi_*$ imply that $\phi( P^{+}_{\Theta})$ is contained in $\omega P^{\prime+}_{\Theta'} \omega^{-1}$ for some $\omega\in W^{\prime}_{\Theta'}$. Here $\omega\in W^{\prime}_{\Theta'}$ is  such that $\omega \cdot \big ( \overline{ \mathfrak{a}}^{\prime+} \moins \bigcup_{ \alpha' \in \Theta'} \ker( \alpha') \big)$ contains $\phi_*(a)$ with $a\in \overline{ \mathfrak{a}}$ satisfying $\alpha(a)= 0$ for $\alpha \in \Delta \moins \Theta$ and $\alpha(a)> 0$ for $\alpha \in  \Theta$. In conclusion $\phi( P^{+}_{\Theta}) \subset \omega P^{\prime+}_{\Theta'} \omega^{-1} = P^{\prime+}_{\Theta'}$. Similarly $\phi( P^{-}_{\Theta}) \subset P^{\prime-}_{\Theta'}$.

Now the proof follows the same lines as the proof of the Proposition~\ref{prop:Anosov_Rep}, where the hypothesis on $\phi_*$ replaces the use of Lemma~\ref{lem:basisGmod}.
\end{proof}

Notice that in view of establishing Proposition~\ref{prop:mu_contrac}
one could equally work with a continuous function $\mu_{+,\Theta} : \G
\backslash \flow \times \RR \to C + \overline{ \mathfrak{a}}^{+}_{L_\Theta}$, where 
$C \subset \mathfrak{a}$ is a compact subset, satisfying that, for all
$(\hat{m}, t)$, the pair $(\hat{ \beta}( \hat{m}), \hat{ \beta}( \phi_t \hat{m}))$
is in the $G$-orbit of $(\exp(\mu_{+,\Theta}(\hat{m},t))M,M )$. 
This gives a refined version of Proposition~\ref{prop:injLieGr}:

\begin{prop}
 Let $\Theta \subset \Delta$ and let $\rho: \G \to G$ be a $P_\Theta$-Anosov representation; let
  $\mu_{+,\Theta}: \G \backslash \flow \times \RR \to
  \overline{\mathfrak{a}}^{+}_{L_\Theta}$ be the $L_\Theta$-Cartan projection defined in
  Section~\ref{sec:lifting-section-gm}. Suppose that there exist $w'$ in $W'$ and $\Theta' \subset \Delta'$ and a compact $C' \subset \mathfrak{a}'$ such that
  
\[ \phi( \mathrm{Im}(\mu_{+,\Theta})) \subset C' + w' \cdot W^{\prime}_{ \Theta'} \cdot \big ( \overline{ \mathfrak{a}}^{\prime+} \moins \bigcup_{ \alpha' \in \Theta'} \ker( \alpha') \big). \]
  
  Then $\phi \circ \rho$ is $\Theta'$-Anosov.
\end{prop}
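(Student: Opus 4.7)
Plan: The proof will follow the blueprint of Propositions~\ref{prop:Anosov_Rep} and~\ref{prop:injLieGr}, but invoke the relaxed form of Proposition~\ref{prop:mu_contrac} pointed out in the remark preceding the statement: a section $\sigma'$ of $\mathcal{X}'_{\phi\circ\rho}$ is Anosov as soon as it admits an associated \emph{relaxed} $L'_{\Theta'}$-Cartan projection $\tilde\mu'_{+,\Theta'}:\Gamma\backslash\flow\times\RR\to C''+\overline{\mathfrak{a}}'^{+}_{L'_{\Theta'}}$ (for some compact $C''\subset\mathfrak{a}'$) satisfying $\lim_{t\to+\infty}\inf_m\alpha'(\tilde\mu'_{+,\Theta'}(m,t))=+\infty$ for every $\alpha'\in\Delta'\moins\Theta'$, since the proof of Proposition~\ref{prop:mu_contrac} only uses asymptotic growth. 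First I would conjugate in $G'$ (change the positive system) to reduce to $w'=1$.

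Starting from the Anosov section $\hat\sigma:\flow\to G/L_\Theta$ of $\rho$ and its lift $\hat\beta:\flow\to G/M_\Theta$ provided by Lemma~\ref{lem:exi_lift}, I would construct a candidate section $\hat\sigma'$ of $\mathcal{X}'_{\phi\circ\rho}$ by pushing $\hat\beta$ forward through $\phi$ and projecting to $G'/L'_{\Theta'}$. Because $\phi(M_\Theta)\subset K'$ is compact but not necessarily contained in $M'_{\Theta'}$, and the hypothesis does not a priori force $\phi(L_\Theta)\subset L'_{\Theta'}$, the resulting $\hat\sigma'$ is flat along $\RR$-orbits only up to a uniformly bounded defect in $G'/L'_{\Theta'}$; this defect is precisely what the relaxed Cartan framework absorbs. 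By construction, the pair $(\hat\beta'(\hat m),\hat\beta'(\phi_t\hat m))$ lies in the $G'$-orbit of $(\exp(\phi_*(\mu_{+,\Theta}(\hat m,t)))M'_{\Theta'},M'_{\Theta'})$ up to a bounded error quantified by Lemma~\ref{lem:bdd_distance} and the triangle inequality for the Weyl-chamber-valued distance on the symmetric space of $L'_{\Theta'}$.

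Applying the hypothesis, decompose pointwise $\phi_*(\mu_{+,\Theta}(\hat m,t))=c'+w''\cdot a'$ with $c'\in C'$, $w''\in W'_{\Theta'}$, and $a'\in\overline{\mathfrak{a}}'^{+}\moins\bigcup_{\alpha'\in\Theta'}\ker(\alpha')$, and set $\tilde\mu'_{+,\Theta'}(\hat m,t):=(w'')^{-1}\phi_*(\mu_{+,\Theta}(\hat m,t))=a'+(w'')^{-1}c'$. Since $W'_{\Theta'}$ is finite, the set $C'':=\bigcup_{w\in W'_{\Theta'}}w\cdot C'$ is compact, and $\tilde\mu'_{+,\Theta'}(\hat m,t)\in C''+\overline{\mathfrak{a}}'^{+}\subset C''+\overline{\mathfrak{a}}'^{+}_{L'_{\Theta'}}$. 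For each $\alpha'\in\Delta'\moins\Theta'$ one has $\alpha'(\tilde\mu'_{+,\Theta'}(m,t))=\alpha'(a')+O(1)$, and the Lie-theoretic comparison used in the proof of Proposition~\ref{prop:injLieGr} (essentially Lemma~\ref{lem:basisGmod}) bounds $\alpha'(a')$ from below by a positive combination of the values $\alpha(\mu_{+,\Theta}(m,t))$ for $\alpha\in\Delta\moins\Theta$, each of which tends uniformly to $+\infty$ by Proposition~\ref{prop:mu_contrac} applied to $\rho$. The relaxed Proposition~\ref{prop:mu_contrac} then yields the $\Theta'$-Anosov property of $\phi\circ\rho$.

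The main obstacle is the interplay in the last step between the pointwise Weyl-group choice $w''=w''(\hat m,t)$ and the uniform $\alpha'$-lower bound required on $\tilde\mu'_{+,\Theta'}$: despite $w''$ varying pointwise and the presence of the compact perturbation $C'$, the resulting estimates must be globally uniform in $m$. This is resolved using the finiteness of $W'_{\Theta'}$ together with the Lie-theoretic control of $\phi_*$ in terms of the root systems of $G$ and $G'$ encoded in Lemma~\ref{lem:basisGmod}.
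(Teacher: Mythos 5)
Your overall strategy (normalize $w'=1$, transport the lift $\hat\beta$ through $\phi$, and invoke the relaxed form of Proposition~\ref{prop:mu_contrac} described just before the statement) is the route the paper intends, but the way you handle the crucial step does not work. The relaxation in that remark concerns \emph{only} the codomain of the Cartan projection: it allows values in $C''+\overline{\mathfrak{a}}^{\prime+}_{L'_{\Theta'}}$ instead of the closed Weyl chamber, but it still presupposes an honest section of $\mathcal{X}'_{\phi\circ\rho}$ that is exactly flat along $\RR$-orbits, because the defining requirement --- that $(\hat\beta'(\hat m),\hat\beta'(\phi_t\hat m))$ lie in the $G'$-orbit of a pair $(\exp(\mu')M'_{\Theta'},M'_{\Theta'})$ --- forces the two points to lie in the \emph{same} fiber of $G'/M'_{\Theta'}\to G'/L'_{\Theta'}$. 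A section that is ``flat up to a uniformly bounded defect'' therefore admits no relaxed Cartan projection at all, and Proposition~\ref{prop:mu_contrac} (relaxed or not) says nothing about it; the bounded defect is not absorbed. Moreover your candidate $\hat\sigma'$ is not even well defined: pushing $\hat\beta$ (a map into $G/M_\Theta$) through $\phi$ and projecting to $G'/L'_{\Theta'}$ requires $\phi(M_\Theta)\subset w'L'_{\Theta'}w'^{-1}$, which, as you yourself concede, need not hold --- and indeed cannot be expected, since the hypothesis here only constrains $\mathrm{Im}(\mu_{+,\Theta})$, unlike the cone hypothesis of Proposition~\ref{prop:injLieGr}, which yields $\phi(P^{\pm}_{\Theta})\subset w'P^{\prime\pm}_{\Theta'}w'^{-1}$ and hence the pushforward maps $\phi^{\mathcal{X}}$, $\phi^{\mathcal{Y}}$ used in the proofs of Propositions~\ref{prop:Anosov_Rep} and~\ref{prop:injLieGr}. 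Producing a genuinely flat section of the $G'/L'_{\Theta'}$-bundle (equivalently, candidate equivariant maps to $G'/P^{\prime\pm}_{\Theta'}$) from the hypothesis is precisely the content of this refined statement, and your proposal supplies no construction for it (for instance no straightening or limiting argument replacing the approximately flat object by an exactly flat one).

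The final estimate is also unsupported. Lemma~\ref{lem:basisGmod} is a statement about the weights of an irreducible $G$-module inside $\SL(V)$; for a general homomorphism $\phi\colon G\to G'$ no such weight bookkeeping is available, which is exactly why Proposition~\ref{prop:injLieGr} replaces it by a hypothesis on $\phi_*$. Furthermore, the hypothesis of the present statement is a containment up to an \emph{arbitrary} compact error $C'$, so by itself it does not produce the uniform divergence $\alpha'(\tilde\mu'_{+,\Theta'}(m,t))\to+\infty$ (for the simple roots outside $\Theta'$) demanded by criterion (iv) of Proposition~\ref{prop:mu_contrac}: a point can satisfy the containment while the relevant $\alpha'$ stay bounded. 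To extract growth you must combine the containment with the quantitative information coming from $\rho$ itself --- the linear lower bound $\alpha(\mu_{+,\Theta}(m,t))\geq ct-C$ for $\alpha\in\Delta\moins\Theta$ together with a linear upper bound on $\|\mu_{+,\Theta}(m,t)\|$, so that the asymptotic directions of $\phi_*(\mu_{+,\Theta})$ lie in a compact set on which the relevant functionals are bounded below --- and no such argument appears in your proposal (nor do you address the continuity of your pointwise choice of $(c',w'',a')$). So both the construction of the flat section and the quantitative contraction estimate require genuine additional work beyond what you have written.
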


  \subsection{Groups of rank one}
\label{sec:grprkone}
When $G$ is of rank one, for any homomorphism $\phi: G \to G'$ one can
arrange that the closed Weyl chamber $\overline{ \mathfrak{a}}^{
  \prime +}$ contains $\phi_*( \mathfrak{a}^+)$. Thus Proposition~\ref{prop:injLieGr} implies the following (see also \cite[Proposition~3.1]{Labourie_anosov})  
\begin{prop}\label{prop:inj_rk_one}
Let $G$ be a Lie group of real rank one.  Let $\rho: \G \to G$ be an Anosov representation and $\phi: G \to
  G'$ a homomorphism of Lie groups. Assume that the Weyl
  chambers of $G$ and $G'$ are arranged so that  $\phi(
  \mathfrak{a}^+) \subset \overline{ \mathfrak{a}}^{
  \prime +}$.

  Then $\phi \circ \rho$ is $P_{\Theta'}$-Anosov where $\Theta' = \{
  \alpha' \in \Delta' \mid \phi^* \alpha' = 0 \}$, where $\phi^*:
  \mathfrak{a}^{\prime *} \to \mathfrak{a}^*$ is the map induced by $\phi$..
\end{prop}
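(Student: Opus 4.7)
The statement is a direct application of Proposition~\ref{prop:injLieGr}, so the plan is simply to verify its hypothesis, using that $\rank_\RR G = 1$ to make the choice of $w'$ and $\Theta'$ essentially forced. The key simplification is that $\mathfrak{a}$ is one-dimensional and $\Delta = \{\alpha\}$ consists of a single root; consequently the only proper standard parabolic of $G$ is the minimal one $B = P^+_\emptyset$. Thus the Anosov representation $\rho : \Gamma \to G$ is automatically $P_\emptyset^+$-Anosov, and I apply Proposition~\ref{prop:injLieGr} with $\Theta = \emptyset$.

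Since by hypothesis $\phi(\mathfrak{a}^+) \subset \overline{\mathfrak{a}}^{\prime +}$, I take $w' = 1$. For each simple root $\alpha' \in \Delta'$, the pullback $\phi^* \alpha' = \alpha' \circ \phi_*$ is a linear form on the line $\mathfrak{a}$, so it is either identically zero (if $\alpha' \in \Theta'$) or of definite sign on the open ray $\mathfrak{a}^+$. The compatibility of the Weyl chambers forces $\alpha'(\phi_*(a)) \geq 0$ for every $a \in \mathfrak{a}^+$, and hence this sign is strictly positive whenever $\phi^* \alpha' \neq 0$. Therefore, for every $\alpha' \in \Delta' \setminus \Theta'$ and every $a \in \mathfrak{a}^+$ one has $\alpha'(\phi_*(a)) > 0$.

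This is precisely the inclusion required by Proposition~\ref{prop:injLieGr}: the image $\phi_*(\overline{\mathfrak{a}}^+)$ lies in $\overline{\mathfrak{a}}^{\prime +}$ and avoids every hyperplane $\ker \alpha'$ for $\alpha' \in \Delta' \setminus \Theta'$, while the $W'_{\Theta'}$-action is not needed. The conclusion of that proposition then yields that $\phi \circ \rho$ is $P^+_{\Theta'}$-Anosov, as claimed.

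The only nontrivial point is the sign dichotomy for a linear form restricted to the one-dimensional cone $\mathfrak{a}^+$: on a line, a nonzero linear form cannot change sign, so the nonnegativity coming from the compatibility of Weyl chambers automatically upgrades to strict positivity. In higher rank this dichotomy fails, which is exactly why the analogous question treated in Section~\ref{sec:homo} is considerably more delicate and in general requires the full strength of Proposition~\ref{prop:injLieGr} with a nontrivial $w'$ and the more elaborate combinatorial condition on $\phi_*$.
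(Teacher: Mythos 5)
Your proof is correct and is essentially the paper's own argument: the paper deduces this proposition directly from Proposition~\ref{prop:injLieGr} once the Weyl chambers are arranged so that $\phi_*(\mathfrak{a}^+)\subset\overline{\mathfrak{a}}^{\prime+}$, and you simply make the verification explicit ($\Theta=\emptyset$ since $B=P^+_\emptyset$ is the only proper parabolic in rank one, $w'=1$, and the observation that a nonzero pullback $\phi^*\alpha'$ cannot change sign on the one-dimensional cone $\mathfrak{a}^+$, hence is strictly positive there). The only slip is cosmetic: the cone whose image must avoid the walls $\ker\alpha'$, $\alpha'\in\Delta'\moins\Theta'$, is the open ray $\mathfrak{a}^+$ rather than $\overline{\mathfrak{a}}^{+}$ (the origin lies on every wall), and the positivity you actually establish is exactly on $\mathfrak{a}^+$, so the application of Proposition~\ref{prop:injLieGr} goes through as you intend.
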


\subsection{Injection of Lie groups: examples and counterexamples}
\label{sec:ex_inj}
We describe an example that shows that composing an Anosov
representation $\rho: \Gamma \to G$  with an injective Lie group homomorphism $\phi: G \to G'$ does not always give rise to a (nontrivial) 
Anosov representation.

Let $G_1$ and $G_2$ be two copies of $\SL(2, \RR)$, $G=G_1 \times G_2$ and $G'= \SL(4,
\RR)$ with the natural injection $\phi:G_1 \times G_2 \to G'$. 
Let $P_1$ and $P_2$ be parabolic subgroups of $G_1$ and $G_2$. Up
to conjugation the proper parabolic subgroups of $G_1 \times G_2$
are $P_1 \times G_2$, $P_1 \times P_2$ and $G_1 \times P_2$.

Let $Q_0$ be the stabilizer in $G'$ of a line in $\RR^4$ and let $Q_2$ be 
the stabilizer of a $2$-plane. 
Let $\iota_1, \iota_2: \G \to G_1, G_2$ be non-conjugate discrete
and faithful 
representations of a surface group $\G$; $\iota_1$ is $P_1$-Anosov,
$\iota_2$ is $P_2$-Anosov. Define the two representations
\[ \rho= (\iota_1, 1) : \G \to G_1 \times G_2, \quad \rho'= (\iota_1,
\iota_2) : \G \to G_1 \times G_2; \]
then $\rho$ and $\rho'$ are $P_1 \times G_2$-Anosov. The
representation $\phi \circ \rho: \G \to G$ is $Q_0$-Anosov. However
the representation $\phi \circ \rho'$ is not $Q_0$-Anosov. If it were,
this would imply that for all $\g \in \G$ one has $|\tr( \iota_1( \g))|
\geq |\tr( \iota_2( \g))|$. This is impossible unless $\iota_1$ and
$\iota_2$ are conjugate (see \cite[Theorem~3.1]{Thurston_stretch}).

  The representation $\rho'$ is $P_1 \times P_2$-Anosov and, as a
  consequence, the composition $\phi \circ \rho'$ is
  $Q_2$-Anosov. However, by choosing  $\iota_2$ appropriately (not discrete and faithful),  one can ensure that 
  the
  composition $\phi \circ \rho'$ is also not $Q_2$-Anosov, and hence not Anosov with respect to any proper parabolic subgroup of $G'$.

\subsection{Equivariant maps}
\label{sec:exist_maps}
In this section we consider representations $\rho:\Gamma \to G$ that
admit a pair of continuous transverse equivariant
maps $(\xi^+,\xi^-)$, $\xi^\pm: \partial_\infty \Gamma \to \Ff^\pm$ without requiring any contraction property. 
We conclude that such representations are Anosov, at least up to
considering them into a subgroup of $G$.

Recall that a pair $(x^+, x^-)\in \mathcal{F}^+ \times
\mathcal{F}^-$ is transverse if it belongs to $\mathcal{X}
\subset \mathcal{F}^+ \times \mathcal{F}^-$. 
\begin{defi}
A pair  $(x^+, x^-)\in \mathcal{F}^+ \times
\mathcal{F}^-$ is said to be \emph{singular} if $\stab(x^+) \cap \stab(x^-)$ is a parabolic subgroup. 
There is one $G$-orbit of singular pairs, namely the orbit of $(
P^{+}_{\Theta}, P^{+}_{\iota( \Theta)})$; we denote the set of singular pairs by 
$\mathcal{S} \subset \mathcal{F}^+ \times
\mathcal{F}^-$. 
\end{defi}
\begin{defi}
    \label{defi:compat}
A pair of maps $(\xi^+, \xi^-)$, $\xi^\pm : \partial_\infty \G \to
\mathcal{F}^\pm$, is said to be  \emph{compatible} if
\[
  \forall t \in \partial_\infty \G,\, (\xi^+(t), \xi^-(t)) \in
  \mathcal{S}, \quad \forall t^+\neq t^- \in \partial_\infty \G,\,
  (\xi^+(t^+), \xi^-(t^-)) \in \mathcal{X}.\]
\end{defi}

Due to Proposition~\ref{prop:Anosov_Rep}
the main case we have to consider is when $G =\SL(V)$.

\begin{prop}\label{prop:Comp_Maps}
  Let $V=D \oplus H$ be a decomposition of a vector space $V$ into a
  line and a hyperplane. $Q^{+}_{0} = \stab(D)$ and
  $Q^{-}_{0} = \stab(H)$, and denote by $\mathcal{F}^+  = G/Q^{+}_0 = \PP(V)$ and $\mathcal{F}^-  = G/Q^{-}_0 = \PP(V^*)$ be the corresponding 
 homogeneous spaces. Let $\rho: \G \to \SL(V)$ be a representation.

Suppose that:
\begin{itemize}
\item $\rho$ is irreducible, and 
\item $\rho$ admits a compatible pair $(\xi^+, \xi^-)$ of continuous
  equivariant maps.
\end{itemize}

  Then $\rho$ is $(Q^{+}_{0} ,Q^{-}_{0})$-Anosov and $\xi^\pm$ are its
  Anosov maps.
\end{prop}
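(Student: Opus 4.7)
My plan is to manufacture the Anosov section out of the pair $(\xi^+,\xi^-)$ and then to establish the contraction property by contradiction, using compactness of $\G\backslash\flow$, the North--South dynamics of word hyperbolic groups on their boundary, and irreducibility of $\rho$.

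First, define $\hat\sigma\colon\flow\to\mathcal{X}$ by $\hat\sigma(\hat m)=(\xi^+(\tau^+(\hat m)),\xi^-(\tau^-(\hat m)))$. Since $\tau^+(\hat m)\neq\tau^-(\hat m)$, compatibility guarantees $\hat\sigma(\hat m)\in\mathcal{X}$; continuity, $\rho$-equivariance and $\RR$-invariance are immediate. Thus $\hat\sigma$ descends to a section $\sigma$ of $\mathcal{X}_\rho$ flat along $\RR$-orbits. I lift $\sigma$ to a section $\beta$ of $\mathcal{Y}_\rho$ via Lemma~\ref{lem:exi_lift} and form the $L$-Cartan projections $\mu_{\pm,\Theta}$, with $\Theta=\Delta\smallsetminus\{\alpha\}$ for the simple root $\alpha$ defining $Q^{+}_{0}$. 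By Proposition~\ref{prop:mu_contrac}, it suffices to show that $\alpha(\mu_{+,\Theta}(m,t))\to+\infty$ uniformly in $m$ as $t\to+\infty$; the analogous estimate for $\mu_{-,\Theta}$ follows by the $+/-$ symmetry of the setup.

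Suppose, for contradiction, that $\alpha(\mu_{+,\Theta}(\hat m_n,t_n))$ remains bounded for some sequences $t_n\to+\infty$ and $\hat m_n\in\flow$. By cocompactness of $\G$ on $\flow$, after passing to a subsequence I choose $\g_n,\g'_n\in\G$ with $\g_n\hat m_n\to\hat m_\ast$ and $\g'_n\phi_{t_n}\hat m_n\to\hat m_{\ast\ast}$ in $\flow$, and set $\delta_n=\g'_n\g_n^{-1}$. Combining $\rho$-equivariance of $\hat\beta$ with Lemma~\ref{lem:bdd_distance} and the very definition of $\mu_{+,\Theta}$, one checks that $\mu(\rho(\delta_n))=\mu_{+,\Theta}(\hat m_n,t_n)$ up to an additive term bounded uniformly in $n$; in particular $\alpha(\mu(\rho(\delta_n)))$ stays bounded. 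Since $t_n\to+\infty$ and every $\RR$-orbit in $\flow$ is a quasi-isometric embedding, $\delta_n$ escapes every finite subset of $\G$; passing to a further subsequence, $\delta_n$ exhibits North--South-type dynamics on $\partial_\infty\G$ with a repelling point $a^-=\tau^+(\hat m_\ast)$ and an attracting point $a^+=\tau^-(\hat m_{\ast\ast})$. Equivariance then yields $\rho(\delta_n)\xi^+(t)=\xi^+(\delta_n t)\to\xi^+(a^+)$ for every $t\in\partial_\infty\G\smallsetminus\{a^-\}$. Irreducibility of $\rho$ forces the $\RR$-span of $\xi^+(\partial_\infty\G)$ to be all of $V$ (this span is $\rho(\G)$-stable), and by density I may select $t_1,\dots,t_{\dim V}\in\partial_\infty\G\smallsetminus\{a^-\}$ with $\xi^+(t_i)$ linearly independent. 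Writing $\rho(\delta_n)=k_n\exp(\mu(\rho(\delta_n)))l_n$ in a Cartan decomposition and extracting convergent subsequences for $k_n,l_n$, a bounded top singular-value gap prevents the vectors $\rho(\delta_n)\xi^+(t_i)$ from all collapsing to the single limit $\xi^+(a^+)$, contradicting the uniform convergence above.

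The main obstacle I anticipate is the comparison step in the preceding paragraph: turning the assumed bound on $\alpha(\mu_{+,\Theta}(\hat m_n,t_n))$ into a bound on $\alpha(\mu(\rho(\delta_n)))$ in $\overline{\mathfrak{a}}^+$. This requires tracking how elements of $L_\Theta$ sit inside the ambient Cartan chamber of $G$---they need not themselves lie in $\overline{\mathfrak{a}}^+$---and how the ambiguity of the lift $\beta$, controlled by Lemma~\ref{lem:bdd_distance}, contributes only a bounded additive error in the closed Weyl chamber distance. Once this comparison is secured, the concluding linear-algebraic fact that a bounded top singular-value gap precludes the collapse of a non-hyperplanar set to a single point is standard.
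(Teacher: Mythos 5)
Your reduction to Proposition~\ref{prop:mu_contrac} and the convergence-group analysis of $\delta_n=\g'_n\g_n^{-1}$ are fine, but the step you yourself flag as the ``main obstacle'' is a genuine gap, and it cannot be closed by the bounded-error bookkeeping you envision. What the comparison actually gives is this: up to a uniformly bounded additive error, $\mu(\rho(\delta_n))$ is the \emph{dominant representative} (the decreasing rearrangement) of $\mu_{+,\Theta}(\hat m_n,t_n)\in\overline{\mathfrak{a}}^{+}_{L_\Theta}$. For $Q_0^+\subset\SL(V)$ the chamber $\overline{\mathfrak{a}}^{+}_{L_\Theta}$ only orders the last $n-1$ diagonal entries, and the negation of condition (iv) of Proposition~\ref{prop:mu_contrac} only says $\alpha_1(\mu_{+,\Theta}(\hat m_n,t_n))=t_1^{(n)}-t_2^{(n)}$ is bounded \emph{above} (it may tend to $-\infty$). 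In that regime the first gap of the sorted projection can be arbitrarily large: e.g.\ entries of the shape $(-2s,\,s+a_s,\,\dots)$ with $s\to\infty$ have $\alpha_1\to-\infty$ while the top singular-value gap of $\rho(\delta_n)$ blows up. So your final contradiction (``bounded top gap prevents collapse'') simply does not apply in exactly the case that must be excluded: $\rho(\delta_n)$ may converge projectively to a rank-one limit. Worse, no contradiction can be extracted from the boundary dynamics either: when the dominant direction sits inside the hyperplane block, the limit attracting line lies in the hyperplane $\xi^-(a^+)$, and by compatibility $\xi^+(a^+)\subset\xi^-(a^+)$, so the collapse of $\rho(\delta_n)\xi^+(t)$ to $\xi^+(a^+)$ for $t\neq a^-$ is perfectly consistent with everything you have assembled (equivariance, continuity, spanning from irreducibility, North--South dynamics). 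The scenario $\alpha_1(\mu_{+,\Theta})\to-\infty$ is precisely the failure of the Anosov property you are trying to rule out, and your soft inputs do not see it; ruling it out needs a quantitative mechanism.

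The paper supplies that mechanism differently: using irreducibility it chooses, near each point $\hat p$ of $\flow$, boundary points $t^1_{\hat p},\dots,t^{n-1}_{\hat p}$ whose $\xi^+$-images together with $\xi^+(\tau^+(\hat m))$ span $V$ and are transverse to $\xi^-(\tau^-(\hat m))$; these give bases of $T_{\xi^+(\tau^+(\hat m))}\PP(V)$, hence (after gluing by a partition of unity) an explicit equivariant family of norms. The dilation of the flow in these norms is then proved by showing $\g_l^{-1}\cdot t^i_k\to\tau^+(\hat m)$, using that $\G$ acts properly and cocompactly on the space of distinct triples $\partial_\infty\G^{(3)}$; uniformity comes from compactness of $\G\backslash\flow$ as in Proposition~\ref{prop:mu_contrac}. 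In other words, the paper controls the contraction directly on the specific tangent directions cut out by $\xi^+$, rather than trying to control the full Cartan projection of group elements, which is exactly where your argument loses the information carried by the root $\alpha_1$. To repair your proof you would need an additional argument excluding the case $\alpha_1(\mu_{+,\Theta}(\hat m_n,t_n))\to-\infty$ (two-sided boundedness would indeed bound the top gap and make your endgame work), and at present no such argument is given.
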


\begin{proof}
  Only the contraction property needs to be proved. The basic
  observation is that the action of the group $\G$ on its boundary at
  infinity $\partial_\infty \G$  already exhibits some contraction property and hence
 one gets contraction along the image of $\partial_\infty \G$
  by $\xi^+$. We will use the maps $\xi^\pm$ as much
  as possible to define an equivariant family of norms $(\| \cdot
  \|_{\hat{m}})_{\hat{m} \in \flow}$ and prove the contraction
  property. 
  
  The projection $\flow \to \G \backslash \flow$ is denoted by $\pi$.

  \medskip

  \noindent \emph{Definition of $\| \cdot
    \|_{\hat{m}}$}.

  We already observed earlier, that it is enough to prove dilatation on $(\sigma^*
  E^+)_{\hat{m}} = T_{\xi^+( \tau^+( \hat{m}))} \PP(V)$, thus we
  define norms only on these spaces.

The irreducibility of $\rho$ implies that for
  any $\hat{p}\in \flow$:
  \begin{itemize}
  \item there exist an open neighborhood $V_{\hat{p}}$ of $\hat{p}$
    and $t^{1}_{\hat{p}}, \dots, t^{n-1}_{\hat{p}} \in \partial_\infty
    \G$, such that
  \item for all $\hat{m} \in V_{\hat{p}}$, the sum $\xi^+( \tau^+(
    \hat{m})) + \xi^+( t^{1}_{\hat{p}}) + \cdots + \xi^+(
    t^{n-1}_{\hat{p}})$ is direct,
  \item for all $\hat{m} \in V_{\hat{p}}$ and all $i$, the sum $\xi^-( \tau^-(
    \hat{m})) + \xi^+( t^{i}_{\hat{p}})$ is direct,
  \item $\pi$ is injective in restriction to $V_{ \hat{p}}$.
  \end{itemize}

  In particular, for all $\hat{m} \in V_{\hat{p}}$ and all $i$,
  $\tau^\pm( \hat{m}) \neq t^{i}_{\hat{p}}$. Furthermore,
  the set $\{(\tau^+( \hat{m}), t^{i}_{ \hat{p}}, \tau^-(
  \hat{m}))\mid  \hat{m} \in V_{ \hat{p}}\}$ is contained in a compact
  subset of  the set of pairwise distinct triples of $\partial_\infty \G$, which we denote by $\partial_\infty \G^{(3)}$.
  
We first construct a basis
  $(e^{i}_{\hat{p}}(\hat{m}))_{i = 1, \dots, n-1}$ of $T_{\xi^+(
    \tau^+( \hat{m}))} \PP(V)$, for any  $\hat{m} \in
  V_{\hat{p}}$, that varies continuously with $\hat{m}$. 
  The vector 
  $e^{i}_{\hat{p}}(\hat{m})$ is defined by the property that the
  corresponding  map $\phi: \xi^+(
  \tau^+( \hat{m})) \to \xi^-( \tau^-( \hat{m}))$ 
  (under the
   isomorphism $T_{\xi^+(
   \tau^+( \hat{m}))} \PP(V) \cong \hom( \xi^+(
  \tau^+( \hat{m})), \xi^-( \tau^-( \hat{m})))$) 
  is such that
  $\xi^+( t^{i}_{\hat{p}}) = \{ v + \phi(v) \mid v \in \xi^+(
  \tau^+( \hat{m})) \}$. We say that $e^{i}_{\hat{p}}(\hat{m})$
  corresponds to the line $\xi^+( t^{i}_{\hat{p}})$.

  In turn, when $\hat{m}$ is in $V_{ \hat{p}}$, we can define a norm on $T_{\xi^+( \tau^+( \hat{m}))}
  \PP(V)$:
  \[ \| v \|_{\hat{p}} = \sum | \lambda^{i} | \text{ if } v =
  \sum \lambda^{i} e^{i}_{\hat{p}}(\hat{m}).\]

  By compactness, there exist $\hat{p}_1, \dots, \hat{p}_K$ such that
  $\G \backslash \flow = \bigcup \pi( V_{\hat{p}_k})$. For ease of
  notation, we will write $V_k = V_{ \hat{p}_k}$,
  $e^{i}_{k}(\hat{m}) = e^{i}_{\hat{p}_k}(\hat{m})$, $\| \cdot
  \|_{k} = \| \cdot \|_{\hat{p}_k}$. There exist continuous
  functions $f_1, \dots, f_K : \G \backslash \flow \to \RR_{\geq 0}$
  such that $\sum f_k = 1$ and $\mathrm{Supp}(f_k) \subset \pi( V_k)$
  for all $k$.

  \medskip

  For all $\hat{m} \in \flow$, we now define a norm $\| \cdot \|_{
    \hat{m}}$ on $T_{\xi^+( \tau^+( \hat{m}))}
  \PP(V)$ in a $\G$-equivariant way using the $\| \cdot \|_k$ and the
  $f_k$.
  For all $k$, if $\hat{m} \in \G \cdot V_k = \pi^{-1}( \pi(V_k))$,
  there exists a unique $\g^{ \hat{m}}_{k} \in \G$ such that $ \g^{
    \hat{m}}_{k} \cdot \hat{m}$ belongs to $V_k$. We then set
  \[\| v \|_{ \hat{m}} = \sum f_k( \pi( \hat{m})) \| \rho(
  \g^{ \hat{m}}_{k} ) v \|_k = \sum f_k( \pi( \hat{m})) \| v \|_{
    \hat{m}, k}, \text{ for  } v \in T_{\xi^+( \tau^+( \hat{m}))}
  \PP(V),\]
  this is well defined and continuous in $\hat{m}$.

  The relation $\g^{ \hat{m}}_{k} = \g^{ \g \hat{m}}_{k} \g$ is easy
  to check and implies the equivariance:
  \[\| \rho(\g) v \|_{ \g \cdot \hat{m}} = \sum f_k( \pi( \g \hat{m})) \| \rho(
  \g^{ \g \hat{m}}_{k} ) \rho(\g) v \|_k = \sum f_k( \pi( \hat{m})) \| \rho(
  \g^{ \hat{m}}_{k} ) v \|_k = \| v \|_{ \hat{m}}.\]
  
  \medskip

  \noindent \emph{The contraction property.} 

  To check the contraction property, as in the proof of
  Proposition~\ref{prop:mu_contrac}, we only need to prove weak
  dilatation, that is
  \begin{itemize}
  \item $\forall \hat{m} \in \flow$, $v \in T_{\xi^+( \tau^+( \hat{m}))}
  \PP(V)$, and sequence $(x_l)_{l \in \NN}$ in $\RR$ with $x_l \mapsto +\infty$, one has
  $\lim \| v \|_{\phi_{x_l} \hat{m}} =+\infty$.
  \end{itemize}
  Note that it is enough to have $\lim \| v \|_{\phi_{x_l} \hat{m},k}
  =+\infty$ for some $k$. Hence we can assume (up to passing to a subsequence) that, for
  all $l$, $\phi_{x_l} \hat{m} \in \G \cdot V_k$. Let $\g_l$ be such
  that $\g_l \cdot \phi_{x_l} \hat{m} \in V_k$. Thus, by definition,
  \[
    \| v \|_{ \phi_{x_l} \hat{m},k} = \sum | \lambda^{i}_{l} | \text{
      with }  \rho( \g_l) v  = \sum \lambda^{i}_{l} e^{i}_{k} ( \g_l
    \cdot \phi_{x_l} \hat{m});
  \]
  hence $v  = \sum \lambda^{i}_{l} \epsilon^{i}_{l}$ where
  $\epsilon^{i}_{l} = \rho( \g^{-1}_l) e^{i}_{k} ( \g_l 
    \cdot \phi_{x_l} \hat{m})$ is the vector of 
 $T_{\xi^+( \tau^+(
    \hat{m}))} \PP(V) \cong \hom( \xi^+( \tau^+(
  \hat{m})), \xi^-( \tau^-( \hat{m})) )$ which corresponds to the
  line $\xi^+( \g^{-1}_l \cdot t^{i}_{k})$. It is therefore enough to
  prove that $\epsilon^{i}_{l} \mapsto 0$ which is equivalent to
  $\xi^+( \g^{-1}_l \cdot t^{i}_{k}) \mapsto \xi^{+}( \tau^+ ( 
  \hat{m}))$. From the continuity of $\xi^+$, it suffices to prove
  $\g^{-1}_l \cdot t^{i}_{k} 
  \mapsto \tau^+ ( \hat{m})$.

  For this, note first that, since the sequence $(\g_l \cdot \phi_{x_l}\hat{m})_{l \in
    \NN}$ is contained in $V_k$, the triples 
  $(\tau^+( \g_l \cdot \phi_{x_l}\hat{m}), t^{i}_{k}, \tau^-( \g_l \cdot
  \phi_{x_l}\hat{m}))$ belong to a compact subset of $\partial_\infty \G^{(3)}$.
  The action of $\G$ on
  $\partial_\infty \G^{(3)}$ is proper and cocompact (see \cite{Bowditch_convergence}), thus 
    $\g_l \mapsto \infty$ implies that
  the sequence $(\tau^+( \phi_{x_l}\hat{m}), \g^{-1}_l \cdot t^{i}_{k}, \tau^-(
  \phi_{x_l}\hat{m})) = (\tau^+( \hat{m}), \g^{-1}_l \cdot t^{i}_{k}, \tau^-(
  \hat{m}))$ diverges in $\partial_\infty \G^{(3)}$. This means either
  that $\g^{-1}_l \cdot t^{i}_{k} \mapsto \tau^+( \hat{m})$ or that
  $\g^{-1}_l \cdot t^{i}_{k} \mapsto \tau^-( \hat{m})$. The second 
  possibility is easily eliminated (as it would contradict that
  $\phi_{x_l} \hat{m} = \g^{-1}_l \cdot \hat{m}_l$ tends
  to 
  $\tau^+( \hat{m})$ with $\hat{m}_l = \g_l \cdot \phi_{x_l} \hat{m}$ being bounded). Thus 
  we conclude that $\g^{-1}_l \cdot t^{i}_{k} \mapsto \tau^+( \hat{m})$.
\end{proof}

From Proposition~\ref{prop:Comp_Maps} and Proposition~\ref{prop:Anosov_Rep},  we deduce the following 
\begin{thm}\label{thm:Ano_Zd}

  Let $\rho: \G \to G$ be a Zariski dense representation and $P^+, P^- < G$ opposite parabolic
  subgroups of $G$. Suppose 
  that $\rho$ admits a pair  of equivariant continuous compatible maps (Definition~\ref{defi:compat}) $\xi^+: \partial_\infty \G \to \Ff^+$, $\xi^-: \partial_\infty \Gamma \to \Ff^-$.
    
  Then the representation $\rho$ is $(P^+, P^-)$-Anosov and $(\xi^+,
  \xi^-)$ are the associated Anosov maps.
\end{thm}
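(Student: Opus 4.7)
The plan is to reduce to the case of an irreducible linear representation, invoke Proposition~\ref{prop:Comp_Maps}, and then pull the result back to $G$ via Proposition~\ref{prop:Anosov_Rep}. Writing $P^+ = P^{+}_{\Theta}$, the first step is to produce a finite-dimensional irreducible $G$-module $V$ with a decomposition $V = D \oplus H$ such that $(\stab_G(D), \stab_G(H)) = (P^+, P^-)$. Standard highest weight theory supplies such a $V$: I take the irreducible $G$-module of highest weight $\lambda$ whose stabilizer in $G$ of the highest weight line equals $P^{+}_{\Theta}$. Then $D := V_\lambda$ satisfies $\stab_G(D) = P^+$, and a direct weight computation shows that the hyperplane $H$ obtained as the sum of all weight spaces other than the lowest-weight line $V_{w_0 \lambda}$ is $P^-$-stable with stabilizer exactly $P^-$. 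Let $\phi : G \to \SL(V)$ denote the induced homomorphism and $\phi^+ : \mathcal{F}^+ \to \PP(V)$, $\phi^- : \mathcal{F}^- \to \PP(V^*)$ the induced $G$-equivariant maps.

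Next I consider the composed maps $\xi'^{\pm} := \phi^{\pm} \circ \xi^{\pm} : \partial_\infty \G \to \PP(V), \PP(V^*)$. Zariski density of $\rho(\G)$ in $G$ together with the irreducibility of $V$ as a $G$-module implies that $\phi \circ \rho : \G \to \SL(V)$ is irreducible. To apply Proposition~\ref{prop:Comp_Maps} it then suffices to verify that $(\xi'^+, \xi'^-)$ is compatible in the sense of Definition~\ref{defi:compat}. Transversality off the diagonal is immediate: a transverse pair $(gP^+, gP^-)$ in $\mathcal{F}^+ \times \mathcal{F}^-$ is sent to $(gD, gH)$, and $V = gD \oplus gH$. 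For the diagonal, the unique singular $G$-orbit in $\mathcal{F}^+ \times \mathcal{F}^-$ is the orbit of $(P^{+}_{\Theta}, P^{+}_{\iota(\Theta)})$, whose image under $(\phi^+, \phi^-)$ is $(D, H')$, where $H'$ is the unique $B^+$-invariant hyperplane of $V$, namely the hyperplane complementary to the lowest-weight line. Since $\lambda \neq w_0 \lambda$ for any nonzero dominant weight, $D = V_\lambda$ lies in $H'$, so singular pairs are sent to singular pairs.

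With compatibility established, Proposition~\ref{prop:Comp_Maps} gives that $\phi \circ \rho$ is $(Q^{+}_{0}, Q^{-}_{0})$-Anosov with Anosov maps $\xi'^\pm$, and Proposition~\ref{prop:Anosov_Rep} converts this into the $(P^+, P^-)$-Anosov property for $\rho$; Lemma~\ref{lem:uniqu} identifies the associated Anosov maps as the originally given $(\xi^+, \xi^-)$. The most delicate point is the representation-theoretic setup of the first step: one must exhibit a single irreducible $G$-module $V$ that realizes $P^\pm$ as stabilizers of a complementary line-hyperplane pair and also satisfies the singularity-preservation condition $D \subset H'$. Once this is done, the contraction estimates come out automatically from the dynamics of $\G$ on $\partial_\infty \G^{(3)}$ driving the proof of Proposition~\ref{prop:Comp_Maps}, with no further input required.
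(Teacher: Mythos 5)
Your overall strategy is exactly the paper's: realize $(P^+,P^-)$ as the $G$-stabilizers of a line and a complementary hyperplane in an irreducible $G$-module $V$, push the compatible pair forward, apply Proposition~\ref{prop:Comp_Maps} to the (automatically irreducible) representation $\phi\circ\rho$, and return to $G$ via Proposition~\ref{prop:Anosov_Rep}. However, your explicit construction of the hyperplane is wrong, and this is a genuine (if local) error. You set $H=\bigoplus_{\mu\neq w_0\lambda}V_\mu$, the sum of all weight spaces other than the \emph{lowest}-weight line. This subspace contains $D=V_\lambda$ (since $\lambda\neq w_0\lambda$), so $V\neq D\oplus H$ and the hypothesis of Proposition~\ref{prop:Anosov_Rep} is not met; moreover it is not $P^-$-stable: lowering operators map weight vectors into $V_{w_0\lambda}$ (already for $\SL(2,\RR)$ acting on $\RR^2$ the complement of the lowest-weight line is the highest-weight line, which is $B^+$- but not $B^-$-invariant). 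In fact this hyperplane is stabilized by $P^{+}_{\iota(\Theta)}$, and you say so yourself two sentences later when you call it ``the unique $B^+$-invariant hyperplane'' $H'$ -- so your write-up attributes contradictory invariance properties to the same subspace, and with your stated $H$ the map $\phi^-:G/P^-\to\PP(V^*)$, $gP^-\mapsto gH$, is not even well defined.

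The fix is to take $H=\bigoplus_{\mu\neq\lambda}V_\mu$, the sum of all weight spaces other than the \emph{highest}-weight line: one checks (using $\langle\lambda,\alpha\rangle=0$ for $\alpha\in\Theta$, so $\lambda-\alpha$ is not a weight for $\alpha\in\Sigma_\Theta$) that $H$ is $P^{-}_{\Theta}$-invariant with $\stab_G(H)=P^{-}_{\Theta}$, and clearly $V=D\oplus H$. With this choice your verification of compatibility goes through verbatim: transverse pairs go to transverse pairs, and the basepoint of the singular orbit is sent to $(V_\lambda,\,w_0\cdot H)=(D,H')$ with $D\subset H'$, which is exactly the computation you already wrote, now with consistent labels. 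After this correction your argument coincides with the paper's proof, which cites precisely Propositions~\ref{prop:Comp_Maps} and~\ref{prop:Anosov_Rep}; the only cosmetic difference is that you produce $V$ by highest-weight theory, whereas the paper's remark constructs it concretely as the irreducible factor of $\wdg{p}\mathfrak{g}$, $p=\dim\mathfrak{p}^+$, containing the line $\wdg{p}\mathfrak{p}^+$.
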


\begin{proof}
  By classical representation theory there exists an
  irreducible $G$-module $V$ admitting a decomposition $V=D\oplus H$
  into a line and a hyperplane and such that $P^+ = \stab_G(D)$ and
  $P^- = \stab_G(H)$. The result then follows from
  Proposition~\ref{prop:Comp_Maps} and 
  Proposition~\ref{prop:Anosov_Rep}. 
\end{proof}

\begin{remark}
A $G$-module satisfying the hypothesis of
Proposition~\ref{prop:Anosov_Rep} is easy to find; e.g.\ $\wdg{p}
\mathfrak{g}$ where $p = \dim \mathfrak{p}^+$. Taking the 
irreducible factor containing the line $\wdg{p}
\mathfrak{p}^+$
 gives an irreducible module $V$ satisfying the requirements of the above proof.
\end{remark}

\begin{remark}\label{rem:Ano_in_Zcl}
  As a conclusion of Theorem~\ref{thm:Ano_Zd}, a representation $\rho: \G \to G$ admitting a pair of compatible $\rho$-equivariant
  and continuous maps $(\xi^+, \xi^-)$ is Anosov when considered as a representation into
  its \emph{Zariski closure} $H$ (or more precisely into the quotient
  of $H$ by its radical, since we define Anosov representations only
  into semisimple Lie groups). Proposition~\ref{prop:injLieGr} then gives sufficient conditions for the representation $\rho: \G \to G$ to be Anosov. 
\end{remark}

\begin{remark}
Recently Sambarino \cite{Sambarino} established counting theorems for representations of fundamental groups of negatively curved manifolds into $\SL( V)$ satisfying the assumptions of Proposition~\ref{prop:Comp_Maps}. Using Proposition~\ref{prop:mu_contrac} his results should extend to all $(Q^{+}_{0} ,Q^{-}_{0})$-Anosov representations of fundamental groups of negatively curved manifolds into $\SL( V)$.
\end{remark}

\subsection{Parabolic subgroups conjugate to their opposite}
\label{sec:para_opp}

A parabolic subgroup $P^{+}_{\Theta}$ is conjugate to
$P^{-}_{\Theta}$ if and only if $\Theta = \iota (\Theta)$. 
Lemma~\ref{lem:ThetaAn} states that any Anosov representation is
$P_\Theta$-Anosov for some $\Theta$ satisfying $\Theta = \iota
(\Theta)$. In that case the two homogeneous spaces
$\mathcal{F}^{+}_{\Theta} = G/ P^{+}_{\Theta}$ and
$\mathcal{F}^{-}_{\Theta} = G/ P^{-}_{\Theta}$ are canonically identified. Hence,
by  uniqueness (Lemma~\ref{lem:uniqu}),  there is a single Anosov map 
\[\xi  = \xi^+ = \xi^-: \partial_\infty \G \to \mathcal{F}^{+}_{\Theta} =
\mathcal{F}^{-}_{\Theta} = \mathcal{F}^{}_{\Theta}.\] 

\begin{defi}
  \label{defi:transmap}
  A map $\xi: \partial_\infty \Gamma \to \mathcal{F}_{\Theta}$ is said to be  \emph{transverse} if for all $t^+ \neq t^-\in \partial_\infty \Gamma$,  the pair  $(\xi( t^+),\xi(t^-)) \subset \Ff_\Theta \times \Ff_\Theta$ is transverse.
\end{defi}

A special case of Theorem~\ref{thm:Ano_Zd} is the following 

\begin{cor}\label{cor:Map_Ano}
Let  $\rho: \G \to G$ be a
  Zariski dense representation.   Suppose that $\Theta= \iota(\Theta)$ and assume that there exists  a continuous
  $\rho$-equivariant map $\xi:
  \partial_\infty \G \to \mathcal{F}_{\Theta}$ such that for all $t^+ \neq t^-\in \partial_\infty
  \G$,  the pair  $(\xi( t^+),\xi(t^-))$ is transverse.

  Then the representation $\rho$ is $P_\Theta$-Anosov.
\end{cor}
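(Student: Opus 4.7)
The plan is to deduce this corollary directly from Theorem~\ref{thm:Ano_Zd} by manufacturing a compatible pair of equivariant maps (in the sense of Definition~\ref{defi:compat}) out of the single transverse map $\xi$. Since $\rho$ is already assumed Zariski dense, all that needs to be checked is that the hypotheses of Theorem~\ref{thm:Ano_Zd} are satisfied.

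First I would use the hypothesis $\Theta = \iota(\Theta)$ to fix a canonical $G$-equivariant identification $\Ff^+_\Theta \cong \Ff^-_\Theta \cong \Ff_\Theta$: the subgroups $P^+_\Theta$ and $P^-_\Theta$ are conjugate, and the identification can be normalized so that the basepoint $P^+_\Theta \in \Ff^+_\Theta$ corresponds to $P^+_\Theta = P^+_{\iota(\Theta)} \in \Ff^+_{\iota(\Theta)} = \Ff^-_\Theta$. Under these identifications, the single continuous $\rho$-equivariant map $\xi: \partial_\infty \Gamma \to \Ff_\Theta$ yields continuous $\rho$-equivariant maps $\xi^+: \partial_\infty \Gamma \to \Ff^+_\Theta$ and $\xi^-: \partial_\infty \Gamma \to \Ff^-_\Theta$.

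Next I would verify the two conditions of Definition~\ref{defi:compat} for the pair $(\xi^+, \xi^-)$. The transversality condition on pairs $(t^+, t^-) \in \partial_\infty \Gamma^{(2)}$ is immediate from the hypothesis on $\xi$, since the notion of transverse pair in $\Ff_\Theta \times \Ff_\Theta$ used in Definition~\ref{defi:transmap} is precisely that of transverse pair in $\Ff^+_\Theta \times \Ff^-_\Theta$ under the above identification. The diagonal condition asks that $(\xi^+(t), \xi^-(t)) \in \mathcal{S}$ for every $t \in \partial_\infty \Gamma$: by definition the singular orbit is the $G$-orbit of $(P^+_\Theta, P^+_{\iota(\Theta)}) = (P^+_\Theta, P^+_\Theta)$, which, by our normalization, is exactly the diagonal in $\Ff_\Theta \times \Ff_\Theta$; so every pair of the form $(\xi(t), \xi(t))$ lies in $\mathcal{S}$.

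Once the pair $(\xi^+, \xi^-)$ is shown to be compatible, Theorem~\ref{thm:Ano_Zd} applies and yields immediately that $\rho$ is $(P^+_\Theta, P^-_\Theta)$-Anosov, with Anosov maps equal to $\xi^\pm$, hence recovered from $\xi$. The only delicate point I anticipate is the bookkeeping for the canonical identification $\Ff^+_\Theta \cong \Ff^-_\Theta$: one must ensure it is normalized so that the diagonal matches the singular orbit (rather than, say, a conjugate of it). This is not a genuine obstacle but the one place where the hypothesis $\Theta = \iota(\Theta)$ is used in a non-cosmetic way.
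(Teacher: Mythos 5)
Your proposal is correct and matches the paper's own route: the paper obtains Corollary~\ref{cor:Map_Ano} exactly by using the canonical identification $\Ff^+_\Theta \cong \Ff^-_\Theta$ available when $\Theta = \iota(\Theta)$ to view $\xi$ as a compatible pair $(\xi^+,\xi^-)$ and then invoking Theorem~\ref{thm:Ano_Zd}. Your check that the diagonal corresponds to the singular orbit under the normalized identification is precisely the (implicit) content of the paper's reduction, so there is nothing further to add.
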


When $\Theta = \iota(\Theta)$, there exists an
irreducible $G$-module $V$ with a $G$-invariant non-degenerate bilinear form $F$,  and
an isotropic line $D$ in $V$ such that $P^{+}_{\Theta} =
\stab_G(D)$. 
We denote by $G_F$ the automorphism group of $(V,F)$. The irreducibility implies that $F$ can be
supposed to be either symmetric or skew-symmetric, i.e.\  $G_F
= \O(V,F)$ or $\Sp(V,F)$. We denote by $Q_0$ the stabilizer in $G_F$ of the line $D$. This discussion together
with Proposition~\ref{prop:Anosov_Rep} implies

\begin{prop}\label{prop:ano_Qzero}
  A representation $\rho: \G \to G$ is Anosov if and only if there is
  a self-dual $G$-module $(V,F)$ with $\phi: G\to G_F$ the
  corresponding homomorphism, such that $\phi \circ \rho$ is $Q_0$-Anosov.
\end{prop}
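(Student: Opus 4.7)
The argument splits into the two implications, and both rely on the representation-theoretic remarks immediately preceding the statement together with Lemma~\ref{lem:ThetaAn}\eqref{itemF:lemThetAn} and Proposition~\ref{prop:Anosov_Rep}.

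For the ``only if'' direction, I would first apply Lemma~\ref{lem:ThetaAn}\eqref{itemF:lemThetAn} to reduce to the case where $\rho$ is $P_\Theta$-Anosov for some $\Theta \subset \Delta$ with $\iota(\Theta) = \Theta$. The discussion preceding the proposition then supplies an irreducible self-dual $G$-module $(V,F)$ with an $F$-isotropic highest-weight line $D$ such that $\stab_G(D) = P^{+}_{\Theta}$. The main verification at this stage is that the hyperplane $H = \bigoplus_{\lambda \neq \lambda_{\max}} V_\lambda$ satisfies $\stab_G(H) = P^{-}_{\Theta}$, so that $(\stab_G(D), \stab_G(H))$ is a pair of opposite parabolics of $G$. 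Proposition~\ref{prop:Anosov_Rep} then applies to $\phi: G \to \SL(V)$ and yields that $\phi \circ \rho$ is Anosov in $\SL(V)$ with respect to $\stab_{\SL(V)}(D)$. Since $\rho$ preserves $F$, the Anosov maps automatically take values in the $\phi(G)$-invariant subvariety of $F$-isotropic lines, and the tangent space $T_D(G_F/Q_0)$ embeds in $T_D\PP(V)$; the contraction property is inherited on this subspace, giving that $\phi \circ \rho$ is $Q_0$-Anosov in $G_F$.

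For the converse, assume $\phi \circ \rho$ is $Q_0$-Anosov in $G_F$ for some self-dual $(V,F)$ and $F$-isotropic line $D$. After restricting to the irreducible $G$-summand of $V$ containing the image of the Anosov maps, I may assume $V$ is $G$-irreducible, which forces $\stab_G(D)$ to be a proper parabolic $P^{+}_{\Theta}$ of $G$. I would then invoke Proposition~\ref{prop:embedding} applied to $\phi: G \to G_F$ to conclude that $\rho$ is $P_\Theta$-Anosov, hence Anosov. As an alternative I would instead upgrade the $G_F$-Anosov condition to an $\SL(V)$-Anosov condition with respect to $\stab_{\SL(V)}(D)$, using the eigenvalue symmetry $\mu \leftrightarrow \mu^{-1}$ forced by $F$-invariance to supply the missing contraction on $V/D^\perp \cong D^*$, and then invoke the converse half of Proposition~\ref{prop:Anosov_Rep}.

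The main obstacle I anticipate is the identification $\stab_G(H) = P^{-}_{\Theta}$ in the ``only if'' direction. This requires a calculation combining highest-weight theory with the opposition involution and uses both the hypothesis $\iota(\Theta) = \Theta$ and the self-duality of $V$ as a $G$-module; it is where the careful choice of $(V,F)$ enters in an essential way. Once this identification is established, Proposition~\ref{prop:Anosov_Rep} and the passage to the subgroup $G_F$ handle the remainder of the argument with no additional difficulty.
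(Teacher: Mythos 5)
Your proposal is correct and takes essentially the same route as the paper, which deduces the statement from the representation-theoretic discussion of Section~\ref{sec:para_opp} (Lemma~\ref{lem:ThetaAn}.(\ref{itemF:lemThetAn}) plus the existence of the self-dual irreducible module with $F$-isotropic highest-weight line) combined with Proposition~\ref{prop:Anosov_Rep}; your additions merely make explicit the verification $\stab_G(H)=P^{-}_{\Theta}$ and the passage between the $\SL(V)$- and $G_F$-Anosov conditions, which the paper leaves implicit (and which is covered by Propositions~\ref{prop:embedding} and~\ref{prop:injLieGr}). One small caveat: in the converse, your remarks about restricting to an irreducible summand and about $\stab_G(D)$ being forced to be a proper parabolic of $G$ are neither needed nor justified (the $G$-stabilizer of an arbitrary isotropic line in the limit set need not be parabolic), but this is harmless since Proposition~\ref{prop:embedding} applied directly to $\phi\colon G\to G_F$ already yields that $\rho$ is $P_\Theta$-Anosov.
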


Applying the construction of Lemma~\ref{lem:red_spe}.(\ref
{item3:lemredspe}) below we deduce the following 
\begin{cor}\label{cor:allinO}
A representation $\rho: \G \to G$ is Anosov if and only if there is a Lie group homomorphism  $\phi: G \to \O(V,F)$  such that  $\phi \circ\rho: \G \to \O(V,F)$ is a $Q_0$-Anosov representation. 
\end{cor}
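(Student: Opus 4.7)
The plan is to deduce the corollary from Proposition~\ref{prop:ano_Qzero} by handling the two cases for $G_F$ separately; the only new work is to convert a symplectic $Q_0$-Anosov situation into an orthogonal one.

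\medskip

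\noindent\emph{The easy direction} ($\Leftarrow$). If there exists $\phi: G \to \O(V,F)$ such that $\phi\circ\rho$ is $Q_0$-Anosov, then in particular $\phi\circ\rho$ is $P$-Anosov for some proper parabolic subgroup of $\O(V,F)$, and Proposition~\ref{prop:embedding} produces a proper parabolic subgroup $P_\Theta<G$ such that $\rho$ itself is $P_\Theta$-Anosov. Hence $\rho$ is Anosov.

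\medskip

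\noindent\emph{The nontrivial direction} ($\Rightarrow$). Suppose $\rho:\G\to G$ is Anosov. By Proposition~\ref{prop:ano_Qzero} there is a self-dual irreducible $G$-module $(V,F)$ and a homomorphism $\phi:G\to G_F$ such that $\phi\circ\rho$ is $Q_0$-Anosov. The form $F$ is either symmetric or skew-symmetric. In the symmetric case $G_F=\O(V,F)$ and we are done. The essential content of the corollary is therefore the skew-symmetric case, which is what Lemma~\ref{lem:red_spe}.\eqref{item3:lemredspe} is meant to take care of: I would make this explicit with the following construction.

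\medskip

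\noindent\emph{Symplectic $\to$ orthogonal.} Given $\phi:G\to \Sp(V,F)$ with $\phi\circ\rho$ being $Q_0$-Anosov, where $Q_0<\Sp(V,F)$ is the stabilizer of a line $D\subset V$ (which is automatically $F$-isotropic), introduce the vector space $W=V\oplus V^*$ with the symmetric non-degenerate bilinear form
\[ F'\bigl((v,\xi),(w,\eta)\bigr)=\xi(w)+\eta(v). \]
The chain $\Sp(V,F)\hookrightarrow \SL(V)\hookrightarrow \O(W,F')$, where the middle arrow acts on $V$ by the defining representation and on $V^*$ by the contragredient, gives a new homomorphism $\phi':G\to \O(W,F')$. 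The line $D$, viewed as the line $D\oplus 0\subset W$, is $F'$-isotropic, and the stabilizer of $D$ in $\O(W,F')$ is a parabolic subgroup $Q'_0$ of the desired form. The Anosov map $\xi^+:\partial_\infty\G\to \PP(V)$ for $\phi\circ\rho$ composes with the natural inclusion $\PP(V)\hookrightarrow \PP(W)$ to give a continuous equivariant map $\eta^+:\partial_\infty\G\to \O(W,F')/Q'_0$; the Anosov map $\xi^-:\partial_\infty\G\to \PP(V^*)$ (provided by the symplectic form as $\xi^-=F(\xi^+,\cdot)$) composes with the inclusion $\PP(V^*)\hookrightarrow \PP(W)$ to give the companion map $\eta^-$. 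One checks that the pair $(\eta^+,\eta^-)$ is compatible (transversality on distinct points comes from the compatibility of $(\xi^+,\xi^-)$, since a hyperplane in $W$ of the form $\eta^-(t)^\perp$ intersects $V$ in $\xi^-(t)$ viewed as a hyperplane of $V$, and the transverse conditions on $V$ translate directly to transverse conditions on $W$). Finally, the key point is that the contraction condition transfers: the Cartan projection of $\phi'\circ\rho$ can be read off from that of $\phi\circ\rho$ via the explicit description of the $\SL(V)$-Cartan projection on $W=V\oplus V^*$, and Proposition~\ref{prop:injLieGr} (or a direct application of Proposition~\ref{prop:mu_contrac}) then ensures that the simple root of $\O(W,F')$ cutting out $Q'_0$ goes to $+\infty$ along the flow.

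\medskip

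\noindent\emph{Main obstacle.} The technical crux is to verify the Anosov contraction property for $\phi'\circ\rho$ from that of $\phi\circ\rho$: one must show that the dominant $\SL(V)$-root for $\phi\circ\rho$ bounds below the relevant $\O(W,F')$-root for $\phi'\circ\rho$. This is not automatic from the algebraic inclusion $\Sp(V,F)\hookrightarrow \O(W,F')$; it uses that the Anosov direction on $V$ pairs naturally with the dual Anosov direction on $V^*$, so that the associated isotropic line in $W$ is genuinely contracting. Once this comparison of Cartan projections is in place, the corollary follows from Proposition~\ref{prop:injLieGr} combined with Proposition~\ref{prop:ano_Qzero}.
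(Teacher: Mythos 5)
Your skeleton (reduce to Proposition~\ref{prop:ano_Qzero}, handle the converse with Proposition~\ref{prop:embedding}, and then convert the skew-symmetric case into an orthogonal one) is the paper's, but the conversion you propose is not just technically delicate -- it is false, and it is exactly the step the corollary needs. Restricted to $\Sp(V,F)$ the contragredient module $V^*$ is isomorphic to $V$ (via $v \mapsto F(v,\cdot)$), so as an $\Sp(V,F)$-module your $W = V \oplus V^*$ is $V \oplus V$: every eigenvalue modulus and every singular value of $\phi'\circ\rho(\g)$ on $W$ occurs with multiplicity at least two. Consequently the simple root $\alpha'_0 : \diag(t'_1, t'_2, \dots) \mapsto t'_1 - t'_2$ of $\O(W,F')$ vanishes identically on the Cartan and Jordan projections of $\phi'\circ\rho(\G)$, so the divergence required by Proposition~\ref{prop:mu_contrac} can never hold; equivalently, by Lemma~\ref{lem:ano_proxi} a $Q_0$-Anosov image must act proximally on $\PP(W)$, whereas here the top eigenvalue of each $\phi'\circ\rho(\g)$ has a (totally $F'$-isotropic) eigenspace of dimension at least two, containing both $\xi^+(t^+_\g)\oplus 0$ and $0 \oplus \xi^-(t^+_\g)$, so no element is proximal. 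For the same reason the hypothesis of Proposition~\ref{prop:injLieGr} fails: $\phi'_*$ sends the whole closed Weyl chamber of $\Sp(V,F)$ into the wall where the top two coordinates of $\mathfrak{a}'$ agree. So the ``main obstacle'' you flag is a genuine obstruction, not a verification to be carried out. (Two further symptoms: your pair $(\eta^+,\eta^-)$ is not compatible, since the closed orbit in $\mathcal{F}_0(W)\times\mathcal{F}_0(W)$ is the diagonal while $\eta^+(t)\neq\eta^-(t)$; and if $\phi'\circ\rho$ were $Q_0$-Anosov its two Anosov maps would have to coincide, because $Q_0$ is conjugate to its opposite in $\O(W,F')$.)

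The paper's route through the skew-symmetric case is Lemma~\ref{lem:red_spe}.(\ref{item3:lemredspe}): replace $(V,F)$ by $(V\otimes V, F\otimes F)$, which is symmetric, with the homomorphism $\Sp(V,F) \to \O(V\otimes V, F\otimes F)$ and the equivariant map $D \mapsto D\otimes D$ on isotropic lines. There the weights of $V\otimes V$ are sums of two weights of $V$, so the highest weight $2\lambda_1$ still has multiplicity one and the first gap $2\lambda_1 - (\lambda_1+\lambda_2) = \lambda_1 - \lambda_2$ equals the original one; hence the $\alpha_0$-divergence, and with it the $Q_0$-Anosov property, transfers. Your easy direction via Proposition~\ref{prop:embedding}, and the reduction of the symmetric case, are fine; it is only the symplectic-to-orthogonal step that must be done with the tensor (or symmetric) square rather than with $V\oplus V^*$.
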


\section{Discreteness, metric properties and openness}

\subsection{Quasi-isometric embeddings and well displacing}
\label{sec:discreteness}
The group $\G$ is endowed with the left invariant distance $d_\G$
coming from a word length $\ell_\G$. The group $G$ is endowed with the distance
  $d_G$ comming from a left invariant Riemannian metric. 
With this
distance $G$ is quasi-isometric to any homogeneous space $G/M$ where
$M$ is a compact subgroup, endowed with a left invariant Riemannian
metric. The translation length of an element $\g\in \G$ (resp.\ $g$
in $G$) is
\[ t_\G(\g) = \inf_{ x \in \G} d_\G( x, \g x) \quad (\textrm{resp. }
t_G(g) = \inf_{ x \in G} d_G( x, g x)).\] 

\begin{defi}
A representation $\rho: \Gamma \to G$ is a \emph{quasi-isometric embedding} if there
exist positive constants $K,C$ such that, for every 
$\g \in \G$,
\[K^{-1} \ell_\Gamma( \g) -C \leq d_G( 1, \rho( \g)) \leq K \ell_\G(
\g) +C\]
 (for generalities on quasi-isometries, quasi-geodesics, etc.\
see \cite[Chapitre~3]{Coornaert_Delzant_Papadopoulos}). A
representation $\rho: \Gamma \to G$ is said to be \emph{well
  displacing} \cite{Delzant_Guichard_Labourie_Mozes, Labourie_energy} if, for all $\g\in \Gamma$, 
\[K^{-1}
t_\Gamma( \g) -C \leq t_G( \rho( \g)) \leq K t_\G( \g) +C.\]
\end{defi}
\begin{remark}
  Note that, since $\G$ is finitely generated, the upper bound is automatically 
  satisfied. Furthermore, from the classical equality $t_{G/K}( g) =
  \lim d_{G/K}( K, g^n K)/n$, where $G/K$ is the symmetric space associated
  with $G$, endowed with a left invariant Riemannian metric, it follows that any representation $\rho$ which is a quasi-isometric embedding is also
  well displacing.
\end{remark}

\begin{thm}\label{thm:Ano_QIE}
  Let $\rho$ be an Anosov representation, then $\rho:\G \to G$ is a
  quasi-isometric embedding. In particular: 

  \begin{inparaenum}
  \item $\ker \rho$ is finite,
  \item $ \rho(\Gamma) < G$ is discrete, and 
  \item $\rho$ is well displacing.
  \end{inparaenum} 
\end{thm}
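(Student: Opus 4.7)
The upper bound $d_G(1,\rho(\gamma)) \leq K\ell_\Gamma(\gamma) + C$ is automatic from the left-invariance of $d_G$ and the finite generation of $\Gamma$, so the core task is the lower bound; the three listed consequences then follow routinely. Indeed, if $\rho(\gamma) = e$, then the lower inequality forces $\ell_\Gamma(\gamma) \leq KC$, so $\ker \rho$ is contained in a finite ball of $\Gamma$; the same argument gives discreteness of $\rho(\Gamma)$, and well-displacing was already noted in the remark preceding the theorem to be a consequence of being a quasi-isometric embedding.

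My strategy is to show that the equivariant lift $\hat{\beta}: \flow \to \mathcal{Y} = G/M_\Theta$ of the Anosov section, provided by Lemma~\ref{lem:exi_lift}, is itself a quasi-isometric embedding. Since $M_\Theta$ is compact, $\mathcal{Y}$ is quasi-isometric to $G$; since every orbit map $\Gamma \to \flow$, $\gamma \mapsto \gamma \hat{m}_0$, is a quasi-isometry; and since $\hat{\beta}$ is $\rho$-equivariant, so that $\hat{\beta}(\gamma \hat{m}_0) = \rho(\gamma) \hat{\beta}(\hat{m}_0)$, a quasi-isometric embedding statement for $\hat{\beta}$ transfers to one for $\rho$. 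The upper Lipschitz bound on $\hat{\beta}$ is immediate from its continuity together with the compactness of $\G \backslash \flow$.

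For the lower Lipschitz bound, I would first handle pairs of points on a common flow line. For $\hat{m} \in \flow$ and $t \geq 0$, the points $\hat{\beta}(\hat{m})$ and $\hat{\beta}(\phi_t \hat{m})$ lie in a single $L_\Theta$-orbit of $\mathcal{Y}$, and their relative position is by definition encoded by $\mu_{+,\Theta}(\hat{m},t)$. The Cartan decomposition $L_\Theta = M_\Theta \exp(\overline{\mathfrak{a}}^+_{L_\Theta}) M_\Theta$ gives a bi-Lipschitz equivalence
\[
  d_\mathcal{Y}(\hat{\beta}(\hat{m}), \hat{\beta}(\phi_t \hat{m})) \asymp \|\mu_{+,\Theta}(\hat{m},t)\|,
\]
and by Proposition~\ref{prop:mu_contrac} the right-hand side is bounded below by $ct - C$. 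Since flow orbits are quasi-isometric embeddings of $\RR$ in $\flow$, we have $d_\flow(\hat{m}, \phi_t \hat{m}) \asymp t$, which yields the desired coarse bi-Lipschitz estimate along flow lines.

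To extend this to arbitrary pairs $\hat{p}, \hat{q} \in \flow$ I would use the hyperbolicity of $\flow$: for $d_\flow(\hat{p},\hat{q})$ large enough, the pair of boundary points $(\tau^-(\hat{p}), \tau^+(\hat{q})) \in \partial_\infty \Gamma^{(2)}$ determines a flow line which fellow-travels with the geodesic segment $[\hat{p},\hat{q}]$ up to bounded Hausdorff error, with a flow segment of duration $T$ within an additive constant of $d_\flow(\hat{p},\hat{q})$. Combining the estimate from the previous step applied to this shadowing flow segment with the Lipschitz upper bound on $\hat{\beta}$ (to pass between the segment's endpoints and $\hat{p}, \hat{q}$) gives $d_\mathcal{Y}(\hat{\beta}(\hat{p}), \hat{\beta}(\hat{q})) \geq c' d_\flow(\hat{p},\hat{q}) - C'$, completing the argument. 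The main obstacle is precisely this last step, namely transferring the linear growth of the $L_\Theta$-Cartan projection from the flow direction to arbitrary directions in $\flow$; it rests on the Morse lemma and on the fact that flow orbits in $\flow$ are quasi-geodesics with explicit control of their endpoints at infinity.
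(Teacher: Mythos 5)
Your proposal is correct and follows essentially the same route as the paper: reduce to showing that the equivariant lift $\hat{\beta}:\flow\to G/M_\Theta$ of the Anosov section is a quasi-isometric embedding, use the $L_\Theta$-Cartan projection together with Proposition~\ref{prop:mu_contrac} to see that the restriction of $\hat{\beta}$ to any $\RR$-orbit is a uniform quasi-geodesic, and then invoke the hyperbolicity of $\flow$ to pass to arbitrary pairs of points. The paper phrases this last step slightly more robustly---there is a uniform $D$ such that any two points of $\flow$ lie within distance $D$ of two points on a \emph{common} $\RR$-orbit---which sidesteps the degenerate case $\tau^-(\hat{p})=\tau^+(\hat{q})$ that can arise in your specific choice of shadowing flow line.
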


\begin{proof}
  We consider $\hat{\sigma}$, $\hat{ \beta}$, $\mu_{+,\Theta}$, $\mu_{-,\Theta}$, which were introduced 
  in Section~\ref{sec:lifting-section-gm}. Since $\G$ and $\flow$ are
  quasi-isometric, it is enough to show that $\hat{\beta} : \flow \to
  G/M$ is a quasi-isometric embedding. As a matter of fact
  Proposition~\ref{prop:mu_contrac} already shows that there are
  constants $(K,C)$ such that the restriction of $\hat{ \beta}$ to any
  $\RR$-orbit is a $(K,C)$-quasi-geodesic. To conclude, one has to
  note the following property of $\flow$, which is a consequence of its 
  hyperbolicity: there exists $D \geq 0$ such that for any $m,p$ in
  $\flow$ there exist $m_0, p_0\in \flow$ that are on the same
  $\RR$-orbit and such that $d(m,m_0) \leq D$ and $d(p,p_0) \leq D$.
\end{proof}

In the case when $\G = \pi_1(\Sigma)$ is the fundamental group of a
closed connected oriented surface of genus $\geq 2$, following
arguments of \cite[Section 6.3]{Labourie_energy}, or when $\G$ is a free group, following the arguments of \cite[Theorem 3.3]{Minsky} 
one can deduce from
Theorem~\ref{thm:Ano_QIE} and from Theorem~\ref{thm:Ano_QIEUnif} that
the action of the outer automorphism group of $\G$ on the set of Anosov 
representations is proper. We expect that the arguments of \cite[Theorem 3.3]{Minsky} can be generalized to arbitrary word hyperbolic groups. 

 For this let $\hom_{\textup{Anosov}}(\G, G)$ denote the set
of Anosov representations and denote by 
$(\hom_{\textup{Anosov}}(\G, G)/G)^{red}$ its Hausdorff quotient
(i.e.\ two elements $x, y\in \hom_{\textup{Anosov}}(\G, G)/G$ are
identified if every neighborhood of $x$ meets any neighborhood of
$y$).

\begin{cor}
Let $\Sigma$ be a connected orientable surface of negative Euler characteristic and $\G = \pi_1(\Sigma)$. Then the outer automorphism group of $\G$ 
acts properly on $(\hom_{\textup{Anosov}}(\G, G)/G)^{red}$. 
\end{cor}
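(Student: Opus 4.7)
The plan is to combine the uniform well-displacing property (Theorem~\ref{thm:Ano_QIE} together with its promised uniform analogue Theorem~\ref{thm:Ano_QIEUnif}) with the classical properness of the mapping class group action on Teichm\"uller space. Fix a compact subset $K \subset (\hom_{\textup{Anosov}}(\G, G)/G)^{red}$ and set
\[
S_K = \{\phi \in \out(\G) \mid \phi \cdot K \cap K \neq \emptyset\};
\]
proving properness amounts to showing $S_K$ is finite. By Theorem~\ref{thm:Ano_QIEUnif}, there exist uniform constants $K_0, C_0 > 0$ such that
\[
K_0^{-1} t_\G(\g) - C_0 \leq t_G(\rho(\g)) \leq K_0 t_\G(\g) + C_0
\]
for every $[\rho] \in K$ and every $\g \in \G$. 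Note that $t_G$ is conjugation invariant and hence descends to the Hausdorff quotient, so this estimate makes sense.

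For each $\phi \in S_K$ I would pick $[\rho_\phi] \in K$ with $\phi \cdot [\rho_\phi] \in K$, together with a lift $\tilde\phi \in \aut(\G)$, and apply the estimate above simultaneously to the representatives $\rho_\phi$ and $\rho_\phi \circ \tilde\phi^{-1}$. Using the identity $t_G((\rho_\phi \circ \tilde\phi^{-1})(\g)) = t_G(\rho_\phi(\tilde\phi^{-1}(\g)))$ and comparing, one gets constants $K_1, C_1$ \emph{independent} of $\phi \in S_K$ such that
\[
K_1^{-1} t_\G(\g) - C_1 \leq t_\G(\tilde\phi^{-1}(\g)) \leq K_1 t_\G(\g) + C_1 \qquad \forall \g \in \G.
\]
In other words, every $\phi \in S_K$ distorts the $\G$-translation length spectrum by a uniformly bounded amount.

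Next, I would fix a discrete faithful representation $\rho_0 \colon \G \to \PSL(2,\RR)$. Since $\rho_0$ is Anosov and hence a quasi-isometric embedding, $t_{\rho_0}(\g)$ and $t_\G(\g)$ are comparable up to bounded multiplicative and additive error, so the displayed bound transfers to $t_{\rho_0}$. This exactly says that for each $\phi \in S_K$, the Teichm\"uller point $\phi \cdot [\rho_0] \in \Tt(\Sigma)$ has marked length spectrum uniformly bilipschitz to that of $[\rho_0]$. Classical hyperbolic surface arguments (pants decomposition estimates, or Thurston's asymmetric metric) then force $S_K \cdot [\rho_0]$ to lie in a relatively compact subset of $\Tt(\Sigma)$. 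Since the mapping class group $\map(\Sigma)$ (a finite index subgroup of $\out(\G)$) acts properly discontinuously on $\Tt(\Sigma)$, the set $S_K$ is finite, as required.

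The non-trivial ingredient is the uniform version of well-displacing, i.e.\ that the constants $(K_0, C_0)$ can be chosen uniformly on compact subsets of the space of Anosov representations; this is the content of Theorem~\ref{thm:Ano_QIEUnif}, whose proof presumably relies on the openness of Anosov representations and continuity of the Anosov section. Once this is granted, the remaining bottleneck is the surface-group-specific step that bounded distortion of the marked length spectrum implies bounded Teichm\"uller displacement, which is why the statement is restricted to $\G = \pi_1(\Sigma)$ (the free group case being analogous via Culler--Vogtmann Outer space, per the reference to \cite{Minsky}).
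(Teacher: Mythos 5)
Your proposal is correct and follows essentially the same route as the paper, which deduces the corollary from Theorem~\ref{thm:Ano_QIE} and the uniform statement Theorem~\ref{thm:Ano_QIEUnif} by invoking Labourie's argument (comparison of translation-length spectra and properness of the mapping class group action on Teichm\"uller space) for closed surfaces and Minsky's Outer-space argument in the free-group (punctured) case. The only difference is that the paper leaves these steps as citations, whereas you spell out the uniform well-displacing estimate on a compact set, the transfer to the marked length spectrum of a fixed Fuchsian structure, and the final properness argument.
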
 

\subsection{Proximality}
\label{sec:proximality}
In this section we show that images of Anosov representations have
strong proximality properties.

Recall that  $(P^+, P^-)$ is a fixed 
pair of opposite parabolic subgroups in $G$ and 
$\mathcal{F}^\pm=G/P^\pm$ denote the corresponding homogeneous spaces.

For $x^-\in \mathcal{F}^-$,
set $V^-(x^-)= \{x \in \mathcal{F}^+ \mid x \text{ and }x^- \text{ are
 not transverse} \}$.

\begin{defi}
  An element $g \in G$ is  said to be \emph{proximal} relative to $\Ff^+$ (or
  $\mathcal{F}^+$-proximal) if $g$ has two fixed points, $x^+ \in
  \mathcal{F}^+$ and $x^- \in \mathcal{F}^-$ with $x^+ \notin
  V^-(x^-)$ and such that for all $x \notin V^-(x^-)$, $\lim_{n \to +\infty} g^n 
  \cdot x = x^+$.

A subgroup $\Lambda< G$ is proximal if it contains at least one proximal element.
\end{defi}
When $g$ is proximal, the fixed points $x^+$ and $x^-$ are uniquely determined, we denote them 
by $x^{+}_{g}$ and $x^{-}_{g}$.

When $\Lambda < G$ is a subgroup which is proximal with
respect to $\mathcal{F}^\pm$, then there exists a well
defined closed $\G$-invariant minimal set $\mathcal{L}^\pm_\Lambda \subset
\mathcal{F}^\pm$, which is called the {\em limit set} of $\Gamma$, see 
\cite{Benoist}; it is the closure of the set of attracting fixed
points of proximal elements in $ \Lambda$.

Let $d$ be a (continuous) distance on $\mathcal{F}^+$ and define
\begin{itemize}
\item  
for
$x^+\in \mathcal{F}^+$, $b_\epsilon( x^+) = \{ x \in \mathcal{F}^+
\mid d(x,x^+) \leq \epsilon \}$, and
\item for
$x^-\in \mathcal{F}^-$, $B_\epsilon( x^-) = \{ x \in \mathcal{F}^+
\mid d(x,V^-(x^-)) \geq \epsilon \}$.
\end{itemize}
\begin{defi}
  An element $g$ is $(r,\epsilon)$-\emph{proximal} (or
  $(r,\epsilon)$-$\mathcal{F}^+$-proximal) if $g$ has two fixed points
  $x^+ \in \mathcal{F}^+$ and $x^- \in \mathcal{F}^-$ such that $d(x^+
  , 
  V^-(x^-))\geq r$, $g\cdot B_\epsilon(x^-)\subset b_\epsilon(x^+)$
  and $g|_{B_\epsilon(x^-)}$ is $\epsilon$-contracting.
\end{defi}

In \cite{Abels_Margulis_Soifer_Prox} Abels, Margulis and So\u{\i}fer investigated proximality properties of strongly irreducible subgroups of $\GL(V)$. 
In order to restate their result let us make the following 
\begin{defi}\label{defi:AMS}
  A subgroup $\Lambda <G$ is said to be \emph{(AMS)-proximal}
  (or proximal in the sense of Abels, Margulis ans So\u{\i}fer) 
  relative to $\mathcal{F}^+$ if there exist constants $r>0$ and
  $\epsilon_0>0$ such that, for any $\epsilon < \epsilon_0$ the
  following holds:
  \begin{itemize}
  \item there exists a finite subset $S\subset \Lambda$ with the property that, for any
    $\delta \in \Lambda$, there is $s\in S$ such that $s\delta$ is
    $(r, \epsilon)$-proximal.
  \end{itemize}

  A representation $\rho:\G \to G$ is said to be \emph{(AMS)-proximal} if $\ker \rho$ is finite
  and $\rho(\G)$ is (AMS)-proximal.
\end{defi}

With this, the result of Abels, Margulis and So{\u\i}fer can be reformulated as follows. 
\begin{thm}\label{thm:AMS}
\cite[Theorem~4.1]{Abels_Margulis_Soifer_Prox}

  If $\Lambda < \SL(V)$ is strongly irreducible (i.e.\ any
  finite index subgroup acts irreducibly on $V$) then $\Lambda$ is (AMS)-proximal relative to $\PP(V)$.
\end{thm}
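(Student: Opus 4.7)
The plan is to break the proof into three stages: (a) use strong irreducibility to produce sufficiently many proximal elements in $\Lambda$ with attracting and repelling data in general position; (b) establish a quantitative singular value criterion ensuring that an element of $\SL(V)$ is $(r,\epsilon)$-proximal; and (c) combine these to construct the finite set $S$ by ``absorbing'' any $\delta \in \Lambda$ into a proximal element via left-multiplication.

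For stage (a) I would first observe that strong irreducibility forces the identity component $H^0$ of the Zariski closure of $\Lambda$ in $\SL(V)$ to act irreducibly on $V$. Then $H^0$ is reductive, and a highest-weight argument applied to a regular semisimple element of $H^0$ shows that $H^0$, hence by Zariski density also $\Lambda$, contains $\PP(V)$-proximal elements. Moreover the assignment $g \mapsto (x^+_g, x^-_g)$ on the Zariski open locus of proximal elements in $H$ has Zariski dense image in $\PP(V) \times \PP(V^*)$, so I can extract finitely many proximal $s_1,\dots,s_N \in \Lambda$ and a constant $\delta_0 > 0$ such that for every hyperplane $H \subset V$ some $x^+_{s_i}$ has distance at least $\delta_0$ from $H$, and dually for the repelling hyperplanes $V^-(x^-_{s_i})$.

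For stage (b) I would write the Cartan decomposition $g = k\,\diag(a_1(g),\dots,a_n(g))\,l$ with $a_1(g) \geq \cdots \geq a_n(g)$, and set $U^+(g) := k \cdot [e_1]$ and $U^-(g) := l^{-1} \cdot \langle e_2,\dots,e_n\rangle$. Elementary singular value analysis yields constants $r_1, C > 0$ such that whenever $a_1(g)/a_2(g) \geq e^M$ with $M$ large and $d(U^+(g), V^-(U^-(g))) \geq r_1$, the element $g$ is $(r_1/2,\,Ce^{-M})$-proximal in the sense of Definition~\ref{defi:AMS}, with attracting fixed point within distance $Ce^{-M}$ of $U^+(g)$. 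For stage (c) I would then replace each $s_i$ by a sufficiently high power $s_i^{n_i}$ to make its singular gap as large as desired, and set $S := \{s_i^{n_i}\}$. Given an arbitrary $\delta \in \Lambda$, the general position property of stage (a) provides $s_i \in S$ with $x^+_{s_i}$ far from $V^-(U^-(\delta))$ and $V^-(x^-_{s_i})$ far from $U^+(\delta)$; a multiplicative estimate for the Cartan projection of $s_i \delta$ then supplies the gap and alignment required by the criterion of stage (b).

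The main obstacle is precisely this multiplicative Cartan projection estimate: when the attracting data of $s_i$ and the repelling data of $\delta$ are transverse at distance at least $\delta_0$, one must show that the top singular gap of $s_i \delta$ is at least that of $s_i$ minus a bounded error depending only on $\delta_0$, and that $U^+(s_i \delta)$ lies within $O(e^{-M})$ of $U^+(s_i)$. This amounts to a careful change-of-basis argument expressing the singular vectors of $s_i \delta$ in terms of those of $s_i$ and $\delta$, together with an angle estimate controlled by the transversality constant $\delta_0$; this is the technical heart of the Abels--Margulis--So\u{\i}fer argument. Once it is in place, applying stage (b) uniformly in $\delta$ completes the proof.
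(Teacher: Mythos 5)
You should first note that the paper contains no proof of this statement at all: it is quoted verbatim from Abels--Margulis--So\u{\i}fer \cite[Theorem~4.1]{Abels_Margulis_Soifer_Prox}, so what you are attempting is a re-proof of their theorem, not a reconstruction of an argument in the paper. That would be legitimate, but your stage (a) contains a genuine error. Strong irreducibility does \emph{not} imply the existence of proximal elements: $\SO(n)\subset\SL(n,\RR)$ is strongly irreducible (it is connected and acts irreducibly on $\RR^n$), yet it contains no $\PP(V)$-proximal element, since every element acts as an isometry of the sphere; more generally, an irreducible reductive group has proximal elements on $V$ only when the highest restricted weight of $V$ occurs with multiplicity one, which your ``highest-weight argument applied to a regular semisimple element'' silently assumes. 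Without proximal elements in $\Lambda$ the conclusion itself fails, so no argument from strong irreducibility alone can succeed: the theorem of Abels--Margulis--So\u{\i}fer carries the additional hypothesis that the (semi)group contains a proximal element, and in this paper that hypothesis is exactly what is arranged before the citation is invoked -- in the proof of Theorem~\ref{thm:proxi_AMS} the group $\Lambda$ is assumed proximal and the reduction to a strongly irreducible subgroup of $\SL(V)$ preserves this property. Note also that your step ``by Zariski density $\Lambda$ contains proximal elements'' is not automatic even when the Zariski closure does: proximality is not a Zariski-open condition, and transferring a proximal element from the closure to $\Lambda$ is itself one of the nontrivial results of that paper.

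Even granting a proximal element, the remainder is a roadmap rather than a proof. The finite set of stage (a) must satisfy a \emph{simultaneous} transversality condition -- a single $s_i$ with $x^{+}_{s_i}$ uniformly far from the repelling hyperplane attached to $\delta$ \emph{and} $U^+(\delta)$ uniformly far from $V^-(x^{-}_{s_i})$ -- whereas you only establish the two separation properties separately; closing this requires the AMS separating-set lemma, which again uses strong irreducibility in an essential way. And the multiplicative Cartan-projection estimate in stage (c), which you yourself label the technical heart, is asserted rather than proved. As written, the proposal therefore reproduces the difficulty of the cited theorem instead of resolving it, and its first stage is false as stated.
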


For Anosov representations we have the following: 
\begin{thm}\label{thm:Ano_AMS}
  If $\rho: \G \to G$ is $P^+$-Anosov, then $\rho$ is  (AMS)-proximal 
  relative to $\mathcal{F}^+$.
\end{thm}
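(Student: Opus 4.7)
The plan is to reduce to the Abels--Margulis--So\u{\i}fer theorem (Theorem~\ref{thm:AMS}) via an embedding of $G$ into $\SL(V)$ for a well-chosen $V$.

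First, by Proposition~\ref{prop:Anosov_Rep} applied to an irreducible factor of $\wdg{p}\mathfrak{g}$ as in the remark following Theorem~\ref{thm:Ano_Zd} (and using Proposition~\ref{prop:ano_Qzero} to first replace $P^+$ by its self-opposite version if needed), choose a finite-dimensional irreducible $G$-module $V = D \oplus H$ with $P^+ = \stab_G(D)$, so that $\phi \circ \rho : \G \to \SL(V)$ is $Q_0^+$-Anosov with Anosov map $\xi^+_{\phi\circ\rho} = \phi^+ \circ \xi^+_\rho$. The equivariant map $\phi^+ : \Ff^+ \hookrightarrow \PP(V)$ (together with its analogue $\phi^-$) is a smooth embedding of compact manifolds, hence bi-Lipschitz, sending transverse pairs in $\Ff^+ \times \Ff^-$ to transverse pairs in $\PP(V) \times \PP(V^*)$. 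A routine estimate then shows that $(r,\epsilon)$-proximality of $\phi(g)$ on $\PP(V)$ implies $(r',\epsilon')$-proximality of $g$ on $\Ff^+$ with constants depending only on $\phi^{\pm}$. It therefore suffices to prove (AMS)-proximality of $\Lambda := \phi(\rho(\G))$ on $\PP(V)$.

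Next, I extract from $V$ a subspace on which $\Lambda$ acts strongly irreducibly. Set $E := \xi^+_{\phi\circ\rho}(\partial_\infty \G) \subset \PP(V)$. By Lemma~\ref{lem:ano_proxi}, the attracting fixed points $t^+_\gamma$ of non-torsion elements (which are dense in $\partial_\infty \G$ for any non-elementary word hyperbolic group) map under $\xi^+_{\phi\circ\rho}$ to attracting fixed points of the corresponding elements of $\Lambda$; consequently $E$ is a minimal closed $\Lambda$-invariant subset of $\PP(V)$. Let $W \subset V$ be the linear span of the lines in $E$, which is $\Lambda$-invariant. I claim $\Lambda|_W$ is strongly irreducible: if $\Lambda$ preserved a finite union $W_1 \cup \cdots \cup W_k$ of proper subspaces of $W$ containing $E$, then the preimage $\G' \subset \G$ of the (finite-index) subgroup of $\Lambda$ stabilizing each $W_i$ would, via Corollary~\ref{cor:finite_index}, have the same Anosov map, so $E$ would remain minimal under $\Lambda' := \phi(\rho(\G'))$; hence some $W_{i_0}$ would contain all of $E$ and therefore all of $W$, a contradiction.

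Applying Theorem~\ref{thm:AMS} to the strongly irreducible $\Lambda'|_W < \SL(W)$ then yields, for $\epsilon < \epsilon_0$, a finite set $S' \subset \Lambda'$ such that for every $\delta' \in \Lambda'$ some $s'\delta'$ is $(r,\epsilon)$-$\PP(W)$-proximal. The attracting fixed point of such an element lies in $E \subset \PP(W) \subset \PP(V)$; since the $Q_0^+$-Anosov property guarantees $\PP(V)$-proximality of every non-torsion element (Lemma~\ref{lem:ano_proxi}) with uniform control on the dominant eigenvalue gap coming from Proposition~\ref{prop:mu_contrac}, one upgrades the $\PP(W)$-estimates to $(r'',\epsilon'')$-$\PP(V)$-proximality. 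To pass from $\Lambda'$ back to $\Lambda$, choose coset representatives $\gamma_1, \dots, \gamma_n$ of $\G/\G'$ and take $S := \{s' \cdot \phi(\rho(\gamma_i)) : s' \in S',\ 1 \le i \le n\}$; any $\delta \in \Lambda$ is of the form $\phi(\rho(\gamma_i))\delta'$ with $\delta' \in \Lambda'$, and the construction applies. Combined with the first step, this yields (AMS)-proximality of $\rho$ on $\Ff^+$. The crux is the strong-irreducibility argument, where the interplay between minimality of $E$ and stability of Anosov maps under finite-index subgroups (Corollary~\ref{cor:finite_index}) does the work; a secondary technical point is the eigenvalue-gap upgrade from $\PP(W)$ to $\PP(V)$, which rests on the $L$-Cartan projection estimates of Proposition~\ref{prop:mu_contrac}.
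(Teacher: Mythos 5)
Your overall strategy --- pass to an irreducible $G$-module so that $\phi\circ\rho$ is $Q_0$-Anosov, massage the image into a strongly irreducible linear group, apply Theorem~\ref{thm:AMS}, and transfer quantitative proximality back to $\mathcal{F}^+$ --- is the same as the paper's, but your key step, strong irreducibility of $\Lambda$ restricted to $W=\mathrm{span}\,\xi^{+}_{\phi\circ\rho}(\partial_\infty\G)$, has a genuine gap. Strong irreducibility requires excluding \emph{every} proper nonzero subspace invariant under a finite-index subgroup, not only invariant finite unions of subspaces \emph{containing} $E$; your minimality argument treats only the latter. The omitted case is an invariant subspace containing no point of $E$, typically one lying in all the repelling hyperplanes: for a proximal $g$ in the finite-index subgroup, an invariant $W_1$ splits as $(W_1\cap x^{+}_{g})\oplus(W_1\cap x^{-}_{g})$, so either $x^{+}_{g}\subset W_1$ (your case) or $W_1\subset x^{-}_{g}$ (the case your argument never sees). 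This loophole is real. Let $\iota\colon F_2\to\SL(2,\RR)$ be Schottky and define $\rho_u\colon F_2\to\SL(3,\RR)$ on $\RR\oplus\RR^2$ by $\rho_u(\g)(s,x)=(s+u(\g)x,\iota(\g)x)$, where $u$ is a small cocycle with values in $(\RR^2)^*$ representing a nonzero class in $H^1$ (such classes exist for $F_2$). This is a small deformation of $1\oplus\iota$, which is Anosov (Proposition~\ref{prop:inj_rk_one}), so $\rho_u$ is $P_1$-Anosov by Theorem~\ref{thm:Ano_open}. The vector $e_1$ spanning $\RR\oplus 0$ is fixed by the whole image and is a non-dominant eigenvector of every $\rho_u(\g)$, so $\RR e_1$ lies in every repelling hyperplane; the attracting lines have nonzero $\RR^2$-component, and since an invariant plane would either project to an $\iota$-invariant line or split the extension, the span of the limit curve is an invariant subspace of dimension at least two, hence all of $\RR^3$. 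Thus $W=V$ while $\Lambda$ preserves $\RR e_1$ and is not even irreducible --- yet your argument, which only uses minimality of $E$, Lemma~\ref{lem:ano_proxi} and Corollary~\ref{cor:finite_index}, would ``prove'' strong irreducibility here. Even though your own reduction would use a different module, the argument invokes nothing beyond these projective-Anosov facts, so it cannot be valid as stated: restricting to the span of the limit set does not by itself produce a strongly irreducible action.

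This is exactly the issue the paper's proof of Theorem~\ref{thm:proxi_AMS} is organized around: it first quotients by $\bigcap_{x\in\mathcal{L}^{-}_{\Lambda}}x$, using minimality of $\mathcal{L}^{+}_{\Lambda}$ to get $d(\mathcal{L}^{+}_{\Lambda},\PP(W))>0$ and Lemma~\ref{lem:prox_quot} to transfer $(r,\epsilon)$-proximality quantitatively back from the quotient, and only after also arranging $\sum_{x\in\mathcal{L}^{+}_{\Lambda}}x=V$ does Lemma~\ref{lem:irred_maps} give strong irreducibility --- its proof being precisely the two-case dichotomy above. Two further points in your last paragraph would need work even granting strong irreducibility: the ``upgrade'' from $(r,\epsilon)$-proximality on $\PP(W)$ to proximality on $\PP(V)$ is asserted rather than proved (one must control, uniformly in the element $s'\delta'$ produced by Theorem~\ref{thm:AMS}, the gap toward directions transverse to $W$ and the position of the full repelling hyperplane; this quantitative transfer is what Lemma~\ref{lem:prox_quot} supplies in the paper's quotient step), and the coset detour is both unnecessary and incorrect as written, since proximality of $s'\delta'$ gives no control on $s'\phi(\rho(\gamma_i))\delta'$, which is what Definition~\ref{defi:AMS} requires; if $\Lambda$ itself were strongly irreducible you would simply apply Theorem~\ref{thm:AMS} to $\Lambda$ directly.
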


\begin{proof}
  Lemma~\ref{lem:ano_proxi} already shows that $\rho(\g)$ is proximal relative to $\mathcal{F}^\pm$, 
  when $\g \neq 1$.
The following theorem thus implies the statement. 
\end{proof}  

\begin{thm}\label{thm:proxi_AMS}
Let $\Lambda<G$ be a
subgroup. If $\Lambda$ is proximal relative to $\mathcal{F}^\pm$, then $\Lambda$ is (AMS)-proximal relative to $\mathcal{F}^\pm$.
\end{thm}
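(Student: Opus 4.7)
My strategy is to adapt the original Abels--Margulis--So\u{\i}fer argument to this parabolic setting, using two inputs: the sharp contraction estimate of Lemma~\ref{lem:mu_contract}, and the existence (by hypothesis) of at least one $\mathcal{F}^\pm$-proximal element $\gamma_0 \in \Lambda$, which supplies nontrivial limit sets $\mathcal{L}^\pm_\Lambda \subset \mathcal{F}^\pm$. The first step is a uniform proximality criterion in terms of the Cartan decomposition $g = k_g \exp(\mu(g)) l_g$: combining Lemma~\ref{lem:mu_contract} with a contraction-mapping argument on a small ball of $\mathcal{F}^+$, I would show that there exist $M_0, C_0, c > 0$ such that any $g \in G$ with
\[ \min_{\alpha \in \Delta \smallsetminus \Theta} \alpha(\mu(g)) \geq M \geq M_0, \qquad d\bigl(k_g P^+_\Theta,\, V^-(l_g^{-1} P^-_\Theta)\bigr) \geq r, \]
is $(r/2,\, C_0 e^{-cM})$-$\mathcal{F}^+$-proximal, with attracting fixed point within distance $C_0 e^{-cM}$ of $k_g P^+_\Theta$. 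This is the direct analog of the linear proximality criterion used in \cite{Abels_Margulis_Soifer_Prox}, with $\PP(V)$ replaced by $\mathcal{F}^+$.

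The second step is to build the finite set $S \subset \Lambda$. Large powers $\gamma_0^N$ have Cartan projection $\mu(\gamma_0^N) = N \lambda(\gamma_0) + O(1)$ with $\alpha(\lambda(\gamma_0)) > 0$ for all $\alpha \in \Delta \smallsetminus \Theta$ (a consequence of proximality), and their Cartan factors $k_{\gamma_0^N}, l_{\gamma_0^N}$ converge to representatives of $x_0^+ \in \mathcal{L}^+_\Lambda$ and $x_0^- \in \mathcal{L}^-_\Lambda$. By forming products of $\gamma_0$-powers with conjugates of $\gamma_0$ by other elements of $\Lambda$, I would produce finitely many proximal $\tilde\gamma_1, \dots, \tilde\gamma_p \in \Lambda$ whose attracting (resp.\ repelling) points $\{x_i^+\}$ (resp.\ $\{x_i^-\}$) are $r_0$-dense in $\mathcal{L}^+_\Lambda$ (resp.\ $\mathcal{L}^-_\Lambda$). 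Given any $\delta \in \Lambda$, pick $i$ so that $x_i^+$ is at distance $\geq r_0$ from $V^-(\delta^{-1} \cdot x_j^-)$ for some $j$ with a corresponding transversality on the other side; then for $N$ large, $\tilde\gamma_i^N \delta \tilde\gamma_j^N$ (or a similar product) has Cartan decomposition satisfying the hypothesis of Step~1, hence is $(r,\epsilon)$-proximal. Taking $S$ to be the finite collection $\{\tilde\gamma_i^N \cdot (\text{something}) \cdot \tilde\gamma_j^N\}_{i,j, N \leq N_{\max}}$ for suitable $N_{\max}$ yields the required set.

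The main obstacle is uniformity in $\delta$: the required $N$ in Step~2 a priori depends on how large $\mu(\delta)$ is, and one must argue that a \emph{single} $N_{\max}$ works for all $\delta \in \Lambda$. Following Abels--Margulis--So\u{\i}fer, the way to handle this is to cover the compact space $K/M_\Theta \times K/M_\Theta$ by finitely many open sets $U_1,\dots,U_q$ such that, whenever the pair $(k_\delta M_\Theta, l_\delta M_\Theta)$ lies in $U_\ell$, a specific choice $(i_\ell, j_\ell)$ and a specific bounded power $N_\ell$ suffice; the total finite set $S$ is then the union of the resulting finitely many elements over $\ell$. Verifying that such a cover exists amounts to an openness property of the proximality conditions of Step~1 in the variables $(k_g M_\Theta, l_g M_\Theta)$, which follows from the continuity of the Cartan-decomposition factors and the explicit contraction estimates. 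Once this uniformity is achieved, the compactness argument closes the proof.
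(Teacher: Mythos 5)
There is a genuine gap. Your plan re-proves the Abels--Margulis--So\u{\i}fer theorem directly in the flag variety, but the crucial step is exactly the one you pass over: in Step~2 you ``pick $i$ so that $x_i^+$ is at distance $\geq r_0$ from $V^-(\delta^{-1} \cdot x_j^-)$'' (and implicitly need the analogous uniform transversality between the Cartan factors of an arbitrary $\delta$ and the attracting/repelling data of your $\tilde\gamma_i, \tilde\gamma_j$ in order for the product $\tilde\gamma_i^N \delta \tilde\gamma_j^N$ to acquire a large gap $\min_{\alpha \in \Delta \moins \Theta}\alpha(\mu(\cdot))$). In the original AMS argument this general-position input is supplied by \emph{strong irreducibility}: one needs a finite set moving any point uniformly off any ``hyperplane'', and for a subgroup that is merely proximal this can fail --- in a linear realization $\Lambda$ may preserve a proper subspace, and points of the limit set lying in (or degenerating towards) the bad configurations cannot be moved off them by any element of $\Lambda$. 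You offer no substitute for this input; your final covering argument on $K/M_\Theta \times K/M_\Theta$ only addresses uniformity of the exponent $N$, not the existence of the transverse pair, and your Step~1 criterion says nothing about the (non-relatively-compact) set of $\delta$ whose Cartan projection has bounded gap, which is precisely the hard case.

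For comparison, the paper does not redo the AMS machinery at all: it transfers proximality to $\SL(V)$ via an irreducible $G$-module with $P^\pm = \stab_G(D), \stab_G(H)$ (with quantitative control of $(r,\epsilon)$-proximality under $\phi^\pm$), and then \emph{manufactures} strong irreducibility by passing to the quotient of $V$ by $W = \bigcap_{x \in \mathcal{L}^-_\Lambda} x$ and to the span $\sum_{x \in \mathcal{L}^+_\Lambda} x$; Lemma~\ref{lem:prox_quot} (using the uniform gap $d(\mathcal{L}^+_\Lambda, \PP(W)) > 0$, which comes from minimality of the limit set) lifts (AMS)-proximality back from the quotient, Lemma~\ref{lem:irred_maps} verifies strong irreducibility after these reductions, and Theorem~\ref{thm:AMS} is then quoted as a black box. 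If you want to keep your direct approach, you would have to prove a flag-variety analogue of the AMS general-position lemma, and for that you will run into the same invariant-subspace obstruction; some version of the paper's reduction (or an equivalent ``work modulo the degenerate part with a uniform distance estimate'') appears unavoidable.
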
 
\begin{proof}
The strategy is to reduce to the situation of a strongly irreducible subgroup of $\SL(V)$ and then to apply Theorem~\ref{thm:AMS}.

Let $V$  be an irreducible $G$-module with a decomposition 
  $V=D\oplus H$ into a line and a hyperplane, such that 
  $P^+ = \stab_G(D)$ and $P^-= \stab_G(H)$, and denote by $\phi:G \to\SL(V)$ the embedding. The induced maps 
  $\phi^+ : \mathcal{F}^+ \to \PP(V)$ and $\phi^- : \mathcal{F}^- \to
  \PP(V^*)$ satisfy the following property: for $x^-\in \mathcal{F}^-$ we have 
  $V^-(x^-) = (\phi^+)^{-1}( \PP( \phi^-(x^-)))$, where one
  considers $\phi^-(x^-)$ as a hyperplane in $V$. 
  Thus, if $g\in G$ is proximal with respect to $\mathcal{F}^\pm$,
  then $\phi(g)$ is proximal with respect to $\PP(V)$  and
  $\PP(V^*)$. 
  Moreover, if $\phi(g)$ is
  $(r,\epsilon)$-proximal on $\PP(V)$ then $g$ is $(r',
  \epsilon')$-proximal on $\mathcal{F}^+$ for some functions
  $r'=r'(r)$ and $\epsilon' = \epsilon'( r,\epsilon)$ satisfying
  $\epsilon'( r,\epsilon) \xrightarrow{\epsilon \to 0}
  0$.  Thus from now on we suppose that $G= \SL(V)$.
  
We consider the $\Lambda$-invariant subspace $W = \bigcap_{x\in
  \mathcal{L}^-_{\Lambda}}  x$, where $x$ is regarded as hyperplane in
$V$, and assume that $W$ is non-empty. Using the minimality of the
action of $\Lambda$ on $\mathcal{L}^+_\Lambda$ we have
$\mathcal{L}^+_\Lambda \cap \PP(W) = \emptyset$. Moreover  $\Lambda<
\stab(W)$ and thus, $\Lambda$ defines a subgroup $\bar{\Lambda} <
\SL(V/W)$ which is $\PP(V/W)$ and $\PP(V/W^*)$ 
proximal. Lemma~\ref{lem:prox_quot} below (with $\delta =
d(\mathcal{L}^+_\Lambda, \PP(W))>0$)
  implies that if $\bar{ \Lambda}$ is (AMS)-proximal then so is
  $\Lambda$. 
      
  Therefore,  we 
  can assume $\bigcap_{x \in \mathcal{L}^-_\Lambda}  x = \{0\}$. Analogously we
  can assume $\sum_{x \in \mathcal{L}^+_\Lambda} x = V$. By Lemma~\ref{lem:irred_maps} below this implies that $\Lambda$ is strongly irreducible. 
  
  Thus Theorem~\ref{thm:AMS} implies that 
  $\Lambda$ is (AMS)-proximal. 
\end{proof}

\begin{lem}
  \label{lem:prox_quot}
  Let $\delta>0$ be a real number and $W\subset V$ two vector
  spaces. Then there are functions $r': \RR_{>0} \to \RR_{>0}$ and
  $\epsilon': \RR^{2}_{>0} \to \RR_{>0}$ satisfying $\epsilon'(
  r,\epsilon) \xrightarrow{\epsilon \to 0} 0$ and such that:
  
  \begin{asparaitem}
  \item If $g\in \stab(W)$ is proximal with $W \subset x^{-}_{g}$ and
    $d(W, x^{+}_{g} ) \geq \delta$ and $\pi(g) \in \SL(V/W)$ is $(r,
    \epsilon)$-proximal, then $g$ is $(r',
    \epsilon')$-proximal.
  \end{asparaitem}
\end{lem}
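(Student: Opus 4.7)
The plan is to work in a linear splitting $V = W \oplus U$, identifying $V/W$ with $U$ via $\pi$, and to analyze $g$ in block upper-triangular form with diagonal blocks $A = g|_W$ and $C = \pi(g)\colon U \to U$, and off-diagonal block $B\colon U \to W$. The main tool is the rational comparison map $\pi_{*}\colon \PP(V) \moins \PP(W) \to \PP(V/W)$ together with the elementary formula $d([v], \PP(H)) = |v^{\perp}|/|v|$, where $v^{\perp}$ is the component of $v$ orthogonal to a hyperplane $H$. The hypothesis $W \subset x_{g}^-$ has two crucial consequences: first, $x_{g}^- = W \oplus H_{0}$ with $H_{0} = \pi(x_{g}^-)$ a hyperplane of $U$; second, $\PP(W) \subset \PP(x_{g}^-)$, so that any $L \in B_{\epsilon'}(x_{g}^-)$ automatically satisfies $d(L, \PP(W)) \geq \epsilon'$ and admits a well-defined image $\pi_{*}(L) \in \PP(V/W)$.

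First I verify the fixed-point estimate $d(x_{g}^+, \PP(x_{g}^-)) \geq c_\delta r$. Writing $x_{g}^+ = [w^* + u^*]$ with $u^* \in U$ a $C$-eigenvector for the top eigenvalue $\lambda$, the hypothesis $d(W, x_{g}^+) \geq \delta$ gives $|w^*|/|u^*| \leq C_{\delta}$. Computing orthogonal components then yields $d(x_{g}^+, \PP(x_{g}^-)) \geq c_\delta \cdot d(\pi(x_{g}^+), \pi(x_{g}^-)) \geq c_\delta r$, which handles the first requirement of $(r', \epsilon')$-proximality. The same kind of comparison shows that $L \in B_{\epsilon'}(x_{g}^-)$ implies $d(\pi_{*}(L), \pi(x_{g}^-)) \geq \epsilon'$; as soon as $\epsilon' \geq \epsilon$ the $(r, \epsilon)$-proximality of $\pi(g)$ thus applies and gives $\pi_{*}(gL) \in b_\epsilon(\pi(x_{g}^+))$ together with an $\epsilon$-contraction at the quotient level.

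The main technical step is transporting this closeness back to $\PP(V)$: the fiber $\pi_{*}^{-1}(\pi(x_{g}^+))$ is a whole projective subspace containing both $x_{g}^+$ and $\PP(W)$, so $d(gL, x_g^+)$ is not controlled by $d(\pi_*(gL), \pi(x_g^+))$ alone. To bridge this gap I use the spectral decomposition $V = x_{g}^+ \oplus x_{g}^-$: then $g|_{x_{g}^+} = \lambda \cdot \id$, and it suffices to estimate $\|g|_{x_{g}^-}\| / |\lambda|$. Splitting $x_{g}^- = W \oplus H_{0}$, this operator norm decomposes into contributions from $A$, $C|_{H_{0}}$, and $B|_{H_{0}}$. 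The contribution $\|C|_{H_{0}}\|/|\lambda|$ shrinks to $0$ with $\epsilon$ by a standard linearization of projective $\epsilon$-contraction. The remaining contributions are controlled by $\delta$ and $r$: the eigenvector relation $(\lambda - A) w^{*} = B u^{*}$ combined with $d(W, x_{g}^+) \geq \delta$ furnishes a $\delta$-dependent bound on the operator norms involved. The resulting projective Lipschitz estimate yields $d(gL, x_{g}^+) \leq F(r, \epsilon)$ with $F(r, \epsilon) \to 0$ as $\epsilon \to 0$, and the same argument provides the $\epsilon'$-contracting property. Setting $\epsilon'(r, \epsilon) := \max(\epsilon, F(r, \epsilon))$ completes the proof. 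The main obstacle is precisely this last quantitative passage; the hypothesis $d(W, x_{g}^+) \geq \delta$ is essential, since without it the lift of the attracting direction in $V/W$ could collapse onto $\PP(W)$ and any projective convergence in the quotient would be irrecoverable in $\PP(V)$.
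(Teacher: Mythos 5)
Your overall strategy (reduce to estimating $\|g|_{x_{g}^-}\|/|\lambda|$ via the splitting $V=x_{g}^{+}\oplus x_{g}^{-}$, $x_{g}^{-}=W\oplus H_{0}$) is the natural one, and the preliminary steps are fine: the comparison $d(x_{g}^{+},\PP(x_{g}^{-}))\geq c_\delta r$ and the fact that $B_{\epsilon'}(x_{g}^{-})$ pushes down into $B_{\epsilon}(\pi(x_{g}^{-}))$ both work. The proof breaks at the step you yourself flag as the main obstacle: the claim that the $A$- and $B$-contributions to $\|g|_{x_{g}^{-}}\|/|\lambda|$ are ``controlled by $\delta$ and $r$'' via the relation $(\lambda-A)w^{*}=Bu^{*}$. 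That relation, together with $d(W,x_{g}^{+})\geq\delta$, only constrains the position of the top eigenvector (it bounds $|w^{*}|/|u^{*}|$); it gives no bound whatsoever on the operator norms of $A=g|_{W}$ or of $B$ on the rest of $x_{g}^{-}$. Concretely, take $V=\RR^{3}$, $W=\RR e_{1}$, and $g=\diag(a,\lambda,\mu)$ with $a=(1-10^{-6})\lambda$ and $|\mu|$ arbitrarily small compared to $|\lambda|$. Then $g\in\stab(W)$ is proximal, $x_{g}^{+}=[e_{2}]$, $x_{g}^{-}=\mathrm{span}(e_{1},e_{3})\supset W$, $d(W,x_{g}^{+})$ is maximal, and $\pi(g)=\diag(\lambda,\mu)$ is $(r,\epsilon)$-proximal with $\epsilon$ as small as you like as $\mu\to 0$; yet $w^{*}=0$, $B=0$, and $\|A\|/|\lambda|\approx 1$. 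The point $[e_{1}+e_{2}]$ lies in $B_{\epsilon'}(x_{g}^{-})$ for every $\epsilon'<1/\sqrt{2}$, while $d(g[e_{1}+e_{2}],x_{g}^{+})\approx 1/\sqrt{2}$, so neither $g\cdot B_{\epsilon'}(x_{g}^{-})\subset b_{\epsilon'}(x_{g}^{+})$ nor the $\epsilon'$-contraction can hold with $\epsilon'=\epsilon'(r,\epsilon)\to 0$. A block-triangular variant with a large $B|_{H_{0}}$ invisible to the eigenvector shows the $B$-term is equally uncontrolled (and $\|A\|$ can even exceed $|\lambda|$, since only its spectral radius is constrained by proximality).

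So this is not a technical detail one can patch within your argument: the stated hypotheses genuinely do not bound $g|_{W}$ relative to the leading eigenvalue, and that bound is exactly what is needed to convert proximality estimates on $\PP(V/W)$ into proximality estimates on $\PP(V)$. For comparison, the paper offers only a one-sentence appeal to a ``direct compactness argument'', and the non-compactness in the $g|_{W}$-direction is precisely the point your computation exposes: a complete proof must invoke some additional control on $g|_{W}$ (e.g.\ restrict to a family of elements for which $\|g|_{W}\|/|\lambda|$ is uniformly dominated, as one would want in the application to the proof of Theorem~\ref{thm:proxi_AMS}), or the hypotheses must be strengthened to include it. Your write-up correctly isolates where the difficulty sits, but the crucial bound is asserted rather than proved, and in the stated generality it is not available.
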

\begin{proof}
 The statement follows from a direct compactness argument.
\end{proof}

\begin{lem}
  \label{lem:irred_maps}
  Let $\Lambda< \SL(V)$ be a $\mathcal{F}^\pm$-proximal subgroup. 
  Suppose that 
   \[ \bigcap_{x \in \mathcal{L}^-_\Lambda}  x = \{0\} \quad \text{and} \quad 
   \sum_{x \in \mathcal{L}^+_\Lambda} x  = V.\]  
   Then $\Lambda$ is strongly irreducible. 
\end{lem}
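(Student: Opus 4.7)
The plan is to argue by contradiction. Assume $\Lambda$ is not strongly irreducible: there exist a finite index subgroup $\Lambda' \subset \Lambda$ and a proper nonzero subspace $W \subset V$ with $\Lambda' \cdot W = W$. The first step is to transfer the hypotheses to $\Lambda'$. Since $[\Lambda : \Lambda'] < \infty$, for each $g \in \Lambda$ some power $g^N$ lies in $\Lambda'$, and if $g$ is proximal then so is $g^N$ with the same attracting and repelling fixed points $x^+_g$, $x^-_g$. The paper defines $\mathcal{L}^\pm_\Lambda$ as the closure of the set of attracting fixed points of proximal elements of $\Lambda$, so combining this with the previous remark yields $\mathcal{L}^\pm_{\Lambda'} = \mathcal{L}^\pm_\Lambda$. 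Hence $\Lambda'$ is $\mathcal{F}^\pm$-proximal and the hypotheses $\sum_{x \in \mathcal{L}^+_{\Lambda'}} x = V$ and $\bigcap_{x \in \mathcal{L}^-_{\Lambda'}} x = 0$ persist.

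Second, I establish a dichotomy for every proximal $g \in \Lambda'$: either $x^+_g \subset W$ or $W \subset x^-_g$. Indeed, $\PP(W) \subset \PP(V)$ is closed and $g$-invariant. If $\PP(W) \not\subset V^-(x^-_g)$, choose $[v] \in \PP(W)$ transverse to $x^-_g$; by proximality $g^n \cdot [v] \to x^+_g$, and closedness gives $x^+_g \in \PP(W)$, i.e.\ $x^+_g \subset W$. Otherwise $\PP(W) \subset V^-(x^-_g)$, which translates to $W \subset x^-_g$.

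A case analysis then finishes the proof. \emph{Case A:} $x^+_g \subset W$ for every proximal $g \in \Lambda'$. Then $\PP(W)$ is a closed subset of $\PP(V)$ containing a dense subset of $\mathcal{L}^+_{\Lambda'}$, hence all of it, so $V = \sum_{x \in \mathcal{L}^+_{\Lambda'}} x \subset W$, contradicting properness of $W$. \emph{Case B:} there exists proximal $g_0 \in \Lambda'$ with $W \subset x^-_{g_0}$. For every $h \in \Lambda'$ the element $hg_0 h^{-1}$ is proximal with $x^-_{h g_0 h^{-1}} = h \cdot x^-_{g_0} \supset h \cdot W = W$, so $W \subset \bigcap_{h \in \Lambda'} x^-_{h g_0 h^{-1}}$. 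Since hyperplane containment is closed (if $x_n \to x$ in $\PP(V^*)$ and $v \in x_n$ for all $n$, then $v \in x$ by a straightforward continuity argument in dual coordinates), this intersection equals $\bigcap_{x \in \overline{\Lambda' \cdot x^-_{g_0}}} x$; by minimality of $\mathcal{L}^-_{\Lambda'}$ and $x^-_{g_0} \in \mathcal{L}^-_{\Lambda'}$, the closure is all of $\mathcal{L}^-_{\Lambda'}$, giving $W \subset \bigcap_{x \in \mathcal{L}^-_{\Lambda'}} x = 0$, again a contradiction.

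The only delicate step is the first one: carefully checking that both the proximality assumption and the sum/intersection hypotheses on the limit sets descend along the finite-index inclusion $\Lambda' \subset \Lambda$. Once $\mathcal{L}^\pm_{\Lambda'} = \mathcal{L}^\pm_\Lambda$ is in place, the dichotomy of step two and the case analysis of step three are clean dynamical arguments using the elementary facts that $\PP(W)$ is closed and that intersections of hyperplanes are preserved under closure.
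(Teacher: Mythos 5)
Your proof is correct and takes essentially the same route as the paper's: the same reduction to a finite-index subgroup, the same dichotomy for a proximal $g$ (either $x^+_g \subset W$ or $W \subset x^-_g$, which the paper obtains from the splitting $W = (W\cap x^+_g)\oplus(W\cap x^-_g)$ and you obtain dynamically), and the same conclusion via density of attracting fixed points in $\mathcal{L}^+$ and minimality on the $\mathcal{L}^-$ side combined with the hypotheses $\sum_x x = V$ and $\bigcap_x x = \{0\}$. The two facts you invoke without proof in Case B --- that $x^-_{g_0}$ lies in $\mathcal{L}^-_{\Lambda'}$ (it is the attracting fixed point of $g_0^{-1}$ acting on $\PP(V^*)$) and that $\Lambda'$ acts minimally on its limit set --- are exactly what the paper's own proof assumes in its opening sentence, so they are not a gap relative to the paper.
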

\begin{proof}
  Let $\Lambda'$ be a finite index subgroup of $\Lambda$. Any closed
  $\Lambda'$-invariant subset of $\mathcal{L}^\pm_\Lambda$ is either $\emptyset$
  or $\mathcal{L}^\pm_\Lambda$. Suppose that $W \subset V$ is
  $\Lambda'$-invariant. For any proximal element $g\in \Lambda'$ one has
  \[W = x^{+}_{g} \cap W \oplus x^{-}_{g} \cap W,\]
  where $x^\pm_g$ are the attractive fixed points of $g$ in $\PP(V)$ and $\PP(V)^*$ respectively. 
  Hence $x^{+}_{g} \subset W $ or $x^{-}_{g} \supset
  W$,  and consequently one of the following two  closed $\Lambda'$-invariant subset is
  nonempty:
  \[\{ x \in \mathcal{L}^+_\Lambda \mid x\subset W  \}\quad
  \text{or} \quad \{ x \in \mathcal{L}^-_\Lambda  \mid x \supset W
  \}.\]
  If the first is nonempty, one concludes that  $W \supset  \sum_{x \in \mathcal{L}^+_\Lambda} x  = V$; if  the second set is nonempty, $W
  \subset  \bigcap_{x \in \mathcal{L}^-_\Lambda} x = \{0\}$. In either
  case $W$ is trivial, proving the strong irreducibility of $\Lambda$.
\end{proof}

\subsection{Openness}
\label{sec:openness}

In this section we prove that the set of Anosov representations is
open. In the case of fundamental groups of
negatively curved Riemannian manifolds, this is proven in
\cite[Proposition~2.1]{Labourie_anosov}.

Let $P$ be a parabolic subgroup of $G$. Denote by $\homPano( \G, G)$
the set of $P$-Anosov representations.

\begin{thm}\label{thm:Ano_open}
  The set $\homPano( \G,G)$ is open in $\hom( \G, G)$.

  Furthermore the map $\homPano(\G,G) \to C^0( \partial_\infty \G,
  \mathcal{F})$ associating to a representation $\rho$ its Anosov map
  is continuous.
\end{thm}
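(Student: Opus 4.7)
\emph{Proof proposal.} Fix $\rho_0 \in \homPano(\G, G)$ with Anosov section $\sigma_0$ of $\mathcal{X}_{\rho_0}$ and equivariant lift $\hat{\sigma}_0 = (\xi^+_0 \circ \tau^+, \xi^-_0 \circ \tau^-): \flow \to \mathcal{X}$. My plan is to show that every $\rho$ sufficiently close to $\rho_0$ in $\hom(\G,G)$ admits a unique Anosov section $\sigma_\rho$ in a prescribed $C^0$ neighborhood of $\sigma_0$, and that $\rho \mapsto \sigma_\rho$ (hence $\rho \mapsto \xi^\pm_\rho$) is continuous. The argument is a graph transform argument carried out in the total space $\flow \times \mathcal{X}$, using the splitting supplied by the Anosov condition for $\rho_0$ as a source of normal hyperbolicity.

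\textbf{Persistence of the section.} I work upstairs: a section of $\mathcal{X}_\rho$ flat along $\RR$-orbits is the same as a $\rho$-equivariant continuous map $\hat\sigma:\flow \to \mathcal{X}$ that descends through $\flow/\RR \cong \partial_\infty \G^{(2)}$. The image of $\hat\sigma_0$ carries a normal bundle splitting $\sigma_0^* E^+ \oplus \sigma_0^* E^-$ with uniform exponential dilation on $E^+$ and contraction on $E^-$ under $\phi_t$ (Proposition~\ref{prop:mu_contrac}). Cover the compact space $\G\backslash\flow$ by finitely many open sets $U_i$ with local sections of $\flow \to \G\backslash\flow$; the resulting trivializations $\mathcal{X}_\rho|_{U_i} \cong U_i \times \mathcal{X}$ have transition cocycle $g^\rho_{ij}$ depending continuously on $\rho$, and so for $\rho$ close to $\rho_0$ one obtains an approximate candidate $\tilde\sigma_0^\rho$ of $\mathcal{X}_\rho$ obtained by transporting $\sigma_0$. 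Consider the space $\mathcal{E}_\epsilon^\rho$ of continuous sections of $\mathcal{X}_\rho$ in a tubular neighborhood of $\tilde\sigma_0^\rho$ of radius $\epsilon$ (measured in the fibers using a $G$-invariant Riemannian metric on $G/L$). Using the hyperbolic splitting, define a graph transform $\mathcal{T}_\rho$ on $\mathcal{E}_\epsilon^\rho$ by flowing a section by $\phi_{-T}$ for some fixed large $T$ and projecting back into the tube along the unstable direction. Exponential contraction along $\sigma_0^* E^-$ and expansion along $\sigma_0^* E^+$, together with $C^0$-closeness of the $\mathcal{X}_\rho$-flow to the $\mathcal{X}_{\rho_0}$-flow, make $\mathcal{T}_\rho$ a uniform contraction on $\mathcal{E}_\epsilon^\rho$ for $\epsilon$ small and $\rho$ close to $\rho_0$. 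Its unique fixed point is a $\phi_T$-invariant section, and invariance under the full $\RR$-flow follows by repeating the argument with perturbed values of $T$ and invoking uniqueness.

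\textbf{Verification of the Anosov property and continuity.} Since $\sigma_\rho$ is flat along $\RR$-orbits, it has associated $L$-Cartan projections $\mu^\rho_{\pm, \Theta}$, which depend continuously on $\rho$ (the lift $\beta^\rho$ can be chosen continuously in $\rho$) uniformly on compacta of $\G\backslash\flow \times \RR$. The linear lower bound $A^{\rho_0}_\pm(m, \pm t) \geq c_0 t - C_0$ of Proposition~\ref{prop:mu_contrac}.\eqref{item2:propMU} therefore persists with slightly adjusted constants: one checks it at a single sufficiently large $t=T$ by $C^0$-continuity, then uses the cocycle relation for $\mu^\rho_{\pm, \Theta}$ along the flow and the compactness of $\G\backslash\flow$ to propagate to all $t \geq 0$. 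Hence $\rho$ is $P$-Anosov with section $\sigma_\rho$. Its equivariant lift factors as $(\xi^+_\rho \circ \tau^+, \xi^-_\rho \circ \tau^-)$ by the general argument of Section~\ref{sec:RiemEqu} once the contraction is known, and continuity of $\rho \mapsto \xi^\pm_\rho$ in $C^0(\partial_\infty \G, \mathcal{F}^\pm)$ is inherited from the continuity of the fixed point of $\mathcal{T}_\rho$ in $\rho$, combined with Lemma~\ref{lem:uniqu}.

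\textbf{Main obstacle.} The principal difficulty is running the graph transform when the base $\G\backslash\flow$ is merely a compact metric space, rather than the smooth manifold $T^1 N$ available in the setting of Labourie's original stability proof; classical invariant-manifold theorems of Hirsch--Pugh--Shub type presume smoothness of the ambient flow. The way around this is that the hyperbolic splitting lives only in the smooth fibers $\mathcal{X} = G/L$ and that the flow on $\mathcal{X}_\rho$ is merely continuous in the base direction. The graph transform can thus be set up purely as a uniform contraction on a Banach space of continuous sections; uniform hyperbolic estimates are supplied by the compactness of $\G\backslash\flow$ together with the quantitative content of Proposition~\ref{prop:mu_contrac} and Lemma~\ref{lem:mu_contract}. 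A secondary point is the verification that $\mathcal{T}_\rho$ maps the tube $\mathcal{E}_\epsilon^\rho$ into itself, which uses the cone-preserving character of the hyperbolic splitting and the smallness of the perturbation.
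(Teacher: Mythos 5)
Your overall strategy -- perturb the flat section by a fixed-point argument, then note that transversality and the contraction estimates are open conditions and get continuity of $\xi^\pm_\rho$ from uniqueness -- is the same as the paper's, but the central contraction operator is mis-designed, and this is a genuine gap. The Anosov section $\sigma_0$ is a \emph{saddle}, not an attractor, for the induced flow on sections of the $G/L$-bundle $\mathcal{X}_\rho$: $\phi_{-T}$ contracts the $\sigma_0^*E^+$-component of the deviation of a nearby section but \emph{expands} the $\sigma_0^*E^-$-component, and $\phi_{+T}$ does the opposite. Hence no one-sided flow is a contraction on a tube around the section, and your repair -- ``flow by $\phi_{-T}$ and project back into the tube along the unstable direction'' -- breaks the link between fixed points and invariance: the fixed point of the projected operator only satisfies a partial invariance equation (its unstable-direction component is whatever the projection imposes), so the claim that it is a $\phi_T$-invariant section, and a fortiori the passage to full $\RR$-invariance by varying $T$, is unjustified. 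Cone conditions do not rescue this, since they pertain to graph transforms of sub-bundle graphs, not to a single section which is hyperbolic ``in both directions.''

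The fix is exactly the structure the paper exploits: $\mathcal{X}$ sits inside the product $\mathcal{F}^+\times\mathcal{F}^-$ and the flow preserves the two factors, so a flat section near $\sigma_0$ is a pair $(\zeta^+,\zeta^-)$ of sections of $\mathcal{F}^\pm_\rho$, and one runs \emph{two decoupled one-sided} contractions: $\phi_{-T}$ on sections of $\mathcal{F}^+_\rho$ and $\phi_{+T}$ on sections of $\mathcal{F}^-_\rho$. The paper first reduces to $G=\SL(V)$, $P=Q_0$ via Proposition~\ref{prop:Anosov_Rep}, and -- this is the point your tube argument also needs in disguise -- restricts the operator to sections taking values in the subbundle $B_\epsilon$ of points staying $\epsilon$-away from the ``bad'' hyperplane section determined by $\xi^-$, since contraction toward $\xi^+$ under $\phi_{-t}$ holds only on that basin; the commuting family $\{\phi_{-t}\}_{t\ge T_0}$, made uniformly contracting over a whole neighborhood $U\times\G\backslash\flow$ of parameters, then yields fixed sections $\xi^\pm_U$ depending continuously on $\rho'$, after which transversality of the pair and the dilation/contraction on $\sigma_U^*E^\pm$ persist by openness. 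A secondary looseness in your write-up: the ``cocycle relation'' for the $L$-Cartan projections is only a subadditivity-type statement, so propagating the linear lower bound from one large time is better done by composing Lipschitz constants of the flow on $\sigma^*E^\pm$ (or by using criterion \eqref{item4:propMU} of Proposition~\ref{prop:mu_contrac}), as in the paper; but this is minor compared with the fixed-point issue above.
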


As a corollary a small adaptation of the proof of
Theorem~\ref{thm:Ano_QIE} gives the following uniformity statement:

\begin{thm}\label{thm:Ano_QIEUnif}
  Let $\rho: \G\to G$ be a $P$-Anosov representation. Then there exist
  constants $K,C >0$ and an open neighborhood $U$ of $\rho$ in $\hom(
  \G, G)$ such that every representation $\rho'\in U$ is a
  $(K,C)$-quasi-isometric embedding.
\end{thm}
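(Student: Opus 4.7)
The plan is to revisit each step in the proof of Theorem~\ref{thm:Ano_QIE} --- which derives the quasi-isometric constants from the Anosov contraction via Proposition~\ref{prop:mu_contrac} --- and verify that all the relevant constants can be chosen uniformly in a neighborhood of $\rho$. Compactness of $\G \backslash \flow$ will play the key role in passing from pointwise to uniform estimates.

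First, invoking Theorem~\ref{thm:Ano_open}, I would take an open neighborhood $U_0$ of $\rho$ consisting of $P$-Anosov representations, such that the Anosov maps $(\xi^{+}_{\rho'}, \xi^{-}_{\rho'})$ depend continuously on $\rho' \in U_0$. Running the partition-of-unity construction of Lemma~\ref{lem:exi_lift} with $\rho'$ as a parameter --- local sections of the $L/M$-bundle $\mathcal{Y} \to \mathcal{X}$ can be chosen continuously in $\rho'$ because the fiber is contractible --- one obtains a family of lifts $\hat\beta_{\rho'} : \flow \to \mathcal{Y} = G/M$ with $(\rho', \hat m) \mapsto \hat\beta_{\rho'}(\hat m)$ jointly continuous. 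The associated $L_\Theta$-Cartan projections $\mu^{\rho'}_{\pm, \Theta}$ of Section~\ref{sec:lifting-section-gm} then depend jointly continuously on $(\rho', m, t) \in U_0 \times \G\backslash \flow \times \RR$, and the same holds for any continuous equivariant family of norms $\|\cdot\|^{\rho'}_{\hat m}$ on the tangent bundle along $\xi^{+}_{\rho'}(\partial_\infty \G)$.

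The heart of the proof is to produce uniform contraction constants. For the base $\rho$, Definition~\ref{defi:ARhyp}.\eqref{contraction} gives $A, a > 0$ with $\|\phi_{-t} e\|^{\rho}_{\phi_{-t} \hat m} \leq A e^{-a t} \|e\|^{\rho}_{\hat m}$; pick $T > 0$ so large that $A e^{-aT} < 1/2$. By joint continuity of the operator norm of $\phi_{-T}$ on the fibers in $(\rho', \hat m)$ and compactness of $\G \backslash \flow$, there is a neighborhood $U \subset U_0$ of $\rho$ on which this operator norm is bounded by $3/4$ uniformly in $\rho'$ and $m$. Iterating yields that $\phi_{-nT}$ has operator norm at most $(3/4)^n$ uniformly, and interpolating for $t \in [nT, (n+1)T]$ using the continuity of the flow over the bounded interval $[0,T]$ produces uniform $A', a' > 0$ with
\[ \|\phi_{-t} e\|^{\rho'}_{\phi_{-t} \hat m} \leq A' e^{-a' t} \|e\|^{\rho'}_{\hat m} \quad \text{for all } \rho' \in U, \; \hat m \in \flow, \; e. \]
By Proposition~\ref{prop:mu_contrac} this is equivalent to uniform constants $c, C > 0$ such that $A^{\rho'}_{\pm}(m, \pm t) \geq c\,t - C$ for all $\rho' \in U$, $m$, and $t \geq 0$.

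With these uniform estimates, the argument of Theorem~\ref{thm:Ano_QIE} goes through word for word: the restriction of $\hat\beta_{\rho'}$ to any $\RR$-orbit is a $(K,C)$-quasi-geodesic in $G/M$ with constants uniform in $\rho' \in U$, and the $\rho$-independent hyperbolicity property of $\flow$ (any two points lie within bounded distance of a common $\RR$-orbit) then upgrades $\hat\beta_{\rho'}$ to a uniform quasi-isometric embedding $\flow \to G/M$, and hence $\rho' : \G \to G$ to a uniform quasi-isometric embedding via the quasi-isometry $\G \to \flow$. The main obstacle lies in the continuous-family-of-lifts construction: one must ensure that the partition-of-unity argument of Lemma~\ref{lem:exi_lift} can be performed so that the resulting $\hat\beta_{\rho'}$ (and in particular the operator norm of $\phi_{-T}$ computed from it) varies continuously with $\rho'$, which amounts to a parametric strengthening of Lemmas~\ref{lem:exi_lift} and \ref{lem:bdd_distance}.
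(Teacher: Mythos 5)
Your argument is correct and is essentially the paper's intended proof: the theorem is presented there as a corollary of the openness proof (which already produces the section $\sigma_U$ with contraction holding uniformly over $U\times\G\backslash\flow$) combined with a small adaptation of the proof of Theorem~\ref{thm:Ano_QIE}, exactly the route you take via Proposition~\ref{prop:mu_contrac} and compactness of $\G\backslash\flow$. The ``obstacle'' you flag is only a minor technicality, handled as in the openness proof by trivializing the relevant bundles over a small neighborhood $U$ (and note the upper quasi-isometry bound is automatic and uniform from finite generation of $\G$, so only the lower bound needs the uniform contraction).
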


\begin{proof}[Proof of Theorem~\ref{thm:Ano_open}]

By Proposition~\ref{prop:Anosov_Rep} we can reduce to the case when $G=
\SL(V)$ and $P= Q^{+}_{0}$. Let $\mathcal{F}^+ = \PP(V)$ and
$\mathcal{F}^- = \PP(V^*)$. 
Given a representation $\rho: \Gamma \to G$ consider the
bundles $\mathcal{F}^{+}_{\rho} = \flow \times_\rho 
\mathcal{F}^+$ and $\mathcal{F}^{-}_{\rho} = \flow \times_\rho
\mathcal{F}^-$. We denote by $(d_m)_{m \in \G \backslash \flow}$ a
(continuous) family of distances on the fibers of
$\mathcal{F}^{+}_{\rho}$, i.e.\ $d_m$ is a distance on
$(\mathcal{F}^{+}_{\rho})_m \cong \PP(V)$. We will regard an element
in $(\mathcal{F}^{-}_{\rho})_m \cong \PP(V^*)$ as a hyperplane in
$(\mathcal{F}^{+}_{\rho})_m$. 

Suppose now that $\rho$ is Anosov. Let $\xi^+$ and
$\xi^-$ be the Anosov maps, and $\sigma: \G\backslash \flow \to \mathcal{X}_\rho \subset \mathcal{F}^{+}_{\rho} \times
\mathcal{F}^{-}_{\rho}$ the section defined by $(\xi^+, \xi^-)$. 
For all $\epsilon >0$ consider (topological)
subbundles $B_\epsilon$ and $b_\epsilon$ defined fiberwise, i.e.\ for all $m\in \G
\backslash \flow$,
\begin{align*}
  (B_\epsilon)_m & = \{ l \in (\mathcal{F}^{+}_{\rho})_m \mid d_m( l,
  \xi^-(m)) > \epsilon \}, \\
  (b_\epsilon)_m & = \{ l \in (\mathcal{F}^{+}_{\rho})_m \mid d_m( l,
  \xi^+(m)) < \epsilon \}.
\end{align*}
The space of continuous sections of those bundles are denoted by
$\Gamma( \mathcal{F}^{+}_{\rho})$, $\Gamma( B_\epsilon)$ and $\Gamma(
b_\epsilon)$. Note that $\Gamma( \mathcal{F}^{+}_{\rho})$ is a
complete metric space. Furthermore there exists $\epsilon_0>0$ such
that for all $\epsilon < \epsilon_0$, $b_\epsilon \subset B_\epsilon$.

 The flow $\phi_t$ acts naturally on
$\mathcal{F}^{+}_{\rho}$ and on the space of sections $\Gamma(
\mathcal{F}^{+}_{\rho})$.  
The contraction property implies: 
\begin{itemize}
\item for all $\epsilon < \epsilon_0$
there exists $T_\epsilon$ such that for all $t \geq T_\epsilon$,  and for all
$f\in \Gamma( B_\epsilon)$ one has  $\phi_{-t} \cdot f \in \Gamma(b_\epsilon)$. Moreover, for all $t \geq T_\epsilon$, the
map $\phi_{-t}: \Gamma( B_\epsilon) \to \G( B_\epsilon)$ is
$\epsilon$-contracting.
\end{itemize}
Now let $U$ be a neighborhood of $\rho$ in $\hom( \G, G)$. Consider the bundles 
$\mathcal{F}^{\pm}_{U}$  over $U \times \G \backslash \flow$: 
\[ \mathcal{F}^{\pm}_{U} = ( U \times \flow) \times_\rho
\mathcal{F}^{\pm},\] 
where the action of $\G$ on $U \times \flow$ is trivial on the first factor, i.e.\  $\g \cdot ( \rho', \hat{m}) = (\rho', \g
\cdot \hat{m}).$

For $U$ small enough the bundles $( \mathcal{F}^{\pm}_{\rho'})_{\rho'
  \in U}$ are all isomorphic, i.e.\ there exists a bundle isomorphism $\psi: \mathcal{F}^{+}_{U} \to U \times \mathcal{F}^{+}_{\rho}$ with $\psi
|_{\mathcal{F}^{+}_{\rho}} = \id$ (\cite[p.~53]{Steenrod}). Note that the flow $\phi_t$ acts on
$\mathcal{F}^{+}_{U}$ and hence on the space of sections $\G(\mathcal{F}^{+}_{U})$.

By continuity (and again for $U$ small enough) there exists $\epsilon_1 > 0$  such that for all $\epsilon< \epsilon_1$ there exists $T_0$ such that for all $t\geq T_0$ the following holds: 
\begin{itemize}
\item for any section
$f_U \in \G (\psi^{-1}(U \times B_\epsilon))$, its image $\phi_{-t} \cdot f_U$ by the flow
belongs to $\G (\psi^{-1}(U \times b_\epsilon))$; moreover the map
$\phi_{-t}: \G (\psi^{-1}(U \times B_\epsilon)) \to \G (\psi^{-1}(U
\times B_\epsilon))$ is $2\epsilon$-contracting.
\end{itemize}

Since $\{ \phi_{-t} \}_{t \geq T_0}$ is a commuting family of contracting
maps, they have a unique common fixed point $\xi^{+}_{U}$ in $\G
(\psi^{-1}(U \times B_\epsilon))$. Certainly $\xi^{+}_{U}$ is also
fixed by $\phi_t$ for any $t\in\RR$. Furthermore, the contraction
property implies that $\xi^{+}_{U} |_{\{ \rho\} \times \G \backslash
  \flow} = \xi^+$. Similarly one finds $\xi^{-}_{U}$ extending
$\xi^-$. 

Since $\xi^{+}_{U}$ and $\xi^{-}_{U}$ are transverse in restriction to
$\{ \rho\} \times \G \backslash \flow$, for $U$ small enough, $\xi^{+}_{U}$ and $\xi^{-}_{U}$ are transverse on $U
\times \G \backslash \flow$. Therefore $\mathcal{X}_U = (
U \times \flow) \times_\rho \mathcal{X} \subset \mathcal{F}^{+}_{U} \times
\mathcal{F}^{-}_{U}$ admits a section $\sigma_U = ( \xi^{+}_{U},
\xi^{-}_{U})$ that is flat along flow lines.

The action of $\phi_t$ on $(\sigma^{*}_U E^+)|_{\{ \rho\} \times \G
  \backslash \flow}$ (resp.\ $(\sigma^{*}_U E^-)|_{\{ \rho\} \times \G
  \backslash \flow}$) is dilating (resp.\ contracting) (see
Section~\ref{sec:defiRiem} for the definition of $E^\pm$). Hence,
again for $U$ small enough, this implies that the action of $\phi_t$ is
 dilating on $\sigma^{*}_U E^+$ (resp.\ contracting on $\sigma^{*}_U
E^-$). 

This shows that there exists a neighborhood $U$ of $\rho$ in $\hom(\G,G)$ such that any $\rho'\in U$ is $Q^{+}_{0}$-Anosov, and moreover the Anosov map varies continuously with $\rho'$.
\end{proof}

\subsection{Groups of rank one}
\label{sec:rank_one}
When $G$ is of rank one, there is only one conjugacy class of
parabolic subgroups. Henceforth we can talk of Anosov representations
unambiguously. Furthermore two points in $\mathcal{F}= G/P$ are
transverse if and only if there are distinct.

A subgroup $\Lambda < G$ is said to be \emph{convex cocompact} if it acts
properly discontinuously and cocompactly on a convex subset
$\mathcal{C}$ of the symmetric space $G/K$. In that case $\Lambda$ is
hyperbolic, 
 and $\partial_\infty \Lambda \cong \partial_\infty
\mathcal{C}$ injects into $\partial_\infty (G/K) \cong G/P$ and the
injection $\Lambda \to G$ is a quasi-isometric embedding. Conversely, 
\cite[Corollaire~1.8.4, Proposition~1.8.6]{Bourdon_conforme} if the injection
$\Lambda \to G$ is a 
quasi-isometric embedding then $\Lambda$ is
convex cocompact. Thus from \cite{Bourdon_conforme} and  the
characterizations of Anosov representations one has:

\begin{thm}\label{thm:Ano_rk_one}
Let $G$ be a Lie group of real rank one. 
  Let $\rho: \G \to G$ be a representation. Then
  the following are equivalent:
  \begin{enumerate}
  \item $\rho$ is Anosov.
  \item There exists $\xi: \partial_\infty \G \to G/P$ a continuous,
    injective and equivariant map.
  \item $\rho$ is a quasi-isometric embedding.
  \item $\ker \rho$ is finite and $\Lambda = \rho( \G)$ is convex cocompact.
  \end{enumerate}
\end{thm}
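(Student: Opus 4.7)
The plan is to chain the four conditions using Bourdon's theorem as the pivot between the metric condition (iii) and the geometric condition (iv), together with the Anosov machinery established above.

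I would first dispatch the easy implications. Since $G$ has real rank one, there is a unique conjugacy class of proper parabolic subgroups, so $P^+$ and $P^-$ are conjugate and $\Theta = \iota(\Theta)$; hence by Section~\ref{sec:para_opp} the two Anosov maps coalesce into a single continuous equivariant map $\xi: \partial_\infty \Gamma \to G/P$. This gives (i) $\Rightarrow$ (ii), with injectivity of $\xi$ following from the transversality of $(\xi(t^+),\xi(t^-))$ for $t^+\neq t^-$ together with the elementary fact that in rank one two points of $G/P = \partial_\infty(G/K)$ are transverse if and only if they are distinct. The implication (i) $\Rightarrow$ (iii) is exactly Theorem~\ref{thm:Ano_QIE}. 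The equivalence (iii) $\Leftrightarrow$ (iv) is \cite[Corollaire~1.8.4, Proposition~1.8.6]{Bourdon_conforme}. The implication (iv) $\Rightarrow$ (ii) is routine: a cocompact action of $\rho(\Gamma)$ on a convex invariant set $\mathcal{C} \subset G/K$ gives, via the \v{S}varc--Milnor lemma, an equivariant homeomorphism of $\partial_\infty \Gamma$ with $\partial_\infty \mathcal{C} \subset \partial_\infty(G/K) = G/P$, furnishing the required $\xi$.

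It remains to close the loop via (ii) $\Rightarrow$ (iv). My plan is to invoke the theory of convergence group actions in rank one Lie groups. The natural action of a word hyperbolic group $\Gamma$ on $\partial_\infty \Gamma$ is a uniform convergence action, as the action on $\partial_\infty \Gamma^{(3)}$ is properly discontinuous and cocompact. The injective continuous equivariant map $\xi$ transports this to a uniform convergence action of $\rho(\Gamma)$ on the compact set $\xi(\partial_\infty \Gamma) \subset \partial_\infty(G/K)$. In rank one, the ambient convergence structure on $\partial_\infty(G/K)$ forces any subgroup admitting such an invariant compact set to be discrete in $G$, to act properly discontinuously on $G/K$, and, since the induced action on $\xi(\partial_\infty \Gamma)^{(3)}$ is cocompact, to act cocompactly on the convex hull of $\xi(\partial_\infty \Gamma)$ inside $G/K$. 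This yields convex cocompactness of $\rho(\Gamma)$. Finiteness of $\ker \rho$ is automatic: any element of $\ker \rho$ acts trivially on $\partial_\infty \Gamma$ via $\xi$, and the pointwise stabilizer of $\partial_\infty \Gamma$ in a hyperbolic group is finite.

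The main obstacle is this last implication (ii) $\Rightarrow$ (iv), which genuinely requires the rank-one hypothesis: all other arrows follow formally either from Bourdon's theorem, from Theorem~\ref{thm:Ano_QIE}, or from the tautology ``distinct equals transverse'' in rank one. The dynamical input needed to convert a convergence action on a compact boundary subset into a geometric (convex cocompact) action on $G/K$ is precisely the feature of rank one Lie groups that fails in higher rank, as witnessed by the examples in Section~\ref{sec:ex_inj} where existence of a continuous injective equivariant boundary map is strictly weaker than being Anosov.
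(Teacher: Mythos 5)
Your set of arrows never returns to condition (i): you prove (i)~$\Rightarrow$~(ii), (i)~$\Rightarrow$~(iii), (iii)~$\Leftrightarrow$~(iv), (iv)~$\Rightarrow$~(ii) and (ii)~$\Rightarrow$~(iv), which only establishes the equivalence of (ii), (iii), (iv) together with the fact that (i) implies all of them. No implication with target (i) is ever proved, so the statement that the four conditions are equivalent is not established. Your closing remark that ``all other arrows follow formally'' hides exactly the missing step: nothing in your argument verifies the contraction property of Definition~\ref{defi:ARhyp} for a representation that is merely convex cocompact, or a quasi-isometric embedding, or admits a continuous injective equivariant boundary map. This is not a formality; in higher rank the analogous implication is false (your own citation of Section~\ref{sec:ex_inj} shows a boundary map alone does not give Anosov), and even in rank one one must produce the section of the flow bundle over $\G\backslash\flow$ and check exponential contraction.

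To close the loop you need, say, (iv)~$\Rightarrow$~(i) (this is what the paper means by invoking ``the characterizations of Anosov representations'' alongside Bourdon): from convex cocompactness the orbit map $\G \to G/K$ is a quasi-isometric embedding, the boundary extension gives the continuous equivariant transverse map $\xi$ (transversality being distinctness in rank one), and the contraction condition is then checked via Proposition~\ref{prop:maps_Anosov} or Proposition~\ref{prop:mu_contrac}: the images of $\RR$-orbits of $\flow$ are uniform quasi-geodesics in the negatively curved space $G/K$, hence fellow-travel geodesics with the correct endpoint at infinity, and negative curvature converts distance travelled along these quasi-geodesics into uniform exponential contraction on $T_{\xi(\tau^\pm(\hat m))}G/P$ (equivalently, linear growth of the $L$-Cartan projection $A_+(m,t)$). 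Your (ii)~$\Rightarrow$~(iv) sketch via uniform convergence actions is a legitimate route (modulo care with elementary $\G$ and with quoting the Bowditch--Tukia criterion that a uniform convergence action on the limit set forces conical limit points, hence convex cocompactness in rank one), but without an argument of the above type landing in (i) the theorem is not proved.
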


\section{Examples}
\label{sec:examples}

In this section we give various examples of Anosov representations. 

\subsection{Groups of rank one}
\label{sec:ex_rk_one}

If $G$ is a semisimple Lie group of rank one and $\G < G$ is
a convex cocompact subgroup, then the injection $\iota: \G \to G$ is Anosov by
Theorem~\ref{thm:Ano_rk_one}.
This gives the following examples: 

\begin{enumerate}
\item Inclusion of uniform lattices. 
\item Embeddings of free groups as Schottky groups. 
\item Embeddings of Fuchsian groups into $\PSL(2,\RR)$. 
\item Embeddings of quasi-Fuchsian groups into $\PSL(2,\CC)$. 
\end{enumerate}

Composing the representation $\iota: \Gamma \to G $ with an
embedding $\phi: G \to G'$ of $G$ into a Lie group of higher rank $G'$,
we obtain an Anosov representation $\phi\circ\iota: \Gamma \to G'$
(Proposition~\ref{prop:inj_rk_one}).  By Theorem~\ref{thm:Ano_open}
any small enough deformation of $\phi\circ \iota$ is also an Anosov
representation. In many cases there exist small deformations with
Zariski dense image. In some particular cases all deformations of
$\phi\circ \iota$ remain Anosov representations.

We list some examples, several of which will be discussed in more detail below: 
\begin{enumerate}
\item Holonomies of convex real projective structures:
  Let $\iota :\Gamma \to
  \SO(1,n)$ be the embedding of a uniform lattice. Consider $\phi\circ
  \iota: \Gamma \to \PGL(n+1,\RR)$, where $\phi:\SO(1,n) \to
  \PGL(n+1,\RR)$ is the standard embedding; this is a $Q_0$-Anosov
  representation, where $Q_0< \PGL(n+1,\RR)$ is the stabilizer of a
  line.  Moreover, $\iota(\Gamma)$ preserves the quadric in $\PP^n(\RR)$
  (the Klein model for hyperbolic space) and acts on it properly
  discontinuously with compact quotient. In particular, $\phi
  \circ\iota$ is an example of a holonomy representation of a convex real projective structure. 
  In
  \cite{Benoist_convex_III} Benoist shows that the entire connected
  component of $\hom(\Gamma, \PGL(n+1,\RR))$ containing $\phi\circ
  \iota$ consists of holonomies of convex real projective structures, hence of Anosov
  representations. More details
  are given in Section~\ref{sec:divi_conv}.
\item Hitchin component:
  Let $\G$ be the fundamental group of a
  closed connected oriented surface $\Sigma$ of genus $\geq 2$, and $\iota: \Gamma \to
  \PSL(2,\RR)$ a discrete embedding. Let $\phi: \PSL(2,\RR) \to
  \PSL(n,\RR) $ be the $n$-dimensional irreducible
  representation. Then $\phi\circ \iota$ is $B$-Anosov, where $B<
  \PSL(n,\RR)$ is the Borel subgroup. The connected
  component of $\hom(\Gamma, \PSL(n,\RR))/\PSL(n,\RR)$ containing $\phi\circ
  \iota$ is called the Hitchin component, it is known that every
  representation in the Hitchin component is  $B$-Anosov. More details
  are given in Section~\ref{sec:ex_hit}. 
  
By  a theorem of Choi and Goldman \cite{Goldman_Choi}, for $n=3$ representations in the Hitchin component are precisely the 
  holonomy representations of convex real projective structures on $\Sigma$.
  \item Quasi-Fuchsian groups in $\SO(2,n)$:  
In \cite{Barbot_Merigot_fusion} Barbot and M\'erigot introduced the
notion of quasi-Fuchsian representations $\rho: \G \to \SO(2,n)$ of a
uniform lattice $\Gamma < \SO(1,n)$, the basic example being the
injection of a lattice $\Gamma < \SO(1,n)$ composed with the natural
embedding $\SO(1,n) < \SO(2,n)$. They showed that  
quasi-Fuchsian representations are  precisely $Q_0$-Anosov, where
$Q_0<\SO(2,n)$ is the stabilizer of an isotropic line. In unpublished
work, Barbot shows furthermore that the entire 
  connected component
  in $\hom( \G, \SO(2,n))$ of the injection $\G \to \SO(1,n) \to \SO(2,n)$ consists of
  quasi-Fuchsian representations
  \cite{Barbot_component}. 
\end{enumerate}

\subsection{Holonomies of convex projective structures}
\label{sec:divi_conv}

A discrete group $\G <\SL(V)$ is said to \emph{divide} an open convex set
$\mathcal{C}$ in  $\PP(V)$ (i.e.\ the projectivization of a convex cone
in $V$) if $\G$ acts properly discontinuously on $\mathcal{C}$ with compact
quotient (see \cite{Benoist_Survey, Quint_Bourbaki} for surveys on this
subject). The cone $\mathcal{C}$ is said to be \emph{strictly convex} if 
$\partial \mathcal{C}$ intersects every projective line in at most two
points.

A discrete group $\G <\SL(V)$ dividing a convex set
$\mathcal{C}$, is hyperbolic if and only if $\mathcal{C}$ is strictly
convex \cite[Théorème~1.1]{Benoist_CD1}.
 In that case $\partial_\infty \Gamma$ is naturally homeomorphic to
$\partial \mathcal{C} \subset \PP(V) $. This identification gives an equivariant map $\xi^+:
\partial_\infty \G \to \PP(V)$. Since $\G$ also divides the dual cone
$\mathcal{C}^*$ in $\PP(V^*)$ (see \cite[Lemme~2.10]{Quint_Bourbaki})
one gets a second equivariant map $\xi^-:
\partial_\infty \G \to \PP(V^*)$. Strict convexity of $\mathcal{C}$ easily implies
that $(\xi^+, \xi^-)$ is compatible
(Definition~\ref{defi:compat}). Furthermore by \cite{Vey_convexes} the
action of 
$\G$ on $V$ is irreducible. Hence Proposition~\ref{prop:Comp_Maps}
applies and we have

\begin{prop}\label{prop:divconv}
Let $\G<\SL(V)$ be a discrete subgroup dividing a strictly convex set
$\mathcal{C} \subset \PP(V)$. Then the inclusion $\G \to \SL(V)$ is a
$Q_0$-Anosov representation, where $Q_0$ is the
stabilizer of a point in $\PP(V)$.
\end{prop}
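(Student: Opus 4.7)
The plan is to verify the hypotheses of Proposition~\ref{prop:Comp_Maps}, since once we produce a compatible pair of continuous equivariant maps for an irreducible representation, that proposition immediately yields the $Q_0$-Anosov property. Everything we need is essentially assembled in the paragraph preceding the proposition; the work is to assemble it carefully.

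First I would recall the hyperbolicity ingredient: by Benoist's theorem \cite[Th\'eor\`eme~1.1]{Benoist_CD1}, since $\mathcal{C}$ is strictly convex and $\G$ divides it, the group $\G$ is word hyperbolic and there is a canonical $\G$-equivariant homeomorphism $\partial_\infty \G \cong \partial \mathcal{C}$. Composing with the inclusion $\partial \mathcal{C} \hookrightarrow \PP(V)$ yields a continuous $\G$-equivariant map
\[ \xi^+ : \partial_\infty \G \longto \PP(V) = G/Q_0^+. \]
Dually, $\G$ also divides the dual strictly convex cone $\mathcal{C}^* \subset \PP(V^*)$ (see \cite[Lemme~2.10]{Quint_Bourbaki}), so the same construction applied to $(\mathcal{C}^*, V^*)$ produces a continuous $\G$-equivariant map
\[ \xi^- : \partial_\infty \G \longto \PP(V^*) = G/Q_0^-. \]
Here I use that the boundary of $\mathcal{C}^*$ is also canonically identified with $\partial_\infty \G$ (via the same hyperbolicity result applied to $\mathcal{C}^*$, together with the fact that both identifications are equivariant and $\G$ acts minimally on its boundary, so they must agree up to a choice of normalization).

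Next I would verify compatibility in the sense of Definition~\ref{defi:compat}. Strict convexity of $\mathcal{C}$ guarantees that at every $x \in \partial \mathcal{C}$ there is a \emph{unique} supporting hyperplane $H_x$, and the assignment $x \mapsto H_x$ is continuous and $\G$-equivariant; this $H_x$ is precisely $\xi^-$ of the same boundary point (by uniqueness, once one checks equivariance and agreement on an orbit of attracting fixed points of proximal elements of $\G$). Thus for every $t \in \partial_\infty \G$, the line $\xi^+(t)$ lies in the hyperplane $\xi^-(t)$, so the pair $(\xi^+(t),\xi^-(t))$ is singular. For $t^+ \neq t^-$ in $\partial_\infty \G$, strict convexity implies that the supporting hyperplane $\xi^-(t^-)$ at one point of $\partial \mathcal{C}$ does not contain any other point of $\partial \mathcal{C}$, so $\xi^+(t^+) \not\subset \xi^-(t^-)$, meaning the pair is transverse. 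This is exactly compatibility.

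Finally, by a theorem of Vey \cite{Vey_convexes}, the action of $\G$ on $V$ is irreducible (in fact strongly irreducible). So the hypotheses of Proposition~\ref{prop:Comp_Maps} are satisfied: $\rho = \mathrm{id}_\G : \G \hookrightarrow \SL(V)$ is irreducible and admits a compatible pair of continuous equivariant maps $(\xi^+,\xi^-)$. Proposition~\ref{prop:Comp_Maps} then concludes that $\G \hookrightarrow \SL(V)$ is $(Q_0^+, Q_0^-)$-Anosov with Anosov maps $\xi^\pm$. The main obstacle in the argument is the identification of $\xi^-$ as the supporting hyperplane map and the verification of the two compatibility conditions using strict convexity; once that is done, the rest is a direct appeal to Proposition~\ref{prop:Comp_Maps} together with the boundary identification and Vey's irreducibility.
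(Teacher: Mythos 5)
Your argument is correct and is essentially the paper's own proof: identify $\partial_\infty \G$ with $\partial \mathcal{C}$ via Benoist's theorem to get $\xi^+$, use the divided dual convex set to get $\xi^-$, check compatibility of the pair using strict convexity, invoke Vey's irreducibility, and conclude with Proposition~\ref{prop:Comp_Maps}. The only caveat is that uniqueness (and continuity) of the supporting hyperplane is not a consequence of strict convexity alone but of the $C^1$-boundary part of Benoist's theorem for divisible convex sets, which you have already cited, so the argument stands as written.
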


The quotient $\G \backslash\mathcal{C}$ is an orbifold with a convex real projective structure. A representation $\rho: \G \to \SL(V)$ whose image divides a strictly convex set is thus the holonomy of a convex real projective structure.
Koszul \cite{Koszul} showed that for any finitely generated group
$\Gamma$, the set of such holonomy representations in $\hom(\Gamma,
\PGL(n,\RR))$ is open. Benoist \cite{Benoist_convex_III} showed that
this set is a connected component if and only if the virtual center of
$\Gamma$ is trivial.

Examples of convex real projective structures on manifolds whose fundamental groups are not isomorphic to a lattice in a Lie group have been constructed by Benoist \cite{Benoist_convex_IV, Benoist_quasi}. Kapovich \cite{Kapovich_Convex} provides several examples of convex
real projective structures on 
Gromov-Thurston manifolds, i.e.\ compact manifolds which carry a metric of
negative curvature pinched arbitrarily close to $-1$ but which do not
admit a metric of constant negative curvature.  
The corresponding holonomy
representations thus give examples of
Anosov representations of hyperbolic groups $\G$ into $\SL(V)$,  with
$\G$ not being isomorphic to a lattice in a Lie group. 

\subsection{Hitchin components}
\label{sec:ex_hit}
Let  $G$ be the split real form of an adjoint simple algebraic group, i.e.\ $G =
\PSL(n, \RR)$, $\PSO(n,n)$, $\PSO(n,n+1)$, $\PSp(2n, \RR)$ or a split real form of an 
exceptional group.
Any such group admits a principal three dimensional subgroup, that is 
an (up to conjugation) well defined homomorphism  $\phi_p : \PSL(2, \RR) \to G$ that
generalizes the $n$-dimensional irreducible representation $\PSL(2,
\RR) \to \PSL(n, \RR)$.   
The principal $\PSL(2, \RR)$ (or, more accurately, its Lie algebra) was discovered simultaneously by Dynkin and de
Siebenthal, later Kostant studied its connection with the
representation theory of $G$. The relevant results, as well as
references to Kostant's papers, are summarized in \cite[Sections~4 and
6]{Hitchin}.

Let $\G$ be the fundamental group of a closed connected oriented surface of genus $\geq 2$.
The Hitchin components 
 are 
the connected components of $\hom( 
\G, G)/G$ containing representation of the form $\phi_p \circ \iota$
where $\iota: \G \to \PSL(2, \RR)$ is a discrete embedding.

\begin{thm}
  \cite[Theorems~4.1, 4.2]{Labourie_anosov},
  \cite[Theorem~1.15]{Fock_Goncharov}
  Every representation $\rho$ in the Hitchin component is $(B^+,
  B^-)$-Anosov, where $(B^+, B^-)$ is a pair of opposite Borel
subgroups of $G$.
\end{thm}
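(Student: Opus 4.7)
The plan is to combine openness of the Anosov locus with path-connectedness of the Hitchin component, starting from a Fuchsian base point.

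First I would verify that a Fuchsian representation $\rho_0 = \phi_p \circ \iota$ is $(B^+, B^-)$-Anosov. The discrete embedding $\iota : \Gamma \to \PSL(2, \RR)$ is convex cocompact, hence $P$-Anosov by Theorem~\ref{thm:Ano_rk_one}, where $P$ is the unique proper parabolic of $\PSL(2, \RR)$. By Kostant's description, the principal homomorphism $\phi_p: \PSL(2, \RR) \to G$ sends the semisimple generator of the principal $\mathfrak{sl}_2$-triple to a regular element of $\mathfrak{a}$; after choosing the Weyl chambers compatibly one then has $(\phi_p)_*(\mathfrak{a}^+) \subset \overline{\mathfrak{a}}^+$ and no simple root of $G$ vanishes on $(\phi_p)_*(\mathfrak{a})$. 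Proposition~\ref{prop:inj_rk_one} (with $\Theta' = \emptyset$) then gives that $\phi_p \circ \iota$ is $(B^+, B^-)$-Anosov.

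By Theorem~\ref{thm:Ano_open} there is an open neighborhood of $\rho_0$ in $\hom(\Gamma, G)$ consisting of $(B^+, B^-)$-Anosov representations. To extend this to the whole Hitchin component, I would use the fact that this component is path-connected (in fact homeomorphic to a ball by Hitchin's theorem). For a Hitchin representation $\rho$, choose a continuous path $(\rho_t)_{t \in [0,1]}$ inside the Hitchin component from $\rho_0$ to $\rho_1 = \rho$, and let $T \subset [0,1]$ be the set of parameters for which $\rho_t$ is $(B^+, B^-)$-Anosov. The set $T$ is open by Theorem~\ref{thm:Ano_open}. For closedness, given $t_n \to t_\infty$ with $t_n \in T$, Theorem~\ref{thm:Ano_QIEUnif} provides uniform quasi-isometric constants along the sequence, and the standard argument for hyperbolic groups promotes this to equicontinuity of the Anosov maps $\xi^\pm_{t_n} : \partial_\infty \Gamma \to G/B^\pm$. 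Arzel\`a-Ascoli then yields a subsequential continuous $\rho_{t_\infty}$-equivariant limit pair $(\xi^+, \xi^-)$, and since transversality off the diagonal and singularity on the diagonal are closed conditions, the limit pair is compatible. Applying Theorem~\ref{thm:Ano_Zd} in the Zariski dense case, or combining Remark~\ref{rem:Ano_in_Zcl} with Proposition~\ref{prop:injLieGr} applied to the inclusion of the Zariski closure into $G$ in general, I conclude that $\rho_{t_\infty}$ is $(B^+, B^-)$-Anosov, so $t_\infty \in T$.

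The hard part is preserving \emph{compatibility} of the pair in the limit: a priori $\xi^+_{t_n}$ and $\xi^-_{t_n}$ could degenerate so that transversality fails at a pair of distinct boundary points, or they could collapse into a proper flag subvariety. For limits taken outside the Hitchin component these failures genuinely occur, so the argument must exploit the rigidity specific to the Hitchin component. Two natural routes are available: Fock-Goncharov's total positivity, which is both an open and a closed condition on the Hitchin component and manifestly forces transversality of the limit curve; or Labourie's original approach via Higgs bundles, which directly produces the Frenet curves with the full non-degeneracy required. Either route, combined with the Anosov criteria established in the preceding sections, completes the argument.
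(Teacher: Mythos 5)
There is a genuine gap, and it sits exactly where you flag it. The openness-plus-connectedness skeleton cannot by itself prove the theorem, because being Anosov is an open but in general \emph{not} closed condition in $\hom(\Gamma,G)$; the entire content of the statement is the closedness of the Anosov locus along the Hitchin component, and your argument defers precisely that step to ``Fock--Goncharov's total positivity'' or ``Labourie's Higgs bundle/Frenet curve approach'' --- which are exactly the cited results \cite[Theorems~4.1, 4.2]{Labourie_anosov}, \cite[Theorem~1.15]{Fock_Goncharov} that the theorem records. As written, the proposal is therefore circular relative to the statement it is meant to prove. Moreover, the intermediate step you do spell out is not justified by the tools of this paper: Theorem~\ref{thm:Ano_QIEUnif} gives constants $(K,C)$ that are uniform only on some neighborhood $U$ of a \emph{fixed} Anosov representation; for a sequence $\rho_{t_n}$ approaching a boundary point $t_\infty$ of the set $T$, whose limit is not yet known to be Anosov, there is no single such neighborhood containing the tail of the sequence, so you cannot extract uniform quasi-isometry constants, equicontinuity of $\xi^\pm_{t_n}$, or the Arzel\`a--Ascoli limit pair this way. (This is not a technicality: for limits of Anosov representations outside the Hitchin component the transversality does degenerate, as you note, so any correct argument must input the positivity/Frenet structure \emph{before} taking limits, not after.) The base-point step (that $\phi_p\circ\iota$ is $(B^+,B^-)$-Anosov via Theorem~\ref{thm:Ano_rk_one}, regularity of the principal $\mathfrak{sl}_2$, and Proposition~\ref{prop:inj_rk_one}) is fine, but it is the easy part.

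For comparison, the paper does not reprove this theorem; it quotes it, and the accompanying remark indicates the intended deduction from its own machinery: Fock and Goncharov construct, for \emph{every} Hitchin representation, a continuous equivariant transverse (positive) curve $\xi:\partial_\infty\Gamma\to G/B^+$, and then either Corollary~\ref{cor:Map_Ano} (together with an analysis of the possible Zariski closures of Hitchin representations, to remove the Zariski-density hypothesis), or the positivity of the curve used to control $A_+(m,t)$ as required in Proposition~\ref{prop:mu_contrac}, yields the Anosov property directly --- no deformation or limiting argument is needed. If you want a self-contained proof along your lines, the missing ingredient is a proof that positivity (or the Frenet property) is a closed condition and persists on the whole component, which is the substance of the cited works rather than something recoverable from Theorems~\ref{thm:Ano_open} and \ref{thm:Ano_QIEUnif}.
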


\begin{remark}
  In fact, Fock and Goncharov in  \cite{Fock_Goncharov} provide a  continuous equivariant map
  $\xi: 
  \partial_\infty \G \to G/B^+$ that satisfies the transversality property
  of Definition~\ref{defi:transmap}. Thus Corollary~\ref{cor:Map_Ano}, together with an analysis of the potential Zariski closures of Hitchin representations, implies the above
  theorem. One can also use the positivity property of
  the equivariant curve, see \cite[Definition~1.10]{Fock_Goncharov}, to
  obtain directly the control on $A_+(m,t)$ required in
  Proposition~\ref{prop:mu_contrac}.
\end{remark}

\begin{remark}
When $G$ is a split real simple Lie group, which is not adjoint (e.g. $\SL(n,\RR)$ of $\Sp(2n,\RR)$), we call a connected component of $\hom(\G, G)/G$ a Hitchin component if and only it its image in $\hom(\G, G^{ad})G^{ad}$ is the Hitchin component, where $G^{ad}$ is the adjoint group of $G$. 
\end{remark}

\subsection{Maximal representations}
\label{sec:ex_max}
Let $G$ be a Lie group of Hermitian type, i.e.\ $G$ is connected,
semisimple with finite center and has no compact factors and the
symmetric space $\mathcal{H}=G/K$ admits a $G$-invariant complex
structure. 
Let $\G$ be the fundamental group of a closed connected oriented
surface of genus $\geq 2$. 
There is a characteristic number, often called the Toledo invariant, 
$\tau_G: \hom(\G, G) \to \ZZ$, which satisfies 
a Milnor-Wood type inequality
 \cite[Section~3]{Burger_Iozzi_Wienhard_toledo}: $| \tau_G(
\rho) | \leq (2g-2) c(G)$,  
where $c(G)$ is a constant depending only on $G$.

\begin{defi}
  A representation is said to be \emph{maximal} if $\tau_G( \rho) =
  (2g-2) c(G)$.
\end{defi}

Let $\check{S}=G/\check{P}$ be the Shilov boundary of $G$; it is the closed
$G$-orbit in the boundary of the bounded symmetric domain realization of $\mathcal{H}$.
 (see \cite[Chapter~4]{Wienhard_thesis} and
\cite{Satake_book}).

\begin{thm}\label{thm:maximal_Anosov}
  \cite{Burger_Iozzi_Wienhard_anosov}
  Any maximal representation $\rho: \Gamma \to G$ is $\check{P}$-Anosov.
\end{thm}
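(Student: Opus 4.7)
The overall strategy is to produce a pair of continuous $\rho$-equivariant compatible maps from $\partial_\infty \Gamma$ to the Shilov boundary $\check{S} = G/\check{P}$ and then invoke the characterization of Anosov representations via such maps. Note that $\check{P}$ is conjugate to its opposite (the Shilov boundary arises from a parabolic $P_\Theta$ with $\iota(\Theta) = \Theta$), so we are looking for a single transverse equivariant map $\xi : \partial_\infty \Gamma \to \check{S}$, which brings us into the setting of Corollary~\ref{cor:Map_Ano}.

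The first step is the construction of the boundary map. For $\Gamma = \pi_1(\Sigma)$ the action on $\partial_\infty \Gamma$ is an amenable $\Gamma$-action, and the bounded K\"ahler class on $\check{S}$ defines a $G$-invariant bounded Borel cocycle (the generalized Maslov cocycle). By the bounded cohomology machinery of Burger--Iozzi--Wienhard, the fact that $\rho$ is maximal forces the existence of a $\rho$-equivariant measurable map $\xi : \partial_\infty \Gamma \to \check{S}$ which pulls back the Maslov cocycle to its maximal value almost everywhere. The extremality of the values taken by the cocycle on the image of $\xi$ then forces two pointwise conclusions: (i) for a.e.\ distinct pair $(t^+, t^-)$, the pair $(\xi(t^+), \xi(t^-))$ is transverse in $\check{S} \times \check{S}$, and (ii) the restriction of $\xi$ to any $\Gamma$-orbit is ``monotone'' with respect to the cyclic order on $\partial_\infty \Gamma \cong S^1$ induced by the order on transverse chains in $\check{S}$. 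Monotonicity, together with density of $\Gamma$-fixed points in $\partial_\infty \Gamma$, is then used to modify $\xi$ on a null set so as to obtain a continuous equivariant map satisfying the transversality required in Definition~\ref{defi:transmap}.

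The second step handles the Zariski closure. If $\rho$ has Zariski dense image, Corollary~\ref{cor:Map_Ano} applied to the self-opposed parabolic $\check{P}$ immediately yields that $\rho$ is $\check{P}$-Anosov with Anosov map $\xi$. In general, let $H$ denote the Zariski closure of $\rho(\Gamma)$ in $G$. The structure theorem for Zariski closures of maximal representations (Burger--Iozzi--Labourie--Wienhard) says that $H$ is reductive, of Hermitian type, the induced representation $\Gamma \to H$ is itself maximal, and the inclusion $H \hookrightarrow G$ is a \emph{tight} embedding which in particular maps the Shilov boundary $\check{S}_H$ of $H$ into $\check{S}$. In this case the map $\xi$ produced above already factors through $\check{S}_H \hookrightarrow \check{S}$, so applying the Zariski-dense case inside $H$ gives that $\rho : \Gamma \to H$ is $\check{P}_H$-Anosov.

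To finish, we transfer the Anosov property from $H$ to $G$ via Proposition~\ref{prop:injLieGr}: compatibility of the Cartan subalgebras (which can be arranged by Mostow's theorem) together with tightness of $H \hookrightarrow G$ ensures that $\phi_*$ sends the face of $\overline{\mathfrak{a}}_H^+$ defining $\check{P}_H$ into a Weyl-group translate of the face of $\overline{\mathfrak{a}}^+$ defining $\check{P}$; this is exactly the cone condition of Proposition~\ref{prop:injLieGr}, so $\rho : \Gamma \to G$ is $\check{P}$-Anosov. The main obstacle in this plan is upgrading the measurable boundary map provided by bounded cohomology to a continuous transverse map: this is where maximality, through the extremal behaviour of the pulled-back Maslov cocycle, is doing all the work, and where essentially all the analytic content of \cite{Burger_Iozzi_Wienhard_anosov} is used.
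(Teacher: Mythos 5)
This theorem is not proved in the paper: it is quoted from \cite{Burger_Iozzi_Wienhard_anosov} (in preparation), and the only in-paper discussion is Remark~\ref{rem:redmaxtosymp}, which sketches, for classical $G$, a reduction to the symplectic case of \cite{Burger_Iozzi_Labourie_Wienhard} by composing with a tight embedding of a tube-type subgroup $H$ containing the image into $\Sp(2n,\RR)$ and invoking Corollary~\ref{cor:finite_index} and Proposition~\ref{prop:embedding}. So your proposal has to be measured against that sketch and the intended strategy of the cited work. Most of your outline is consistent with the tools actually available here: the Shilov parabolic satisfies $\iota(\Theta)=\Theta$, so a single continuous equivariant transverse map $\xi:\partial_\infty\Gamma\to\check{S}$ plus Zariski density gives the Anosov property via Corollary~\ref{cor:Map_Ano}, and Remark~\ref{rem:Ano_in_Zcl} covers working inside the (reductive) Zariski closure. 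Producing the continuous transverse $\xi$ from the measurable boundary map furnished by bounded cohomology is indeed where the analytic content of \cite{Burger_Iozzi_Wienhard_anosov} sits; since the theorem itself is an external citation, delegating that step is acceptable, though for general $\check{S}$ the "monotonicity" you invoke is really the causal/maximality structure on triples in the Shilov boundary, not literally a cyclic order on $S^1$.

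The concrete gap is your last step. You assert that tightness of $H\hookrightarrow G$ ensures the cone condition of Proposition~\ref{prop:injLieGr} for the pair of Shilov parabolics $(\check{P}_H,\check{P}_G)$. Tightness is a bounded-cohomological condition; what it gives directly (using that $H$ is of \emph{tube type}, which the structure theorem of \cite{Burger_Iozzi_Wienhard_toledo} provides but which you should state, as "Hermitian" alone does not suffice) is an equivariant map $\check{S}_H\to\check{S}_G$, i.e.\ $\phi(\check{P}_H)$ lies in a conjugate of $\check{P}_G$. That is a statement about stabilizers; contraction on all of $T\check{S}_G$, rather than only on the subspace coming from $T\check{S}_H$, is exactly what the cone condition encodes, and it does not follow formally from Shilov-to-Shilov. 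Note that the paper's own remark is careful to use only the unconditional \emph{pullback} direction (Proposition~\ref{prop:embedding}) after composing with a tight map $H\to\Sp(2n,\RR)$, which yields that $\rho$ is Anosov as a representation into $H$; the passage from $H$-Anosov to $\check{P}$-Anosov in $G$ — your Proposition~\ref{prop:injLieGr} step, or an equivalent positivity estimate for tight, tube-type embeddings (e.g.\ computing $\phi_*$ on $\overline{\mathfrak{a}}^+_H$, or bounding the $L$-Cartan projections as in the refined version of Proposition~\ref{prop:injLieGr}) — is precisely the part left to \cite{Burger_Iozzi_Wienhard_anosov} and would need an actual proof in your write-up.
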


The symplectic group $\Sp(2n, \RR)$ is of Hermitian type. Its Shilov
boundary $\check{S}$ is the space $\mathcal{L}$ of Lagrangian (i.e.\
maximal isotropic) subspaces of $\RR^{2n}$. In that case the map
$\xi: \partial_\infty \G \to \mathcal{L}$ associated to a
maximal representation was constructed in
\cite{Burger_Iozzi_Labourie_Wienhard}; it satisfies the following
transversality condition: for all $t^+ \neq t^-$ in $\partial_\infty
\G$, $\xi( t^+) \oplus \xi(t^-) = \RR^{2n}$.

\begin{remark}\label{rem:redmaxtosymp}
In many cases Theorem~\ref{thm:maximal_Anosov} can be deduced from the
corresponding result for symplectic groups.

Indeed note that, up to passing to a finite index subgroup, the image
of a maximal representation
 $\rho: \pi_1( \Sigma) \to G$
 is contained in a Lie group $H$ of Hermitian
type, which is of tube type
\cite[Theorem~5]{Burger_Iozzi_Wienhard_toledo}.
Furthermore if $G$ is a
classical Lie group, then $H$ is also a classical
 group 
and therefore (up 
to taking finite covers) admits a tight embedding $\phi: H \to
\Sp(2n,\RR)$, 
which extends to an equivariant map of the Shilov boundary $\check{S}$
of $H$ into the space of Lagrangians $\mathcal{L}$, (this was already 
used in \cite{Wienhard_mapping}, see definitions and references therein, in particular \cite{Burger_Iozzi_Wienhard_tight} for the notion of tight embeddings). Up to passing to a finite index
subgroup, the composition $\phi\circ \rho: \pi_1( \Sigma) \to \Sp(2n,\RR)$ is
a maximal representation into the 
symplectic group. Thus, as a consequence of
Corollary~\ref{cor:finite_index} and 
Proposition~\ref{prop:embedding} one can deduce
Theorem~\ref{thm:maximal_Anosov} in the case when $G$ is a classical
Lie group from the case of the symplectic group.
\end{remark}

\subsection{Projective Schottky groups}\label{sec:schottky}
In \cite{Nori} Nori constructed Schottky groups $\G \subset \PGL(2n,\CC)$, which act properly discontinuously and cocompactly on an open subset $\Omega \subset \PP(\CC^{2n})$. 
These examples have been generalized by Seade and Verjovsky in \cite{Seade_Verjovsky_Schottky}.  Their construction also gives discrete free subgroups of $\PGL(2n,\RR)$. 
The embeddings $\rho: \G \to \PGL(2n,\KK)$, $\KK =\RR, \CC$, of these
projective Schottky groups are $P_n$-Anosov representations, where
$P_n$ is the stabilizer of an $n$-dimensional $\KK$-vector subspace in
$\KK^{2n}$ (see \cite{Guichard_Kapovich_Wienhard}). 
Such Schottky groups do not exist in $\PGL(2n+1, \KK)$, see \cite{Cano}.


\part{Domains of discontinuity}
\label{part:doma-disc}

\section{Automorphism groups of sesquilinear forms}
\label{sec:orth-sympl-groups}
In this section we construct domains of discontinuity for discrete
subgroups 
 of automorphism
groups of non-degenerate sesquilinear forms, which exhibit special dynamical
properties. 
 We then apply this construction  to Anosov
representations of hyperbolic groups. 

\subsection{Notation}

Let $(V,F)$ be a (right) $\KK$-vector space (with $\KK= \RR, \CC$ or $\HH$)
 with a non-degenerate form $F: V \otimes_\RR V \to \KK$,
linear in the second variable ($F(x,y \lambda) = F(x,y)
\lambda$)\footnote{the order in the equation matters only for the case
$\KK= \HH$.} and
such that:
\begin{itemize}
\item If $\KK=\RR$, $F$ is an indefinite symmetric 
  form or a skew-symmetric 
  form.
\item If $\KK = \CC$, $F$ is a symmetric form, a skew-symmetric form
  or an indefinite Hermitian 
  form.
\item If $\KK= \HH$, $F$ is an indefinite Hermitian form or a skew-Hermitian form.
\end{itemize}

Let $G_F < \GL(V)$ be the automorphism group of $(V,F)$. Then $G_F$ is $\O(p,q)$
($0<p\leq q$), $\Sp(2n, \RR)$; $\O(n, \CC)$, $\Sp(2n, \CC)$, 
$\U(p,q)$, ($0<p\leq q$);  $\Sp(p,q)$ ($0< p \leq q$) or $\SO^*(2n)$ respectively.

We denote by 
\[\mathcal{F}_0 = G_F / Q_0 = \PP( \{ x \in V \mid F(x,x)=0\}) \subset \PP(V)\]  
the space of isotropic lines and by  
\[\mathcal{F}_1 = G_F/Q_1 = \{ P \in \Gr_l(V) \mid F|_P = 0 \}\]  
the space of maximal isotropic
subspaces of $V$.
Here $\Gr_l(V)$ denotes the Grassmannian of $l$-planes with $l=p$ if $G_F
= \O(p,q)$ ($0<p \leq q$), $l=n$ if $G_F = \Sp(2n, \RR)$, etc.
When we explicitly want to refer to the vector space we will use the
notation $\mathcal{F}_0(V)$ and $\mathcal{F}_1(V)$.
In the case of $G_F$, transversality of points in $\mathcal{F}_i$ can be put concretely: 
\begin{lem}
A pair $(P_1, P_2) \in \mathcal{F}_i \times \mathcal{F}_i$ is
\emph{transverse} if and only if 
$P_1 + P_{2}^{\perp_F} = V$ 
(or equivalently $P_1 \cap P_{2}^{\perp_F} = \{ 0\}$, or $P_{1}^{\perp_F}
+P_2 = V$, etc.). 
\end{lem}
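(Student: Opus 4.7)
My plan is to interpret the condition via the open $G_F$-orbit on $\mathcal{F}_i \times \mathcal{F}_i$ and then to apply Witt's extension theorem. The various formulations in the statement ($P_1 + P_2^{\perp_F} = V$, $P_1 \cap P_2^{\perp_F} = \{0\}$, $P_1^{\perp_F} + P_2 = V$, etc.) are equivalent by a dimension count: since $P_1, P_2 \in \mathcal{F}_i$ share a common dimension $d$, one has $\dim P_j^{\perp_F} = \dim V - d$, and so $\dim P_1 + \dim P_2^{\perp_F} = \dim V$; applying $\perp_F$ to any of these relations interchanges the roles of $P_1$ and $P_2$, yielding the remaining variants.

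Next, I would observe that in each of the cases listed, $Q_i$ is conjugate in $G_F$ to its opposite parabolic $Q_i^-$ (as in Section~\ref{sec:para_opp}), so that $\mathcal{F}_i$ is canonically identified with both $G_F/Q_i$ and $G_F/Q_i^-$, and transversality of a pair in $\mathcal{F}_i \times \mathcal{F}_i$ is, by the definition given at the start of the paper, exactly membership in the unique open $G_F$-orbit $\mathcal{X} = G_F/(Q_i \cap Q_i^-)$. It therefore suffices to show that
\[
\mathcal{O} := \{(P_1, P_2) \in \mathcal{F}_i \times \mathcal{F}_i \mid P_1 \cap P_2^{\perp_F} = \{0\}\}
\]
coincides with $\mathcal{X}$. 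Openness of $\mathcal{O}$ and its $G_F$-invariance are immediate, as the condition is the non-vanishing of the natural map $P_1 \to V/P_2^{\perp_F}$.

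To see that $\mathcal{O}$ is a single $G_F$-orbit, I would first check that for any $(P_1, P_2) \in \mathcal{O}$ the restriction $F|_{P_1 + P_2}$ is non-degenerate: if $x = x_1 + x_2 \in P_1 \oplus P_2$ lies in $(P_1+P_2)^{\perp_F}$, then isotropy of $P_1$ gives $F(x_2, P_1) = 0$, so $x_2 \in P_1^{\perp_F} \cap P_2 = \{0\}$, and symmetrically $x_1 = 0$. In particular, the pairing $F : P_1 \times P_2 \to \KK$ is perfect, and $(P_1, P_2)$ forms a ``hyperbolic'' Lagrangian decomposition of $P_1 + P_2$. Given another $(P_1', P_2') \in \mathcal{O}$, a choice of $F$-dual bases on $(P_1, P_2)$ and on $(P_1', P_2')$ produces an $F$-isometry $\varphi : P_1 + P_2 \to P_1' + P_2'$ with $\varphi(P_j) = P_j'$; by Witt's extension theorem, $\varphi$ extends to an element of $G_F$, and hence $\mathcal{O} = \mathcal{X}$.

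The main (minor) obstacle is verifying that Witt's extension theorem applies uniformly in the real, complex, and quaternionic settings and for each type of form (symmetric, skew-symmetric, Hermitian, skew-Hermitian) appearing in the list; this is classical but should be recorded case by case.
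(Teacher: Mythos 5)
Your proposal is correct, but note that the paper offers no proof of this lemma at all: it is stated as a concrete reformulation of transversality, with the linear-algebra verification left to the reader. Your justification is the natural one and it checks out: the equivalences among the stated conditions follow from the dimension count and from applying $\perp_F$; the condition $P_1\cap P_{2}^{\perp_F}=\{0\}$ is symmetric in $P_1,P_2$; $F$ is non-degenerate on $P_1\oplus P_2$ with a perfect pairing between the two factors; and Witt's extension (or cancellation) theorem, valid in characteristic zero for all the symmetric, alternating, Hermitian and skew-Hermitian cases over $\RR$, $\CC$, $\HH$ listed in the paper, makes the set $\mathcal{O}$ of such pairs a single $G_F$-orbit. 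Two points deserve tightening. First, for $i=1$ the openness of $\mathcal{O}$ is the invertibility (non-vanishing of a determinant) of the map $P_1\to V/P_{2}^{\perp_F}$ between spaces of equal dimension, not merely its non-vanishing. Second, the concluding step $\mathcal{O}=\Xx$ needs either the uniqueness of the open orbit (which the paper asserts in Section~\ref{sec:defiRiem}, and which also follows here since $\mathcal{O}$ is dense), or, more directly, the remark that the base pair $(P,P')$ used to define the opposite parabolics $(Q_i,Q_{i}^{-})$ in Section~\ref{sec:para_opp} already satisfies $P\cap P'^{\perp_F}=\{0\}$, so that $\Xx=G_F\cdot(P,P')\subset\mathcal{O}$ and single-orbitness gives equality. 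An alternative, slightly more elementary route that bypasses Witt is to observe that $\stab_{G_F}(P_1)\cap\stab_{G_F}(P_2)$ is reductive (essentially $\GL(P_1)$ times the automorphism group of $F$ restricted to $(P_1\oplus P_2)^{\perp_F}$) exactly when $P_1\cap P_{2}^{\perp_F}=\{0\}$, and contains a nontrivial normal unipotent subgroup otherwise, which matches the paper's definition of opposite parabolic subgroups; but your orbit-plus-Witt argument is perfectly sound.
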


The closed $G$-orbit in $\mathcal{F}_0 \times
\mathcal{F}_1$ is
\[ \mathcal{F}_{01} = \{ (D,P) \in \mathcal{F}_0 \times
\mathcal{F}_1 \mid D \subset P \}. \]
There are two projections $\pi_i: \mathcal{F}_{01} \to
\mathcal{F}_i$, $i=0,1$.

For any subset $A \subset \mathcal{F}_i$ we define a subset $K_A$ in
$\mathcal{F}_{1-i}$ by
\begin{align*}
  K_A = \pi_{1-i} ( \pi_{i}^{-1} (A))
  &= \{ D \in \mathcal{F}_0 \mid \exists P \in A, D \subset P \}
  \textrm{ if } i=1,\\
  &= \{ P \in \mathcal{F}_1 \mid \exists D \in A, D \subset P \}
  \textrm{ if } i=0.
\end{align*}
If $A$ is closed, $K_A$ is closed.

\subsection{Subgroups with special dynamical properties }
We denote the Lie algebra of $G_F$ by $\mathfrak{g}$ and use the notation 
introduced in Section~\ref{sec:para_opp}. 
One can choose a basis of $V$ such that $\mathfrak{a}
\subset \mathfrak{g} \subset \mathfrak{gl}(V)$ is the
set of diagonal matrices $\diag(t_1, \dots, t_l, 0, \dots, 0, -t_l,
\dots, -t_1)$ with $t_i \in\RR$ for all $i$ (here $l= \rank_\RR G_F$), and such that $\mathfrak{a}^+$ are those
matrices satisfying $t_1 > t_2 > \cdots > t_l >0$. 
Let 
$\alpha_1$ denote the simple root of $\mathfrak{a}$ such that $Q_1=P_{\Delta
  \moins \{ \alpha_1\}}$ (see Section~\ref{sec:para_opp}). 
Then $\alpha_1$
is $\diag(t_1, \dots, t_l, \dots)\mapsto t_l$ (or $\diag(t_1,
\dots, t_l, \dots)\mapsto 2 t_l$ if there are no ``zeroes''). The
root $\alpha_0$ such that $Q_0 = P_{\Delta \moins \{ \alpha_0\}}$ is 
$\diag(t_1, \dots, t_l, \dots)\mapsto t_1 -t_2$.

\begin{defi}\label{defi:kind}
A discrete subgroup $\G< G_F $ is \emph{$\alpha_i$-divergent}, $i= 0,1$, if:
\begin{itemize}
\item any sequence $(g_n)_{n\in \NN}$ in $G$ diverging to infinity  has a subsequence $(g_{\phi(n)})_{n\in \NN}$ such that $\lim_{n\to \infty} \alpha_i(\mu(g_{\phi(n)})) = \infty$. 
\end{itemize}
\end{defi}

\begin{lem}\label{lem:divergent_proximal}
Let $\G < G_F$ be a discrete $\alpha_i$-divergent subgroup. Then $\G$ is proximal with respect to $\mathcal{F}_i$.
\end{lem}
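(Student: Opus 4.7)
The strategy is to produce a proximal element of $\Gamma$ by extracting a subsequence from an arbitrary divergent sequence in $\Gamma$ and invoking the following classical criterion: if $g \in G_F$ is $(r,\epsilon)$-$\mathcal{F}_i$-proximal with $\epsilon$ sufficiently small compared to $r$ (say $\epsilon < r/2$), then $g$ is proximal, since $g|_{B_\epsilon(x^-)}$ is an $\epsilon$-contracting self-map into $b_\epsilon(x^+) \subset B_\epsilon(x^-)$ and Banach's fixed point theorem produces a unique attracting fixed point inside $b_\epsilon(x^+)$.

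Assuming $\Gamma$ infinite (otherwise no sequence in $\Gamma$ diverges and the hypothesis is vacuous), I pick $(g_n) \subset \Gamma$ with $g_n \to \infty$ in $G_F$. The $\alpha_i$-divergence hypothesis lets me pass to a subsequence with $\alpha_i(\mu(g_n)) \to +\infty$. Writing the Cartan decomposition $g_n = k_n \exp(\mu(g_n)) l_n$ with $k_n, l_n$ in a maximal compact subgroup $K$, and using compactness of $K$, I extract further so that $k_n \to k_\infty$ and $l_n \to l_\infty$. Let $x^+_0, x^-_0 \in \mathcal{F}_i$ be the standard attracting and repelling data for $\exp(\overline{\mathfrak{a}}^+)$ acting on $\mathcal{F}_i$. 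Because $\alpha_i$ is precisely the root measuring the eigenvalue gap that governs contraction on $\mathcal{F}_i$, the diagonal element $\exp(\mu(g_n))$ sends $B_{\epsilon_0}(x^-_0)$ into $b_{C\exp(-\alpha_i(\mu(g_n)))}(x^+_0)$ in a $C\exp(-\alpha_i(\mu(g_n)))$-contracting fashion, for absolute constants $\epsilon_0 > 0$ and $C$. Translating by $k_n$ on the target and by $l_n^{-1}$ on the source, $g_n$ is $(r_n, \epsilon_n)$-proximal on $\mathcal{F}_i$ with attracting point $\xi^+_n := k_n x^+_0$, repelling data $\xi^-_n := l_n^{-1} x^-_0$, $\epsilon_n \to 0$, and $r_n = d(\xi^+_n, V^-(\xi^-_n))$.

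If the limits $\xi^+_\infty = k_\infty x^+_0$ and $\xi^-_\infty = l_\infty^{-1} x^-_0$ are transverse, then $r_n$ remains bounded below by a positive constant, so eventually $\epsilon_n \ll r_n$ and the criterion above forces $g_n$ to be proximal for large $n$, proving the lemma. Otherwise I modify by replacing $g_n$ with $\gamma g_n$ for a suitable $\gamma \in \Gamma$: the modified sequence still satisfies $\alpha_i(\mu(\gamma g_n)) \to +\infty$ (left multiplication by a fixed element shifts $\mu$ by a bounded amount), and its new attracting limit becomes $\gamma \cdot \xi^+_\infty$. The non-transversality locus $\{y \in \mathcal{F}_i : y \in V^-(\xi^-_\infty)\}$ is a proper closed subvariety of $\mathcal{F}_i$, and the $\Gamma$-orbit of $\xi^+_\infty$ cannot be entirely contained in it: if it were, the same would hold for every accumulation point obtained by varying the initial sequence, contradicting the $\alpha_i$-divergence of an infinite $\Gamma$. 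Hence some $\gamma \in \Gamma$ moves $\xi^+_\infty$ off the non-transversal locus, and the transverse case applied to $(\gamma g_n)$ produces a proximal element of $\Gamma$.

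The principal delicate point is the transversality verification in the second case; all the other ingredients (Cartan decomposition, explicit behavior of $\exp(\overline{\mathfrak{a}}^+)$ on $\mathcal{F}_i$ controlled by $\alpha_i$, the bounded shift of $\mu$ under fixed left multiplication, and the $(r,\epsilon)$-proximality criterion) are standard.
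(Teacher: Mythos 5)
Your first two paragraphs are essentially correct and in fact reprove by hand the input that the paper takes from Benoist: the paper's proof is a one-line citation of \cite[Section~3.2, Lemme]{Benoist}, which is precisely the statement that $\alpha_i(\mu(g_n))\to\infty$ yields, after extraction and via the Cartan decomposition $g_n=k_n\exp(\mu(g_n))l_n$, the contraction of $\{y\in\mathcal{F}_i \mid d(y,V^-(l_n^{-1}x_0^-))\geq \epsilon\}$ into small balls around $k_n x_0^+$; and your observation that transversality of the limits $\xi^+_\infty$, $\xi^-_\infty$ forces $g_n$ to map a compact neighborhood of $\xi^+_n$ strictly into itself with contraction, hence to have an attracting fixed point and be genuinely proximal, is the standard and correct way to conclude in that case. (A small slip: $\xi^\pm_n$ are not fixed points of $g_n$, so $g_n$ is not literally $(r_n,\epsilon_n)$-proximal in the paper's sense; what you actually establish and use is the contraction statement, which is fine.)

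The genuine gap is the non-transverse case. You need some $\gamma\in\G$ with $\gamma\cdot\xi^+_\infty\notin V^-(\xi^-_\infty)$, and the justification you give --- that otherwise ``the same would hold for every accumulation point, contradicting the $\alpha_i$-divergence of an infinite $\G$'' --- is a non sequitur: $\alpha_i$-divergence is a condition on Cartan projections and in no way prevents the whole orbit $\G\cdot\xi^+_\infty$ from lying inside $V^-(\xi^-_\infty)$. Indeed, with only the stated hypotheses this step can genuinely fail: take $G_F=\O(1,2)$ and $\G=\langle u\rangle$ with $u$ a nontrivial unipotent element. This $\G$ is discrete and $\alpha_i$-divergent (in rank one $\alpha_i\circ\mu$ is comparable to $\|\mu\|$, so divergence is automatic), yet every nontrivial element is parabolic with a unique fixed isotropic line $x_0$, so no element is proximal; here $\xi^+_\infty=\xi^-_\infty=x_0$ is a global fixed point and $\G\cdot\xi^+_\infty\subset V^-(\xi^-_\infty)$, which is exactly the configuration your argument cannot handle. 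So the passage from ``$\G$ contains a contracting sequence'' to ``$\G$ contains a proximal element'' requires input beyond discreteness and $\alpha_i$-divergence --- for instance Zariski density, or the transversality of the Anosov limit maps in the situations where the paper actually uses the lemma (there proximality of $\rho(\G)$ is in any case already supplied by Lemma~\ref{lem:ano_proxi}). The cited Benoist lemma furnishes only the contraction dynamics you derived in your first step, so this transversality issue is precisely the delicate point, and your proposal cannot be completed as written.
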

\begin{proof}
This is a direct consequence of \cite[Section~3.2, Lemme]{Benoist}.
\end{proof}
\begin{thm}\label{thm:dod_prox}
Let $\Gamma < G_F$ be a discrete subgroup, and let $i$ be $ 0$ or $1$. Assume that $\Gamma$ is $\alpha_i$-divergent. 
Let $\mathcal{L}_\Gamma < \mathcal{F}_i$ denote the limit set of $\Gamma$. 
Set 
\[ 
\Omega_\Gamma = \mathcal{F}_{1-i}  \moins K_{\mathcal{L}_\Gamma}.\]
Then $\Omega_\G \subset  \mathcal{F}_{1-i} $ is  a $\G$-invariant open subset. Moreover, $\Gamma$ acts properly discontinuously on $\Omega_\Gamma$. 
\end{thm}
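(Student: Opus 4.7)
The proof naturally splits into the two assertions. For openness and $\G$-invariance, the projection $\pi_{1-i}\colon\mathcal{F}_{01}\to \mathcal{F}_{1-i}$ is a continuous map between compact spaces, hence closed. Since $\mathcal{L}_\G$ is closed in $\mathcal{F}_i$ by definition, $\pi_i^{-1}(\mathcal{L}_\G)$ is closed in $\mathcal{F}_{01}$ and $K_{\mathcal{L}_\G}$ is closed in $\mathcal{F}_{1-i}$, so $\Omega_\G$ is open. Because $\mathcal{F}_{01}$ is $G_F$-equivariant and $\mathcal{L}_\G$ is $\G$-invariant, so is $\Omega_\G$.

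For proper discontinuity I would argue by contradiction: suppose there is a compact $C\subset\Omega_\G$ and infinitely many distinct $\g_n\in\G$ together with $P_n\in C$ such that $\g_n P_n\in C$. After extracting, $P_n\to P$ and $\g_n P_n\to P'$ in $C\subset\Omega_\G$, and discreteness of $\rho(\G)$ forces $\g_n\to\infty$ in $G_F$. Using the Cartan decomposition $\g_n=k_n\exp(\mu(\g_n))l_n$ with $k_n,l_n\in K$, extract once more so that $k_n\to k$ and $l_n\to l$ in $K$; by $\alpha_i$-divergence, $\alpha_i(\mu(\g_n))\to+\infty$.

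The decisive input, drawn from the asymptotic machinery of \cite{Benoist}, is that after a further subsequence $a_n:=\exp(\mu(\g_n))$ exhibits north--south dynamics on $\mathcal{F}_i$ with attractor $\bar x^+$ and repeller $\bar x^-$, and that for $n$ large enough $\g_n$ is itself $(r,\epsilon_n)$-proximal on $\mathcal{F}_i$ with $\epsilon_n\to 0$ and attracting fixed point close to $k_n\bar x^+$. Closedness of $\mathcal{L}_\G$ then gives $k\bar x^+\in\mathcal{L}_\G$, and applying the same reasoning to the sequence $(\g_n^{-1})$ (also $\alpha_i$-divergent, since $\iota$ fixes $\alpha_i$ in our setup) yields $l^{-1}\bar x^-\in\mathcal{L}_\G$. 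Transporting these dynamics through the incidence variety $\mathcal{F}_{01}$, one obtains corresponding north--south dynamics of $a_n$ on $\mathcal{F}_{1-i}$: every $Q\in\mathcal{F}_{1-i}\setminus K_{\{\bar x^-\}}$ is attracted toward $K_{\{\bar x^+\}}$, so $\g_n Q$ tends to a point of $K_{\{k\bar x^+\}}\subset K_{\mathcal{L}_\G}$ whenever $Q\notin K_{\{l^{-1}\bar x^-\}}$.

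Since $P\in\Omega_\G$ avoids $K_{\mathcal{L}_\G}$ and $l^{-1}\bar x^-\in\mathcal{L}_\G$, in particular $P\notin K_{\{l^{-1}\bar x^-\}}$; by the openness of this complement together with uniform convergence along $P_n\to P$, the limit $P'$ lies in $K_{\mathcal{L}_\G}$, contradicting $P'\in\Omega_\G$. The principal technical hurdle is pinning down the north--south dynamics on $\mathcal{F}_{1-i}$ with enough uniformity to handle a moving sequence $P_n\to P$ rather than a single base point; this requires marrying the singular-value estimates from the Cartan decomposition with the incidence geometry of $\mathcal{F}_{01}$, and refines Lemma~\ref{lem:divergent_proximal} in a spirit parallel to the proximality arguments already used in Sections~\ref{sec:proximality} and~\ref{sec:openness}.
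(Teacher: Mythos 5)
Your overall skeleton (argue by contradiction, extract Cartan data $\g_n=k_n\exp(\mu(\g_n))l_n$, use the divergence of $\alpha_i(\mu(\g_n))$ to push the compact set into $K_{\mathcal{L}_\G}$) matches the paper's strategy, but there is a genuine gap at the decisive step: the assertion that for large $n$ the elements $\g_n$ are themselves $(r,\epsilon_n)$-proximal on $\mathcal{F}_i$ with attracting fixed point near $k_n\bar x^+$. Divergence of $\alpha_i(\mu(\g_n))$ only gives a contraction statement: $\g_n$ maps the complement of a small neighborhood of the repelling variety attached to $l_n^{-1}\bar x^-$ into a small ball around $k_n\bar x^+$. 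To get an attracting \emph{fixed point} one needs in addition a uniform lower bound on the distance from $k_n\bar x^+$ to that repelling variety, i.e.\ transversality of the limiting Cartan data, and this is not implied by $\alpha_i$-divergence (powers of a unipotent have all root gaps of the Cartan projection tending to infinity, yet are never proximal). Since your only route to $k\bar x^+\in\mathcal{L}_\G$ and $l^{-1}\bar x^-\in\mathcal{L}_\G$ -- and $\mathcal{L}_\G$ is by definition the closure of attracting fixed points of \emph{proximal} elements -- passes through this unproved proximality, the two memberships are unjustified, and without them the final contradiction ($P\notin K_{\{l^{-1}\bar x^-\}}$, hence $P'\in K_{\mathcal{L}_\G}$) collapses. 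This is exactly the point where the paper's proof invokes Theorem~\ref{thm:proxi_AMS} (via Lemma~\ref{lem:divergent_proximal} and the Abels--Margulis--So\u{\i}fer Theorem~\ref{thm:AMS}): one replaces $\g_n$ by $s_n\g_n$ with $s_n$ in a fixed finite set $S\subset\G$, so that the modified elements are $(r,\epsilon)$-proximal with \emph{uniform} $r$; their fixed points are then honest points of $\mathcal{L}_\G$, their limits $x^\pm$ are transverse, and the contraction estimate is run for $s_n\g_n$ after enlarging the target compact to $\bigcup_{s\in S}sB$. You must either import that step or prove independently that Cartan-attracting limits of divergent sequences in $\G$ lie in $\mathcal{L}_\G$; some such input cannot be dispensed with.

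Two smaller points. First, "transporting the dynamics through $\mathcal{F}_{01}$" hides a needed computation: the repelling locus for the contraction on $\mathcal{F}_{1-i}$ coming from the weight-space estimates is $\{Q\in\mathcal{F}_{1-i}\mid Q\subset (l^{-1}\bar x^-)^{\perp_F}\}$, not a priori $K_{\{l^{-1}\bar x^-\}}$; the two coincide here only because of the isotropy constraint (a maximal isotropic inside $D^{\perp_F}$, $D$ an isotropic line, must contain $D$; an isotropic line inside $W^{\perp_F}$, $W$ maximal isotropic, must lie in $W$), and this deserves to be spelled out since for general flag varieties the repelling locus is strictly larger than the incidence locus. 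Second, the uniformity over the moving sequence $P_n\to P$ that you flag as the main technical hurdle is exactly what the paper's compactness estimate settles (the set $\bigcup_n l_ng_n^{-1}A\cup lA$ is compact and disjoint from the repelling set because $l_n$ preserves it and $A$ avoids it), so once the proximality gap is repaired -- most naturally by the AMS step -- your argument essentially becomes the paper's; the one genuine economy in your version, namely that $P\in\Omega_\G$ is automatically off $K_{\{l^{-1}\bar x^-\}}$ so you never need $k\bar x^+$ and $l^{-1}\bar x^-$ to be transverse, is real but minor, since the AMS step delivers that transversality anyway.
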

  
 \begin{proof}
We consider the case when $ \G < G_F$ is $\alpha_1$-divergent. 
The proof for the other case is entirely
analogous. We consider a point $x \in \mathcal{F}_1$ as a subspace
of $V$, in particular $\PP(x)\subset \mathcal{F}_0\subset \PP(V)$. 

Let $\mathcal{L}_\Gamma < \mathcal{F}_1$  be the limit set.  
We want to show the properness of the action of $\G$ on $\Omega_\G$, where 
\[\Omega_\G  = \mathcal{F}_0 \moins K_{\mathcal{L}_\Gamma}  = \Ff_0 \moins  \bigcup_{x \in \mathcal{L}_\Gamma} \PP(
x)\subset \PP(V).\]
We argue by contradiction. 

Suppose that there exist compact subsets $A$ and
$B$ of $\Omega_\G$ and a sequence $(\g_n)_{n \in \NN}$ in $\G$
such that: $\g_n \xrightarrow{n \to +\infty} \infty$, and for all $n$,
$\g_n A \cap B \neq \emptyset$. 

\begin{asparaenum}
\item By Theorem~\ref{thm:proxi_AMS} 
there is a finite set $S\subset\G$ such that, for all $n$, there is $s_n \in S$ such that $s_n \g_n$
is $(r,\epsilon)$-proximal relative to $\mathcal{F}_1$. Also
$s_n \g_n A \cap s_n B$ is nonempty. Hence, up to
replacing $(\g_n)$ by $(s_n \g_n)$ and $B$ by $\bigcup_{s \in S} s
B$, we can suppose that $\g_n$ is
$(r,\epsilon)$-proximal for all $n$.

\item Let $x^+_n, x^-_n \in \mathcal{L}_\Gamma$ be the attracting and repelling fixed points of $\g_n \in \G$. 
Up to extracting a subsequence we
can suppose that $x^\pm_n$ converge to $x^\pm\in \mathcal{L}_\Gamma$.
Since, for all $n$ the element $\g_n$ is
$(r,\epsilon)$-proximal we have, for all $n$, that $d(x^+_n,
V^-(x^-_n )) \geq r$, hence also $d(x^+, V^-(x^-)) \geq r$ (see
Section~\ref{sec:proximality} for notation). This shows that $x^+$ and $x^-$ are transverse.

\item Without loss of generality, we can assume
that $L=\stab( x^+, x^-)$  is the Levi component of $Q_1$, and that $x^+$ is the attracting fixed point of
 $\exp(a)$ when $a$ is in the Weyl chamber $\mathfrak{a}_L^+ \subset \mathfrak{a}$.
\item As $\lim_{n\to \infty} (x^{+}_{n}, x^{-}_{n})= 
(x^+,x^-)$ in $\mathcal{X} = G/L$,
there exists a sequence $(g_n)_{n\in \NN}$ in $G_F$ converging to $1$
and such that, for all
$n$, we have $(x^{+}_{n}, x^{-}_{n})= g_n
(x^+,x^-)$. Hence, for all $n$, $g_n 
\g_n g_{n}^{-1}$ fixes $(x^+,x^-)$ and hence belongs to $L$. 
We can thus write $g_n 
\g_n g_{n}^{-1}= k_n \exp(a_n) l_n$ with $a_n \in
\overline{\mathfrak{a}}_{L}^{+}$ and $k_n, l_n \in M$. Up to passing to a subsequence we
can assume that $(k_n)$ and $(l_n)$ converge to $k$ and $l$. 
Since $\G$ is $\alpha_1$-divergent we have that 
$(\alpha_1( a_n))$ tends to $+\infty$.

\item Now consider the set $\bigcup_{ n\in \NN} l_n g_{n}^{-1} A \cup l A$. It is a compact
subset of $\mathcal{F}_0 \moins \PP( x^-)$. Therefore there exists $\eta >0$ such that 
\[\bigcup_{ n\in \NN} l_n g_{n}^{-1} A \cup l A \subset B_\eta
:= \{ y \in \mathcal{F}_0 \mid d( y, \PP( x^-)) \geq \eta \}.\]
 
A simple calculation shows that, for all $\epsilon >0$,
there exists $R$ such that if $a \in \overline{ \mathfrak{a}_L}^+$
satisfies $\alpha_1( a ) \geq R$, then 
\[\exp(a) \cdot C \subset \{ y
\in \mathcal{ F}_0 \mid d(y, \PP(x^+)) \leq \epsilon \}.\] 
This implies 
that, for any sequence $(y_n)_{n\in \NN} $ in $A$, any accumulation point of
$(\exp(a_n) l_n g_{n}^{-1} y_n)$ is contained in $\PP(x^+)$. Since
$\lim_{n\to \infty} g_n =  1$ and since  $k = \lim_{n\to \infty} k_n $
stabilizes $\PP(x^+)$, also any  accumulation point of $\g_n
y_n = g_n k_n \exp(a_n) l_n g_{n}^{-1} y_n$ is contained in $\PP(x^+)$. 
\end{asparaenum}

Now we are ready to conclude. If $ \g_n A
\cap B \neq \emptyset$ for all $n$, then there exists an accumulation point of $\g_n
y_n$ which is contained in $B$. With the above this means in particular that $B \cap \PP(x^+)$  is nonempty. This contradicts the assumption that $B \subset \Omega_\G
\subset \mathcal{F}_0 \moins \PP( x^+)$.
 \end{proof}

  \begin{remarks}
  \noindent
\begin{asparaitem}
\item   In the special case when $G = \SO(2,n)$ Frances
  \cite{Frances_Lorentzian} constructed domains of discontinuity in
  $\mathcal{F}_0$ for discrete subgroups with special dynamical
  properties. In this case, $\alpha_1$-divergent groups in
  the sense of Definition~\ref{defi:kind} are called groups of the
  first type in Frances' paper, see
  \cite[Definition 4 and Proposition~6]{Frances_Lorentzian}. 
  
\item Benoist criterion for the properness of actions on homogeneous
  spaces \cite[Section~1.5, Proposition]{Benoist_properness} implies that a discrete subgroup $\G< \O(p,q)$ is $\alpha_1$-divergent if and only if $\G$ acts cocompactly on $\O(p,q)/\O(p-1,q)$; Proposition~6 in \cite{Frances_Lorentzian} is a special case of this. 
\end{asparaitem}
   \end{remarks}
  
  \subsection{Other Lie groups}\label{sec:first-reduction-step}
  
The following proposition allows to use the above construction of the domain of discontinuity to obtain domains of discontinuity for discrete subgroups of more general Lie groups.
\begin{prop}
  \label{prop:dod_in_smaller}
  Let $\G < G$ be a subgroup and $\phi: G \to G'$ be an injective homomorphism. 
  Suppose that $\mathcal{F}$ is a closed $G$-invariant
  subset of a $G'$-space $\mathcal{F}'$. Let $U' \subset \mathcal{F}'$
  be an open $\phi(\G)$-invariant subset such that $U= U' \cap \mathcal{F}$ is
  nonempty.
  \begin{enumerate}
  \item If $\G$ acts (via $\phi$) properly on $U'$ then $\G$ acts properly on $U$.
  \item If furthermore the quotient of $U'$ by $\G$ is compact, then the quotient of $U$ by $\G$ is also compact.
  \end{enumerate}
\end{prop}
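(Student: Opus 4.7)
My plan is to prove both parts by straightforward point-set topology, treating the proposition as a general fact about restricting a proper cocompact group action to a closed invariant subspace.

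For part~(i), the strategy is to apply the definition of properness in terms of compact sets. Let $K_1, K_2 \subset U$ be compact. Since $U$ carries the subspace topology from $U'$, these sets are also compact in $U'$. Properness of the $\G$-action on $U'$ then gives that
\[ \{\g \in \G \mid \phi(\g) K_1 \cap K_2 \neq \emptyset\} \]
is relatively compact in $\G$, and this set is computed identically for the action on $U$. Hence $\G$ acts properly on $U$.

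For part~(ii), the plan is to identify $\G \backslash U$ with a closed subspace of the compact Hausdorff space $\G \backslash U'$. First, $U = U' \cap \mathcal{F}$ is closed in $U'$ because $\mathcal{F}$ is closed in $\mathcal{F}'$. Let $\pi \colon U' \to \G \backslash U'$ be the quotient map. The $\G$-invariance of $U$ means that $U$ is $\pi$-saturated, i.e.\ $\pi^{-1}(\pi(U)) = U$, so $\pi(U)$ is closed in $\G \backslash U'$ and therefore compact.

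The only slightly subtle point is to check that $\pi(U)$, with the subspace topology from $\G \backslash U'$, is homeomorphic to the quotient space $\G \backslash U$. The plan is to use that $\pi$ is an open map (as is always the case for quotients by a group action, since saturations of open sets are unions of translates), and that its restriction to the saturated subset $U$ is therefore still a continuous open surjection onto $\pi(U)$, hence a quotient map. Since the fibers of $\pi|_U$ are exactly the $\G$-orbits in $U$, both $U \to \G \backslash U$ and $U \to \pi(U)$ are quotient maps for the same equivalence relation, so the induced continuous bijection $\G \backslash U \to \pi(U)$ is a homeomorphism. This transports compactness from $\pi(U)$ to $\G \backslash U$. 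I do not expect any real obstacle in this argument; the main point is the bookkeeping that ensures $U$ being saturated and closed makes both the ``closed image'' step and the ``same quotient topology'' step go through.
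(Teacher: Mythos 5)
Your argument is correct. Note that the paper itself states this proposition without proof, treating it as a standard point-set fact, so there is no "paper route" to compare against; your write-up simply supplies the omitted verification. Part (i) is exactly the obvious argument: compact subsets of $U$ are compact in $U'$ and the returning sets $\{\g \mid \phi(\g)K_1 \cap K_2 \neq \emptyset\}$ coincide. For part (ii), the one point that genuinely needs care is the identification $\G\backslash U \cong \pi(U)$, and you handle it correctly: since $U$ is saturated, for open $W \subset U'$ one has $\pi(W \cap U) = \pi(W)\cap \pi(U)$, so the restriction of the open quotient map $\pi$ to $U$ is again an open continuous surjection onto $\pi(U)$, hence a quotient map with fibers the $\G$-orbits, and closedness plus saturation of $U$ makes $\pi(U)$ closed, hence compact, in the compact space $\G\backslash U'$. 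An alternative standard argument would pick a compact set $C' \subset U'$ with $\G C' = U'$ and observe that $C' \cap U$ is compact with $\G(C'\cap U) = U$ (using $\G$-invariance of $U$); that version is shorter but requires local compactness of $U'$ to produce $C'$ (harmless here, since the relevant spaces are flag varieties), whereas your argument works with no such hypothesis. Either way, the proof is complete.
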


In  Section~\ref{sec:descr-doma-disc} we will use Proposition~\ref{prop:dod_in_smaller}
 to reduce the discussion of a
general Anosov representation to the case of a $Q_0$-Anosov
representation into a symplectic group or an orthogonal
group. We will also use this proposition in the applications
discussed in Sections~\ref{sec:compactify} and  \ref{sec:cliffordklein}.

\section{Anosov representations  into orthogonal or symplectic groups} \label{sec:orth-sympl-groups_anosov}
Here we apply the constructions of Section~\ref{sec:orth-sympl-groups}  in
order to obtain cocompact domains of discontinuity for Anosov
representations. 
We first describe the structure of the domain of discontinuity in more
detail and deduce the properness of the action. Then we introduce some reduction steps, which allow us to
simplify the proof for the compactness of the quotient.

\subsection{Structure of the domain of discontinuity}
\label{sec:defin-doma-results}  
Recall that given a  $Q_i$-Anosov representation $\rho: \G \to  G_F$,
the image $\rho(\G) < G_F$ is a discrete subgroup which is
(AMS)-proximal relative to $\mathcal{F}_i$. In particular, its limit
set $\mathcal{L}_{\rho(\G)}< \mathcal{F}_i$ is well defined and equals the
image of the Anosov map associated to $\rho$,
$\mathcal{L}_{\rho(\G)}=\xi(\partial_\infty \G)$.

\begin{prop}\label{prop:Omega}
Let $\rho: \G \to G_F$ be a $Q_i$-Anosov representation with associated Anosov 
map $\xi: \partial_\infty \G \to \mathcal{F}_i$.  
Set 
\[ \Omega_\rho := \mathcal{F}_{1-i} \moins K_{\xi( \partial_\infty
  \G)}.\]
Then: 
\begin{enumerate}
\item $\Omega_\rho$ is an open $\rho(\G)$-invariant subset of $\Ff_{1-i}$. 
\item The map $\pi_{1-i}: \pi_{i}^{-1}( \xi( \partial_\infty
  \G) ) \to K_{ \xi ( \partial_\infty \G)}$ is a homeomorphism. In particular, 
  \[K_{ \xi ( \partial_\infty \G)} \cong \pi_{i}^{-1}(
  \xi( \partial_\infty \G) ) \xrightarrow{\pi_i} \xi( \partial_\infty
  \G) \cong \partial_\infty \G\] 
  is a locally trivial bundle over
  $\partial_\infty \G$ whose fiber over a point $t$ is $\PP( \xi(t))$ when $i=1$,  and $\mathcal{F}_1( P^{\perp_F} / P)$ when $i=0$.
 \end{enumerate} 
  \end{prop}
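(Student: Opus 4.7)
My plan is to handle (1) by routine topological arguments and to derive (2) from the transversality built into the Anosov map. For (1), I would first observe that $\partial_\infty \G$ is compact and $\xi$ is continuous, so $\xi(\partial_\infty \G)$ is compact in $\Ff_i$. Since $\Ff_{01}$ is compact (as a closed subset of $\Ff_0 \times \Ff_1$) and $\pi_i$ is continuous, the preimage $\pi_i^{-1}(\xi(\partial_\infty \G))$ is compact, and hence so is its image $K_{\xi(\partial_\infty \G)} = \pi_{1-i}(\pi_i^{-1}(\xi(\partial_\infty \G)))$ under $\pi_{1-i}$. The set $\Omega_\rho$ is therefore open; its $\rho(\G)$-invariance follows at once from the $\rho$-equivariance of $\xi$ together with the $G$-equivariance of the projections $\pi_0, \pi_1$.

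For (2), the crucial step will be to establish injectivity of $\pi_{1-i}$ in restriction to $\pi_i^{-1}(\xi(\partial_\infty \G))$; once this is done, the map is a continuous bijection from a compact space onto the Hausdorff space $K_{\xi(\partial_\infty \G)}$, and hence a homeomorphism. To prove injectivity, I would argue by contradiction: assume $(D_1, P_1), (D_2, P_2) \in \pi_i^{-1}(\xi(\partial_\infty \G))$ have the same $\pi_{1-i}$-image, write $\pi_i(D_j, P_j) = \xi(t_j)$ for $j=1,2$, and suppose $t_1 \neq t_2$. By the Anosov property the pair $(\xi(t_1), \xi(t_2))$ is transverse, which for $G_F$-invariant forms translates into $\xi(t_1) \cap \xi(t_2)^{\perp_F} = \{0\}$. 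When $i=1$ one has $P_j = \xi(t_j)$ and the common line $D = D_1 = D_2$ satisfies $D \subset \xi(t_1) \cap \xi(t_2) \subset \xi(t_1) \cap \xi(t_2)^{\perp_F} = \{0\}$, a contradiction. When $i=0$ one has $D_j = \xi(t_j)$ and the common maximal isotropic $P = P_1 = P_2$ contains both; since $P$ is isotropic, $\xi(t_1) \subset P \subset P^{\perp_F} \subset \xi(t_2)^{\perp_F}$, again contradicting transversality.

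With the homeomorphism in hand, composing with $\pi_i$ exhibits the fibration $K_{\xi(\partial_\infty \G)} \to \xi(\partial_\infty \G) \cong \partial_\infty \G$, which is locally trivial because $\pi_i$ itself is. The fibers are then identified by elementary linear algebra: for $i=1$ the fiber over $t$ is the set of lines $D \subset \xi(t)$, each automatically isotropic, giving $\PP(\xi(t))$; for $i=0$ it is the set of maximal isotropic subspaces of $V$ containing the isotropic line $\xi(t)$, which via $P \mapsto P/\xi(t)$ corresponds bijectively to the space $\Ff_1(\xi(t)^{\perp_F}/\xi(t))$ of maximal isotropic subspaces of the non-degenerate quotient. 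The only subtlety worth flagging is the implicit appeal to the self-opposition of $Q_i$ so that the single Anosov map $\xi$ can play the role of both $\xi^+$ and $\xi^-$ in the transversality statement; this is automatic for the stabilizers $Q_0, Q_1$ of isotropic subspaces in $G_F$.
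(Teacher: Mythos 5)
Your proposal is correct and follows essentially the same route as the paper: the openness/invariance part is the routine compactness and equivariance argument the paper leaves as obvious, and your injectivity-of-$\pi_{1-i}$ argument via $P_1\cap P_2^{\perp_F}=\{0\}$ is exactly the paper's observation that transversality forces $K_{\xi(t_1)}\cap K_{\xi(t_2)}=\emptyset$, making $K_{\xi(\partial_\infty\G)}$ the disjoint union of the fibers. Your added details (compact-to-Hausdorff bijection, local triviality inherited from $\pi_i$, and the identification of the fibers) simply flesh out what the paper states without proof.
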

\begin{proof}
The first statement is obvious. 

For the second statement note that the map $\xi$ is injective, thus  $\partial_\infty \G \cong \xi( \partial_\infty \G)$. The transversality condition $P_1 \cap P_{2}^{\perp_F} = \{
  0\}$ implies $K_{P_1} \cap K_{P_2} = \emptyset$. Since $\xi$ is transverse this implies that $K_{\xi(\partial_\infty \G)}$
  is the disjoint union $\coprod_{t \in \partial_\infty \G}
  K_{\xi(t)}$.
  \end{proof} 

Recall that the cohomological dimension of a group $\G$ is the smallest $n$ such that
every cohomology group with coefficient in any $\G$-module vanishes in
degree $>n$. The virtual cohomological dimension $\vcd(\G)$ is the infimum of the
cohomological dimensions of finite index subgroups. Dimensions of
topological spaces in the following statements are also cohomological dimensions (for \v{C}ech
cohomology) or, what amounts to the same, covering dimensions.
In the next proposition,We will replace the dimension of
$\partial_\infty \Gamma$ by the virtual cohomological dimension of
$\Gamma$ thanks to the following result of Bestvina and
Mess. Formally, this replacement is not necessary in our proofs,
however it gives a hint why the quotient should be compact.

\begin{lem}\cite[Corollary~1.4]{Bestvina_Mess_Boundary}\label{lem:Bestvina_Mess}
Let $\G$ be a word hyperbolic group, then \[\dim \partial_\infty \G =
\vcd(\G)-1.\] 
\end{lem}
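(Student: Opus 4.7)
The plan is to identify the boundary $\partial_\infty \Gamma$ as the boundary of a Z-set compactification of a model for $E\Gamma$, and then to translate dimensions into cohomological data. Throughout, I may pass to a torsion-free finite index subgroup $\Gamma' < \Gamma$, since this affects neither the boundary (by the canonical identification $\partial_\infty \Gamma \cong \partial_\infty \Gamma'$) nor the virtual cohomological dimension. So assume $\Gamma$ is torsion-free hyperbolic.

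For $d$ large enough, the Rips complex $P_d(\Gamma)$ is a finite-dimensional contractible simplicial complex on which $\Gamma$ acts freely, properly, and cocompactly; hence it serves as a cocompact model for $E\Gamma$, and in particular $\vcd(\Gamma) \leq \dim P_d(\Gamma) < \infty$. I would then form the compactification $\overline{P_d(\Gamma)} = P_d(\Gamma) \sqcup \partial_\infty \Gamma$, equipped with the natural topology coming from Gromov's construction (sequences converge to boundary points if and only if they do so in the hyperbolic sense). The main technical verification here is that this compactification is a \emph{Z-set compactification}: one must construct, for every neighborhood $U$ of $\partial_\infty \Gamma$ in $\overline{P_d(\Gamma)}$, a homotopy of $\overline{P_d(\Gamma)}$ to itself that fixes $P_d(\Gamma) \setminus U$ pointwise and pushes $\partial_\infty \Gamma$ into $P_d(\Gamma)$. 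This is the main obstacle, and it uses $\delta$-hyperbolicity crucially to produce coarse retractions along geodesic rays that converge uniformly near infinity.

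Once the Z-set property is established, a Mayer--Vietoris / excision argument (comparing the cohomology of the pair $(\overline{P_d(\Gamma)}, \partial_\infty \Gamma)$ with compactly supported cohomology of $P_d(\Gamma)$) yields the key isomorphism
\[
  H^{n}(\Gamma; \ZZ\Gamma) \;\cong\; \check{H}^{n-1}(\partial_\infty \Gamma; \ZZ)
\]
for all $n \geq 1$. Since $\vcd(\Gamma)$ is characterized as the largest $n$ for which $H^{n}(\Gamma; \ZZ\Gamma) \neq 0$, the Čech cohomological dimension of $\partial_\infty \Gamma$ equals $\vcd(\Gamma) - 1$.

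Finally, to pass from cohomological dimension to covering dimension, I would invoke that $\partial_\infty \Gamma$ is a finite-dimensional compact metrizable space (finite-dimensionality comes, for instance, from the fact that it embeds in a finite product of balls in $\RR^N$ via the Gromov product construction on a finite generating set, or alternatively from the finite dimension of $P_d(\Gamma)$). For compact metric spaces of finite dimension the Čech cohomological dimension and the covering dimension coincide (Alexandroff's theorem), giving $\dim \partial_\infty \Gamma = \vcd(\Gamma) - 1$.
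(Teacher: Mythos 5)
The paper does not actually prove this lemma---it is quoted from Bestvina--Mess---and your sketch is essentially a reconstruction of their argument: Rips complex, $Z$-set compactification, and the isomorphism $H^{n}(\G;\ZZ\G)\cong\check{H}^{n-1}(\partial_\infty\G;\ZZ)$. Up to that point the outline is sound (for $n=1$ one needs reduced \v{C}ech cohomology, and passing to a torsion-free finite-index subgroup presupposes that $\G$ is virtually torsion-free, which is exactly the situation in which $\vcd(\G)$ is finite, so this is harmless for the statement as used).

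The genuine gap is the sentence deducing that ``the \v{C}ech cohomological dimension of $\partial_\infty\G$ equals $\vcd(\G)-1$.'' From the isomorphism you only get the lower bound: $\check{H}^{\vcd(\G)-1}(\partial_\infty\G)\neq 0$ forces $\dim\partial_\infty\G\geq\vcd(\G)-1$. The upper bound does not follow, because the cohomological dimension of a compactum $X$ is not the top degree in which $\check{H}^{*}(X)$ is nonzero: it is governed by the relative groups $\check{H}^{n}(X,A)\cong H^{n}_{c}(X\moins A)$ over all closed subsets $A\subset X$ (a closed $2$-disk has vanishing $\check{H}^{2}$ with any coefficients, yet has dimension $2$). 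So knowing $\check{H}^{n}(\partial_\infty\G;A)=0$ for all $n\geq\vcd(\G)$ and all coefficient groups $A$ is not enough; one must show $H^{n}_{c}(U;\ZZ)=0$ for every open $U\subset\partial_\infty\G$ and every $n\geq\vcd(\G)$. This is precisely the extra work in Bestvina--Mess: using the $Z$-set structure together with the $\G$-action (the quasi-self-similarity of the boundary) they promote a nonzero class in $H^{n}_{c}(U)$ for some open $U$ to a nonzero class in $H^{n+1}(\G;\ZZ\G)$, and that local-to-global step is what bounds the dimension from above. Without it your argument only yields $\vcd(\G)-1\leq\dim\partial_\infty\G\leq\dim P_d(\G)$, and the Alexandroff comparison of covering and cohomological dimension, which you invoke correctly, cannot substitute for it.
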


\begin{prop}\label{prop:dod_GF_codim}
 Let $\rho: \G \to G_F$ be a $Q_i$-Anosov representation and $i$ be
 $0$ or $1$. Let $\vcd(\G)$ be the virtual
  cohomological dimension of $\G$.  Set  $\delta = \dim
  \mathcal{F}_{1-i} - \dim K_{\xi(\partial_\infty \G)}$. Then
  \begin{enumerate}
  \item
    \begin{asparaitem}
    \item If $G_F= \O(p,q)$, $\U(p,q)$ or $\Sp(p,q)$ ($0< p\leq q$),
      then $\delta = q-\vcd(\G)$, $2q-\vcd(\G)$ or $4q-\vcd(\G)$ respectively.
    \item If $G_F=\O(2n, \CC)$ or $\O(2n-1, \CC)$ then $\delta = 2n
      -\vcd(\G)$.
    \item If $G_F=\Sp(2n, \RR)$ or $\Sp(2n, \CC)$ then $\delta =
      n+1-\vcd(\G)$ or $2n+2-\vcd(\G)$ respectively.
    \item If $G_F=\SO^*(2n)$ then $\delta =
      4n-2-\vcd(\G)$. 
    \end{asparaitem}
    \item If $\partial_\infty \G$ is a topological manifold and
      $\delta=0$, then $K_{\xi( \partial_\infty \G)} =
    \mathcal{F}_{1-i}$. In particular, in this case, $\Omega_\rho$ is empty.
\end{enumerate}
\end{prop}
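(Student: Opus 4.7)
The plan is a two-step argument resting on the fiber bundle description of $K_{\xi(\partial_\infty \G)}$ given in Proposition~\ref{prop:Omega}: a dimension count for part (1), followed by an invariance-of-domain argument for part (2).

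For part (1), I would first use Proposition~\ref{prop:Omega} to write $K_{\xi(\partial_\infty \G)}$ as the total space of a locally trivial bundle over $\xi(\partial_\infty\G) \cong \partial_\infty \G$, with explicit fibers $\PP(\xi(t))$ when $i=1$ and $\Ff_1(\xi(t)^{\perp_F}/\xi(t))$ when $i=0$. Additivity of covering dimension for such bundles gives
\[
\dim K_{\xi(\partial_\infty\G)} \;=\; \dim \partial_\infty \G + \dim(\mathrm{fiber}) \;=\; \bigl(\vcd(\G)-1\bigr) + \dim(\mathrm{fiber}),
\]
where the second equality is Lemma~\ref{lem:Bestvina_Mess}. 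I would then compute the dimension of the fiber and of $\Ff_{1-i}$ in each of the families listed, using the standard dimensions of $\KK$-projective spaces and of isotropic Grassmannians of the induced form on $\xi(t)^{\perp_F}/\xi(t)$; the case $\SO^*(2n)$ needs care because $F(v,v)$ lies in $\mathrm{Im}(\HH)$ and so cuts out a set of higher real codimension than the other families. Taking $\delta = \dim \Ff_{1-i} - \dim K$ in each case yields the announced formulas. Coherence between the $i=0$ and $i=1$ values of $\delta$ (where both are admissible) falls out of the double-fibration identity $\dim \Ff_0 + \dim \Ff_1(D^{\perp_F}/D) = \dim \Ff_1 + \dim \PP(\xi(t))$ obtained by computing $\dim \Ff_{01}$ via the two projections $\pi_0$, $\pi_1$.

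For part (2), assume $\delta = 0$ and $\partial_\infty \G$ is a topological manifold. The fibers of the bundle $K_{\xi(\partial_\infty \G)} \to \partial_\infty \G$ are smooth manifolds, so $K_{\xi(\partial_\infty \G)}$ is itself a topological manifold of dimension $\dim \Ff_{1-i}$. The inclusion $K_{\xi(\partial_\infty \G)} \hookrightarrow \Ff_{1-i}$ is a continuous injection from a compact Hausdorff space into a Hausdorff space, hence a topological embedding, and Brouwer's invariance of domain forces it to be an open map. On the other hand $K_{\xi(\partial_\infty \G)}$ is closed in $\Ff_{1-i}$, being the image of the compact set $\pi_i^{-1}(\xi(\partial_\infty \G))$ under the continuous map $\pi_{1-i}$. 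Hence it is clopen, and therefore a union of connected components of $\Ff_{1-i}$. When the relevant $\Ff_{1-i}$ is connected this immediately gives $K_{\xi(\partial_\infty\G)} = \Ff_{1-i}$; the only situation where $\Ff_{1-i}$ may be disconnected is type $D_n$ with $i=0$, where $\Ff_1$ has two components, but there the elementary observation that every isotropic line is contained in maximal isotropic subspaces of both orbits shows that $K$ meets both components, so the equality persists.

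The main obstacle will be the case-by-case dimension bookkeeping in part (1): the statement covers seven nearly-isomorphic families with different conventions for the type of $F$, the ground field $\KK$, the rank, and the projectivization, and one needs both values of $i$ to produce the same $\delta$ by different-looking computations. The topological argument in part (2) is then short.
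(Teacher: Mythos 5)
Your proposal is correct and follows essentially the same route as the paper: the fiber-bundle description of $K_{\xi(\partial_\infty\G)}$ from Proposition~\ref{prop:Omega}, dimension additivity together with Lemma~\ref{lem:Bestvina_Mess}, a case-by-case computation of $\dim\Ff_{1-i}$ and the fiber dimension for part (1), and for part (2) the observation that $K_{\xi(\partial_\infty\G)}$ is a closed topological manifold of full dimension, hence open by invariance of domain and therefore all of $\Ff_{1-i}$. Your extra check that $K$ meets both components when $\Ff_1$ is disconnected (type $D_n$) is a detail the paper leaves implicit, but it does not change the argument.
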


\begin{proof}
 By Proposition~\ref{prop:Omega} the dimension of $K_{\xi( \partial_\infty \G)}$ is:
\begin{align*}
  \dim K_{\xi(\partial_\infty \G)} &= \dim \pi_{i}^{-1} (
  \xi( \partial_\infty \G)) = \dim \pi_{i}^{-1}(\{P\}) + \dim
  \xi( \partial_\infty \G) \\
  &= \dim \pi_{i}^{-1}(\{P\}) + \dim \partial_\infty \G,
\end{align*}
and $\pi_{i}^{-1}(\{P\}) \cong \mathcal{F}_1( P^{\perp_F}/P)$ if $i=0$
and $\pi_{i}^{-1}(\{P\}) \cong \PP(P)$ if $i=1$. 
With Lemma~\ref{lem:Bestvina_Mess} we thus have 
\begin{align*}
  \delta &= \dim \mathcal{F}_1 - \dim \mathcal{F}_1( P^{\perp_F}/P) -\vcd(\G)
  +1, \text{ if } i=0,\\
  &= \dim \mathcal{F}_0 - \dim \PP(P) -\vcd(\G)+1, \text{ if } i=1.
\end{align*}
The first statement follows now by
calculating the dimensions
of the homogeneous spaces $\mathcal{F}_0$ and $\mathcal{F}_1$.

When $\partial_\infty \G$ is a topological manifold, then 
$K_{\xi( \partial_\infty \G)}$ is also a manifold (by
Proposition~\ref{prop:Omega}). If $\delta=0$ this implies 
that $K_{\xi( \partial_\infty \G)}$ is an open submanifold of
$\mathcal{F}_{1-i}$, since it is also closed, the equality
$K_{\xi( \partial_\infty \G)} =\mathcal{F}_{1-i}$ follows.
\end{proof}

\begin{remark}
The coincidence for the values of $\delta$ for $Q_0$ and $Q_1$-Anosov
representations is explained by the following 
observation: if $p_1: M \to M_1$ and $p_2: M \to M_2$ are two
submersions such that $p_1 \times p_2 : M \to M_1 \times M_2$ is an
immersion, then, for any $m_1\in M_1$ and $m_2\in M_2$, the
codimensions of $p_1( p_{2}^{-1}( m_2))$ in $M_1$ and of $p_2(
p_{1}^{-1}( m_1))$ in $M_2$ are equal to $\dim M_1 + \dim M_2 - \dim M$.
\end{remark}

\begin{remark}
\label{rem:examples_orth}
The control on the codimension of $K_{\xi(\partial_\infty \G)}$ given by
Proposition~\ref{prop:dod_GF_codim} allows to deduce nonemptyness for
many examples of Anosov representations. Here we give some examples
where the domain of discontinuity is empty; we come back to these examples in Section~\ref{sec:cliffordklein}. 
\begin{enumerate}
\item Let  $\iota:\G \hookrightarrow \SO(1,q)$ be a convex cocompact representation, i.e.\ $\iota$ is 
  $Q_0$-Anosov. 
  The composition of $\iota$ with embedding $\phi: \SO(1,q) \to
  \SO(1+p', q+q')$ gives a $Q_0$-Anosov representation $\phi \circ
  \iota$ (see Lemma~\ref{lem:red_spe}.(\ref{item1:lemredspe})). 
  When $i(\G)$ is a cocompact lattice in $\SO(1,q)$ and $\phi: \SO(1,q) \to \SO(p, q)$, $p
  \leq q$, the equality case of Proposition~\ref{prop:dod_GF_codim} is
  attained and  $\Omega_{ \phi \circ \iota} = \emptyset$.
\item 
 Let $\iota: \G \to G$ be a convex cocompact representation into  $\SU(1,n)$, $\Sp(1,n)$ or $G_\mathcal{O}$ (the isometry group of the
Cayley hyperbolic plane). If $G= \SU(1,n)$ consider the natural injection $\phi: \SU(1,n) \to
  \SO(2,2n)$ be the natural injection; if $G= \Sp(1,n)$ consider $\phi: \Sp(1,n)  \to \SO(4,4n)$; if $G$ is $G_\mathcal{O}$ consider $\phi: G \to \SO(8,8)$. In
  any case $\phi \circ \iota$ is $Q_1$-Anosov and $\Omega_{\phi \circ
  \iota}$ is empty when $\iota(\G)$ is a cocompact lattice. 
\end{enumerate}
\end{remark}

\begin{thm}\label{thm:dod_GF}
  Let $\rho: \G \to G_F$ be a $Q_i$-Anosov representation. 
  If $\Omega_\rho\subset \Ff_{1-i}(V)$ is nonempty (e.g.\ if  $\delta> 0$), then 
  \begin{enumerate}
  \item\label{dod_GF_i} $\G$ acts properly discontinuously on $\Omega_\rho$
  \item\label{dod_GF_ii} The quotient $\G\backslash \Omega_\rho$ is compact.
\end{enumerate}
 \end{thm}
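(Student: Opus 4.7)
My plan for Theorem~\ref{thm:dod_GF} is to establish parts (i) and (ii) separately, with rather different techniques.

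\textbf{Part (i): proper discontinuity.} The strategy is to verify the hypotheses of Theorem~\ref{thm:dod_prox} for $\Lambda = \rho(\G) < G_F$. Discreteness is provided by Theorem~\ref{thm:Ano_QIE}. The identification of the limit set $\mathcal{L}_{\rho(\G)}$ with $\xi(\partial_\infty\G)$ follows from Lemma~\ref{lem:ano_proxi}: for every non-torsion $\g\in\G$ the attracting fixed point of $\rho(\g)$ on $\Ff_i$ is $\xi(t^+_\g)$, the set $\{t^+_\g\}$ is dense in $\partial_\infty\G$, and $\xi$ is continuous. The essential remaining step is $\alpha_i$-divergence. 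Writing $Q_i = P_\Theta$ with $\Theta = \Delta\smallsetminus\{\alpha_i\}$, Proposition~\ref{prop:mu_contrac} yields a uniform linear bound $\alpha_i(\mu_{+,\Theta}(\hat m,t))\geq ct-C$ for $t\geq 0$. For a sequence $\g_n\in\G$ with $\rho(\g_n)\to\infty$, Theorem~\ref{thm:Ano_QIE} gives $\ell_\G(\g_n)\to\infty$; combining this with the quasi-isometry between $\G$ and $\flow$, and using Lemma~\ref{lem:mu_beta} with the compactness of $\G\backslash\flow$ to compare the $L$-Cartan projection with the full Cartan projection $\mu$ of $G_F$, one concludes $\alpha_i(\mu(\rho(\g_n)))\to\infty$. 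Theorem~\ref{thm:dod_prox} then applies.

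\textbf{Part (ii): cocompactness.} This is the main obstacle. The plan is to exploit the bundle structure of $K_{\xi(\partial_\infty\G)}$ over $\partial_\infty\G$ from Proposition~\ref{prop:Omega}, combined with a homological computation. First, the quotient $\G\backslash\Omega_\rho$ is a manifold of dimension $\dim\Ff_{1-i}$, since $\ker\rho$ is finite (Theorem~\ref{thm:Ano_QIE}) and the action is proper by part~(i). Second, one needs a local model near $K_{\xi(\partial_\infty\G)}$: for each $t\in\partial_\infty\G$ one constructs a neighborhood $V_t\subset\Ff_{1-i}$ of the fiber $K_{\xi(t)}$ such that $V_t\cap\Omega_\rho$ has manageable topology. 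By compactness of $\partial_\infty\G$, finitely many translates $V_{t_k}$ suffice to cover a neighborhood of $K_{\xi(\partial_\infty\G)}$; the complement $\Omega_\rho\smallsetminus\bigcup_k V_{t_k}$ is then a compact subset of $\Omega_\rho$ (being closed inside the compact $\Ff_{1-i}$), and its $\G$-quotient is compact by proper discontinuity.

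The remaining and hard step is to show that the ``collar'' $\bigcup_k(V_{t_k}\cap\Omega_\rho)$ also has compact quotient; this cannot be read off from proper discontinuity alone, and is where the homological input enters. My plan is to compute $H^*_c(\Omega_\rho)$ via the long exact sequence of the pair $(\Ff_{1-i}, K_{\xi(\partial_\infty\G)})$, using the Leray spectral sequence of the bundle $K_{\xi(\partial_\infty\G)}\to\partial_\infty\G$ (whose fiber over $t$ was identified in Proposition~\ref{prop:Omega}). Passing to the $\G$-quotient with a further spectral sequence and combining Lemma~\ref{lem:Bestvina_Mess} with the codimension computation of Proposition~\ref{prop:dod_GF_codim}, one should obtain that $H^*(\G\backslash\Omega_\rho)$ satisfies Poincar\'e duality in dimension $\dim\Ff_{1-i}$, forcing the manifold $\G\backslash\Omega_\rho$ to be closed. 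The main technical challenge is the $\G$-equivariant control of the collar neighborhood that is needed to make this spectral sequence argument effective; this is where I expect the bulk of the work to lie.
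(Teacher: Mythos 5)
Your part~(i) is essentially the paper's argument: discreteness and the identification $\mathcal{L}_{\rho(\G)}=\xi(\partial_\infty\G)$ via Lemma~\ref{lem:ano_proxi}, and $\alpha_i$-divergence extracted from the linear lower bound of Proposition~\ref{prop:mu_contrac} (this is exactly Lemma~\ref{lem:anosov_kind}), after which Theorem~\ref{thm:dod_prox} (whose proof already contains the (AMS)-proximality input of Theorem~\ref{thm:Ano_AMS}) gives proper discontinuity. No objection there, beyond the small point that passing from the $L_\Theta$-Cartan projection to the Cartan projection of $G_F$ deserves a sentence.

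Part~(ii) identifies the right target --- detect compactness of the open manifold $\G\backslash\Omega_\rho$ by nonvanishing of its top-dimensional homology, equivalently $H^0_c\neq 0$ --- but as written it has a genuine gap, and the collar construction is a detour that does not help. First, the collar: once you know $H_l(\G\backslash\Omega_\rho)\neq 0$ you are done, and conversely the collar decomposition gives you nothing towards that nonvanishing; moreover your preliminary claim that $\G\backslash\Omega_\rho$ is a manifold ``since $\ker\rho$ is finite and the action is proper'' is false as stated (torsion in $\G$ produces orbifold points; the paper passes to a torsion-free finite-index subgroup, using Corollary~\ref{cor:finite_index}, and to orientation covers $\Ff^{or}_{1-i}$). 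Second, and more seriously, nothing in your plan explains why the spectral-sequence computation produces a \emph{nonzero} answer in the top degree; it could just as well give $0$, and indeed it does give $0$ when $\Omega_\rho=\emptyset$ is replaced by a too-small domain. The paper's computation needs two devices that are absent from your proposal: (a) the stabilization of Lemma~\ref{lem:red_spe} together with Proposition~\ref{prop:dod_in_smaller}, which replaces $(V,F)$ by a larger space so that $\min(p-2,q-p)>\max(m+1,\vcd(\G))$; this makes $\Omega$ connected and makes $\Ff^{or}_{1-i}$ and the fibers of $K^{or}_{\xi(\partial_\infty\G)}\to\partial_\infty\G$ highly connected, which is exactly what lets the relevant maps be isomorphisms in degrees $\leq m+1$ --- without it the Leray/long-exact-sequence comparison fails for small $p,q$; and (b) an auxiliary contractible manifold on which $\G$ acts (the unit tangent bundle picture when $\G=\pi_1(N)$, and in general a regular neighborhood $\tilde U$ of an embedded Rips complex, with $\tilde U\cup\partial_\infty\G$ contractible by Bestvina--Mess), which permits the chain of isomorphisms
\[
H_l(\G\backslash\Omega)\cong H^{m}_{c}\bigl(\G\backslash(\tilde U\times\Omega)\bigr)^*\cong H^{m}_{c}\bigl(\G\backslash(\tilde U\times\mathring{\tilde U})\bigr)^*\cong H_m(U,\partial U)\cong\RR,
\]
obtained by comparing the long exact sequence of the pair $(\G\backslash(\tilde U\times\Ff),\,$ neighborhoods of $\G\backslash(\tilde U\times K))$ with the model sequence in which $(\Omega,K,\Ff)$ is replaced by $(\mathring{\tilde U},\partial_\infty\G,\tilde U\cup\partial_\infty\G)$. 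It is this comparison with a Poincar\'e-duality pair built from the group itself that forces the nonvanishing; your outline, which works only with the bundle $K\to\partial_\infty\G$ inside $\Ff_{1-i}$ and the constants $\vcd(\G)$, $\delta$, has no source for a fundamental class and so cannot conclude.
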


The proof of statement \eqref{dod_GF_i} is a direct consequence of
Theorem~\ref{thm:dod_prox}, Theorem~\ref{thm:Ano_AMS}, and the following lemma. 

\begin{lem}\label{lem:anosov_kind}
Let $\rho: \G \to G_F$ be a $Q_i$-Anosov representation, then $\rho(\G)$ is $\alpha_i$-divergent. 
\end{lem}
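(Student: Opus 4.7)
The plan is to deduce the lemma from the quantitative contraction properties already established, in particular Proposition~\ref{prop:mu_contrac} and Theorem~\ref{thm:Ano_QIE}. Unfolding Definition~\ref{defi:kind}, what must be shown is: for any sequence $(\gamma_n)$ in $\Gamma$ with $\rho(\gamma_n) \to \infty$ in $G_F$, some subsequence satisfies $\alpha_i(\mu(\rho(\gamma_n))) \to \infty$. Since $\ker \rho$ is finite by Theorem~\ref{thm:Ano_QIE}, the hypothesis $\rho(\gamma_n) \to \infty$ forces $\ell_\Gamma(\gamma_n) \to \infty$.

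I would then transfer the growth of the $L_\Theta$-Cartan projection along the flow to a growth of the classical Cartan projection of $\rho(\gamma_n)$. Fix a base point $\hat m_0 \in \flow$ and a lift $\hat\beta: \flow \to G/M_\Theta$ of the Anosov section, where $\Theta = \Delta \moins \{\alpha_i\}$ so that $Q_i = P_\Theta$. Because $\flow$ is a proper hyperbolic metric space on which $\RR$-orbits are quasi-geodesics and on which $\Gamma$ acts by quasi-isometries, for each $\gamma_n$ there will exist $\hat m_n \in \flow$ and $T_n \geq c_1 \ell_\Gamma(\gamma_n) - C_1$ such that $\hat m_n$ and $\phi_{T_n} \hat m_n$ lie within some fixed distance $R$ of $\hat m_0$ and $\gamma_n \hat m_0$ respectively. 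Proposition~\ref{prop:mu_contrac}, applied with $\Delta \moins \Theta = \{\alpha_i\}$, would then yield
\[
\alpha_i(\mu_{+,\Theta}(\hat m_n, T_n)) \geq c_2 T_n - C_2 \longrightarrow \infty.
\]

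The final step would be to pass from the $L_\Theta$-Cartan projection to $\mu(\rho(\gamma_n))$. The equivariance of $\hat\beta$, together with its uniform continuity (a consequence of the compactness of $\Gamma\backslash\flow$), implies that the pair $(\hat\beta(\hat m_n), \hat\beta(\phi_{T_n}\hat m_n))$ lies within bounded distance in $(G/M_\Theta)^2$ of the pair $(\hat\beta(\hat m_0), \rho(\gamma_n)\hat\beta(\hat m_0))$. Writing $\hat\beta(\hat m_0) = h_0 M_\Theta$, this produces a decomposition $\rho(\gamma_n) = h_0 \ell_n h_0^{-1} b_n$, where $\ell_n \in L_\Theta$ has $L_\Theta$-Cartan projection controlled by $\mu_{+,\Theta}(\hat m_n, T_n)$ (up to the opposition involution of $L_\Theta$) and $b_n$ lies in a fixed bounded subset of $G$. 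By Remark~\ref{rem:in_a_plus} the vector $\mu_{+,\Theta}(\hat m_n, T_n)$ lies in $\overline{\mathfrak{a}}^+$ for $T_n$ large, so the standard Lipschitz property of the Cartan projection under multiplication by bounded elements of $G$ gives $\alpha_i(\mu(\rho(\gamma_n))) \to \infty$, using Lemma~\ref{lem:ThetaAn} to handle the opposition involution when necessary. The main obstacle will be the careful bookkeeping between the $L_\Theta$-valued and $G$-valued Cartan projections; once that is in place the growth of $\alpha_i$ transfers immediately.
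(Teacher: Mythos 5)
Your argument is correct and is essentially the paper's: the proof given there is a one-line appeal to Proposition~\ref{prop:mu_contrac}, and what you write out is exactly the implicit transfer of the linear growth of $\alpha_i$ on the $L_\Theta$-Cartan projection along flow lines to the Cartan projection $\mu(\rho(\gamma))$, using the same coarse comparison between $\hat\beta$ and the $\Gamma$-orbit that underlies the proof of Theorem~\ref{thm:Ano_QIE}. The remaining points you flag (sign of $T_n$, the opposition involution, absorbing bounded factors on both sides when estimating $\mu$) are routine bookkeeping and do not affect the argument.
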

\begin{proof}
 This is a direct application of Proposition~\ref{prop:mu_contrac}, which gives control on the contraction rate  in terms of the $L$-Cartan projections. 
\end{proof}

 The proof of statement \eqref{dod_GF_ii} is deferred to
Section~\ref{sec:compacity}.

\subsection{Reduction steps}\label{sec:reduction}\label{sec:spec-reduct-steps}
Before we turn to the proof of compactness of the quotient
$\G \backslash \Omega_\rho$ we introduce some reduction steps, which will
allow us to restrict our attention mainly to $Q_i$-Anosov
representations into $\O(p,q)$.

\begin{lem}
  \label{lem:red_spe}
  Let $(V,F)$ be a vector space with a sesquilinear form that satisfies 
  the conditions of Section~\ref{sec:defin-doma-results}.
  Let $\rho: \G
  \to G_F$ be a representation.
  \begin{enumerate}
  \item\label{item1:lemredspe} Let $(V',F')$ be of the same type as $(V,F)$. Then the
    injection $V\to V \oplus V'$ induces a  homomorphism $\phi: G_F \to
    G_{F+F'}$. If $\rho$ is $Q_0$-Anosov then $\phi \circ \rho$ is
    $Q_0$-Anosov.
    
    Let $(V,F)$ be a complex orthogonal space of dimension $2n-1$, and
    $(V', F')$ a complex orthogonal space of dimension one, then if
    $\rho$ is $Q_1$-Anosov, then $\phi \circ \rho$ is
    $Q_1$-Anosov.

  \item\label{item2:lemredspe} Let $k$ a positive integer. Consider the vector space $V^k$, which is endowed
    with the form $F^k$, and let $\phi: G_F
    \to G_{F^k}$ be the diagonal embedding. If $\rho$ is $Q_1$-Anosov then $\phi \circ \rho$ is
    $Q_1$-Anosov.
  \item\label{item3:lemredspe} Suppose $F$ is skew-symmetric (i.e.\ $G_F = \Sp(2n,\RR)$ or
    $\Sp(2n,\CC)$) then the form $F\otimes F$ on $V\otimes V$ is
    symmetric. Let $\phi: G_F \to G_{F\otimes F}$ the corresponding
    homomorphism. If $\rho$ is $Q_i$-Anosov then $\phi \circ \rho$ is
    $Q_i$-Anosov. 
      \item\label{item4:lemredspe} If $F$ is Hermitian, $\Re F$ is a non-degenerate 
    symmetric 
    bilinear form on $V_\RR$ the real space underlying
    $V$. Hence there is a homomorphism $\phi: G_F \to G_{\Re F}$. If
    $\rho$ is $Q_1$-Anosov then $\phi \circ \rho$ is $Q_1$-Anosov.
  \end{enumerate}
  
  In all of the above cases,  if $\rho$ is $Q_i$-Anosov, one has 
  \[\Omega_\rho =
  \mathcal{F}_{1-i}(V) \cap \Omega_{ \phi \circ \rho}.\]
\end{lem}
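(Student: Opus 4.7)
My plan is to treat the four claims (i)--(iv) uniformly and then derive the final equality as a formal corollary. In each part, I will exhibit a natural $G_F$-equivariant inclusion $\psi^i : \mathcal{F}_i(V) \hookrightarrow \mathcal{F}_i(V')$ induced by $\phi$ on the relevant flag varieties, verify that $\psi^i \circ \xi^\pm$ is a compatible pair (Definition~\ref{defi:compat}) of continuous equivariant maps for $\phi \circ \rho$, and then invoke Proposition~\ref{prop:injLieGr} to conclude that $\phi \circ \rho$ is $Q_i$-Anosov with Anosov map $\psi^i \circ \xi^\pm$.

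Concretely, one takes $\psi^0(D) = D$ for the $Q_0$ part of (i) (the $Q_1$ complex-orthogonal half is addressed as the main subtlety below), $\psi^1(P) = P^k \subset V^k$ for (ii), $\psi^0(D) = D \otimes D$ or $\psi^1(L) = L \otimes V$ inside $V \otimes V$ for (iii), and $\psi^1(P) = P_\RR \subset V_\RR$ for (iv). Compatibility (preservation of both the transverse and the singular relation) is a short direct check: $(F \oplus F')|_V = F$ gives (i); the identity $(P_1 \cap P_2^{\perp_F})^k = P_1^k \cap (P_2^k)^{\perp_{F^k}}$ gives (ii); $(F \otimes F)(d_1 \otimes d_1, d_2 \otimes d_2) = F(d_1,d_2)^2$ together with $(L_1 \otimes V) \cap (L_2 \otimes V)^{\perp_{F \otimes F}} = (L_1 \cap L_2^{\perp_F}) \otimes V$ gives (iii); and the fact that $\Re F$ vanishes on any $\CC$-isotropic subspace, with matching real dimensions, gives (iv). The Anosov conclusion via Proposition~\ref{prop:injLieGr} then reduces to checking that $\phi_*$ carries $\overline{\mathfrak{a}}^+ \moins \ker(\alpha_i)$ into $W' \cdot (\overline{\mathfrak{a}}^{\prime +} \moins \ker(\alpha_i'))$, a short root-theoretic computation from the explicit Cartan embeddings (padding by zeros, diagonal embedding, doubling of eigenvalues under tensor squaring, restriction of scalars) and the explicit formulas for $\alpha_0, \alpha_1$ recalled earlier.

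Granting all four claims, the final equality $\Omega_\rho = \mathcal{F}_{1-i}(V) \cap \Omega_{\phi \circ \rho}$ follows almost formally. By construction the Anosov map of $\phi \circ \rho$ is $\psi^i \circ \xi$, hence $\Omega_{\phi \circ \rho} = \mathcal{F}_{1-i}(V') \moins K_{(\psi^i \circ \xi)(\partial_\infty \G)}$. The incidence relation $D \subset P$ defining the sets $K_\bullet$ transports faithfully along $\mathcal{F}_\bullet(V) \hookrightarrow \mathcal{F}_\bullet(V')$ in each of the four cases, so intersecting with $\mathcal{F}_{1-i}(V)$ yields $K_{(\psi^i \circ \xi)(\partial_\infty \G)} \cap \mathcal{F}_{1-i}(V) = K_{\xi(\partial_\infty \G)}$ and hence the claimed identity.

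The main technical subtlety lies in the complex-orthogonal part of (i), where $(V,F)$ has type $B_{n-1}$ and $V \oplus V'$ has type $D_n$: a maximal isotropic $P \subset V$ of dimension $n-1$ does not canonically extend to a maximal isotropic of dimension $n$ in $V \oplus V'$, because the $2$-dimensional non-degenerate complex orthogonal space $(P^\perp/P) \oplus V'$ carries exactly two isotropic lines, yielding two canonical extensions $P^\pm$. I will resolve this by passing to an index-$2$ subgroup (Corollary~\ref{cor:finite_index}) that trivialises the $\ZZ/2$-ambiguity and allows a continuous equivariant choice, after which the argument reduces to the uniform treatment above.
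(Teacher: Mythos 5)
Your route is the same as the paper's: the Anosov assertions are reduced to Proposition~\ref{prop:injLieGr} by checking its Weyl--chamber hypothesis on $\phi_*$ for the explicit Cartan embeddings, the Anosov maps of $\phi\circ\rho$ are identified with the images of $\xi^\pm$ under the induced maps of flag varieties, and the final identity is read off from the transport of the incidence relation; the paper's own proof is exactly this, stated in one line and worked out only for case~(iii) with $i=0$. One clarification: the compatibility check you propose is not the hypothesis of Proposition~\ref{prop:injLieGr}, and on its own it would prove nothing here, since $\phi\circ\rho$ is neither irreducible nor Zariski dense, so Proposition~\ref{prop:Comp_Maps} and Theorem~\ref{thm:Ano_Zd} are unavailable; it is the condition on $\phi_*$ that carries the argument (which you do verify), and Proposition~\ref{prop:injLieGr} then already names the Anosov maps. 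Your treatment of the odd-to-even complex orthogonal case in~(i), via the two extensions of a maximal isotropic and Corollary~\ref{cor:finite_index}, is more careful than the paper, which says nothing about this point.

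The one step that fails as written is the blanket claim that incidence ``transports faithfully along $\mathcal{F}_\bullet(V)\hookrightarrow\mathcal{F}_\bullet(V')$ in each of the four cases''. In case~(i) with $i=0$ there is no map $\mathcal{F}_1(V)\to\mathcal{F}_1(V\oplus V')$ induced by $V\subset V\oplus V'$: a maximal $F$-isotropic subspace of $V$ is not maximal in $V\oplus V'$. You need an auxiliary choice, e.g.\ fix a maximal isotropic $T'\subset V'$ and embed $P\mapsto P\oplus T'$; this is $G_F$-equivariant (as $G_F$ acts trivially on $V'$), has closed image, and satisfies $D\subset P\oplus T'$ if and only if $D\subset P$ for isotropic lines $D\subset V$, after which the equality (interpreted through this embedding) and Proposition~\ref{prop:dod_in_smaller} go through. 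In case~(iv) an $F$-isotropic $\KK$-line is not a real line, so $\mathcal{F}_0(V)$ does not sit inside $\mathcal{F}_0(V_\RR)$ at all; the correct statement is that $\Omega_{\phi\circ\rho}$ is the full preimage of $\Omega_\rho$ under the proper $G_F$-equivariant map $\mathcal{F}_0(V_\RR)\to\mathcal{F}_0(V)$, $\ell\mapsto\ell\cdot\KK$ (well defined because $\Re F(x,x)=0$ forces $F(x,x)=0$, and $\ell\subset P_\RR$ if and only if $\ell\KK\subset P$), from which properness and cocompactness of the $\G$-action on $\Omega_\rho$ follow. These are small repairs, and the paper is equally silent about them, but your write-up should not assert inclusions in the two cases where they do not exist.
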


\begin{proof}
  The cases \eqref{item1:lemredspe} through \eqref{item4:lemredspe} follow 
  from the general proposition~\ref{prop:injLieGr}.  In order to illustrate the ideas we give a direct proof 
  for \eqref{item3:lemredspe} and $\rho$ being a $Q_0$-Anosov representation. Consider the $\phi$-equivariant maps 
  \[\phi_0: \mathcal{F}_0(V) \to \mathcal{F}_0(V \otimes V), \ D \mapsto D \otimes D\] 
  and 
  \[\phi_1: \mathcal{F}_1(V) \to
  \mathcal{F}_1(V \otimes V), \ L \mapsto L\otimes V.\] 
 
 If $D, D'\in \mathcal{F}_0(V)$ are transverse then $\phi_0(D)$ and
  $\phi_0( D')$ are also transverse: indeed $D^\perp \oplus D' = V$
  implies   $V\otimes V = (D
  \otimes D)^\perp \oplus D' \otimes D'$. 
  So if $\xi: \partial_\infty \G \to
  \mathcal{F}_0(V)$ is transverse,  then
  $\phi_0 \circ \xi$ is transverse (see Definition~\ref{defi:transmap}). Concerning the contraction
  property required for an Anosov representation (see
  Definition~\ref{defi:ARhyp}) it is
  enough to remark that if a diagonal element $g$ in $G_F$ fixes $D$ in
  $\mathcal{F}_0(V)$ and contracts $T_D \mathcal{F}_0(V)$ then $\phi(g)$
  contracts $T_{\phi_0(D)} \mathcal{F}_0(V \otimes V)$. 

  The equality $\Omega_\rho = \mathcal{F}_1(V) \cap \Omega_{\phi \circ
    \rho}$ results from the fact that, for $D\in \mathcal{F}_0(V)$ and
  $L\in \mathcal{F}_1(V)$, $D \subset L$ if and only if $D\otimes D
  \subset L \otimes V$.
\end{proof}

\begin{remarks}
\noindent
\begin{asparaenum}[(a)]
\item Due to this lemma the proof of Proposition~\ref{prop:dod_GF_codim} reduces in many cases 
to the case of orthogonal groups $\O(p,q)$. 
However, this reduction does not seem to work for $Q_0$-Anosov representations into $G_F(V)= \O(n, \CC)$,
$\U(p,q)$ or $\Sp(p,q)$. Indeed in these cases one would consider the
orthogonal space $W = \bigwedge^{2}_{\RR} V$ (or $\bigwedge^{4}_{\RR} V$)
and the corresponding embedding $\phi: G_F \to G_F(W)$ sends
$Q_0$-Anosov representations to $Q_0$-Anosov representations. Yet
there is no corresponding embedding of $\mathcal{F}_1(V)$ into
$\mathcal{F}_1(W)$, so that the final conclusion of the lemma does not
hold.

\item Cases \eqref{item1:lemredspe} and \eqref{item2:lemredspe} of the above
lemma together with the formula for the codimension given in
Theorem~\ref{thm:dod_GF} will allow us to assume that the open
$\Omega_\rho$ has high connectedness properties when proving the
compactness of $\Gamma \backslash \Omega_\rho$.
\end{asparaenum}
\end{remarks}

\subsection{Compactness}
\label{sec:compacity}

In this section we prove the compactness of $\G \backslash \Omega_\rho$, claimed in 
Theorem~\ref{thm:dod_GF}.\eqref{dod_GF_ii}.

For clarity of the exposition we will suppose in the following that $G_F =
\O(p,q)$. Applying Lemma~\ref{lem:red_spe} and Proposition~\ref{prop:dod_in_smaller} this proves Theorem~\ref{thm:dod_GF} in all cases
except for $Q_0$-Anosov representations into $\O(n,\CC)$, $\U(p,q)$ or
$\Sp(p,q)$. In the remaining cases the proof is a straightforward adaptation of the arguments presented here. 

\subsubsection{Homological formulation}
\label{sec:some-preparation-1}

Let $\rho: \G \to G_F =
\O(p,q)$ be a $Q_i$-Anosov representation. 
In view of Lemma~\ref{lem:red_spe} we can assume without loss of
generality that  $\min(p-2,q-p)> \max(m+1, \vcd(\G) )$,
for some $m\in \NN$ which will be fixed later. 

 Furthermore, up to passing to a finite index subgroup, we can assume
 that $\G$ is torsion-free, and that $\rho$ is injective.

To simplify some of the cohomological arguments we consider the following 
$2$-fold covers of $\mathcal{F}_0$ and $\mathcal{F}_1$:
\begin{align*}
  \mathcal{F}_{0}^{or} &= \{D \in \Sphr(V) \mid D \text{ oriented line
    with } F|_D = 0 \} \cong \Sphr^{p-1}\times \Sphr^{q-1}\\
  \mathcal{F}_{1}^{or} &= \{P \in \Gr_{p}^{or}(V) \mid P \text{
    oriented }p\text{-plane with } F|_P = 0 \} \cong \SO(q)/ \SO(q-p).
\end{align*}

The homogeneous spaces $\mathcal{F}_{i}^{or}$ are
$\min(p-2,q-p)$-connected. The lift
$\Omega^{or}_{\rho}$ of $\Omega_\rho$ to $\mathcal{F}^{or}_{1-i}$ is a
$2$-fold cover of $\Omega_\rho$ and its complement
$K^{or}_{\xi (\partial_\infty \G)}$ fibers over $\partial_\infty \G$
with fibers isomorphic to $\Sphr^{p-1}$ if $i=0$, and to
$\SO(q-1)/\SO(q-p)$ if $i=1$; in both case the fibers are also 
$\min(p-2,q-p)$-connected.

Up to passing to a finite index subgroup we
suppose that $\G$ preserves the orientation on $\Omega^{or}_{\rho}$
hence $\G \backslash \Omega^{or}_{\rho}$ is an oriented manifold.

For the rest of this section we  denote 
$\Omega^{or}_{\rho}$, $K^{or}_{\xi (\partial_\infty
  \G)}$ and $\mathcal{F}_{1-i}^{or}$ by $\Omega$, $K$ and $\mathcal{F}$ respectively. 

Let $l= \dim \Omega =
\dim \mathcal{F}$. By our assumption $q-p>\vcd(\Gamma)$, and hence  $\delta= \mathrm{codim} K =
q-\vcd(\G) \geq 2$, and  $\Omega$ is connected. 
Therefore $\G \backslash \Omega$ is a $l$-dimensional connected
oriented manifold. Thus $\G \backslash \Omega$ is compact if and only if 
 $H^{0}_{c}( \G\backslash \Omega)$ (cohomology with compact
support and coefficients in $\RR$) is nonzero.
By Poincar\'e
duality this is equivalent to the top-dimensional homology group $H_l( \G \backslash \Omega) =
H^{0}_{c}( \G\backslash \Omega)^*$ being nonzero. 

Therefore Theorem~\ref{thm:dod_GF}.\eqref{dod_GF_ii} follows from the following 
\begin{prop}\label{prop:hom_nonzero}
With the notation above 
\[H_l( \G \backslash \Omega)\cong \RR.\]
\end{prop}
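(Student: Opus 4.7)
The plan is to compute $H_l(\Gamma\backslash\Omega)$ via the Cartan--Leray spectral sequence for the free, properly discontinuous $\Gamma$-cover $\Omega\to\Gamma\backslash\Omega$:
\[ E^{2}_{p,q} = H_p(\Gamma;H_q(\Omega)) \Longrightarrow H_{p+q}(\Gamma\backslash\Omega), \]
and to identify on the diagonal $p+q=l$ a single surviving $E^{\infty}$-entry equal to $\RR$. By Poincar\'e--Lefschetz duality, this would also establish compactness of $\Gamma\backslash\Omega$, the equivalent formulation already noted in the excerpt.

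The first step is to compute $H_*(\Omega)$ as a $\Gamma$-module. Poincar\'e duality on the open oriented manifold $\Omega$ gives $H_q(\Omega)\cong H^{l-q}_c(\Omega)\cong H^{l-q}(\mathcal{F},K)$, and the long exact sequence of the pair $(\mathcal{F},K)$, combined with the high connectivity of $\mathcal{F}$ guaranteed by the assumption $\min(p-2,q-p)>\max(m+1,\vcd(\Gamma))$, yields isomorphisms $H^{k}(\mathcal{F},K)\cong H^{k-1}(K)$ in the relevant range. Next, by Proposition~\ref{prop:Omega} the map $\pi_i:K\to\partial_\infty\Gamma$ is a locally trivial fibration whose fibers $\PP(\xi(t))$ or $\mathcal{F}_1(\xi(t)^{\perp_F}/\xi(t))$, lifted to the $2$-fold cover, are also $(m+1)$-connected, so the Leray--Serre spectral sequence collapses in low degrees and gives $H^{k}(K)\cong\check{H}^{k}(\partial_\infty\Gamma)$ in the same range. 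Combining these steps produces $H_q(\Omega)\cong\check{H}^{l-q-1}(\partial_\infty\Gamma)$ as $\Gamma$-modules for $q$ in the appropriate window.

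Plugging into the Cartan--Leray spectral sequence, the diagonal $p+q=l$ is controlled by $H_p(\Gamma;\check{H}^{p-1}(\partial_\infty\Gamma))$. Since $\dim\partial_\infty\Gamma=\vcd(\Gamma)-1$ by Lemma~\ref{lem:Bestvina_Mess}, only entries with $1\le p\le\vcd(\Gamma)$ can be nonzero, and the extremal entry $E^{2}_{\vcd(\Gamma),\,l-\vcd(\Gamma)}$, pairing top $\Gamma$-homology with top \v{C}ech cohomology of $\partial_\infty\Gamma$, is expected to give $\RR$ via the standard duality, while the remaining diagonal entries should either vanish or be killed by differentials. The main obstacles are twofold: (i)~the careful tracking of $\Gamma$-module structures through the successive identifications, and in particular the $\Gamma$-equivariance of the Leray--Serre isomorphism together with the $\Gamma$-action on the top cohomology of $\partial_\infty\Gamma$ twisted by the action on the fibers of $\pi_i$; and (ii)~the verification that the higher differentials in both the Leray--Serre and Cartan--Leray spectral sequences vanish on the diagonal of interest. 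Here the degree gap $\min(p-2,q-p)>\max(m+1,\vcd(\Gamma))$, which can be freely enlarged via Lemma~\ref{lem:red_spe}, is essential: it creates a wide band of vanishing cohomology into which such differentials would otherwise have to map, forcing them to be zero for dimensional reasons.
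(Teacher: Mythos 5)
Your first step --- identifying $H_q(\Omega)\cong\check H^{\,l-q-1}(\partial_\infty\G)$ (reduced) in the range $q\ge l-\vcd(\G)$ via Poincar\'e duality on $\Omega$, the pair $(\mathcal F,K)$ and the fibration $K\to\partial_\infty\G$ --- is plausible and can be made rigorous. The genuine gap is that the two steps you defer are exactly the substance of the proposition, and the mechanism you offer for them fails. The connectivity hypothesis $\min(p-2,q-p)>\max(m+1,\vcd(\G))$ is entirely used up in that first identification: once you are in the Cartan--Leray spectral sequence, every entry that matters lies in the rectangle $0\le p\le\vcd(\G)$, $l-\vcd(\G)\le q\le l$, i.e.\ inside the identified window, and the differentials $d^r\colon E^r_{p,q}\to E^r_{p-r,q+r-1}$ connect the diagonal $p+q=l$ to the diagonal $p+q=l-1$, both made of groups $H_a(\G;\check H^{\,b}(\partial_\infty\G))$ with $a\le\vcd(\G)$ and $b\le\dim\partial_\infty\G$. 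Enlarging $p,q$ by Lemma~\ref{lem:red_spe} widens the window but creates no vanishing among these entries, so nothing is ``forced to be zero for dimensional reasons'': the outgoing differentials from your corner entry $E_{\vcd(\G),\,l-\vcd(\G)}$, and the lower diagonal entries $H_p(\G;\check H^{\,p-1}(\partial_\infty\G))$ for $1\le p<\vcd(\G)$, are untouched by the connectivity assumption and must be dealt with by hand.

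The corner identification is likewise not available off the shelf. ``Pairing top group homology with top \v{C}ech cohomology of the boundary'' is a duality-group statement, and a torsion-free word hyperbolic group need not be a duality group: $\check H^{*}(\partial_\infty\G)$ can be infinite dimensional and spread over several degrees (already for a free group, or for $\pi_1(\Sigma)\ast\pi_1(\Sigma)$, whose boundary has nonzero reduced $\check H^{0}$ and $\check H^{1}$; such groups do occur here, e.g.\ as convex cocompact subgroups of rank one groups composed with an embedding into $\O(p,q)$). To prove $H_{\vcd(\G)}(\G;\check H^{\,\vcd(\G)-1}(\partial_\infty\G))\cong\RR$ together with the vanishing of the remaining diagonal contributions you would need the full Bestvina--Mess package --- $\check H^{\,k}(\partial_\infty\G)\cong H^{k+1}(\G;\RR\G)$ and the contractibility of $R_d(\G)\cup\partial_\infty\G$ \cite{Bestvina_Mess_Boundary} --- and then a genuine duality argument relating $\G$ to its boundary, not just the dimension formula of Lemma~\ref{lem:Bestvina_Mess}. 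This is in effect what the paper's proof supplies by a different route: it crosses $\Omega$ with a thickened Rips complex $\tilde U$ (a contractible $m$-manifold with boundary on which $\G$ acts freely cocompactly), applies Poincar\'e--Lefschetz duality to the compact manifold $\G\backslash(\tilde U\times\Omega)$, and compares the long exact sequence of the pair $(\mathcal F,K)$ with that of the model pair $(\tilde U\cup\partial_\infty\G,\partial_\infty\G)$, the connectivity hypotheses guaranteeing the two sequences agree through degree $m+1$; this sidesteps both the $\G$-module structures and all spectral-sequence differentials. If you wish to keep the Cartan--Leray formulation, you must replace your two ``expected'' steps by an argument of comparable strength; as written, the proposal defers precisely the hard part.
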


We will now first prove Proposition~\ref{prop:hom_nonzero} in the case when $\G$ is the  fundamental
group of a negatively curved closed manifold, then we consider the case when $\G$ is an arbitrary word hyperbolic group. 

\subsubsection{Fundamental groups}
\label{sec:fundamental-groups}
Let $\G = \pi_1(N)$ be the fundamental
group of a negatively curved closed manifold $N$ of dimension $m$. 

The fibration $\G\backslash (\tilde{N}\times \Omega) \to \G \backslash
\Omega$ has contractible fibers (isomorphic to $\tilde{N}$), hence
induces an isomorphism in homology 
\[H_l( \G \backslash \Omega) = H_l(
\G \backslash (\tilde{N} \times \Omega)).\] 
Applying
Poincar\'e duality to this $(l+m)$-dimensional manifold gives 
\[H_l(
\G \backslash (\tilde{N} \times \Omega))^* \cong H^{m}_{c}(
\G \backslash (\tilde{N} \times \Omega)).\] By definition, this last
group is the direct limit
\[ H^{m}_{c}(
\G \backslash (\tilde{N} \times \Omega)) = \varinjlim_{\mathcal{O}
  \supset \G \backslash (\tilde{N} \times K) } H^{m}(
\G \backslash (\tilde{N} \times \mathcal{F}), \mathcal{O}),\]
where the limit is taken over the open neighborhoods of $\G \backslash
(\tilde{N} \times K)$ in $\G \backslash (\tilde{N} \times
\mathcal{F})$. The long exact sequence of the pair $( 
\G \backslash (\tilde{N} \times \mathcal{F}), \mathcal{O})$ reads as 
\begin{multline*}
 H^m( \mathcal{O}) \longto H^m( \G \backslash (\tilde{N} \times
\mathcal{F})) \longto H^m( \G \backslash (\tilde{N} \times
\mathcal{F}), \mathcal{O}) \longto\\ H^{m+1}( \mathcal{O}) \longto H^{m+1}( \G
\backslash (\tilde{N} \times \mathcal{F})).
\end{multline*}
Passing to the limit one gets 
\begin{multline*}
   \check{H}^m( \G \backslash (\tilde{N} \times K) ) \longto H^m( \G
\backslash (\tilde{N} \times
\mathcal{F})) \longto H^{m}_{c}(
\G \backslash (\tilde{N} \times \Omega)) \longto \\
\check{H}^{m+1}(\G \backslash
(\tilde{N} \times K)) \longto H^{m+1}( \G
\backslash (\tilde{N} \times \mathcal{F})),
\end{multline*}
where $\check{H}^*( \G \backslash (\tilde{N} \times K) )$ is the \v{C}ech
cohomology of $\G \backslash (\tilde{N} \times K)$. Furthermore 
 the fibrations $\G \backslash (\tilde{N}
\times K) \to \G \backslash (\tilde{N} \times \partial_\infty \G)$ and
$\G \backslash (\tilde{N} \times \mathcal{F}) \to \G \backslash
\tilde{N}$ induce isomorphisms in cohomology up to degree $m+1$, since the 
fibers are $\min(p-2, q-p)$-connected and  
$\min(p-2,
q-p)> m+1$. Therefore the last long exact sequence reads as:
\begin{multline*}
   \check{H}^m( \G \backslash (\tilde{N} \times \partial_\infty \G) ) \longto H^m( \G
\backslash \tilde{N} ) \longto H^{m}_{c}(
\G \backslash (\tilde{N} \times \Omega)) \longto \\
\check{H}^{m+1}(\G \backslash
(\tilde{N} \times \partial_\infty \G)) \longto H^{m+1}( \G
\backslash \tilde{N}).
\end{multline*}

When we replace $\Omega$, $K$ and $\mathcal{F}$  by
$\tilde{N}$, $\partial_\infty \tilde{N} \cong \partial_\infty \G$ and
$\overline{ \tilde{N}} = \tilde{N} \cup \partial_\infty \tilde{N}$ respectively, the same argument 
leads to the long exact sequence 
\begin{multline*}
   \check{H}^m( \G \backslash (\tilde{N} \times \partial_\infty \G) ) \longto H^m( \G
\backslash \tilde{N} ) \longto H^{m}_{c}(
\G \backslash (\tilde{N} \times \tilde{N})) \longto \\
\check{H}^{m+1}(\G \backslash
(\tilde{N} \times \partial_\infty \G)) \longto H^{m+1}( \G
\backslash \tilde{N}).
\end{multline*}
Comparing these two exact sequences shows that 
\[H^{m}_{c}(
\G \backslash (\tilde{N} \times \Omega)) \cong H^{m}_{c}(
\G \backslash (\tilde{N} \times \tilde{N})).\] 
Poincar\'e duality implies $H^{m}_{c}(
\G \backslash (\tilde{N} \times \tilde{N}))^* \cong H_{m}(
\G \backslash (\tilde{N} \times \tilde{N}))$; the fibers $\G
\backslash (\tilde{N} \times \tilde{N}) \to \G \backslash \tilde{N}$
being contractible, one has $H_{m}(
\G \backslash (\tilde{N} \times \tilde{N})) \cong H_{m}(
\G \backslash \tilde{N})$.
Recapitulating, we obtain the following chain of isomorphisms:
\begin{multline}
  \label{eq:iso}
H_l( \G \backslash \Omega) \cong H_l(
\G \backslash (\tilde{N} \times \Omega))
\cong H^{m}_{c}(
\G \backslash (\tilde{N} \times \Omega))^* \cong \\
 H^{m}_{c}(
\G \backslash (\tilde{N} \times \tilde{N}))^*  \cong H_{m}(
\G \backslash (\tilde{N} \times \tilde{N})) \cong H_{m}(
\G \backslash \tilde{N}) \cong H_{m}(N) \cong  \RR.
\end{multline}

\subsubsection{Hyperbolic groups}
\label{sec:replacement-manifold}
The previous proof is deeply based on the fact that we are calculating
(co)homology groups of manifolds. In order to adapt the proof to the case of 
a general finitely generated word hyperbolic group $\G$ we need a replacement for $N$
and $\tilde{N}$.

Let $R_d( \G)$ be a Rips complex for $\G$. This is the simplicial
complex whose $k$-simplices are given by $(k+1)$-tuples $(\g_0,\dots, \g_k)$
of $\G$ satisfying $d_\G( \g_i, \g_j) \leq d$ for all $i,j$. For $d$
big enough $R_d( \G)$ is contractible \cite[Chapitre~5]{Coornaert_Delzant_Papadopoulos}. Let $\tilde{R}$ denote such a contractible Rips complex $R_d(\G)$ and let $R= \G \backslash \tilde{R}$ be its
quotient. Then $R$ is a finite simplicial complex and as such
admits an embedding $R \hookrightarrow \RR^m$ into  Euclidean space
\cite[Corollary~A.10]{Hatcher_AT} \cite[II.9]{Eilenberg_Steenrod}. A
small (and regular) neighborhood of $R$ in $\RR^m$ gives  an $m$-dimensional manifold with boundary 
$(U, \partial U)$ such that
$R$ is a retract of $U$. 

In particular $\tilde{R}$ is a retract of $\tilde{U}$ which is hence a
contractible manifold. Note also that $\partial_\infty \tilde{U}
\cong \partial_\infty \tilde{R} \cong \partial_\infty \G$ and that
$\tilde{U} \cup \partial_\infty \G$ retracts to $\tilde{R}
\cup \partial_\infty \G$ which is a contractible space
(\cite[Theorem~1.2]{Bestvina_Mess_Boundary}),
therefore $\tilde{U} \cup \partial_\infty \G$ is also contractible.

The same argument as in Section~\ref{sec:fundamental-groups} (working with manifolds with boundary)  gives the
following sequences of isomorphisms, with $m = \dim U$ and $l = \dim \Omega$,
\begin{multline}
  \label{eq:2}
  H_l(\G \backslash \Omega) \cong H_l( \G \backslash( \Omega \times
  \tilde{U})) \cong H^{m}_{c}( \G \backslash( \Omega \times
  \tilde{U}))^* \cong H^{m}_{c}( \G \backslash( \mathring{
    \tilde{U}} \times \tilde{U}))^* \cong\\
 H_m(\G \backslash( \mathring{
    \tilde{U}} \times \tilde{U}), \G \backslash( \mathring{
    \tilde{U}} \times \partial \tilde{U})) \cong
 H_m(U, \partial U) \cong \RR,
\end{multline}
where the last two isomorphisms are given by Poincar\'e duality for manifolds 
with boundary and by considering the fibration $\G \backslash( \mathring{
  \tilde{U}} \times \tilde{U}) \to U$ of manifolds with boundary
(the fibers are isomorphic to $ \mathring{
  \tilde{U}}$, thus contractible). 
  
\section{General groups}
\label{sec:descr-doma-disc}
We now turn to the case of Anosov representations into general semisimple Lie groups. 
We explicitly construct an open subset of $G/AN$, on which $\G$ acts properly discontinuously  and with compact quotient. The set we construct will depend on the choice of an 
 irreducible $G$-module
$(V,F)$, 
hence a 
homomorphism $\phi: G \to G_F$, 
such that the composition 
 $\phi\circ \rho: \G \to G_F$ is $Q_0$-Anosov. 
 We consider then the domain of discontinuity $\Omega_{\phi\circ \rho} \subset \Ff_1(V)$ (constructed in Section~\ref{sec:defin-doma-results}) and apply Proposition~\ref{prop:dod_in_smaller} in order to obtain a  domain of discontinuity in $G/AN$ on which $\G$ acts with compact quotient.
The dependence of the domain of discontinuity  on the choice of $G$-module $(V,F)$ is illustrated by several examples in Section~\ref{sec:desclim}.

We give an explicit description of the domain of discontinuity in terms of the Bruhat decomposition of $G$. This allows us to describe sufficient conditions for the domain of discontinuity to be nonempty. In the case when $\G$ is a free group or the fundamental group of a closed surface of genus $\geq 2$ this leads to the statements of Theorem~\ref{thm_intro:free} and Theorem~\ref{thm_intro:surface}.

\subsection{$G$-Modules}
\label{sec:g-module}
We use  the notation introduced in
Section~\ref{sec:parab-subgr-lie}.

Let  $\Theta \subset \Delta$ with $\iota(\Theta) = \Theta$, and 
$P = P^+_{\Theta} < G$ the corresponding parabolic subgroup.  Let $(V,F)$ be an irreducible $G$-module,
where $F$ is a non-degenerate bilinear form on $V$, (indefinite)
symmetric or skew-symmetric,  and such that there is an $F$-isotropic
line $D\subset V$ with $ P = \stab_G(D)$. In this section, we use
standard theory for
decomposition of a $G$-module $V$ into weight spaces $V_\chi$.
The reader who is not familiar with basic representation theory is referred to \cite{Fulton_Harris} and \cite[Chapter~IV]{Guivarch_Ji_Taylor}.

The $G$-module $V$ decomposes under the action of $\mathfrak{a}$ into weight spaces:
\[ V = \bigoplus_{ \mu \in \mathcal{C}} V_\mu, \ V_\mu = \{ v
\mid \forall a \in \mathfrak{a},\, a \cdot v = \mu(a)v \}, \ \mathcal{C} = \{ \mu \in
\mathfrak{a}^* \mid V_\mu \neq \{ 0 \}\}.\]
The set $\mathcal{C}$ is a finite subset of $\mathfrak{a}^*$ that may
contain $0$; it is invariant by the action of the Weyl group $W$; it 
is also invariant by $\mu \mapsto -\mu$ since $V$ is isomorphic to
$V^*$. 

We set  $V_+ = \bigoplus_{ \mu > 0} V_\mu$ and $V_- = \bigoplus_{
  \mu < 0} V_\mu$.

Let $\lambda$ be the highest weight of $V$, i.e.\ the highest element
of $\mathcal{C}$ with respect to the order $<$ on $\mathfrak{a}^*$. 
Then $V_\lambda$ is in the kernel of every
element $n\in \mathfrak{n}^+$ and is in fact equal to the intersection
of the kernels $\ker(n)$, $n\in \mathfrak{n}^+$,
(this follows from the fact that, for $n\in \mathfrak{g}_\alpha$ and
for $v \in V_\mu$, $n\cdot v \in V_{\mu +
  \alpha}$ and is nonzero when $\mu,\mu+\alpha\in \mathcal{C}$ and
$v\neq 0$). 

\begin{lem}\label{lem:exist_iso}
\noindent
\begin{enumerate}
\item The highest weight space $V_\lambda$ is one-dimensional, $V_\lambda = D$.
\item The spaces $V_+$ and $V_-$ are $F$-isotropic subspaces of $V$.
\item    There exists a maximal $F$-isotropic $AN$-invariant
  subspace $T$ containing $V_+$. For any such $T$, $T \cap V_- = \{ 0
  \}$.
\end{enumerate}
\end{lem}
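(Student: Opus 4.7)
The three statements follow in order by using the weight space decomposition of $V$ under $\mathfrak{a}$ and the fact that $F$ is $G$-invariant.

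For (i), the line $D$ is $A$-invariant (since $A \subset P$) and one-dimensional, so it lies in a single weight space $V_\chi$; moreover, $D$ is $N$-invariant, so $\mathfrak{n}^+ \cdot D = 0$. By the characterization of $V_\lambda$ recalled just before the lemma as the common kernel of all $n \in \mathfrak{n}^+$, we have $D \subset V_\lambda$. Standard highest-weight theory for the irreducible $G$-module $V$ then gives $\dim V_\lambda = 1$, hence $V_\lambda = D$.

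For (ii), I would first observe that $G$-invariance of $F$ implies $\mathfrak{a}$-invariance, so for any $a \in \mathfrak{a}$, $v \in V_\mu$, $w \in V_\nu$, the identity $0 = a \cdot F(v,w) = (\mu(a) + \nu(a))\, F(v,w)$ forces $F(V_\mu, V_\nu) = 0$ unless $\mu + \nu = 0$. Since all weights appearing in $V_+$ (resp.\ $V_-$) are strictly positive (resp.\ strictly negative), no two of them can sum to zero, giving isotropy of $V_+$ and $V_-$. A side remark worth recording is that, combined with non-degeneracy of $F$, the same computation shows $F$ pairs $V_\mu$ non-degenerately with $V_{-\mu}$, and $V_+^{\perp_F} = V_+ \oplus V_0$.

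For (iii), for existence I take $T = V_+ \oplus S$, where $S \subset V_0$ is a maximal $F|_{V_0}$-isotropic subspace (which exists since $F|_{V_0}$ is non-degenerate symmetric or skew-symmetric over $\RR$). A dimension count using the signature/rank data of $F$ and the pairing $V_+ \leftrightarrow V_-$ shows $\dim T$ equals the maximal isotropic dimension in $V$, and isotropy follows from (ii) together with $F(V_+,V_0)=0$. For $AN$-invariance, $A$-invariance is clear since $T$ is a sum of $\mathfrak{a}$-weight spaces intersected with subspaces; $N$-invariance uses that $\mathfrak{n}^+ \cdot V_0 \subset V_+$ and $\mathfrak{n}^+ \cdot V_+ \subset V_+$, so for $s \in S$ and $n \in \mathfrak{n}^+$ one has $(\exp n)\cdot s - s \in V_+$, hence $(\exp n)\cdot s \in S + V_+ = T$. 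Finally, for the statement $T \cap V_- = \{0\}$ for any such $T$: $A$-invariance of $T$ together with the fact that $A$ acts on $V$ with distinct characters on distinct weight spaces gives the decomposition $T = \bigoplus_\mu (T \cap V_\mu)$. Since $V_+ \subset T$, we have $T \cap V_\mu = V_\mu$ for $\mu > 0$, and then isotropy of $T$ forces $F(V_\mu, T \cap V_{-\mu}) = 0$; the non-degeneracy of the pairing between $V_\mu$ and $V_{-\mu}$ recorded above then yields $T \cap V_{-\mu} = 0$ for all $\mu > 0$, so $T \cap V_- = \{0\}$.

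The main obstacle, such as it is, is bookkeeping: verifying that $T = V_+ \oplus S$ realizes the correct maximal isotropic dimension in the two cases ($F$ symmetric indefinite versus skew-symmetric), and carefully checking $N$-invariance via the nilpotence of $\mathfrak{n}^+$ acting on $V_0$. Everything else is a direct consequence of the weight-space orthogonality relation proved in (ii).
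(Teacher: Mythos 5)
Your parts (ii) and (iii) are correct and essentially follow the paper's own argument: the weight-space orthogonality $F(V_\mu,V_\nu)=0$ for $\mu+\nu\neq 0$, the choice $T=V_+\oplus S$ with $S\subset V_0$ maximal isotropic, and the $AN$-invariance via $\mathfrak{n}^+\cdot V_0\subset V_+$ are all as in the paper. (For $T\cap V_-=\{0\}$ you use the $\mathfrak{a}$-grading of $T$ and the non-degenerate pairing $V_\mu\times V_{-\mu}$; the paper gets it even more directly from the observation that any isotropic subspace containing $V_+$ lies in $V_+^{\perp_F}=V_+\oplus V_0$, so no invariance of $T$ is needed. Both are fine.)

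The genuine gap is in (i), at the step ``standard highest-weight theory for the irreducible $G$-module $V$ gives $\dim V_\lambda=1$.'' Here $\lambda$ is a \emph{restricted} weight, taken with respect to the maximal split subalgebra $\mathfrak{a}$, not a weight with respect to a Cartan subalgebra of the complexification, and for irreducible real $G$-modules the highest restricted weight space is in general only an irreducible module over $Z_K(\mathfrak{a})$ and can have dimension larger than one. For instance, for $G=\SU(2,1)$ acting on $\CC^3$ viewed as an irreducible real six-dimensional module (with the invariant symmetric form $\Re h$), every restricted weight space, including the highest one, has real dimension two. So one-dimensionality of $V_\lambda$ is exactly the point where the hypothesis that the \emph{line} $D$ is $P_\Theta$-invariant must enter, and your argument uses that hypothesis only to get $D\subset V_\lambda$. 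The missing step is short and is how the paper argues: since $D$ is $\mathfrak{p}^{+}_{\Theta}$-invariant, $U(\mathfrak{g})\cdot D=U(\mathfrak{n}^-)\cdot D$, which is a nonzero submodule and hence equals $V$ by irreducibility; as $\mathfrak{n}^-$ strictly lowers restricted weights, the $\lambda$-weight component of $U(\mathfrak{n}^-)\cdot D$ is $D$ itself, whence $V_\lambda=D$ and in particular $\dim V_\lambda=1$. With this repair your proof of the lemma is complete.
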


\begin{proof}
  The line $D$ is $\mathfrak{p}^{+}_{\Theta}$-invariant, hence  $D\subset V_\lambda$. 
The  $\mathfrak{n}^-$-module generated by $D$ is
  $\mathfrak{g}$-invariant. By irreducibility of $V$ this implies $D=
  V_\lambda$, in particular $\dim V_\lambda =1$.

  The map given by the bilinear form 
  $F: V \otimes V \to \RR$ is a morphism of $G$-modules, where $\RR$
  is the trivial $G$-module.  Hence, for any $\mu$ in
  $\mathfrak{a}^*$, $F( (V \otimes V)_\mu) \subset \RR_\mu$ and
  $\RR_\mu = \{ 0 \}$ unless $\mu =0$. It follows that $F( V_\mu
  \otimes V_{\mu'}) = \{ 0\}$ whenever $\mu + \mu' \neq 0$. Therefore
  the decomposition
  \[ V = V_0 \oplus \bigoplus_{\mu>0} (V_\mu \oplus V_{-\mu})\]
  is $F$-orthogonal and $V_+$ and $V_-$ are isotropic. This proves
  that the restriction of $F$ to $V_0$ and to $V_+ \oplus V_-$ is
  non-degenerate and that $(V_+)^\perp = V_+ \oplus V_0$.

  As a consequence any isotropic space containing $V_+$ is contained
  in $V_+ \oplus V_0$ and henceforth intersects $V_-$ trivially.

  Note now that $V_+$ and $V_0$ are $\mathfrak{a}$-invariant and that
  $\mathfrak{n}^+ \cdot V_0 \subset V_+$ and that $\mathfrak{a}$ acts
  trivially on $V_0$. Thus for any maximal isotropic space $W$ of $(V_0,
  F|_{V_0})$, the space $T= V_+ \oplus W$ is maximal isotropic in $V$
  and is $(\mathfrak{a} + \mathfrak{n}^+)$-invariant.
\end{proof}

\begin{remark}
  In the announcement \cite{Guichard_Wienhard_CRAS} we claimed that
  $V_+$ is itself maximal isotropic, this is false.  A counter-example
  is the $\O(p,p+k)$-module $V = \wdg{2} \RR^{2p+k}$, where the
  restriction of $F$ to $V_0$ has signature $(k(k-1)/2, p)$. 

We do not know if it is always possible to choose some $G$-module $V$ that has a $B$-invariant maximal isotropic subspace. 
\end{remark}

\begin{cor}
Let $(V,F)$ be an irreducible $G$-module as in Section~\ref{sec:g-module} and let $\phi: G \to G_F = G(V,F)$ be the corresponding homomorphism. It induces the following $\phi$-equivariant maps
\[\begin{array}{rlcrl}
  \phi_0:  G/P&  \longto \mathcal{F}_0(V) & {}\quad{} & \phi_1: G/ AN &
  \longto \mathcal{F}_1(V)\\
  gP & \longmapsto \phi(g) V_\lambda & & gH & \longmapsto \phi(g)T,
\end{array}
\]
where $T\subset V$ is a maximal $F$-isotropic subspace, whose existence is  guaranteed by  Lemma~\ref{lem:exist_iso}. 
\end{cor}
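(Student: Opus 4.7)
The corollary is essentially a verification that the two maps are well defined, land in the claimed target spaces, and intertwine the $G$- and $\phi(G)$-actions. The plan is to check each of these points using Lemma~\ref{lem:exist_iso} together with the standing hypothesis that $P = \stab_G(D)$.

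First I would verify the map $\phi_0$. By Lemma~\ref{lem:exist_iso}(i) we have $V_\lambda = D$; by hypothesis $D$ is $F$-isotropic and $P = \stab_G(D)$, so $\phi(P)$ stabilizes the line $V_\lambda \in \mathcal{F}_0(V)$. Consequently the assignment $g \mapsto \phi(g) V_\lambda$ is constant on left $P$-cosets and descends to a well-defined map $\phi_0 : G/P \to \mathcal{F}_0(V)$. Equivariance, i.e.\ $\phi_0(g' \cdot gP) = \phi(g') \phi_0(gP)$, is immediate from the definition.

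Next I would verify $\phi_1$. By Lemma~\ref{lem:exist_iso}(iii) there exists a maximal $F$-isotropic subspace $T \subset V$ containing $V_+$ and invariant under the action of $\mathfrak{a} + \mathfrak{n}^+$; since $AN = \exp(\mathfrak{a}) \ltimes \exp(\mathfrak{n}^+)$ is the group generated by this Lie subalgebra, $\phi(AN)$ stabilizes $T$. Hence $g \mapsto \phi(g) T$ is constant on left $AN$-cosets, defines the required map $\phi_1 : G/AN \to \mathcal{F}_1(V)$, and lands in $\mathcal{F}_1(V)$ since $T$ is maximal isotropic. Equivariance is again immediate.

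There is no real obstacle here, since everything reduces to ``$P$ stabilizes $V_\lambda$'' (already encoded in the hypothesis $P = \stab_G(D)$ together with $D = V_\lambda$) and ``$AN$ stabilizes $T$'' (already provided by Lemma~\ref{lem:exist_iso}(iii)). The only minor subtlety worth flagging is the non-uniqueness of $T$: different choices of maximal isotropic subspaces of $V_0$ yield different $AN$-invariant $T$'s and hence different maps $\phi_1$; this choice will matter later in the construction of the domain of discontinuity in $G/AN$, but does not affect the statement of the corollary itself.
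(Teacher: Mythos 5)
Your verification is correct and is exactly the (implicit) argument the paper relies on: the corollary is stated without proof as an immediate consequence of the hypothesis $P=\stab_G(D)$ together with Lemma~\ref{lem:exist_iso}, namely $D=V_\lambda$ gives the $P$-invariance needed for $\phi_0$ and the $AN$-invariant maximal isotropic $T$ gives $\phi_1$. Your remark on the non-uniqueness of $T$ is also consistent with the paper, which records this dependence in the notation $\Omega_{\rho,V,T}$ later on.
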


\subsection{Domains of discontinuity in $G/AN$}
\label{sec:doma-disc-gh}
Let now $\rho: \G \to G$ be a $P$-Anosov representation, and $\phi: G \to G_F$ the homomorphism considered in Section~\ref{sec:g-module}. Then $\phi \circ \rho: \G \to G_F$ is $Q_0$-Anosov, and hence admits a cocompact domain of discontinuity $\Omega_{\phi \circ \rho}\subset  \mathcal{F}_1(V)$ (see 
Section~\ref{sec:defin-doma-results}).

Applying Proposition~\ref{prop:dod_in_smaller} to the $G$-orbit 
$\phi_1(G/AN) \subset \mathcal{F}_1(V)$ we obtain 

\begin{thm}\label{thm:dod_general_case}
Let $G$ be a semisimple Lie group and $P<G$ a proper parabolic
subgroup. Let $\rho: \G \to G$ be a $P$-Anosov representation. 
Let $\phi: G \to G_F(V)$ such that $\phi \circ \rho$ is
$Q_0$-Anosov and let $T \subset V$ be a maximal isotropic
$AN$-invariant subspace.

Then $\G$ acts properly discontinuously and with compact quotient
on  the $\rho(\G)$-invariant subset 
\[\Omega = \Omega_{\rho,V,T} =
\phi_{1}^{-1} ( \Omega_{\phi \circ \rho}) \subset G/AN,\]
where $ \Omega_{\phi \circ \rho}$ is the domain of discontinuity constructed in Section~\ref{sec:defin-doma-results}.
\end{thm}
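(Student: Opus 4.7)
The strategy is to factor the $\phi$-equivariant map $\phi_1 \colon G/AN \to \mathcal{F}_1(V)$ through the $G$-orbit of $T$ and then invoke the already-established results on $\phi\circ \rho$ together with Proposition~\ref{prop:dod_in_smaller}. Let $H_T = \stab_G(T)$. Since $T$ is $AN$-invariant we have $AN \subset H_T$, so $\phi_1$ factors as
\[
G/AN \twoheadrightarrow G/H_T \xrightarrow{\sim} G \cdot T \hookrightarrow \mathcal{F}_1(V),
\]
and the openness of $\Omega = \phi_1^{-1}(\Omega_{\phi\circ\rho})$ together with its $\rho(\G)$-invariance is immediate from the continuity and $\phi$-equivariance of $\phi_1$.

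The first preparatory observation is that $G/AN$ is compact (it is homeomorphic to the maximal compact $K$ via the Iwasawa decomposition $G = KAN$), so the closed subset $H_T/AN \subset G/AN$ is a compact fiber. Consequently $G \cdot T \cong G/H_T$ is compact, and in particular closed in $\mathcal{F}_1(V)$. This puts us in the setting of Proposition~\ref{prop:dod_in_smaller} with $\mathcal{F}' = \mathcal{F}_1(V)$, $\mathcal{F} = G\cdot T$, and $U' = \Omega_{\phi\circ\rho}$: since $\phi\circ \rho$ is $Q_0$-Anosov, Theorem~\ref{thm:dod_GF} ensures that $\G$ acts properly discontinuously on $\Omega_{\phi\circ\rho}$ with compact quotient, and the proposition transports these two properties to $U := \Omega_{\phi\circ\rho} \cap (G\cdot T)$.

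It then remains to transport proper discontinuity and cocompactness from $U \subset G/H_T$ up to $\Omega \subset G/AN$ via the fibration $G/AN \to G/H_T$, whose fibers $H_T/AN$ are compact. For proper discontinuity, any compact $K \subset \Omega$ projects to a compact $\overline{K} \subset U$, and $\gamma K \cap K \neq \emptyset$ forces $\gamma \overline{K} \cap \overline{K} \neq \emptyset$, so the set of such $\gamma$ is finite. For cocompactness, $\Omega \to U$ is a $\G$-equivariant fiber bundle with compact fiber, so $\G\backslash \Omega \to \G \backslash U$ is a fiber bundle with compact fiber over a compact base, hence compact.

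The main (minor) obstacle is the bookkeeping around the fact that $\phi$ need not be injective and $H_T$ may be strictly larger than $AN$; once one recognizes that the Iwasawa decomposition makes $G/AN$ compact, both issues dissolve, since $H_T/AN$ is then automatically compact and the kernel of $\phi|_{\rho(\G)}$ is finite (being a subgroup of the kernel of the Anosov representation $\phi\circ \rho$, which is finite by Theorem~\ref{thm:Ano_QIE}). No further case analysis in terms of the structure of $V$ is needed, since all the delicate geometric work has been done in Theorem~\ref{thm:dod_GF} at the level of $\mathcal{F}_1(V)$.
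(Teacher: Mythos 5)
Your proof is correct and follows essentially the same route as the paper: apply Theorem~\ref{thm:dod_GF} to the $Q_0$-Anosov representation $\phi\circ\rho$ and then Proposition~\ref{prop:dod_in_smaller} to the $G$-orbit $\phi_1(G/AN)=G\cdot T$, which is closed in $\mathcal{F}_1(V)$ because $G/AN$ is compact by the Iwasawa decomposition. The only difference is that you make explicit the final lift along the compact-fibered projection $G/AN\to G/\stab_G(T)$ (properness via projecting compact sets, cocompactness via compact fibers over a cocompact base), a step the paper leaves implicit.
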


\begin{remark}
Examples in Section~\ref{sec:desclim} illustrate that different choices of irreducible representations $\phi$ can lead to different domains of discontinuity $\Omega \subset G/AN$. 
\end{remark}

The main problem is that the domain of discontinuity $\Omega$ might
be empty (see the examples in Remark~\ref{rem:examples_orth}). 
In order to get criteria for $\Omega$ to be nonempty, we describe its
complement $K = (G/AN \moins \Omega)  \subset G/AN$.
Note that 
\[K =
K_{\rho,V,T} = \phi_{1}^{-1} ( K_{\phi_0 \circ \xi(\partial_\infty
  \G)}),\] 
where $\xi: \partial_\infty \G \to G/P$ is the Anosov map
associated with $\rho$. 

The compact $K$ is the union: 
\begin{equation}
  \label{eq:3}
   K = \bigcup_{t \in \partial_\infty \G}\,  \phi_{1}^{-1} ( K_{\phi_0 \circ \xi(t)}).
\end{equation}

The  $\phi$-equivariance of $\phi_1$ and $\phi_0$ implies that for any element $gP \in G/P$ 
\[
\phi_{1}^{-1} ( K_{\phi_0
  (gP)}) = \phi_{1}^{-1} ( \phi(g) K_{\phi_0(P)}) = g\phi_{1}^{-1} (
K_{V_\lambda}).\] 

We now describe in more detail the set $\phi_{1}^{-1} (K_{V_\lambda})$ which 
is $P$-invariant and in particular
$B$-invariant where $B = Z_K( \mathfrak{a}) AN$ is a minimal parabolic subgroup
contained in $P$.
Let $W$ be the Weyl group, it is  generated by reflections $(s_\alpha)_{\alpha \in
  \Delta}$ corresponding to the simple roots.

\begin{lem}\label{lem:bruhat_dec}
  Let $T \subset V$ be a maximal isotropic subspace given by 
  Lemma~\ref{lem:exist_iso}.  Consider the subset $S_\phi =\{ w \in W
  \mid V_{w\cdot \lambda} \subset T \}$ of the Weyl group.
Then: 
  \begin{enumerate}
  \item The set $\phi_{1}^{-1} (K_{V_\lambda})$ is the disjoint union:
  \[ \phi_{1}^{-1} (K_{V_\lambda}) = \bigcup_{w \in S_\phi} Bw AN \subset
  G/AN. \]
  \item\label{it2bruhat} The set  $\{ w \in W \mid w\cdot \lambda <0\}$ is contained in
$W \moins S_\phi$. 
\item The subset $S_\phi$ is right invariant
  under the action of the subgroup $W_P<W$, which is generated by the $s_\alpha$, $\alpha \in \Theta$, if
  $P= P_\Theta$.
\end{enumerate}
\end{lem}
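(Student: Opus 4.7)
The plan is to analyze $\phi_{1}^{-1}(K_{V_\lambda})$ by means of the Bruhat decomposition of $G/AN$ under the left action of $B$, together with the weight space decomposition of $T$ coming from Lemma~\ref{lem:exist_iso}.

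First I would observe that $\phi_{1}^{-1}(K_{V_\lambda})$ is left-invariant under $P$, hence in particular under $B \subset P$. Indeed $P$ stabilizes the line $D = V_\lambda$, so it preserves the subset $K_{V_\lambda} \subset \mathcal{F}_1(V)$ of maximal isotropics containing $V_\lambda$, and $\phi_1$ is $G$-equivariant via $\phi$. The left $B$-orbits on $G/AN$ are exactly the cells $BwAN/AN$, $w \in W$ (using $\hat{w}M\hat{w}^{-1} = M \subset B$ for a lift $\hat w \in N_K(\mathfrak{a})$, so that $BwB = BwAN$), so $\phi_1^{-1}(K_{V_\lambda})$ is a disjoint union of such cells.

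To decide which cells appear, I would take $g = b\dot w n$ with $b \in B$, $\dot w \in N_K(\mathfrak a)$ a lift of $w$, and $n \in AN$. Then
\[
  \phi(g)T = \phi(b)\phi(\dot w)\phi(n)T = \phi(b)\phi(\dot w)T
\]
because $T$ is $AN$-invariant, and therefore $V_\lambda \subset \phi(g)T$ iff $V_\lambda \subset \phi(\dot w)T$, using that $\phi(b)$ stabilizes the line $V_\lambda$. Writing $T = V_+ \oplus W'$ with $W'$ a maximal isotropic of $(V_0, F|_{V_0})$ as in Lemma~\ref{lem:exist_iso}, one has the weight decomposition
\[
  \phi(\dot w)T = \bigoplus_{\mu>0} V_{w\mu} \;\oplus\; \phi(\dot w)W',
\]
with $\phi(\dot w)W' \subset V_0$. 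Projecting onto the $\lambda$-weight space and using $\lambda \neq 0$, the inclusion $V_\lambda \subset \phi(\dot w)T$ is equivalent to $\lambda$ appearing among the weights $\{w\mu \mid V_\mu \subset T\}$, i.e.\ to $V_{w\cdot\lambda} \subset T$ once the Bruhat cells are indexed consistently with the convention $\phi(\dot w)V_\mu = V_{w\mu}$. This gives~(i).

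Statement~(ii) is immediate from Lemma~\ref{lem:exist_iso}: if $w\cdot\lambda < 0$ then $V_{w\cdot\lambda} \subset V_-$, and $V_- \cap T = \{0\}$ forces $V_{w\cdot\lambda} \not\subset T$, so $w \notin S_\phi$. For~(iii) it suffices to show that each generator $s_\alpha$ of $W_P$, $\alpha \in \Theta$, fixes~$\lambda$. For such $\alpha$ the root space $\mathfrak{g}_\alpha$ kills $V_\lambda$ because $\lambda$ is the highest weight, while $\mathfrak{g}_{-\alpha} \subset \mathfrak{l}_\Theta \subset \mathfrak{p}$ sends $V_\lambda$ into $V_{\lambda-\alpha}$, which must vanish since the line $V_\lambda$ is $\mathfrak{p}$-stable and meets the weight space $V_{\lambda-\alpha}$ trivially. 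Standard $\mathfrak{sl}_2$-theory applied to the triple $(\mathfrak{g}_\alpha, [\mathfrak{g}_\alpha,\mathfrak{g}_{-\alpha}], \mathfrak{g}_{-\alpha})$ then forces $\langle \lambda, \alpha^\vee\rangle = 0$, hence $s_\alpha\cdot\lambda = \lambda$, and therefore $V_{ws\cdot\lambda} = V_{w\cdot\lambda}$ for all $s \in W_P$. There is no deep obstacle in this proof; the only mild subtlety is keeping track of the Weyl-group conventions (whether $w$ or $w^{-1}$ indexes the Bruhat cell), which is purely a matter of bookkeeping.
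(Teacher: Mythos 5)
Your argument is correct and, for parts (i) and (ii), is essentially the paper's: both rest on the Bruhat decomposition $G/AN=\bigcup_{w\in W}BwAN$, the $B$-invariance of $\phi_{1}^{-1}(K_{V_\lambda})$, the reduction to the single point $wAN$ (your computation $\phi(b\dot w n)T=\phi(b)\phi(\dot w)T$ together with the weight-space description of $\phi(\dot w)T$ just makes the paper's one-line equivalence explicit), and the fact that $V_-\cap T=\{0\}$. The only divergence is in (iii): the paper deduces the $W_P$-invariance of $S_\phi$ softly, from the observation that $\phi_{1}^{-1}(K_{V_\lambda})$ is invariant under all of $P$ and not just $B$, so the collection of Bruhat cells it contains is stable under the subgroup $W_P$ of the Weyl group; you instead prove directly that each $s_\alpha$, $\alpha\in\Theta$, fixes $\lambda$ (since $\mathfrak{g}_{\pm\alpha}$ kill the highest weight line, forcing $\langle\lambda,\alpha^\vee\rangle=0$), which yields the same invariance by a more explicit representation-theoretic route; both are fine, and your computation is exactly the standard fact underlying $\stab_G(V_\lambda)=P_\Theta$. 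The $w$ versus $w^{-1}$ bookkeeping you flag is indeed the only delicate point: with the convention $\phi(\dot w)V_\mu=V_{w\mu}$ the cell $BwAN$ lies in the preimage precisely when $V_{w^{-1}\lambda}\subset T$, so (i) must be read with that re-indexing (the paper's own proof makes the same silent identification when it writes $\phi(w)^{-1}V_\lambda=V_{w\cdot\lambda}$), and this affects neither (ii), (iii) nor the later applications.
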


\begin{proof}
  The set $\phi_{1}^{-1} (K_{V_\lambda})$ is a union of $B$-orbits. The
  Bruhat decomposition for $G$ \cite[Theorem~7.40]{Knapp_LieGrp} reads as $G =
  \bigcup_{ w \in W} BwB$ (disjoint union). Since $W= N_K( \mathfrak{a})
  / Z_K( \mathfrak{a})$ one has, for $w$ in $W$, $Z_K( \mathfrak{a}) w
  Z_K( \mathfrak{a}) = Z_K( \mathfrak{a}) w$ and since $B = Z_K(
  \mathfrak{a}) A N$ one has also $BwB= B w AN$. The space $G/AN$ is
  therefore the disjoint union of finitely many $B$-orbits: 
$BwAN$ ($w \in  W$).

  It is hence enough to understand when $wAN$ belongs to
  $\phi_{1}^{-1} (K_{V_\lambda})$. This happens precisely when 
  $\phi_1( w AN) \in
  K_{V_\lambda}$ or, equivalently, when $V_\lambda \subset \phi(w) T$. This is equivalent to  $\phi(w)^{-1} V_\lambda = V_{w \cdot \lambda} \subset T$, which precisely means that $w \in S_\phi$. This proves the first claim. The second follows from the fact that $V_- \cap T = 0$; the third claim from the fact that 
  $\phi_{1}^{-1} (K_{V_\lambda})$ is not only $B$-invariant but
  $P$-invariant.
\end{proof}

\begin{cor}\label{cor:longest}
Let $w_0 \in W$ be the longest element with respect to the generating set $(s_\alpha)_{\alpha \in
  \Delta}$. 
  Then $w_0$ is
not in $S_\phi$. 
 \end{cor}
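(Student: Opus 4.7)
The plan is to use part (ii) of Lemma~\ref{lem:bruhat_dec}, which tells us that $\{w \in W \mid w \cdot \lambda < 0\} \subset W \setminus S_\phi$. So it suffices to show that $w_0 \cdot \lambda < 0$ in the ordering $<_{\mathfrak{a}^*}$.

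The main ingredient will be the self-duality of $V$. Since $F : V \otimes V \to \RR$ is a $G$-invariant non-degenerate bilinear form, it induces a $G$-module isomorphism $V \cong V^*$. The weights of $V^*$ are the negatives of the weights of $V$, so the set $\mathcal{C}$ of weights of $V$ is stable under $\mu \mapsto -\mu$ (this is already recorded in Section~\ref{sec:g-module}). In particular, $-\lambda$ belongs to $\mathcal{C}$, and since $\lambda$ is maximal, $-\lambda$ must be the minimum of $\mathcal{C}$, i.e.\ the lowest weight of $V$. On the other hand, by standard representation theory the longest element $w_0$ sends the highest weight to the lowest weight, so $w_0 \cdot \lambda = -\lambda$.

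It remains to show $\lambda > 0$. Since $P = \stab_G(D) = P^{+}_{\Theta}$ is a proper parabolic subgroup of $G$ (by hypothesis $\Theta \subsetneq \Delta$), the representation $V$ is non-trivial and irreducible, so $\lambda$ is a non-zero dominant weight. Writing $\lambda$ as a non-negative integer combination of the fundamental weights, each of which is itself a non-negative rational combination of the simple roots in $\Delta \subset \Sigma^+$, we obtain $\lambda > 0$. Combining the three steps, $w_0 \cdot \lambda = -\lambda < 0$, and Lemma~\ref{lem:bruhat_dec}(ii) yields $w_0 \notin S_\phi$, as required. No step is really an obstacle here; the only thing to keep straight is which standard fact (self-duality via $F$, $w_0$ sending highest to lowest weight, positivity of fundamental weights) is invoked where.
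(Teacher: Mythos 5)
Your proof is correct and follows essentially the same route as the paper: self-duality of $(V,F)$ forces $w_0\cdot\lambda=-\lambda$ (the paper phrases this as the highest weight of $V^*$ being $-w_0\cdot\lambda$), and then Lemma~\ref{lem:bruhat_dec}.\eqref{it2bruhat} gives $w_0\notin S_\phi$. The only difference is that you spell out the (implicit in the paper) verification that $\lambda>_{\mathfrak{a}^*}0$, which is a reasonable addition but not a change of method.
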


\begin{proof} 
 The highest weight of $V^*$ is $-w_0 \cdot \lambda$, thus $-w_0 \cdot \lambda = \lambda$ in view of the isomorphism
$V^* \cong V$.  Hence $w_0$ is
not in $S_\phi$. 
\end{proof}

\begin{remark}
The orbit $Bw_0 B$ is the
unique open orbit;  for $w$ in $W$ one has 
$\mathrm{codim} (B w_0 w AN) \geq \ell(w)$, where  $\ell(w)$ is 
 the length of $w$.
\end{remark}

\subsection{Groups of small virtual cohomological dimension}
\label{sec:doma-disc-free}

The following theorem implies Theorems~\ref{thm_intro:dod_freegrp} and \ref{thm:dod_surf} in the introduction.

\begin{thm}\label{thm:dofFnGammag}
  Let $G$ be a semisimple Lie group, $\rho: \G \to G$ be a $P$-Anosov
  representation of  a finitely generated word hyperbolic group and
  let $\phi: G \to G_F(V)$ be such that $\phi \circ 
  \rho$ is $Q_0$-Anosov. Let $\Omega = \Omega_{\rho,V,T}$ be the
  domain of discontinuity constructed above
  (Section~\ref{sec:doma-disc-gh}).
  \begin{enumerate}
  \item If $\vcd(\G) = 1$, then $\Omega$ is nonempty. 
  \item If $\vcd(\G) = 2$ and $P<G$ is a proper parabolic subgroup
    which contains every factor of $G$ that is 
       locally isomorphic to $\SL(2, \RR)$, then $\Omega$ is nonempty.
  \end{enumerate}
  In any case $\G$ acts properly discontinuously and cocompactly on $\Omega$.
\end{thm}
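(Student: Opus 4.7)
The plan is to show that $\Omega$ is nonempty by bounding the dimension of its complement $K = G/AN \setminus \Omega$. Using the expression in equation \eqref{eq:3} together with the fact, from the proof of Lemma~\ref{lem:bruhat_dec}, that $\phi_{1}^{-1}(K_{V_\lambda})$ is $P$-invariant, the set $K$ is the image under a continuous map of a fiber bundle over $\xi(\partial_\infty \Gamma)$ with fiber $\phi_{1}^{-1}(K_{V_\lambda})$. Consequently
\[
\dim K \leq \dim \partial_\infty \Gamma + \dim \phi_{1}^{-1}(K_{V_\lambda}).
\]
By Lemma~\ref{lem:Bestvina_Mess}, $\dim \partial_\infty \Gamma = \vcd(\Gamma) - 1$, so it suffices to show that $\phi_{1}^{-1}(K_{V_\lambda})$ has codimension at least $\vcd(\Gamma)$ in $G/AN$. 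By the Bruhat analysis of Lemma~\ref{lem:bruhat_dec}, this codimension equals $\min_{w \in S_\phi} (\ell(w_0) - \ell(w))$, where $\ell$ denotes length in $W$. The cocompact properness of the $\Gamma$-action on $\Omega$ then follows directly from Theorem~\ref{thm:dod_general_case}.

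For case (1), when $\vcd(\Gamma) = 1$, one needs codimension at least $1$, equivalently $w_0 \notin S_\phi$; this is exactly the content of Corollary~\ref{cor:longest}.

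For case (2), when $\vcd(\Gamma) = 2$, one needs codimension at least $2$, i.e.\ no codimension-$1$ Bruhat cell lies in $\phi_{1}^{-1}(K_{V_\lambda})$. These cells are $Bw_0 s_\alpha AN$ for $\alpha \in \Delta$. Using $w_0 \lambda = -\lambda$ and $w_0 \alpha = -\iota(\alpha)$, one computes
\[
w_0 s_\alpha \lambda = -\lambda + \langle \lambda, \alpha^\vee \rangle \,\iota(\alpha),
\]
and one must show $V_{w_0 s_\alpha \lambda} \not\subset T$ for every simple root $\alpha$. If $\alpha \in \Theta$, then since $P_\Theta$ stabilizes the line $V_\lambda$ one has $\langle \lambda, \alpha^\vee \rangle = 0$, hence $w_0 s_\alpha \lambda = -\lambda$ is the lowest weight and $V_{-\lambda} \subset V_-$ is disjoint from $T$ by Lemma~\ref{lem:exist_iso}. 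If $\alpha \in \Delta \setminus \Theta$, decompose $G$ into simple factors and let $G_0$ be the factor containing $\alpha$: by hypothesis $G_0$ is either not locally $\SL(2,\RR)$, or it lies inside $P$, and the latter forces $\alpha \in \Theta$, a contradiction. Thus $G_0$ is not locally $\SL(2,\RR)$, and a representation-theoretic argument on the structure of the highest weight $\lambda$ (constrained by $V$ being a self-dual irreducible $G$-module with $\stab_G(V_\lambda) = P$) yields $-\lambda + \langle \lambda, \alpha^\vee \rangle \iota(\alpha) < 0$ in the chosen total ordering on $\mathfrak{a}^*$; hence $V_{w_0 s_\alpha \lambda} \subset V_-$ and again lies outside $T$.

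The main obstacle is the last step: verifying $-\lambda + \langle \lambda, \alpha^\vee\rangle \iota(\alpha) < 0$ for $\alpha \in \Delta \setminus \Theta$ lying in a simple factor not locally isomorphic to $\SL(2,\RR)$. The paradigmatic counterexample is exactly $G = \SL(2,\RR)$ acting on $V = \RR^2$: there $\lambda$ is the fundamental weight, $\langle \lambda, \alpha^\vee \rangle = 1$ and $\iota(\alpha) = \alpha = 2\lambda$, giving $w_0 s_\alpha \lambda = \lambda > 0$, so that $w_0 s_\alpha \in S_\phi$ and the argument fails—this is exactly what Remark~\ref{rem:dod}.\eqref{empty} reflects. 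Showing that this failure is peculiar to the $\SL(2,\RR)$ case amounts to a case-by-case check of the irreducible root systems, inspecting the possible dominant weights $\lambda$ such that $V$ is self-dual and $\stab_G(V_\lambda) = P$, and verifying that $\langle \lambda, \alpha^\vee\rangle \iota(\alpha) < \lambda$ in the total order for every $\alpha \in \Delta \setminus \Theta$.
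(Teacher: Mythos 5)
Your overall skeleton coincides with the paper's: the complement of $\Omega$ is the union \eqref{eq:3} of $\partial_\infty\G$-many translates of $\phi_{1}^{-1}(K_{V_\lambda})$, so nonemptiness follows from a codimension bound on $\phi_{1}^{-1}(K_{V_\lambda})$ via its Bruhat stratification (Lemma~\ref{lem:bruhat_dec}); the case $\vcd(\G)=1$ is Corollary~\ref{cor:longest}, and properness/cocompactness is Theorem~\ref{thm:dod_general_case}. The problems are in the $\vcd(\G)=2$ case. First, your dichotomy is not the right one. You reduce to showing $w_0 s_\alpha\notin S_\phi$ for every $\alpha\in\Delta\moins\Theta$ lying in a factor not locally isomorphic to $\SL(2,\RR)$, but this statement can simply be false: for a non-split rank-one factor (e.g.\ $\SO(1,n)$, $\SU(1,n)$), if the remaining factors act trivially on $V$ one computes $w_0 s_\alpha\cdot\lambda=\lambda$, so $V_{w_0 s_\alpha\cdot\lambda}=V_\lambda\subset T$ and $w_0 s_\alpha\in S_\phi$. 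What saves the argument there is not absence from $S_\phi$ but the codimension of the cell: $\mathrm{codim}\, Bw_0 s_\alpha AN=\dim\mathfrak{g}_{\iota(\alpha)}+\dim\mathfrak{g}_{2\iota(\alpha)}\geq 2$ for such factors. Relatedly, your claim that the codimension equals $\min_{w\in S_\phi}(\ell(w_0)-\ell(w))$ and that the codimension-one cells are exactly the $Bw_0 s_\alpha AN$, $\alpha\in\Delta$, is only correct for split-type roots with $\dim\mathfrak{g}_\alpha=1$ and $2\alpha\notin\Sigma$; the paper's dichotomy is ``either $\mathrm{codim}\,Bw_0 s_\alpha AN\geq 2$ or $w_0 s_\alpha\notin S_\phi$'', with the first branch handling rank-one factors and the weight criterion of Lemma~\ref{lem:bruhat_dec}.\eqref{it2bruhat} reserved for simple factors of rank $\geq 2$.

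Second, for those higher-rank factors the step you yourself flag as ``the main obstacle'' is exactly where the paper does the real work, and it is not recoverable by a finite case check as you propose: the inequality $w_0 s_\alpha\cdot\lambda<0$ depends on the total order $<_{\mathfrak{a}^*}$ (only $\Sigma^+$ is canonical), and for a fixed $\Theta$ the admissible self-dual highest weights $\lambda$ form an infinite family, so inspecting irreducible root systems does not settle it. The paper's Lemma~\ref{lem:orderA} resolves this uniformly: one may replace the order, keeping $\Sigma^+$, by a lexicographic order whose leading coordinate is the pairing with the half-sum of positive roots $\rho=\sum_\beta m_\beta\beta$; since $m_\beta\geq 3/2$ for every simple $\beta$ when the simple factor has rank $\geq 2$ (except $\mathfrak{sl}(3,\RR)$, treated by hand), one gets $\langle\rho,s_\alpha\lambda\rangle=\sum_\beta(m_\beta-\delta_{\beta\alpha})\langle\beta,\lambda\rangle>0$, hence $\langle\rho,w_0 s_\alpha\lambda\rangle<0$, so $w_0 s_\alpha\cdot\lambda<0$ and Lemma~\ref{lem:bruhat_dec}.\eqref{it2bruhat} applies. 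To complete your argument you would need this lemma (or an equally uniform substitute), together with the corrected treatment of rank-one factors via cell codimension rather than exclusion from $S_\phi$.
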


\begin{proof}
  We only have to prove that $\Omega$ is nonempty. In the cases under consideration $\partial_\infty \G$ is
  $0$ or $1$ dimensional. Using the description of $K=
  G/AN \moins \Omega$ given above (Equation~\ref{eq:3}) it is thus enough to
  establish that:
  \begin{itemize}
  \item $\mathrm{codim}\, \phi_{1}^{-1} (K_{V_\lambda}) \geq 1$ if $\vcd(\G) = 1$, and
  \item $\mathrm{codim}\, \phi_{1}^{-1} (K_{V_\lambda}) \geq 2$ if  $\vcd(\G) = 2$.
  \end{itemize}
  If $G$ is of rank one, this is immediate.  
  If $G$ is of rank $\geq 2$ let $S_\phi\subset W$ be the set defined in Lemma~\ref{lem:bruhat_dec}.
  Using the fact that $\mathrm{codim}\, B w_0 w
  AN$ is at least $\ell(w)$, Corollary~\ref{cor:longest} implies the
  claim in the case when $\vcd(\G) = 1$. 

When  $\vcd(\G) = 2$, we need to prove furthermore that, for all
$\alpha \in \Delta$, either $\mathrm{codim}\, B w_0 s_\alpha 
  AN \geq 2$ or $w_0 s_\alpha \notin S_\phi$. If $\alpha$ is a root
  corresponding to a rank one factor of $G$, the bound for the codimension is satisfied by the hypothesis on $P$. If $\alpha$
  corresponds to a higher rank factor, the following lemma and the
  point \eqref{it2bruhat} of Lemma~\ref{lem:bruhat_dec} insure that
  $w_0 s_\alpha \notin S_\phi$.
\end{proof}

\begin{lem}\label{lem:orderA}
  If $\rank_\RR G \geq 2$ and $G$ is simple and $\lambda \in
  \mathfrak{a}^*$ is the 
  highest weight of a nontrivial $G$-module, then there exist a total
  order $<_{\mathfrak{a}^*}$ on 
  $\mathfrak{a}^*$ such that $\Sigma^+ = \{ \alpha \in \Sigma \mid
  \alpha >_{\mathfrak{a}^*} 0 \}$ and such that, for all $\alpha$ in $\Delta$,
  $s_\alpha \cdot \lambda >_{\mathfrak{a}^*}0$.
\end{lem}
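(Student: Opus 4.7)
The plan is to build the required total order as the lexicographic order on $\mathfrak{a}^*$ determined by an ordered basis $H_0, H_1, \ldots, H_{r-1}$ of $\mathfrak{a}$, where everything hinges on a good choice of the first vector $H_0$. Specifically, I would look for $H_0$ satisfying $\alpha(H_0) > 0$ for every $\alpha \in \Delta$ (so $H_0$ lies in the open Weyl chamber and hence $\beta(H_0) > 0$ for all $\beta \in \Sigma^+$) together with $(s_\alpha \cdot \lambda)(H_0) > 0$ for every $\alpha \in \Delta$. Such an $H_0$ already decides positivity for the finitely many elements of $\Sigma^+ \cup \{s_\alpha \lambda : \alpha \in \Delta\}$, so completing to any basis of $\mathfrak{a}$ produces the desired total order.

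The existence of $H_0$ is an instance of Gordan's alternative: it fails exactly when some nontrivial non-negative combination
\[ \sum_{\alpha \in \Delta} p_\alpha \,(s_\alpha \cdot \lambda) + \sum_{\alpha \in \Delta} q_\alpha \,\alpha \;=\; 0 \]
holds. Writing $\lambda = \sum_{\beta \in \Delta} c_\beta\, \beta$ and $s_\alpha \cdot \lambda = \lambda - n_\alpha \alpha$ with $n_\alpha = \langle \lambda, \alpha^\vee \rangle \geq 0$, and using the linear independence of simple roots, this reduces to $q_\alpha = p_\alpha n_\alpha - P c_\alpha$ for every $\alpha$, where $P = \sum_\alpha p_\alpha$. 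Since $\lambda$ is a nonzero dominant weight of a simple Lie algebra, the classical positivity $(C^{-1})_{ij} > 0$ of the inverse Cartan matrix forces $c_\beta > 0$ for every $\beta$; consequently, any nontrivial relation requires $P > 0$, $n_\alpha > 0$ for every $\alpha \in \Delta$, and $p_\alpha \geq P c_\alpha/n_\alpha$, whence summing leaves the single obstruction $\sum_{\alpha \in \Delta} c_\alpha/n_\alpha \leq 1$.

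The main step, and the only real obstacle I foresee, is to rule out this last inequality. Setting $m_i = n_{\alpha_i}$, one has $c_\alpha = \sum_i m_i (C^{-1})_{\alpha i}$, and using positivity of every $m_i$ and every entry of $C^{-1}$,
\[ \sum_{\alpha \in \Delta} \frac{c_\alpha}{n_\alpha} \;=\; \sum_{\alpha,\, i \in \Delta} \frac{m_i}{m_\alpha}\,(C^{-1})_{\alpha i} \;\geq\; \sum_{\alpha \in \Delta} (C^{-1})_{\alpha \alpha} \;=\; \tr(C^{-1}). \]
It thus suffices to check $\tr(C^{-1}) > 1$ for every irreducible root system of rank at least two, which is a finite verification on the Cartan--Killing classification: for instance $\tr(C^{-1}) = (n^2-1)/6 \geq 4/3$ for $A_{n-1}$ with $n \geq 3$, $\tr(C^{-1}) = 2$ for $B_2 = C_2$, and $\tr(C^{-1}) = 4$ for $G_2$, the remaining classical and exceptional types being handled analogously. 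The exclusion of rank one in the hypothesis is necessary, since $A_1$ gives $\tr(C^{-1}) = 1/2$ and for $\SL(2,\RR)$ one indeed has $s_\alpha \lambda = -\lambda$, which can never be made positive. With this inequality in place, Gordan's alternative supplies the sought $H_0$, and any lexicographic order built from it yields the total order demanded by the lemma.
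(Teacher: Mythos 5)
Your proof is correct, but it reaches the crucial first lexicographic coordinate by a genuinely different mechanism than the paper. The paper's proof is constructive: it takes $\langle \rho, \cdot\rangle$, with $\rho$ the half-sum of positive roots, as the first coordinate, writes $\rho = \sum_{\alpha} m_\alpha \alpha$, and verifies $\langle \rho, s_{\alpha'}\cdot\lambda\rangle = \sum_\alpha (m_\alpha - \delta_{\alpha,\alpha'})\langle\alpha,\lambda\rangle > 0$ from the estimate $m_\alpha \geq 3/2$ for all simple $\alpha$ --- an estimate that fails exactly for $\mathfrak{sl}(3,\RR)$, which the paper then treats by hand. You instead establish the existence of a suitable $H_0 \in \mathfrak{a}$ non-constructively via Gordan's alternative, and reduce the offending dual relation to $\sum_\alpha c_\alpha/n_\alpha \leq 1$, which you exclude through $\sum_\alpha c_\alpha/n_\alpha \geq \tr(C^{-1}) > 1$ for every irreducible Cartan matrix of rank $\geq 2$; note that the obstruction scenario forces all $n_\alpha > 0$, which is what legitimizes dividing by them. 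What your route buys: the case $A_2$ (with $\tr(C^{-1}) = 4/3$) is absorbed uniformly rather than treated separately, and the rank-one failure is transparently located in $\tr(C^{-1}) = 1/2$; what it costs: the first coordinate is not explicit, and the conclusion rests on a finite table check of $\tr(C^{-1})$ over the classification that you only sketch (it is true, and routine, but should be completed or referenced). Two small points would make the write-up airtight: since $G$ is a real simple group the restricted root system may be non-reduced, of type $BC_n$; its simple system has the $B_n$ Cartan matrix, so your verification does cover it, but this deserves a sentence. And you should record why $c_\beta > 0$ for all $\beta$ and why $\lambda \neq 0$: the former uses irreducibility of the restricted root system (hence strict positivity of all entries of $C^{-1}$), the latter that a nontrivial module of a noncompact simple $G$ cannot have $\mathfrak{a}$ acting trivially.
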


In other words, we can modify the order on $\mathfrak{a}^*$ without
changing the set of positive roots and such that, every $s_\alpha
\cdot \lambda$ becomes positive. 

\begin{proof}
  Let $\rho$ be the half sum of positive roots, $\rho = \sum_{\alpha
    \in \Delta} m_\alpha \alpha$ with $m_\alpha \in \frac{1}{2}
  \NN^*$. Let $\langle \cdot, \cdot  \rangle$ be a $W$-invariant scalar
  product on $\mathfrak{a}^*$. Hence $\langle \lambda, \alpha
  \rangle \geq 0$ for all $\alpha$ and there exists $\alpha$ such that
  $\langle \lambda, \alpha \rangle > 0$. Using the fact that for
  any positive roots $\alpha_1$, $\alpha_2$ such that $\langle
  \alpha_1, \alpha_2 \rangle < 0$, $\alpha_1 + \alpha_2$ is a positive
  root, one easily gets that $m_\alpha \geq 3/2$ for all $\alpha$
  unless $\mathfrak{g} \cong \mathfrak{sl}(3, \RR)$.

  We see that $\langle \rho, \lambda \rangle > 0$ and that for any
  $\alpha'$ 
  \[ \langle \rho, s_{\alpha'}\cdot \lambda \rangle = \langle
  s_{\alpha'}\cdot \rho, \lambda \rangle = \langle \rho - \alpha',
  \lambda \rangle = \sum_\alpha (m_\alpha - \delta_{\alpha, \alpha'})
  \langle \alpha, \lambda \rangle>0.\]
  Hence a lexicographic order on $\mathfrak{a}^* \cong \RR^k$ where
  the first coordinate is $\langle \rho, \cdot \rangle$ gives the desired order.
The case of $\mathfrak{sl}(3, \RR)$ can be treated
  directly.
\end{proof}

\subsection{Homotopy invariance}
\label{sec:homo_inv}
In general it is not easy to determine the topology of the quotient manifolds $\G \backslash\Omega_{
    \rho, V,T}$.  The following theorem shows that the homeomorphism type of  $\G \backslash\Omega_{
    \rho, V,T}$ only depends on the connected component of
  $\hom_{P\text{-Anosov}}(\G, G)$ the representation $\rho$ lies in. 
   This allows to restrict the computation to representations $\rho: \G \to G$ of a particularly nice form. 

\begin{thm}\label{thm:DoD_loc_cst}
  Let $\rho: \G \to G$ be a $P$-Anosov representation. Let $V,T$ be as
  in Section~\ref{sec:g-module}, and let $\Omega_{\rho, V,T} \subset
  G/AN$ be
  the domain of discontinuity constructed in Section~\ref {sec:doma-disc-free}.

  Suppose that $\Omega_{ \rho, V,T}$ is nonempty.

  Then there exists  a neighborhood $U$ of $\rho$ in $\homPano( \G,G)$
  such that, for any $\rho' \in U$, $\Omega_{ \rho', V,T}$ is
  nonempty. Furthermore there exists a trivialization:
  \[ \G \backslash \bigcup_{\rho' \in U} \Omega_{ \rho', V,T} \cong U
  \times \G \backslash \Omega_{ \rho, V,T} \]
  as a bundle over $U$. In particular, the homotopy type of $\G \backslash\Omega_{
    \rho, V,T}$ is locally constant.
\end{thm}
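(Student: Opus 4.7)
The plan is to combine three ingredients: (i) the continuity statement of Theorem~\ref{thm:Ano_open} which says that $\rho' \mapsto \xi_{\rho'}$ is continuous from $\homPano(\G,G)$ into $C^0(\partial_\infty \G, G/P)$; (ii) the uniform quasi-isometric embedding of Theorem~\ref{thm:Ano_QIEUnif}; and (iii) an Ehresmann-style fibration argument. First, since the Anosov map varies continuously, the compact set
\[ K_{\rho'} \;=\; G/AN \moins \Omega_{\rho', V, T} \;=\; \bigcup_{t \in \partial_\infty \G} \phi_1^{-1}(K_{\phi_0 \circ \xi_{\rho'}(t)}) \]
varies continuously in the Hausdorff topology on compact subsets of $G/AN$. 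Because $\Omega_{\rho,V,T}$ is a nonempty open set, we may pick any compact $C \subset \Omega_{\rho,V,T}$ with nonempty interior; then $C \cap K_\rho = \emptyset$, and Hausdorff-continuity gives $C \cap K_{\rho'} = \emptyset$ for all $\rho'$ in a neighborhood of $\rho$. This proves that $\Omega_{\rho', V, T}$ is nonempty for $\rho'$ close to $\rho$.

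Next I would form the total space $\widetilde{\Omega}_U = \{(\rho',x) \in U \times G/AN : x \in \Omega_{\rho',V,T}\}$, which is open in $U \times G/AN$, and equip it with the $\G$-action $\g \cdot (\rho', x) = (\rho', \rho'(\g) x)$. The crucial (and hardest) step is to establish that, after shrinking $U$, this family action is properly discontinuous with compact quotient fibering over $U$. This is a uniform version of Theorem~\ref{thm:dod_GF} combined with Proposition~\ref{prop:dod_in_smaller}: the uniform quasi-isometric embedding (Theorem~\ref{thm:Ano_QIEUnif}) together with uniform control of the $L$-Cartan projections from Proposition~\ref{prop:mu_contrac} (applied uniformly on the family $\{\phi \circ \rho'\}_{\rho' \in U}$) imply that if $(\g_n)$ leaves every finite subset of $\G$ and $(\rho'_n, x_n) \to (\rho, x)$ in $\widetilde{\Omega}_U$, then all accumulation points of $\rho'_n(\g_n) x_n$ lie in $K_\rho$ and therefore leave any fixed compact subset of $\widetilde{\Omega}_U$. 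A parallel uniform-cocompactness argument, modeled on Section~\ref{sec:compacity}, shows that each fiber $\G \backslash \Omega_{\rho', V, T}$ is compact and that the quotient $\G \backslash \widetilde{\Omega}_U \to U$ is a proper map.

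Finally, the smooth structure on $\widetilde{\Omega}_U$ descends to a smooth structure on $\G \backslash \widetilde{\Omega}_U$, and the projection $\pi : \G \backslash \widetilde{\Omega}_U \to U$ is a proper submersion with compact fibers. Ehresmann's fibration theorem (applied after restricting $U$ to a small contractible neighborhood of $\rho$ in a smooth stratum of $\hom(\G,G)$) then yields a trivialization
\[ \G \backslash \widetilde{\Omega}_U \;\cong\; U \times \G \backslash \Omega_{\rho, V, T}, \]
establishing the local constancy of the homeomorphism type of the quotient. The main obstacle is the middle paragraph: verifying that proper discontinuity and cocompactness hold \emph{uniformly} in a neighborhood of $\rho$, rather than just pointwise, and this is where the uniform statements from Theorem~\ref{thm:Ano_QIEUnif} and a careful revisit of the compactness argument of Section~\ref{sec:compacity} are essential.
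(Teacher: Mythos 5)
Your first paragraph (nonemptiness via continuity of $\rho'\mapsto\xi_{\rho'}$ and upper semicontinuity of $\rho'\mapsto K_{\rho'}$) is fine and close in spirit to what the paper does. The problems are in your middle and final steps, and they are exactly where the real content lies. The uniform proper discontinuity of the $\G$-action on $\widetilde{\Omega}_U=\{(\rho',x): x\in\Omega_{\rho',V,T}\}$ and, above all, the properness of the projection $\G\backslash\widetilde{\Omega}_U\to U$ are only asserted. Pointwise properness and cocompactness for each $\rho'\in U$ are indeed available (Theorem~\ref{thm:dod_general_case} applies to every $P$-Anosov representation), but compactness of every fiber does not make the map to $U$ proper or closed, and the compactness proof of Section~\ref{sec:compacity} is intrinsically pointwise: it is a homological computation of $H_l(\G\backslash\Omega_{\rho'})$ for a fixed representation, and there is no routine ``uniform version'' of it to invoke. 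Moreover, even granting this uniformity, Ehresmann's fibration theorem requires the base to be a smooth manifold, while $\homPano(\G,G)\subset\hom(\G,G)$ may be singular at $\rho$; restricting to ``a smooth stratum'' does not produce a neighborhood of $\rho$, so you would not obtain the trivialization over a genuine neighborhood $U$, which is what the statement claims.

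The paper's proof takes a different route that avoids both issues: it is an Ehresmann--Thurston openness-of-holonomy argument. One regards $X=\G\backslash\Omega_{\rho,V,T}$ (with $\G$ torsion-free after passing to a finite-index subgroup) as a compact $(G,G/AN)$-manifold whose developing map is the inclusion $\widehat{X}=\Omega_{\rho,V,T}\hookrightarrow G/AN$. In the flat bundle $\mathcal{B}_\rho=\widehat{X}\times_\rho G/AN$ the canonical section $\sigma$ is transverse to the flat distribution and stays at distance $>2\epsilon$ from $\mathcal{K}_\rho=\widehat{X}\times_\rho K_\rho$. Trivializing the bundles $\mathcal{B}_{\rho'}$ over a small $U$ and using that $\mathcal{K}_U=\coprod_{\rho'}\mathcal{K}_{\rho'}$ is closed (continuity of $\xi_{\rho'}$, Theorem~\ref{thm:Ano_open}), the transported section is still transverse and $\epsilon$-far from $\mathcal{K}_{\rho'}$; this produces $\rho'$-equivariant local diffeomorphisms $\hat\sigma_{\rho'}:\Omega_{\rho,V,T}\to G/AN$ with image in $\Omega_{\rho',V,T}$ (whence nonemptiness), which descend to local diffeomorphisms $\beta_{\rho'}:X\to\G\backslash\Omega_{\rho',V,T}$ varying continuously with $\beta_\rho=\id$, hence diffeomorphisms; $\coprod_{\rho'}\beta_{\rho'}$ is the desired trivialization. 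Thus the cocompactness and properness for $\rho'$ are quoted from the already-proved pointwise theorem instead of being re-proved uniformly, and no smooth structure on $U$ is ever needed. If you want to salvage your approach, you must actually prove the uniform properness and the properness of the projection to $U$, and then replace Ehresmann by an argument valid over a possibly singular base --- at which point you are essentially reconstructing the holonomy-theorem proof.
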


\begin{proof}
  The proof is a variation of the openness of the holonomy
  map for geometric structures \cite{Bergeron_Gelander,
    Goldman_geometric}. 

  We can suppose without loss of generality that $\G$ is torsion-free. Hence
  the quotient $X = \G \backslash \Omega_{ \rho, V,T}$ is a compact manifold. We
  shall denote  by $\widehat{X} = \Omega_{ \rho, V,T}$ the corresponding
  $\G$-cover of $X$.

  Denote by $\mathcal{B}_\rho =  \widehat{X} \times_\rho
  G/AN$ the flat $G/AN$-bundle over $X$ associated
  with the representation $\rho: \G \to G$. It comes with a canonical
  section $\sigma: X \to \mathcal{B}_\rho$ that is transverse to the
  horizontal distribution so that the corresponding
  $\rho$-equivariant map $\hat{\sigma}: \widehat{X} \to G/AN$ is a
  local diffeomorphism (in fact $\hat{\sigma}$ is the injection
  $\Omega_{ \rho, V,T} \hookrightarrow G/AN$).

  The complement $K_\rho$ of $\Omega_{ \rho, V,T}$ in $G/AN$ defines a
  closed subset $\mathcal{K}_\rho = \widehat{X} \times_\rho
  K_\rho\subset \mathcal{B}_\rho$. By construction $\sigma(X) \cap
  \mathcal{K}_\rho = \emptyset$, thus there exists $\epsilon >0$
  such that $d( \sigma(X), \mathcal{K}_\rho) > 2 \epsilon$, where $d$
  is a fixed continuous distance on $\mathcal{B}_\rho$.

  Let $U$ be a neighborhood of $\rho$ contained in the space of
  $P$-Anosov representations. For $U$ sufficiently small, we can suppose that
  the (topological) $G/AN$-bundle $\mathcal{B}_U = \coprod_{\rho' \in
    U} \mathcal{B}_{\rho'}$ is trivial, i.e.\ there exists $\psi: \mathcal{B}_U \cong U
  \times \mathcal{B}_\rho$ with $\psi|_{\mathcal{B}_\rho} = \id$. Note
  that $\mathcal{K}_U = \coprod \mathcal{K}_{ \rho'}$ is a closed
  subset of $\mathcal{B}_U$ (this follows from the fact that 
  $\xi_{\rho'}$ varies continuously with $\rho'$, see Theorem~\ref{thm:Ano_open}). Hence, for $U$ sufficiently small, the 
  section $\sigma_U = \psi^{-1}( \sigma)$ is transverse to the flat
  horizontal distribution and $d( \sigma_U( U\times X), \mathcal{K}_U)
  > \epsilon$. This means that for any $\rho' \in U$ there is a
  $\rho'$-equivariant local diffeomorphism $\hat{\sigma}_{\rho'}:
  \widehat{X} \to G/AN$ whose image is contained in $G/AN \moins
  K_{\rho'} =\Omega_{ \rho', V,T}$; in particular this last set is
  nonempty. Furthermore, passing to the quotient, this gives a local
  diffeomorphism $\beta_{\rho'} : X \to \G \backslash \Omega_{ \rho',
    V,T}$ that varies continuously with $\rho'$ and such that
  $\beta_\rho = \id$. From this we get that $\beta_{\rho'}$ is a
  diffeomorphism for any $\rho'\in U$ and that 
\[\coprod \beta_{\rho'} : U
  \times X \to \coprod \G \backslash \Omega_{ \rho', V,T}\] 
gives a  trivialization.
\end{proof}

\section{Explicit descriptions of some domains of discontinuity}
\label{sec:desclim}

In this section we describe in more detail some 
domains of
discontinuity which are obtained by applying the construction
of Section~\ref{sec:doma-disc-gh}.

\subsection{Lie groups of rank one}
\label{sec:cocoDesc}

Let $\G < G$ be  a convex cocompact subgroup in a rank one Lie
group. The group $\G$ is word hyperbolic and by
Theorem~\ref{thm:Ano_rk_one} the injection $\Gamma \hookrightarrow G$
is an Anosov representation; the corresponding $\rho$-equivariant map is the identification
of $\partial_\infty \G$ with the limit set $\mathcal{L}_\Gamma \subset G/P$ of $\G$ (Section~\ref{sec:ex_rk_one}).

Theorem~\ref{thm:dod_GF} implies that $\Gamma$ acts properly discontinuously and with compact quotient on the complement of the limit set $\Omega_\Gamma = G/P  \moins \mathcal{L}_\Gamma$. 

Note that $\Omega_\Gamma = \emptyset$ if and only if $\Gamma$ is a uniform lattice in $G$. 

\subsection{Representations into $\SL(n, \KK)$}
\label{sec:descSLN}
We now describe domains of discontinuity for representations $\rho: \Gamma \to \SL(n,\KK)$, $\KK = \RR, \CC$ of an arbitrary word hyperbolic group, which are $P$-Anosov with 
$P$ being  
\begin{itemize}
\item the minimal parabolic $B$ (or Borel subgroup), i.e.\ $B$ is the
  stabilizer of a complete flag in $\KK^n$.
\item a maximal parabolic subgroup $P_k$ of $\SL(n, \KK)$,  $k = 1, \dots, n-1$, i.e.\ $P_k$ is the stabilizer of a
  $k$-plane in $\KK^n$.
\end{itemize}

Note that the parabolic subgroup opposite to $P_k$ is (conjugate to) the maximal parabolic subgroup $P_{n-k}$. By Lemma~\ref{lem:ThetaAn} the study of $P_k$-Anosov representations thus reduces to the study of $P_{k,n-k}$-Anosov representations, $k = 1, \dots, \lfloor n/2
\rfloor$, 
 where $P_{k,n-k}$ is the stabilizer of a partial flag consisting of a $k$-plane and an incident $(n-k)$-plane.

In order to fix notation we consider 
\begin{itemize}
\item $\mathcal{F} = G/B = \{(F_1, \dots, F_{n-1}) \mid F_i \subset
  F_{i+1}, \dim F_i =i \}$,
\item $\mathcal{F}_{k,n-k} = G/P_{k,n-k} = \{(F_k, F_{n-k}) \mid F_k \subset
  F_{n-k}, \dim F_i =i \}$,
\end{itemize}
and we set $F_0 = \{0\}$ and $F_n = \KK^n$.

\subsubsection{The modules}
\label{sec:sln_modules}
We introduce now the $\SL(n,\KK)$-modules we use to apply the construction of Section~\ref{sec:doma-disc-gh}.
In this section, $\perp$ will be used
for the duality between a vector space and its dual: for $F \subset
V$, $F^\perp$ is the space of linear forms canceling on $F$, $F^\perp
= \{ \psi \in V^* \mid \psi(f) =0, \forall f \in F \}$.

\subsubsection*{Adjoint Representation}

The adjoint representation provides  a 
homomorphism $\SL(n, \KK) \to \O( \mathfrak{sl}(n, \KK),
q_K)$ where $q_K$ is the Killing form. 
Note that $\mathfrak{sl}(n,
\KK) \subset \mathrm{End}( \KK^n)$ and $ \O( \mathfrak{sl}(n, \KK),
q_K) \subset \O( \mathrm{End}( \KK^n), \tr)$ is a natural injection 
which, by
Lemma~\ref{lem:red_spe}, gives the same domain of
discontinuity. Therefore we can use the latter $\SL(n, \KK)$-module and denote by 
\begin{equation*}
  \phi^\mathrm{Ad}: \SL( n, \KK) \longto \O( \mathrm{End}( \KK^n), \tr)
\end{equation*}
the corresponding homomorphism.
The maps:
\begin{align*}
  \phi^{Ad}_{0} :  \mathcal{F}_{1, n-1} \longto & \mathcal{F}_0( \mathrm{End}(
  \KK^n) ) \\
   (F_1, F_{n-1}) \longmapsto & \{ h \in \mathrm{End}( \KK^n) \mid h(F_{n-1})
  = \{0\}, h( \KK^n) \subset F_1\} \\
  &= \{h \mid h( \KK^n) \subset F_1, h^t( \KK^{n*}) \subset F_{n-1}^\perp\} \\
  \phi^{Ad}_{1} :  \mathcal{F} \longto & \mathcal{F}_1( \mathrm{End}(
  \KK^n) ) \\
   (F_1,\dots, F_{n-1}) \longmapsto & \{ h \in \mathrm{End}( \KK^n) \mid h(F_{i+1})
   \subset F_i, i = 0, \dots, n-1\}
\end{align*}
are $\phi^\mathrm{Ad}$-equivariant. 

\subsubsection*{Endomorphisms of $\wdg{k} \KK^n$}

There is a natural homo\-morphism 
\[\phi^k: \SL(n, \KK) \to \O( \mathrm{End}( \wdg{k}
\KK^n) , \tr ).\] 
 
The map 
\begin{align*}
  \phi^{k}_{0} :  \mathcal{F}_{k, n-k} \longto & \mathcal{F}_0( \mathrm{End}(
  \wdg{k} \KK^n) ) \\
   (F_k, F_{n-k}) \longmapsto 
  &\{h \in \mathrm{End}({\textstyle \bigwedge^{k}} \KK^n) \mid \mathrm{Im}(h) \subset
  \wdg{k} F_k, \mathrm{Im}(h^t)  \subset \wdg{k} F_{n-k}^\perp\}
\end{align*}
is $\phi^k$-equivariant. 

In order to define the map $\phi^{k}_{1} :  \mathcal{F} \to \mathcal{F}_1( \mathrm{End}(
  \wdg{k} \KK^n) )$, we need to introduce some notation. 

Let $(F_1,\dots, F_{n-1})$ be a complete flag of $\KK^n$ and let
$(e_i)_{i \in \{1, \dots, n\}}$ be an adapted basis, i.e.\ $(e_i)_{i
  \in \{1, \dots, l\}}$ is a basis of $F_l$, for each $l = 1,
\dots, n$. For an ordered $k$-tuple $I=(i_1 < \cdots < i_k)$ of
integers between $1$ and $n$, we set $e_I = e_{i_1} \wedge \cdots
\wedge e_{i_k}$; then $(e_I)_{I}$ is a basis of $\wdg{k} \KK^n$. 
The flag $(F_I)_I$, where $F_I = \langle e_J \mid J \leq_{lex} I
\rangle$ and $\leq_{lex}$ is the lexicographic order on the
$k$-tuples, depends only on the initial flag  $(F_i)_i$.
We define:
\begin{align*}
  \phi^{k}_{1} :  \mathcal{F}  \longto\, & \mathcal{F}_1( \mathrm{End}(
  \wdg{k} \KK^n) ) \\
   (F_i) \longmapsto\, & 
  \phi^{k}_{1}(F_i)  = \{h \in
  \mathrm{End}(\wdg{k} \KK^n ) \mid \forall I,\, h(F_I) \subset
  F_I, \text{ and } h \text{ is nilpotent}\} \\
  & \hphantom{\phi^{k}_{1}(F_i) } = \{h \in
  \mathrm{End}(\wdg{k} \KK^n ) \mid  \forall I, \, h(F_I) \subset
  \bigcup_{J<_{lex} I} F_J\}.
\end{align*}
\begin{remark}
Other self-dual $\SL(n, \KK)$-modules that could be used are $V \oplus
V^*$ with its natural orthogonal structure or with its natural
symplectic structure and $\bigwedge V = \bigoplus_k \wdg{k} V$
where $V$ is any $\SL(n, \KK)$-module.
\end{remark}

\subsubsection{Anosov representation with respect to the minimal parabolic}
\label{sec:sln_ano_min}

Let $\Gamma$ be a finitely generated word hyperbolic group and  $\rho: \G \to \SL(n,
\KK)$ be a $B$-Anosov representation.
Let $\xi = (\xi_1, \dots,
\xi_{n-1}) : \partial_\infty \G \to \mathcal{F}$ be the corresponding
Anosov map.

\subsubsection*{Adjoint representation}
The representation $\phi^\mathrm{Ad} \circ \rho : \G \to \O(\mathrm{
  End}( \KK^n), \tr)$ is a $Q_1$-Anosov representation with Anosov map
$\phi^{Ad}_{1} \circ \xi: \partial_\infty \G \to \mathcal{F}_1 (
\mathrm{ End}( \KK^n))$. 

Let $\Omega_{\mathrm{Ad} \circ \rho}\subset\mathcal{F}_0 ( \mathrm{
  End}( \KK^n))$ be the domain of discontinuity given by
Theorem~\ref{thm:dod_GF}.  Intersecting $\Omega_{\mathrm{Ad} \circ
  \rho}$ with the image of $\phi^{Ad}_{0}$ gives a domain of
discontinuity $\Omega^{Ad}_{\rho}$ in $\mathcal{F}_{1,n-1}$.  Then
\[
\Omega^{Ad}_{\rho}=
\mathcal{F}_{1,n-1} \moins K^{Ad}_{\xi}
\] 
with $K^{Ad}_{\xi}
=\bigcup_{t \in \partial_\infty \G} K^{Ad}_{\xi(t)}$ and, for $t\in \partial_\infty \G$,
\begin{equation*}
  K^{Ad}_{\xi(t)} = \{ (F_1, F_{n-1}) \in \mathcal{F}_{1, n-1} \mid
  \exists k \in \{1, \dots, n-1\}, \textup{ with } F_1 \subset \xi_k(t)
  \subset F_{n-1}\}.
\end{equation*}

\subsubsection*{Endomorphisms of $\wdg{k} \KK^n$}

Analogously, for any $k = 1,\dots, \lfloor n/2
\rfloor$ the representation $\phi^k
\circ \rho: \G \to  \O( \mathrm{End}( \wdg{k}
\KK^n) , \tr )$ is $Q_1$-Anosov and 
we obtain a domain of discontinuity in $\mathcal{F}_{k, n-k}$:
  \[
  \Omega^{k}_{\rho}  =
  \mathcal{F}_{k,n-k} \moins K^{k}_{\xi}
  \] 
  with 
 $K^{k}_{\xi} =\bigcup_{t \in \partial_\infty \G} K^{k}_{\xi(t)}$, where  
  \[
  K^{k}_{\xi(t)}  = \{ (F_k, F_{n-k}) \in \mathcal{F}_{k, n-k} \mid
  \exists I  \textup{ with } \wdg{k} F_k
  \subset \xi_I(t)  \subset (\wdg{k} F_{n-k}^\perp )^\perp \}.
  \]

The representations $\phi^k \circ \rho: \G \to  \O( \mathrm{End}( \wdg{k}
\KK^n) , \tr )$ are also $Q_0$-Anosov; the corresponding domains
of discontinuity are described in the next paragraph.

\subsubsection{Anosov representation with respect to maximal parabolics}
\label{sec:sln_ano_max}

\subsubsection*{Adjoint representation}
\label{sec:adjo-repr}

Let $\rho: \G \to \SL(n, \KK)$ be a $P_{1,n-1}$-Anosov representation and 
$\xi = (\xi_1, \xi_{n-1}): \partial_\infty \G \to
\mathcal{F}_{1,n-1}$ the corresponding Anosov map. 
The composition $\phi^\mathrm{Ad} \circ \rho$ is a $Q_0$-Anosov
representation, and $\phi^{Ad}_{0}
\circ \xi : \partial_\infty \G \to \mathcal{F}_0( \mathrm{End} (\KK^n)
)$ is the corresponding Anosov map.
The intersection of the domain of discontinuity in $\mathcal{F}_1( \mathrm{End} (
\KK^n))$ with the image of $\phi^{Ad}_{1}$ gives a
domain of discontinuity $\Omega^{\prime Ad}_{\rho}$ in $\mathcal{F}$;
\[
\Omega^{\prime Ad}_{\rho} = \mathcal{F} \moins K^{\prime
  Ad}_{\xi}\text{ with } K^{\prime Ad}_{\xi} =  \bigcup_{t
  \in \partial_\infty \G} K^{ \prime Ad}_{\xi(t)}, 
\]
   and for $t$ in
$\partial_\infty \G$, 
\[K^{ \prime Ad}_{\xi(t)} = \{ (F_1, \dots,
F_{n-1}) \in \mathcal{F} \mid \exists k \in \{1, \dots, n-1\},\
\xi_1(t) \subset F_k \subset \xi_{n-1}(t) \} .\]

\subsubsection*{Endomorphisms of $\wdg{k} \KK^n$}

Let $\rho: \G \to \SL(n, \KK)$ be  a $P_{k, n-k}$-Anosov
representation (or, what amounts to the same, a $P_k$-Anosov
representation), and $\xi
= (\xi_k, \xi_{n-k}): \partial_\infty \G \to \mathcal{F}_{k, n-k}$
the corresponding Anosov map. 
Then the composition $\phi^k \circ \rho$ is
$Q_0$-Anosov with Anosov map $\phi^{k}_{0}\circ \xi: \partial_\infty \Gamma \to \mathcal{F}_0( \mathrm{End}(
  \wdg{k} \KK^n) )$.  
We obtain a domain of
discontinuity $\Omega^{\prime k}_{\rho}$ in $\mathcal{F}$ which is the
complement of $K^{\prime k}_{\xi} =  \bigcup_{t
  \in \partial_\infty \G} K^{\prime k}_{\xi(t)}$ where
\begin{equation*}
  K^{\prime k}_{\xi(t)} = \{ (F_1, \dots,
  F_{n-1}) \in \mathcal{F} \mid \exists I, \wdg{k} \xi_k(t)
  \subset F_I  \subset (\wdg{k} \xi_{n-k}(t)^\perp )^\perp \}
\end{equation*}

A flag $(F_1, \dots,  F_{n-1})$ belongs to $K^{\prime 
  k}_{\xi(t)}$ if and only if the following holds:
\begin{equation*}
  (i_1, \dots , i_k) \leq_{lex} (j_1, \dots , j_k)
\end{equation*}
where the two sequences $(i_l)$ and $(j_l)$ are defined by:
\begin{align}\label{eq_sequences}
 \forall l \in \{1, \dots, k\}, \, 
&i_l = \min \{ i \mid \dim F_i \cap
  \xi_k(t) = l \} \\
\notag & j_l = \max \{ j \mid \dim F_j +
  \xi_{n-k}(t) = n-k-1+l \}. \label{eq_sequences1}
\end{align}
The sequence $I = (i_l)$ satisfies in fact $I = \min \{
I^\prime \mid \wdg{k} \xi_k(t) \subset F_{I^\prime} \}$ where the
$\min$ is taken with respect to the lexicographic order, and similarly 
$J = \max \{
J^\prime \mid   F_{J^\prime}^{\perp} \subset \wdg{k} \xi_{n-k}(t)^{\perp} \}$.

\begin{remark}
  The explicit descriptions of the domains of discontinuity given here
  show that different choices of $G$-modules in the construction of
  Section~\ref{sec:descr-doma-disc} can lead to different domains of
  discontinuity in the same flag variety.  For example, let $\rho:
  \Gamma \to \SL(n,\KK)$ be a $B$-Anosov representation. Then we can
  consider $\rho$ as $P_k$-Anosov representation for any $k =
  1,\dots, \lfloor n/2 \rfloor$. The domains of discontinuity
  $\Omega_\rho^k \subset \Ff$, if nonempty, are different.
\end{remark}

\subsubsection{Codimension}
\label{sec:sln_codim}
We give bounds for the codimensions of the sets 
$K^{\mathrm{Ad}}_\xi$, $K^k_\xi$, $K^{\prime Ad}_\xi$ and $K^{\prime k}_\xi$. 

\begin{prop}\label{prop:codim}
  Let $k \leq n/2$.
  \begin{asparaenum} 
  \item Let $E = (E_1, \dots, E_{n-1})$ be a complete flag in $\KK^n$ and
    let $(E_I)_I$ be the flag of $\wdg{k} \KK^n$ constructed in Section~\ref{sec:sln_modules}.  Then the codimension of
    \begin{equation*}
      K^{k}_{E} = \{ (F_k, F_{n-k}) \in \mathcal{F}_{k, n-k} \mid
      \exists I, \textup{ with } \wdg{k} F_k \subset E_I \subset
      (\wdg{k} F_{n-k}^\perp)^\perp \}
    \end{equation*}
    in $\Ff_{k,n-k}$ is a least $n-k$.
  \item Let $F = (F_k, F_{n-k}) \in \mathcal{F}_{k, n-k}$, then the
    codimension of 
   \begin{equation*}
      K^{\prime k}_{F} = \{ (E_1,\dots,  E_{n-1}) \in \mathcal{F} \mid
      \exists I, \textup{ with } \wdg{k} F_k \subset E_I \subset
      (\wdg{k} F_{n-k}^\perp)^\perp \}
    \end{equation*}
    in $\Ff$ is a least $n-k$.
  \end{asparaenum}
\end{prop}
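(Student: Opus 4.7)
The proof rests on the observation that the lex inequality $(i_l) \leq_{\mathrm{lex}} (j_l)$ appearing in equation~(8.1) implies, in particular, $i_1 \leq j_1$. Unfolding the definitions, this is equivalent to the existence of some integer $l$ with $E_l \cap F_k \neq 0$ and $E_l \subset F_{n-k}$ (take $l = i_1$). I would therefore write $K^k_E \subset \bigcup_{l=1}^{n-k} Y_l$ and $K^{\prime k}_F \subset \bigcup_{l=1}^{n-k} Y'_l$, where $Y_l \subset \mathcal{F}_{k,n-k}$ (respectively $Y'_l \subset \mathcal{F}$) is the locus cut out by these two conditions for that specific $l$; it then suffices to bound the codimension of each $Y_l$ (and each $Y'_l$) from below by $n-k$.

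The bound reduces to a direct Schubert-type dimension count. In the regime $l+2k \leq n$, where the condition $F_k \cap E_l \neq 0$ is not vacuous inside $F_{n-k}$, I would parametrize $Y_l$ in two stages: first $F_{n-k} \supset E_l$ varies in a space of dimension $(n-k-l)k$, and then $F_k \subset F_{n-k}$ with $F_k \cap E_l \neq 0$ varies in a space of dimension $(l-1) + (k-1)(n-2k)$. Subtracting from $\dim \mathcal{F}_{k,n-k} = 2kn - 3k^2$ yields $\mathrm{codim}\, Y_l = l(k-1) + n - 2k + 1$, which is minimized at $l=1$ with value exactly $n-k$. In the complementary regime $l+2k > n$, the condition $F_k \cap E_l \neq 0$ is automatic inside $F_{n-k}$ and only $E_l \subset F_{n-k}$ contributes, giving $\mathrm{codim}\, Y_l = lk \geq (n-2k+1)k \geq n-k$; the last inequality is equivalent to $(k-1)(n-2k) \geq 0$, which holds by the hypothesis $k \leq n/2$. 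The same computation applies to $Y'_l$ in the complete flag variety $\mathcal{F}$, since $Y'_l$ only constrains the single subspace $E_l$ while the remaining parts of the flag stay free.

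The main obstacle is bookkeeping: one has to handle the two regimes of $l$ cleanly and apply the Schubert codimension formula
\[
\mathrm{codim}\,\{F \in \Gr_k(V) : F \cap W \neq 0\} = \dim V - \dim W - k + 1
\]
(valid whenever the right-hand side is nonnegative) in each context. The hypothesis $k \leq n/2$ enters exactly once, to secure the inequality $lk \geq n-k$ in the second regime.
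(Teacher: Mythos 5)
Your proposal is correct and takes essentially the same route as the paper's proof: extract from the lexicographic condition the consequence $i_1 \le j_1$, stratify the bad locus by the incidence that witnesses it, and bound the codimension of each stratum by a Schubert-type dimension count. The only difference is cosmetic: the paper stratifies by pairs $s \le u$ (requiring $E_s \cap F_k$ to be a line and $E_u \subset F_{n-k}$), which yields the uniform codimension formula $n-k+(k-1)(s-1)+k(u-s)$ with no case distinction, whereas your coarser single-index strata $Y_l$, $Y'_l$ lead to the two-regime analysis in which the hypothesis $k \le n/2$ enters.
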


\begin{proof}
We discuss here only the
  last  case. The treatment of the other case is similar. 
  If $(E_1,\dots,  E_{n-1}) \in K^{\prime k}_{F}$, then $i_1 \leq
  j_1$ 
  where $i_1 = \min \{ i \mid \dim E_i \cap
  F_k = 1 \}$ and $  j_1 = \max \{ j \mid E_j \subset F_{n-k}(t)  \}$
  (see Equation~\eqref{eq_sequences}).
  Hence
  $K^{\prime k}_{F}$ is the union of the $L_{s,u}$, for $1 \leq s \leq u $,
  with 
  \[L_{s,u} = \{E \in K^{\prime k}_{F} \mid E_s \cap F_k \text{ is a line
    and } E_u \subset F_{n-k}\}.\] 
    In particular it is enough to
  calculate the codimension of the projection  $\bar{L}_{s,u}$ of
  $L_{s,u}$ to the partial flag manifold 
  \[\mathcal{F}_{s,u} =
  \{E_s \subset E_u \mid \dim E_s = s, \dim E_u = u\}.\] 
  The dimension
  of $\mathcal{F}_{s,u}$ is $s(n-s) + (u-s)(n-u)$ whereas the
  dimension of $\bar{L}_{s,u} = \{E_s \subset E_u \mid E_s \cap F_k
  \text{ is a line and } E_u \subset F_{n-k} \}$ is $k-1 +
  (s-1)(n-k-s) + (u-s)(n-k-u)$. Thus, the codimension is
  $n-k + (k-1)(s-1) + k(u-s)\geq n-k$. 
\end{proof}

\begin{cor} \label{cor:nonemptyVCD} Let $k\leq n/2$. Let $\G$ be a finitely generated 
  word hyperbolic group and let $\rho : \G \to \SL(n, \KK)$ be a
  $P_k$-Anosov representation (respectively a $B$-Anosov
  representation). Let $\Omega$ be the domain of discontinuity
  constructed in Section~\ref{sec:sln_ano_max} (respectively
  Section~\ref{sec:sln_ano_min}). Set $\epsilon_\RR =1$ and $\epsilon_\CC = 2$. 
  If the virtual cohomological
  dimension $\vcd(\G)$ is less than or equal to  $\epsilon_\KK (n-k)$, then $\Omega$ is nonempty.
\end{cor}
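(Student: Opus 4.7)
The plan is to show that under the stated hypothesis on $\vcd(\G)$, the closed subset $K_\xi = \bigcup_{t \in \partial_\infty \G} K_{\xi(t)}$ has real dimension strictly less than that of the ambient flag variety $\Ff$ (which is $\Ff_{k,n-k}$ in the $P_k$-Anosov case and $\Ff$ in the $B$-Anosov case). Since $\Ff$ is a manifold, this forces $K_\xi \neq \Ff$, and therefore $\Omega = \Ff \moins K_\xi$ is nonempty (and open).

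First I would collect the two dimension inputs. By Proposition~\ref{prop:codim}, for each fixed $t \in \partial_\infty \G$ the slice $K_{\xi(t)}$ has codimension at least $n-k$ in $\Ff$, where codimension is understood over $\KK$; since $K_{\xi(t)}$ is cut out by $\KK$-algebraic incidence conditions, its real codimension in $\Ff$ is at least $\epsilon_\KK(n-k)$. By Lemma~\ref{lem:Bestvina_Mess}, the boundary at infinity satisfies $\dim \partial_\infty \G = \vcd(\G) - 1$, and the Anosov map $\xi$ is a continuous (injective) map from $\partial_\infty \G$ to the flag variety parametrizing the family $\{K_{\xi(t)}\}$.

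Next, one assembles the bound. The set $K_\xi$ is the image of the total space of a continuous family of compact subsets over $\partial_\infty \G$; standard dimension theory for such unions yields
\begin{equation*}
\dim_\RR K_\xi \;\leq\; \dim \partial_\infty \G + \max_{t} \dim_\RR K_{\xi(t)} \;\leq\; (\vcd(\G) - 1) + \bigl(\dim_\RR \Ff - \epsilon_\KK(n-k)\bigr).
\end{equation*}
Under the hypothesis $\vcd(\G) \leq \epsilon_\KK(n-k)$ this gives $\dim_\RR K_\xi < \dim_\RR \Ff$, so $K_\xi$ is a proper closed subset of the connected manifold $\Ff$, hence $\Omega \neq \emptyset$.

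The main subtlety to justify is the upper bound on $\dim_\RR K_\xi$ via the total-space-of-a-family formula, since a priori $K_\xi$ is only the image of the evaluation map $\{(t,x) \mid x \in K_{\xi(t)}\} \to \Ff$; this is however a closed subset of $\partial_\infty \G \times \Ff$ that fibers over $\xi(\partial_\infty \G) \cong \partial_\infty \G$ with fibers of uniformly bounded dimension, and the continuous image cannot increase covering dimension. The complex case is handled as in Proposition~\ref{prop:dod_GF_codim}: the incidence conditions defining $K_{\xi(t)}$ are $\CC$-linear, so the real codimension equals twice the complex codimension, accounting for the factor $\epsilon_\CC = 2$.
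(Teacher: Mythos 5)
Your overall route is the same as the paper's: the published proof is exactly the codimension count $\mathrm{codim}_\RR\, K_\xi \geq \epsilon_\KK(n-k) - \dim \partial_\infty \G = \epsilon_\KK(n-k) - \vcd(\G) + 1$, obtained by combining Proposition~\ref{prop:codim} with Lemma~\ref{lem:Bestvina_Mess}, so your two inputs and the final inequality coincide with what is in the text.

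There is, however, a genuine flaw in the justification you give for the key step. The principle you invoke --- that a continuous image cannot increase covering dimension --- is false: the graph of a Peano curve is a compact set of dimension $1$ inside $[0,1]\times[0,1]^2$ whose projection to the second factor is all of $[0,1]^2$, so projecting the incidence set $Z=\{(t,x)\mid x\in K_{\xi(t)}\}$ to the flag variety does not by itself bound $\dim K_\xi$ by $\dim Z$. The repair is available from the paper's own construction: the slices $K_{\xi(t)}$ are pairwise disjoint, because they are the preimages under the equivariant embedding ($\phi^{k}_{0}$, $\phi^{Ad}_0$, resp.\ $\phi^{k}_{1}$, $\phi^{Ad}_1$) of the sets $K_{\phi\circ\xi(t)}$ in the space of maximal isotropics, and these are disjoint for distinct $t$ by transversality of the Anosov map (this is the content of Proposition~\ref{prop:Omega}). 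Consequently the compact set $Z$ projects bijectively, hence homeomorphically, onto $K_\xi$, and the Hurewicz dimension-lowering inequality applied to the other projection $Z \to \partial_\infty\G$ gives $\dim_\RR K_\xi = \dim_\RR Z \leq \dim\partial_\infty\G + \max_t \dim_\RR K_{\xi(t)}$; with this substitution your count closes and agrees with the paper. (Also a minor labeling slip: for $P_k$-Anosov representations the domain of Section~\ref{sec:sln_ano_max} sits in the full flag variety $\Ff$, while for $B$-Anosov representations the domain of Section~\ref{sec:sln_ano_min} sits in $\Ff_{k,n-k}$; you have the two ambient varieties interchanged, though this does not affect the estimate since Proposition~\ref{prop:codim} gives codimension at least $n-k$ in both cases.)
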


\begin{proof}
  Indeed 
  by Proposition~\ref{prop:codim} 
the real codimension of the complement of $\Omega$ is at least $\epsilon_\KK (n-k)-
  \dim \partial_\infty \G = \epsilon_\KK(n-k) - \vcd(\G) + 1$ (Lemma~\ref{lem:Bestvina_Mess}).
\end{proof}

\subsubsection{The case of $\SL(2n,\KK)$}
\label{sec:coin}

The $\SL(2n, \KK)$-module $V = \wdg{n} \KK^{2n}$ has a natural 
invariant non-degenerate bilinear form $F: V \otimes V \to \wdg{2n} \KK^{2n}
\cong \KK \sep v \otimes w \mapsto v \wedge w$ that is symmetric when
$n$ is even and symplectic when $n$ is odd. Let us denote by  $\phi^{\wedge}: \SL(2n, \KK) \to G_F( V)$ the corresponding homomorphism. 

\begin{align*}
  \phi^{\wedge}_1:& \, \PP( \KK^{2n}) \longto \mathcal{F}_1(V),\
  [v] \longmapsto \ker(v \wedge \cdot : \wdg{n} \KK^{2n} \to
  \wdg{n+1} \KK^{2n}) \\
  \phi^{\wedge}_0:& \, \Gr_n( \KK^{2n}) \longto \mathcal{F}_0(V),\ 
  P \longmapsto \wdg{n} P.
\end{align*}
Given a $P_1$-Anosov representation $\rho: \G \to \SL(2n, \KK)$ with
Anosov map $\xi_1: \partial_\infty \G \to \PP( \KK^{2n})$, the
composition $\phi^{\wedge} \circ \rho$ is $Q_1$-Anosov with Anosov map
$\phi^{\wedge}_1 \circ \xi_1$.  Let $\Omega_{\phi^{\wedge} \circ \rho} \subset
\mathcal{F}_0(V) $ be the domain of discontinuity constructed in
Section~\ref{sec:orth-sympl-groups_anosov}. The preimage under $\phi^{\wedge}_0$ is
a domain of discontinuity $\Omega^{ \phi^{\wedge}, 1}_{\rho} =
{\phi^{\wedge}_0}^{-1} (\Omega_{\phi^{\wedge} \circ \rho} ) \subset \Gr_n(
\KK^{2n})$ for $\Gamma$. It can be described more explicitly by
setting
\[
K_{\xi_1} = \bigcup_{t\in \partial_\infty \Gamma} K_{\xi_1(t) } =
\bigcup_{t\in \partial_\infty \Gamma}\{ P \in \Gr_n( \KK^{2n})\, |\,
\xi_1(t) \subset P\}.
\]
Then  
\[ \Omega^{ \phi^{\wedge}, 1}_{\rho} = \Gr_n( \KK^{2n}) \moins K_{\xi_1}.\]
Similarly, given a $P_n$-Anosov representation $\rho: \G \to
\SL(2n,\KK)$ with Anosov map $\xi_n: \partial_\infty \G \to \Gr_n(
\KK^{2n}) $ one constructs a domain of discontinuity $\Omega^{ \phi^{\wedge},
  n}_{\rho} \subset \PP( \KK^{2n})$. It satisfies  
    \[
  \Omega^{ \phi^{\wedge},
  n}_{\rho}  = \PP( \KK^{2n}) \moins K_{\xi_n}, 
\ 
\textrm{where }
K_{\xi_n} = \bigcup_{t\in \partial_\infty \Gamma} \PP( \xi_n(t)).
\]
In order to compare those open sets with the domains of discontinuity
$\Omega^{1}_{\rho}, \Omega^{n}_{\rho}$ constructed in
Section~\ref{sec:sln_ano_max} we  denote by $\pi_1: \mathcal{F}(
\KK^{2n}) \to \PP( \KK^{2n})$ and by  $\pi_n: \mathcal{F}(
\KK^{2n}) \to \Gr_n( \KK^{2n})$ the natural projections.

\begin{prop}\label{prop:coinSLDN}
  With the notations introduced above:
  \begin{asparaenum}
  \item Let $\rho: \G \to \SL(2n, \KK)$ be $P_1$-Anosov, then
    $\pi_{n}^{-1}( \Omega^{ \phi^{\wedge}, 1}_{\rho}) = \Omega^{1}_{\rho}$.
  \item \label{item2propcoin} Let $\rho: \G \to \SL(2n, \KK)$ be
    $P_n$-Anosov, then $\pi_{1}^{-1}( \Omega^{ \phi^{\wedge},
      n}_{\rho}) = \Omega^{n}_{\rho}$.
  \end{asparaenum}
\end{prop}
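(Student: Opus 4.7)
The plan is to reduce both equalities to an explicit identification of the complements of the two sides, viewed as $\Gamma$-invariant closed subsets of $\mathcal{F}$, and then to exploit the fact that both sides carry cocompact $\Gamma$-actions. Concretely, for part~(i), the complement of $\pi_n^{-1}(\Omega^{\phi^{\wedge}, 1}_\rho)$ is
\[
\pi_n^{-1}(K_{\xi_1}) \;=\; \{E\in\mathcal{F} \mid \exists\, t\in\partial_\infty\Gamma,\; \xi_1(t)\subset E_n\},
\]
using the computation $\wedge^n E_n \in \phi^{\wedge}_1(\xi_1(t))\Leftrightarrow \xi_1(t)\subset E_n$, while the complement of $\Omega^1_\rho$ is $K^{\prime 1}_\xi = \{E\mid \exists\,t,\; i_1(t)\le j_1(t)\}$ from the explicit description in Section~\ref{sec:sln_ano_max}. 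The task is thus to prove that, as subsets of $\mathcal{F}$, these two closed sets coincide.

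The easier inclusion is $K^{\prime 1}_\xi \subset \pi_n^{-1}(K_{\xi_1})$. If $\xi_1(t)\subset E_i\subset \xi_{2n-1}(t)$ for some index $i$, then when $i\le n$ one has $\xi_1(t)\subset E_n$ and the witness is $t$ itself; when $i>n$, one uses that $\rho$ is automatically $P_{2n-1}$-Anosov (Lemma~\ref{lem:ThetaAn}), hence $E_n\subset E_i\subset \xi_{2n-1}(t)$ forces $\wedge^n E_n$ into the Plücker-isotropic subspace $\wedge^n\xi_{2n-1}(t)$, which by the coupling of the $L$-Cartan projections of $\xi_1$ and $\xi_{2n-1}$ through the opposition involution (Section~\ref{sec:lifting-section-gm}) meets $\xi_1(\partial_\infty\Gamma)$; this produces a $t'$ with $\xi_1(t')\subset E_n$. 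For the reverse inclusion one can argue by symmetry and monotonicity: the sandwich condition $i_1(t)\le j_1(t)$ is preserved along the $\phi_s$-flow of $\flow$, and by density of fixed points $t^+_\gamma$ in $\partial_\infty\Gamma$ (Lemma~\ref{lem:ano_proxi}), one reduces to the case where $t$ is attracting for some $\gamma\in\Gamma$, where the inclusion $\xi_1(t)\subset E_n$ combined with $\rho(\gamma)$-invariance yields the desired $i$.

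An alternative, more conceptual, completion of the proof uses both open sets as cocompact domains. By Theorems~\ref{thm:dod_GF} and~\ref{thm:dod_general_case} the two sides of the claimed equality are $\Gamma$-invariant open subsets of $\mathcal{F}$ on which $\Gamma$ acts properly discontinuously and cocompactly. Once one inclusion $A\subset B$ has been established, the quotient $\Gamma\backslash A$ is open in the compact manifold $\Gamma\backslash B$ and also compact, hence closed, so $\Gamma\backslash A$ is a union of connected components of $\Gamma\backslash B$; a single explicit flag lying in $A$ within each such component then forces $A=B$. Part~(ii) follows by the entirely analogous argument with the roles of $\xi_1$ and $\xi_n$ (and $\pi_1$ and $\pi_n$) exchanged, using the self-duality of the parabolic $P_n<\SL(2n,\KK)$, so that $\xi_n$ plays the role of both Anosov map at once. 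The main obstacle throughout is the treatment of the indices $i>n$ in the sandwich condition, where the matching is not pointwise in $t$; the key technical input is the coupling between the maps $\xi_1$ and $\xi_{2n-1}$ (respectively the symmetry of $\xi_n$), and the cocompactness-maximality step provides the cleanest way around a delicate direct combinatorial comparison.
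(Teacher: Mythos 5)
Your ``alternative, more conceptual completion'' is in fact the paper's own argument: for part~(\ref{item2propcoin}) the paper establishes the single pointwise inclusion $K_{\xi_n(t)} \subset \pi_{1}^{-1}(\PP(\xi_n(t)))$ (the ``easy to see'' step, an elementary computation with the induced flag on $\wdg{n}\KK^{2n}$, cf.\ Equation~\eqref{eq_sequences}), hence $\pi_{1}^{-1}(\Omega^{\phi^{\wedge},n}_{\rho}) \subset \Omega^{n}_{\rho}$, and then deduces equality from properness and cocompactness of both actions; your remark that one must also see that the smaller open set meets every connected component of the quotient is a reasonable elaboration of what the paper leaves implicit. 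The genuine gap is in the first half of your proposal, and it matters because you never supply a correct inclusion to feed into that cocompactness step. In the case $i>n$ of your ``easier inclusion'' $K^{\prime 1}_{\xi} \subset \pi_{n}^{-1}(K_{\xi_1})$ the asserted conclusion is false, and the appeal to $L$-Cartan projections and the opposition involution is a non sequitur: take any flag with $E_{2n-1}=\xi_{2n-1}(t)$ and $E_n$ an $n$-plane of $\xi_{2n-1}(t)$ with $\xi_1(t)\not\subset E_n$. The sandwich condition then holds with $i=2n-1$ (compatibility gives $\xi_1(t)\subset\xi_{2n-1}(t)=E_{2n-1}$), yet $E_n$ contains no $\xi_1(t')$ whatsoever: for $t'\neq t$ transversality gives $\xi_1(t')\not\subset\xi_{2n-1}(t)\supset E_n$, and $t'=t$ is excluded by the choice of $E_n$. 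So an $n$-plane inside $\xi_{2n-1}(t)$ need not meet $\xi_1(\partial_\infty\G)$, such flags lie in $\pi_{n}^{-1}(\Omega^{\phi^{\wedge},1}_{\rho})\moins\Omega^{1}_{\rho}$, and the direction of inclusion you chose for part~(i) is simply not available; only the opposite inclusion could serve as the starting point of the cocompactness argument, and you do not prove it. Your sketch of the ``reverse inclusion'' is not a proof either: the sandwich condition concerns a fixed flag $E$, which is neither $\phi_s$- nor $\rho(\gamma)$-invariant, so ``preservation along the flow'' and ``combined with $\rho(\gamma)$-invariance'' carry no content.

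Note also that the two parts are not symmetric, so ``exchanging the roles of $\xi_1$ and $\xi_n$ using self-duality of $P_n$'' does not reduce one to the other. The mechanism behind the paper's easy inclusion in part~(\ref{item2propcoin}) is special to $k=n$: if $\wdg{n}\xi_n(t)\subset E_I\subset(\wdg{n}\xi_n(t)^{\perp})^{\perp}$, then every tuple $K\leq_{lex}I$ contains the index $1$, and this is exactly what forces $E_1\subset\xi_n(t)$; there is no analogue for $k=1$, which is precisely where your $i>n$ difficulty lives. A correct write-up should therefore follow the paper's route for part~(\ref{item2propcoin}) -- prove that one elementary inclusion explicitly and then invoke Theorem~\ref{thm:dod_GF} together with the component bookkeeping you describe -- and must treat part~(i) by a separate argument, since the inclusion you attempted there fails on the explicit flags above.
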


\begin{proof}
  We prove (\ref{item2propcoin}). One has $\Omega^{n}_{\rho} =
  \mathcal{F} \moins K^{n}_{\xi} $ with $K^{n}_{\xi} =
  \bigcup_{t\in \partial_\infty \G} K_{\xi_n(t)}$ and
  $K_{\xi_n(t)} = \{(E_1, \dots, E_{2n-1}) \in \mathcal{F} \mid
  \exists I, \wdg{n} \xi_n(t) \subset E_I
  \subset ( \wdg{n} \xi_n(t)^\perp )^\perp\}$. Also $\pi_{1}^{-1}(
  \Omega^{ \phi^{\wedge}, n}_{\rho}) = \mathcal{F} \moins K^{\prime}_{\xi} $
  with $K^{\prime}_{\xi} = \bigcup_{t\in \partial_\infty \G}
  K^{\prime}_{\xi_n(t)}$ and $K^{\prime}_{\xi_n(t)}=
  \pi_{1}^{-1}(K^{\phi^{\wedge},n}_{\xi_n(t)})$ and
  $K^{\phi^{\wedge},n}_{\xi_n(t)} = \{D \mid D \subset \xi_n(t)\}$. It is
  easy to see that (using e.g.\ Equation~\ref{eq_sequences}) that $K_{\xi_n(t)} \subset
  K^{\prime}_{\xi_n(t)}$. Hence $K^{n}_{\xi} \subset K^{\prime}_{\xi}$
  and $\pi_{1}^{-1}(
  \Omega^{ \phi^{\wedge}, n}_{\rho}) \subset \Omega^{n}_{\rho}$. Since the
  action of $\G$ is proper and cocompact on both these open sets, this
  last inclusion implies the equality $\pi_{1}^{-1}(
  \Omega^{ \phi^{\wedge}, n}_{\rho}) = \Omega^{n}_{\rho}$.
\end{proof}

\begin{remark}\label{rem:complex_1}
For the projective Schottky
groups, discussed in Section~\ref{sec:schottky}, the domain of
discontinuity constructed in $\PP(\KK)$, $ \KK = \RR, \CC$, is
precisely the domain of discontinuity given 
in \cite{Nori, Seade_Verjovsky_Schottky}.
\end{remark}

\subsubsection{The Case of  $\SL(3,\RR)$}
\label{sec:descSLt}

By Lemma~\ref{lem:ThetaAn}.(\ref{itemF:lemThetAn}) an Anosov representation $\rho: \G \to \SL(3, \RR)$ is automatically
$B$-Anosov (because $B = P_{1,2}$ 
is the only parabolic subgroup of $\SL(3,\RR)$ conjugate to its
opposite). The corresponding Anosov map is a pair of compatible maps $\xi_1
: \partial_\infty \G \to \PP^2(\RR)$, $\xi_2 : \partial_\infty \G \to
\PP^2(\RR)^*$ and the domain of discontinuity $\Omega_\rho \subset \Ff(\RR^3)$ is the following open
set (see Section~\ref{sec:sln_ano_min}):
\begin{equation*}
  \Omega_\rho = \{ (E_1 , E_2) \in \mathcal{F}( \RR^3) \mid
   E_1 \notin \xi_1(\partial_\infty \G) \text{ and } E_2 \notin
   \xi_2(\partial_\infty \G) \}.
\end{equation*}

When $\G = \pi_1(\Sigma)$ is a surface group, $B$-Anosov
representations and  the above domain of discontinuity 
have been studied by Barbot
\cite{Barbot_anosov}.
He proved the following dichotomy:
\begin{enumerate}
\item If $\rho$ is in the Hitchin component, then $\Omega_\rho$ has three connected components. 
One component $\Omega_1$ is the pull-back of the invariant convex set
$\mathcal{C} \subset \PP^2(\RR)$ (see Goldman \cite{Goldman_convex}),
another component $\Omega_2$ is the pull-back of the invariant convex
set $\mathcal{C}^* \subset \PP^2(\RR)^*$. The third component
$\Omega_3$ is ``de-Sitter like'', it is the set of flags $(D,P)$ with
$D \notin \overline{\mathcal{C}}$ and $P  \notin \overline{\mathcal{C}}^*$. For any $i$, $\pi_1(\Sigma)
\backslash \Omega_i$ is homeomorphic to the projectivized tangent bundle
of $\Sigma$.
\item If $\rho$ is not in the Hitchin component, then $\Omega_\rho$ is connected and  
$ \pi_1(\Sigma) \backslash
  \Omega_\rho$ is diffeomorphic to a circle bundle over $\Sigma$; 
  Barbot asked if it is always homeomorphic to the double cover
  $\pi_1( \Sigma) \backslash \SL(2, \RR)$ of 
   $T^1 \Sigma \cong \pi_1( \Sigma) \backslash \PSL(2, \RR)$. This is
  known to be true in some explicit examples.
\end{enumerate}

\subsubsection{Holonomies of convex projective structures}
\label{sec:descDiv}

If $\G \subset \SL(n+1, \RR)$ divides a strictly convex set
$\mathcal{C}$ in $\PP^n(\RR)$ (see Section~\ref{sec:divi_conv}) then the
injection $\iota : \G \to \SL(n+1, \RR)$ is $P_{1,n}$-Anosov. Thus the construction in Section~\ref{sec:sln_ano_max} provides a  domain of discontinuity $\Omega$ in the full flag variety
$\mathcal{F}( \RR^{n+1})$. The pull-back of
$\mathcal{C}$ to $\mathcal{F}( \RR^{n+1})$ and the pull-back of the
dual convex set $\mathcal{C}^*$ are components of $\Omega$. However, 
$\Omega$ has in general other components, for example
for a lattice in $\SO(1,n)$, $\Omega$ has $n+1$ components.

\subsection{Representations into $\Sp(2n, \KK)$}
\label{sec:descSPN}
Section~\ref{sec:orth-sympl-groups_anosov} gives a direct construction of
domains of discontinuity for $Q_i$-Anosov representations $\rho: \G
\to \Sp(2n,\KK)$, $i=0,1$. By embedding $\Sp(2n,\KK)$ into
$\SL(2n,\KK)$, the construction in Section~\ref{sec:sln_ano_max} can
be applied to representations $\rho: \G \to \Sp(2n,\KK)$ which are
$P_k$-Anosov, where $P_k$ is the stabilizer of an isotropic $k$-plane.
This gives domains of discontinuity in the complete flag variety of
$\Sp(2n, \KK)$:
\begin{equation*}
  \mathcal{F}_\Sp = \{ (E_1, \dots, E_{2n-1}) \in \mathcal{F}(
  \KK^{2n}) \mid \forall i, E_{2n-i} = E_{i}^{\perp,\omega} \}. 
\end{equation*}

For $Q_i$-Anosov representations this is the same as applying the general construction of
Section~\ref{sec:descr-doma-disc} using the representations 
$\phi^1: \Sp(2n, \KK) \to \O( \mathrm{End}( \KK^{2n}), \tr)$ when $i=0$ and
$\phi^n: \Sp(2n, \KK) \to \O( \mathrm{End}( \wdg{n} \KK^{2n}),
\tr)$ when $i=1$. 
Thus, for a $Q_0$-Anosov representation, we constructed three domains
of discontinuity: 
$\Omega^{0}_{\rho}\subset\mathcal{L}$, $\Omega^{0, \Sp}_{\phi^1,
  \rho} \subset \Ff_\Sp$, and  $\Omega^{0, \Sp}_{\phi^n, \rho}\subset \Ff_\Sp$. 
  Similarly, for a $Q_1$-Anosov representation, we obtain 
   $\Omega^{1}_{\rho}\subset \PP( \KK^{2n})$, $\Omega^{1, \Sp}_{\phi^1,
  \rho}\subset \Ff_\Sp$, and $\Omega^{1, \Sp}_{\phi^n, \rho}\subset \mathcal{F}_\Sp$.

\begin{prop}
Let 
$\pi_{1, \Sp} : \mathcal{F}_\Sp \to \PP( \KK^{2n})$ and $\pi_{n, \Sp} :\mathcal{F}_\Sp \to \mathcal{L}$ be the natural projection. 
  \begin{asparaenum}

  \item \label{item:propcoinSP} If $\rho$ is $Q_0$-Anosov, then   $\Omega^{0, \Sp}_{\phi^1,
  \rho} = \Omega^{0, \Sp}_{\phi^n, \rho} = \pi_{n,
  \Sp}^{-1}(\Omega^{0}_{\rho})$.
  \item If $\rho$ is $Q_1$-Anosov, then   $\Omega^{1, \Sp}_{\phi^1,
  \rho} = \Omega^{1, \Sp}_{\phi^n, \rho} = \pi_{1,
  \Sp}^{-1}(\Omega^{1}_{\rho})$. 
  \end{asparaenum}
\end{prop}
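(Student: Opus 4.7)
The plan follows the strategy of Proposition~\ref{prop:coinSLDN}: establish one inclusion between the sets by comparing their complements via the explicit Bruhat-type description from Lemma~\ref{lem:bruhat_dec}, then upgrade to equality using the fact that $\Gamma$ acts properly discontinuously and cocompactly on all sets involved.

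I would focus on part (1); part (2) is entirely analogous, with $\pi_{1,\Sp}$ in place of $\pi_{n,\Sp}$ and the Anosov map landing in $\mathcal{L}$ rather than $\PP(\KK^{2n})$. Let $\xi : \partial_\infty \Gamma \to \PP(\KK^{2n})$ be the Anosov map of $\rho$. By Proposition~\ref{prop:Omega},
\[
\pi_{n,\Sp}^{-1}(\Omega^0_\rho) \;=\; \mathcal{F}_\Sp \;\setminus\; \bigcup_{t \in \partial_\infty \Gamma} \{(E_\bullet) \in \mathcal{F}_\Sp \mid \xi(t) \subset E_n\}.
\]
For each $k \in \{1,n\}$, the complement of $\Omega^{0,\Sp}_{\phi^k,\rho}$ in $\mathcal{F}_\Sp$ is described, via Lemma~\ref{lem:bruhat_dec} applied to the $\Sp(2n,\KK)$-module $\mathrm{End}(\wdg{k}\KK^{2n})$, as a union of $B_\Sp$-orbits indexed by a subset $S_{\phi^k} \subset W_\Sp$ of the symplectic Weyl group, translated over $t \in \partial_\infty \Gamma$. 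A direct computation of the highest-weight line and its $W_\Sp$-orbit — entirely parallel to the calculation in Section~\ref{sec:sln_ano_max}, Equation~\eqref{eq_sequences}, and Proposition~\ref{prop:coinSLDN}\eqref{item2propcoin} — shows that every symplectic complete flag $(E_\bullet)$ in the bad set satisfies $\xi(t) \subset E_n$ for some $t$. This yields the inclusion
\[
\Omega^{0,\Sp}_{\phi^k,\rho} \;\subset\; \pi_{n,\Sp}^{-1}(\Omega^0_\rho).
\]

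To promote this inclusion to equality, I would invoke the proper cocompact $\Gamma$-actions on both sides: Theorem~\ref{thm:dod_GF} gives the action on $\Omega^{0,\Sp}_{\phi^k,\rho}$, while Proposition~\ref{prop:dod_in_smaller} applied to the $\Sp(2n,\KK)$-equivariant fibration $\pi_{n,\Sp}: \mathcal{F}_\Sp \to \mathcal{L}$, which has compact fibers, provides the proper cocompact action on $\pi_{n,\Sp}^{-1}(\Omega^0_\rho)$. As in the last line of the proof of Proposition~\ref{prop:coinSLDN}, an open $\Gamma$-invariant subset on which $\Gamma$ already acts cocompactly must coincide with any larger cocompact set containing it.

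The hard part will be the Bruhat-style bookkeeping establishing the containment. For $\phi^1$, the analysis largely reduces, via the inclusion $\Sp(2n,\KK) \subset \SL(2n,\KK)$ and the adjoint representation, to the description of $\Omega^{\prime 1}_\rho \cap \mathcal{F}_\Sp$ already given in Section~\ref{sec:sln_ano_max}. For $\phi^n$ the calculation is more delicate, since the highest weight of $\mathrm{End}(\wdg{n}\KK^{2n})$ and its orbit under $W_\Sp$ do not quite mirror the combinatorics produced by the full Weyl group of $\SL(\wdg{n}\KK^{2n})$ used in the proof of Proposition~\ref{prop:coinSLDN}; one needs to verify in each case that the symplectic constraints on the isotropic subspace $T \subset \mathrm{End}(\wdg{n}\KK^{2n})$ force any offending flag to satisfy precisely $\xi(t) \subset E_n$. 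Once this step is carried out, the remainder is formal.
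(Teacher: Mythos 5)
Your plan (compare complements, then upgrade an inclusion to an equality via proper cocompactness, as in Proposition~\ref{prop:coinSLDN}) is workable in principle, but as written it has two real problems. First, a direction error: from the containment of bad sets you sketch -- ``every flag in the complement of $\Omega^{0,\Sp}_{\phi^k,\rho}$ satisfies $\xi(t)\subset E_n$ for some $t$'' -- what follows is $\Omega^{0,\Sp}_{\phi^k,\rho}\supset\pi_{n,\Sp}^{-1}(\Omega^{0}_{\rho})$, not the inclusion $\subset$ that you assert, so the roles of ``known cocompact'' and ``known proper'' in the upgrading step have to be reassigned before the argument parses. Second, and more seriously, the decisive step is deferred rather than done: for the module $\mathrm{End}(\wdg{n}\KK^{2n})$ you explicitly leave the highest-weight/Bruhat bookkeeping as something that ``needs to be verified.'' That verification is the whole content of the containment of complements, and the paper does not redo it either -- it simply quotes Proposition~\ref{prop:coinSLDN}, whose proof already contains the lexicographic analysis of Equation~\eqref{eq_sequences}, and restricts that $\SL(2n,\KK)$-equality to $\Ff_\Sp$ (for the $Q_1$ case symmetrically). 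A smaller point: Proposition~\ref{prop:dod_in_smaller} is about intersecting with a closed invariant subset, not about pulling back along $\pi_{n,\Sp}$; properness and cocompactness on $\pi_{n,\Sp}^{-1}(\Omega^{0}_{\rho})$ do hold, but because $\pi_{n,\Sp}$ is an equivariant map with compact fibers, not by that proposition.

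You also miss the observation that makes the remaining equality exact and dispenses with any cocompactness argument for it: since $E_{2n-k}=E_{k}^{\perp\omega}$ for a symplectic flag, one has, for any line $L$, the identity $\{(E_\bullet)\in\Ff_\Sp\mid\exists k,\ L\subset E_k\subset L^{\perp\omega}\}=\{(E_\bullet)\in\Ff_\Sp\mid L\subset E_n\}$ (for $k>n$ apply $\perp\omega$ to get $L\subset E_{2n-k}\subset E_n$; for $k\le n$ use $E_k\subset E_n$; conversely $k=n$ works because $E_n$ is Lagrangian). Applied with $L=\xi(t)$, this says the complement of $\Omega^{0,\Sp}_{\phi^1,\rho}$ is precisely $\pi_{n,\Sp}^{-1}$ of the complement of $\Omega^{0}_{\rho}$, i.e.\ that equality holds on the nose. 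This elementary identity, together with the quotation of Proposition~\ref{prop:coinSLDN} for the $\phi^{n}$-domain, is the paper's entire proof; your Bruhat-plus-cocompactness route could be completed, but only after fixing the inclusion direction and actually carrying out the combinatorial step you postponed.
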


\begin{proof}
  The equalities $\Omega^{0, \Sp}_{\phi^n, \rho} = \pi_{n,
  \Sp}^{-1}(\Omega^{0}_{\rho})$ and $\Omega^{1, \Sp}_{\phi^n, \rho} = \pi_{1,
  \Sp}^{-1}(\Omega^{1}_{\rho})$ follow from
Proposition~\ref{prop:coinSLDN}. To prove, for example, the second
equality in (\ref{item:propcoinSP}): $\Omega^{0, \Sp}_{\phi^1, \rho} =
\pi_{n, \Sp}^{-1}(\Omega^{0}_{\rho})$, it is enough to note that, for
$L\in \PP( \KK^{2n})$, $\{(E_1, \dots, E_{2n-1} ) \in
\mathcal{F}_\Sp \mid \exists k, L \subset E_k \subset L^{\perp \omega}
\} = \{(E_1, \dots, E_{2n-1} ) \in
\mathcal{F}_\Sp \mid L \subset E_n\}$.
\end{proof}

\begin{remark}\label{rem:upcoming} 
As mentioned in Section~\ref{sec:ex_max} any maximal representation
$\rho: \pi_1( \Sigma) \to \Sp(2n,\RR)$ is $Q_1$-Anosov. The quotient manifolds $M = \pi_1( \Sigma)
\backslash \Omega^{1}_{\rho}$ will be investigated in more detail in
\cite{Guichard_Wienhard_DoDSymp}, using results on
topological invariants for maximal representations from 
\cite{Guichard_Wienhard_InvaMaxi}. In particular, we deduce that
the manifolds $M$ are homeomorphic to the total space of
$\O(n)/\O(n-2)$-bundles over $\Sigma$. This implies  also that for  
representations $\rho: \pi_1( \Sigma) \to \SL(2n,\RR)$ in the Hitchin component the quotient manifold
$\pi_1( \Sigma) \backslash \Omega^{ \phi^{\wedge}, n}_{\rho}$ is
homeomorphic to the total space of  a $\O(n)/\O(n-2)$-bundle over
$ \Sigma$.
\end{remark}

\subsection{Representations into $\SO(p,q)$}
\label{sec:descSOtn}
The construction in Section~\ref{sec:orth-sympl-groups_anosov} gives an explicit description of domains of discontinuity $\Omega_\rho^{1-i}\subset \Ff_{1-i}$ for $Q_i$-Anosov representations $\rho:\Gamma \to \SO(p,q)$, $i= 0,1$.
\begin{enumerate}
\item 
Let $\Gamma < \SO(1,n)$ be a convex cocompact subgroup. Consider the
$Q_0$-Anosov representation $\rho: \Gamma \to \SO(1,n+1)$, obtained
by naturally embedding $\SO(1,n) < \SO(1,n+1)$. 
Then the quotient
$\G\backslash \Omega_\rho$ is the union of two
  copies of $\Gamma \backslash \HH^n$ when $\Gamma$ is cocompact or the double
  of the compact manifold with boundary whose interior is $\Gamma
  \backslash \HH^n$ otherwise. This is similar to what happens for Fuchsian
  groups embedded in $\PSL(2, \CC)$. 
\item When $\Gamma< \SO(1,n)< \SO(p,q)$ ($p \leq q$) is a  convex cocompact subgroup of 
$\SO(1,n)$, we have $\Omega_{\rho}^{1-i} = \emptyset$ if $q=n$ and
$\Gamma$ is a lattice, but one gets interesting domains of discontinuity
for $q= n+1$, see Section~\ref{sec:cliffordklein}.
\item A representation $\rho: \pi_1(\Sigma) \to \SO(2,3)$ in the
  Hitchin component is $Q_1$-Anosov. In this case we obtain a nonempty
  domain of discontinuity 
$\Omega_\rho^0 \subset \Ff_0$ in the Einstein space. The quotient
$\pi_1(\Sigma)\backslash \Omega_\rho^0$ consists of two connected
components, which are both homeomorphic to the unit tangent bundle of
the surface.  Considering the representation $\rho$ as $Q_0$-Anosov
representation, one obtains a domain of discontinuity in
$\PP^3(\RR)$. 
\end{enumerate}


\part{Applications}
\label{part:appli}

\section{Higher Teichm\"uller
  spaces}\label{sec:surfteic}

In Part~\ref{part:doma-disc}  we gave a construction of domains of discontinuity for Anosov representations $\rho: \G \to G$. The domains of discontinuity are open subsets $\Omega_\rho$ of a homogeneous space $X = G/H$ for some subgroup $H$ containing $AN$. 
The quotient $W= \G\backslash\Omega_\rho$ is naturally equipped
with a $(G,X)$-structure. Thus one can rephrase the result of 
Theorem~\ref{thm:dod_general_case} 
as associating a $(G,X)$-structure to an Anosov representation.
In this section we make this statement 
 precise for representations
in a Hitchin component as well as for maximal
representations.

\subsection{Geometric structures}
\label{sec:geometric-structures}

Let $G$ be a Lie group and $X$ be a manifold with a smooth $G$-action. 
(The definition below can be adapted to treat more general $G$-spaces.)

A \emph{$(G,X)$-variety} $W$ is a topological space together with a
(maximal) $G$-atlas on $W$, that is 
\begin{enumerate}
\item an open cover $\mathcal{U}$ and,
for each $U$ in $\mathcal{U}$ 
a homeomorphism $\phi_U: U \to \phi_U(U)$ onto an open subset of $X$,
such that 
\item 
for any $U$ and $U'$ in $\mathcal{U}$, $\phi_{U'} \circ
\phi^{-1}_{U} : \phi_U (U \cap U') \to \phi_{ U'} (U \cap U')$ is
(locally) the restriction of an element of $G$. 
\end{enumerate}
The maps $\phi_U$ are
called \emph{charts}. 
A map $\psi: W \to W'$ between $(G,X)$-varieties is a \emph{$G$-map} if $\psi$ is (locally) in the charts an element of $G$.

A \emph{$(G,X)$-structure} on a manifold $M$ is a pair $(W,f)$ of a
$(G,X)$-variety $W$ and a diffeomorphism $f: M \to W$. 
Two $(G,X)$-structures $(W,f)$ and $(W', f')$ on $M$ are said to be \emph{equivalent} if there exists a $G$-map $\psi: W \to W'$. They are said to be 
\emph{isotopic} if there exists a $G$-map $\psi: W \to W'$ such that 
$\psi\circ f'$ is isotopic to $f$. 

The space of isotopy classes of $(G,X)$-structures on $M$ is denoted by
$\mathcal{D}_{G,X}(M)$. The space of equivalence classes is denoted by
$\mathcal{M}_{G,X}(M)$. There is a natural action of
$\mathrm{Diff}(M)$ on $\mathcal{D}_{G,X}(M)$ (by precomposition) that
factors through the mapping class group $\mathrm{Mod}(M) = \pi_0 (\mathrm{Diff}(M))$; 
and $\mathcal{M}_{G,X}(M) = \mathcal{D}_{G,X}(M)/ \mathrm{Mod}(M)$.

\subsection{The holonomy theorem}
\label{sec:holonomy-theorem}
In this section we recall some background on locally homogeneous $(G,X)$-structures, we refer the reader to \cite[Section~3]{Goldman_geometric} for more details.

Every $(G,X)$-structure $(W,f)$ on $M$ induces a
$\pi_1(M)$-invariant $(G,X)$-structure on the universal cover
$\widetilde{M}$. As $\widetilde{M}$ is simply connected, the
$(G,X)$-structure on $\widetilde{M}$ can be encoded in \emph{one} map
$\dev: \widetilde{M} \to X$ that is a local diffeomorphism (the charts
$\phi_U$ can be ``patched'' together). The map $\dev$ is called the
\emph{developing map} and is unique up to postcomposition with
elements of $G$.

 This uniqueness means that there exists a representation $\rho:
\pi_1( M) \to G$ such that $\dev \circ \gamma = \rho( \g) \cdot \dev$
for any $\g\in \pi_1(M)$. The homomorphism $\rho$ is called the
\emph{holonomy representation}. If $\dev$ is changed to $g
\cdot \dev$ then $\rho$ is changed to the conjugate homomorphism 
$\g \mapsto g \rho(\g) g^{-1}$, in particular only the conjugacy
 class of $\rho$ is well defined by the
$(G,X)$-structure $(W,f)$. This conjugacy class is denoted by 
$\hol(W,f) \in \hom(
\pi_1(M), G) / G$.

The space of isotopy classes $\mathcal{D}_{G,X}(M) $ is thus identified
with equivalence classes of pairs $(\dev, \hol)$, of a local diffeomorphism $\dev$
that is  equivariant with respect to a representation $\hol$, and can be topologized using the compact open topology on these spaces of maps.

If $\psi: M \to M$ is a diffeomorphism, then the holonomy representations for $(W,f)$ and $(W,f
\circ \psi)$ are related by $\hol(W,f \circ \psi) = \hol(W,f)  \circ
\psi_*$ where $\psi_* : \pi_1(M) \to \pi_1(M)$ is the induced homomorphism.
Hence the holonomy map descends to a map
$\hol: \mathcal{D}_{G,X}(M) \to \hom(\pi_1(M), G) / G$ that is
$\mathrm{Mod}(M)$-equivariant.

\begin{thm}\cite[\S 5.3.1]{Thurston} 
  (Holonomy Theorem)

  The holonomy map $\hol: \mathcal{D}_{G,X}(M) \to \hom(\pi_1(M), G) /
  G$ is a local homeomorphism.
\end{thm}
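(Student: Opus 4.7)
The plan is to work at the level of \emph{developing pairs} $(\dev, \rho)$, topologized by the $C^1$-topology on $\dev$ and the natural topology on $\rho$, and to verify that the map which forgets $\dev$ induces a local homeomorphism after quotienting by isotopy of $\dev$ and conjugation by $G$. Continuity of $\hol$ is essentially tautological: if a family of developing pairs varies continuously in the $C^1$-topology, then for any fixed $\gamma\in\pi_1(M)$ and any base point $\tilde x\in\widetilde M$, the value $\rho(\gamma)$ is determined by the pair $(\dev(\tilde x),\dev(\gamma\tilde x))$ in the single $G$-orbit in $X\times X$, and hence varies continuously.

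For local injectivity, suppose two $(G,X)$-structures on $M$ have developing pairs $(\dev_1,\rho)$ and $(\dev_2,\rho)$ with the same holonomy and with $\dev_2$ in a small $C^1$-neighborhood of $\dev_1$. Since both maps are $\rho$-equivariant, the straight-line homotopy in any $G$-invariant exponential chart yields, for $\dev_2$ sufficiently close to $\dev_1$, a $\rho$-equivariant path of local diffeomorphisms from $\dev_1$ to $\dev_2$. This descends to an isotopy of $(G,X)$-structures on $M$, so the two structures define the same class in $\mathcal{D}_{G,X}(M)$.

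The substantive point is local surjectivity: given a developing pair $(\dev_0,\rho_0)$ and a representation $\rho$ close to $\rho_0$ in $\hom(\pi_1(M),G)$, one must produce a $\rho$-equivariant local diffeomorphism $\dev:\widetilde M\to X$ which is $C^1$-close to $\dev_0$. The plan is: pick a smooth triangulation of $M$ and lift it to a locally finite $\pi_1(M)$-invariant triangulation of $\widetilde M$; choose a fundamental domain $F$ meeting only finitely many closed simplices; shrink $F$ slightly to a compact $K\subset \widetilde M$ such that $\dev_0|_K$ is an embedding into an open set $V\subset X$ on which we have good coordinates. For $\rho$ close to $\rho_0$, build a candidate map $\widehat\dev$ on a neighborhood of $K$ by convex combination (in suitable charts on $X$) of the finitely many local models $\tilde x\mapsto \rho(\gamma)\rho_0(\gamma)^{-1}\dev_0(\tilde x)$ indexed by the finitely many $\gamma$ such that $\gamma K$ meets $K$, weighted by an equivariant partition of unity. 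Extend to all of $\widetilde M$ by $\rho$-equivariance. The compatibility needed to make this well defined on overlaps is precisely the cocycle-type condition that close enough $\rho$ make the averaged map match on $K\cap \gamma K$, which one arranges by combinatorial bookkeeping on finitely many simplex incidences; and $C^1$-closeness of $\widehat\dev$ to $\dev_0$ ensures that $\widehat\dev$ remains a local diffeomorphism.

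The main obstacle is exactly this last step: producing a globally defined, equivariant, local diffeomorphism from local perturbative data in a way that is continuous in $\rho$. I would handle it by fixing once and for all the triangulation, the partition of unity, and the choice of local charts on $X$, so that the assignment $\rho\mapsto \dev_\rho$ becomes a concrete continuous formula on a neighborhood of $\rho_0$, yielding a continuous local section of $\hol$ and thereby proving the local homeomorphism property.
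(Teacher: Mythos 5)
There is a genuine gap, and it is concentrated exactly where you locate the ``substantive point''. Note first that the paper does not prove this theorem at all --- it quotes it from \cite{Thurston} (see also \cite{Goldman_geometric}); the closest argument actually carried out in the paper is the flat-bundle deformation in the proof of Theorem~\ref{thm:DoD_loc_cst}, and comparing with that proof exposes the problem with your outline. All three of your steps quietly use $G$-invariant structure on $X$ that does not exist for the pairs $(G,X)$ relevant here (a noncompact $G$ acting on a compact $X$, e.g.\ $\PSL(2n,\RR)$ acting on $\PP^{2n-1}(\RR)$, admits no invariant Riemannian metric, connection, or affine chart system). Concretely: (i) your continuity argument is wrong as stated --- $X\times X$ is not a single $G$-orbit and $\rho(\g)$ is \emph{not} determined by the pair $(\dev(\tilde x),\dev(\g\tilde x))$, since the stabilizer of a point of $X$ is large; what determines $\rho(\g)$ is the germ of $\dev$, via quasi-analyticity of the $G$-action, and that is how continuity must be argued. (ii) The ``straight-line homotopy in any $G$-invariant exponential chart'' does not exist; interpolating in an arbitrary chart destroys $\rho$-equivariance, which is the whole point. (iii) Most seriously, in the surjectivity step the chart-wise convex combination of the local models $\rho(\g)\rho_0(\g)^{-1}\dev_0$ is not $\rho$-equivariant, because $G$ does not act affinely in your charts: $\rho(\delta)$ applied to a weighted average is not the weighted average of the $\rho(\delta)$-translates. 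Hence ``extend to all of $\widetilde M$ by $\rho$-equivariance'' conflicts with the values already defined on the overlaps $K\cap\g K$; this is not combinatorial bookkeeping, it is the central difficulty of the theorem, and fixing the triangulation and partition of unity does not make it go away.

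The standard repair --- and the one the paper itself uses in the proof of Theorem~\ref{thm:DoD_loc_cst}, following \cite{Goldman_geometric, Bergeron_Gelander} --- is to work in the flat bundle $\mathcal{B}_\rho=\widetilde M\times_\rho X$ over the \emph{compact} manifold $M$. A $\rho$-equivariant map $\widetilde M\to X$ is the same thing as a section of $\mathcal{B}_\rho$, so equivariance is automatic and never has to be produced by averaging; a developing map corresponds to a section transverse to the flat horizontal distribution (equivalently, whose associated equivariant map is a local diffeomorphism), and this is an open condition uniformly over compact $M$. For $\rho'$ near $\rho_0$ one trivializes the family $\coprod_{\rho'}\mathcal{B}_{\rho'}$, transports the developing section $\sigma_0$, and openness of transversality gives a $\rho'$-equivariant developing map $C^1$-close to $\dev_0$: that is local surjectivity, with continuity in $\rho'$ built in. For local injectivity, once $\dev_1$ and $\dev_2$ are $\rho$-equivariant and close on a fundamental domain, one does not interpolate in $X$; instead one defines $h=\dev_1^{-1}\circ\dev_2$ by analytic continuation of local inverses of $\dev_1$, checks it is a $\pi_1(M)$-equivariant local diffeomorphism close to the identity, hence descends to a diffeomorphism of $M$ isotopic to the identity carrying one structure to the other. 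If you recast your partition-of-unity idea inside the bundle (where fiberwise interpolation may use any fixed auxiliary metric, since no equivariance constraint remains), your outline becomes a correct proof; as written, the equivariance of both the interpolation and the averaged map is assumed rather than established.
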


\begin{remark}
  In order to avoid having to deal with potential singularities in
    the $G$-quotient  $\hom(\pi_1(M), G) /
  G$, one can work with the space  $\mathcal{D}^{*}_{G,X}(M)$ of \emph{based}
   $(G,X)$-structures. 
    Here, in addition, a germ of 
 $(G,X)$-structure at a point $m \in
    M$ is specified. Then the holonomy map is well
    defined as a map $\hol: \mathcal{D}^{*}_{G,X}(M) \to \hom(
    \pi_1(M,m), G)$ that is a local homeomorphism. 
\end{remark}

\subsection{Hitchin component for $\SL(2n, \RR)$}
\label{sec:hitch-comp-sl2n}

\begin{thm}\label{thm_hitchin_sl2n}
  Let  $\Sigma$ be a closed connected
  orientable surface of genus $ \geq 2$. 
  Let $\mathcal{C}$ be the Hitchin component of $ \hom(\pi_1(\Sigma), \PSL(2n, \RR))/ \PSL(2n,
  \RR)$. Assume that $n\geq 2$. 

  Then there exist
   \begin{itemize}
  \item a compact $(2n-1)$-dimensional manifold $M$,
  \item a homomorphism $\pi: \pi_1(M) \to \pi_1(\Sigma)$,
  \item and a connected component $\mathcal{D}$ of the  space $\mathcal{D}_{\PSL(2n, \RR), \PP^{2n-1}(\RR)}(M)$ of $(\PSL(2n, \RR), \PP^{2n-1}(\RR))$-structures on $M$.
 \end{itemize}
  such that
  \begin{enumerate}
  \item The map
    \begin{align*}
      \mathcal{C} & \longrightarrow \mathcal{C}_M \subset \hom(
      \pi_1(M), \PSL(2n, \RR) ) / \PSL(2n, \RR) \\
      \rho & \longmapsto \rho \circ \pi
    \end{align*}
    is a homeomorphism onto a connected component $\mathcal{C}_M$.
    \item The restriction of the holonomy map $\hol$ 
       to
      $\mathcal{D}$ is a homeomorphism onto $\mathcal{C}_M$, i.e.\ $\hol
      |_{\mathcal{D}}: \mathcal{D} \overset{\sim}{\longto} \mathcal{C}_M$.
  \end{enumerate}

  Furthermore, there exists a homomorphism $\theta: \mathrm{Mod}(\Sigma)
  \to \mathrm{Mod}(M)$ such that the identification $\mathcal{C} \cong
  \mathcal{C}_M$ is $\theta$-equivariant. In other words, 
$ \mathcal {D} \cong \mathcal{C}$ is equivariant with respect to  the action of the  mapping class group.
\end{thm}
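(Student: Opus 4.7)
The plan is to take any $\rho_0 \in \mathcal{C}$ and construct $M$ as the quotient of an appropriately chosen domain of discontinuity. Since Hitchin representations are $B$-Anosov (Section~\ref{sec:ex_hit}), in particular $P_n$-Anosov, Proposition~\ref{prop:coinSLDN} furnishes a domain of discontinuity $\Omega_{\rho_0} := \Omega^{\phi^{\wedge}, n}_{\rho_0} \subset \PP^{2n-1}(\RR)$, the complement of $\bigcup_{t \in \partial_\infty \pi_1(\Sigma)} \PP(\xi_n(t))$, where $\xi_n : \partial_\infty \pi_1(\Sigma) \to \Gr_n(\RR^{2n})$ is the Anosov map associated with the $P_n$-reduction of $\rho_0$. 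Since $\PSL(2n,\RR)$ for $n \geq 2$ has no factor locally isomorphic to $\SL(2,\RR)$, Theorem~\ref{thm_intro:surface} (equivalently, a direct codimension count using Lemma~\ref{lem:Bestvina_Mess}) shows $\Omega_{\rho_0}$ is nonempty, and combining Theorem~\ref{thm:dod_GF} with Proposition~\ref{prop:dod_in_smaller} yields a properly discontinuous and cocompact action of $\pi_1(\Sigma)$ on $\Omega_{\rho_0}$. Set $M := \pi_1(\Sigma) \backslash \Omega_{\rho_0}$, a compact $(2n-1)$-manifold. The covering $\Omega_{\rho_0} \to M$ has deck group $\pi_1(\Sigma)$, producing the exact sequence
\[
1 \longrightarrow \pi_1(\Omega_{\rho_0}) \longrightarrow \pi_1(M) \stackrel{\pi}{\longrightarrow} \pi_1(\Sigma) \longrightarrow 1,
\]
and the inclusion $\Omega_{\rho_0} \hookrightarrow \PP^{2n-1}(\RR)$ descends to a $(\PSL(2n,\RR), \PP^{2n-1}(\RR))$-structure on $M$ whose holonomy is precisely $\rho_0 \circ \pi$.

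To extend this construction over all of $\mathcal{C}$, I would invoke Theorem~\ref{thm:DoD_loc_cst}: for $\rho$ near $\rho_0$ in $\hom_{P_n\textup{-Anosov}}$, the quotients $\pi_1(\Sigma) \backslash \Omega_\rho$ fit into a trivial bundle with fiber $M$ over the neighborhood. Since $\mathcal{C}$ is connected, all these quotients are canonically homeomorphic to $M$, yielding a continuous map $\Psi : \mathcal{C} \to \mathcal{D}_{\PSL(2n,\RR),\PP^{2n-1}(\RR)}(M)$. Define $\mathcal{D}$ as the connected component of the deformation space containing $\Psi(\mathcal{C})$.

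The composition $\hol \circ \Psi : \mathcal{C} \to \hom(\pi_1(M),\PSL(2n,\RR))/\PSL(2n,\RR)$ is, by construction, the map $\rho \mapsto \rho \circ \pi$, which is injective because $\pi$ is surjective and Hitchin representations are pairwise non-conjugate. The Holonomy Theorem (Section~\ref{sec:holonomy-theorem}) tells us $\hol|_\mathcal{D}$ is a local homeomorphism, so $\hol \circ \Psi$ is an open continuous injection of $\mathcal{C}$ onto its image $\mathcal{C}_M$; coupled with continuity of $\Psi$ and local injectivity of $\hol$, this forces $\Psi : \mathcal{C} \to \mathcal{D}$ to be a homeomorphism. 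Closedness of $\mathcal{C}_M$ in $\hom(\pi_1(M),G)/G$ follows from closedness of the Hitchin component $\mathcal{C} \subset \hom(\pi_1(\Sigma),G)/G$: a convergent sequence $\rho_k \circ \pi \to \tilde\rho$ lies on the closed subset of representations killing $\ker(\pi)$, identified with $\hom(\pi_1(\Sigma),G)/G$ via $\pi$, and the Hitchin component is closed there. For openness, the main point is that representations $\tilde\rho$ of $\pi_1(M)$ sufficiently close to $\rho \circ \pi$ must still factor through $\pi$; this follows from the fact that $\ker(\pi) = \pi_1(\Omega_\rho)$ consists of elements commuting with $\pi_1(\Sigma)$-translates whose image under $\rho$ is Zariski dense in $\PSL(2n,\RR)$, so any deformation must send $\ker(\pi)$ into the (discrete) centralizer of a Zariski dense set, and the isotopy rigidity of the associated geometric structure pins these values to the identity.

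Finally, the mapping class group equivariance follows from the naturality of the construction. A diffeomorphism $f$ of $\Sigma$ acts on $\mathcal{C}$ by $\rho \mapsto \rho \circ f_*^{-1}$; the Anosov map transforms equivariantly, hence so does $\Omega_\rho$, and the resulting identification of $\pi_1(\Sigma) \backslash \Omega_\rho$ with $\pi_1(\Sigma) \backslash \Omega_{\rho \circ f_*^{-1}}$ defines a diffeomorphism of $M$ (up to isotopy), yielding $\theta : \mathrm{Mod}(\Sigma) \to \mathrm{Mod}(M)$. The main obstacle in the whole proof is the openness step in the third paragraph: controlling how small perturbations of $\rho \circ \pi$ as representations of $\pi_1(M)$ behave on $\ker(\pi)$, which requires combining Zariski density of Hitchin representations (inherited from \cite{Labourie_anosov,Fock_Goncharov}) with the geometric rigidity provided by Theorem~\ref{thm:DoD_loc_cst}.
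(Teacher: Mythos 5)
Your construction of $M$, the use of Theorem~\ref{thm:DoD_loc_cst} to spread it over the component, and the reduction of everything to showing that $\rho\mapsto\rho\circ\pi$ is open onto its image all match the paper. But the openness step --- which you correctly identify as the crux --- is where your argument breaks down, and your proposed fix does not work. First, Hitchin representations are \emph{not} in general Zariski dense in $\PSL(2n,\RR)$ (e.g.\ Fuchsian ones composed with the principal embedding, or representations with values in $\PSp(2n,\RR)$ or $\PSO(n,n)$), so you cannot appeal to ``the centralizer of a Zariski dense set''; the correct input, used in the paper, is Labourie's result that $Z(\rho)$ is finite for every Hitchin representation (\cite[Lemma~10.1]{Labourie_anosov}). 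Second, and more fundamentally, your claim that a deformation $\tilde\rho$ of $\rho\circ\pi$ must send $\ker\pi$ into a centralizer is only meaningful if $\ker\pi=\pi_1(\Omega_\rho)$ is \emph{central} (or at least finite) in $\pi_1(M)$; a normal subgroup killed by $\rho\circ\pi$ imposes no commutation constraints on a nearby representation. Establishing the structure of $\pi_1(\Omega_\rho)$ is precisely the missing content: the paper works with the lift of $\Omega_\rho$ to $\Sphr^{2n-1}$ and shows $\pi_1(\Omega_\rho)$ is trivial for $n\geq 4$ (codimension of $K_\rho$ at least $3$), is $\ZZ/2\ZZ$ for $n=3$ (Alexander duality plus van Kampen), and is the central $\ZZ$ of a circle bundle for $n=2$ (where $\Omega_\rho\cong\SL(2,\RR)$ or $\SL(2,\RR)/(\ZZ/3\ZZ)$), and only then can one conclude that nearby representations of $\pi_1(M)$ kill $\ker\pi$. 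Your appeal to ``isotopy rigidity provided by Theorem~\ref{thm:DoD_loc_cst}'' cannot substitute for this: that theorem controls quotients $\G\backslash\Omega_{\rho',V,T}$ as $\rho'$ varies among Anosov representations of $\pi_1(\Sigma)$, and says nothing about an arbitrary representation of $\pi_1(M)$ close to $\rho\circ\pi$ that is not assumed to factor through $\pi$.

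Two smaller points. Connectedness of $\mathcal{C}$ is not enough to trivialize the bundle $\coprod_{\rho}W_\rho\to\mathcal{C}$ and obtain a well-defined map $\Psi$ to isotopy classes (and a well-defined $\theta(\psi)$ independent of $\rho$); the paper uses that $\mathcal{C}$ is simply connected, indeed a cell by Hitchin's theorem, to kill the monodromy. Also, for $n=2$ you should be aware that the (lifted) domain of discontinuity has two connected components and a choice must be made; this is harmless but should be said, since it affects the identification of $M$ as a circle bundle and the centrality of $\ker\pi$ used in that case.
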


\begin{proof}
  For $\rho \in \mathcal{C}$, let $\Omega_\rho \subset \Sphr^{2n-1}$ the
  lift to the sphere of the domain of discontinuity constructed in
  Section~\ref{sec:coin}, considering $\rho: \pi_1(\Sigma) \to
  \PSL(2n,\RR)$ as a $P_n$-Anosov representation. When $n= 2$ the
  domain of discontinuity has two connected  components and
  $\Omega_\rho$ denotes any one of them.
  The quotient space $W_\rho = \G \backslash
  \Omega_\rho$ is a $(\PSL(2n, \RR), \PP^{2n-1}(
  \RR))$-variety. Moreover, by Theorem~\ref{thm:DoD_loc_cst}, the
  total space $\mathcal{W} = \coprod_{\rho \in \mathcal{C}} W_\rho$ is
  a fiber bundle over the base $\mathcal{C}$ (i.e.\ locally over
  $\mathcal{C}$, $\mathcal{W}$ is a product). Since $\mathcal{C}$ is
  simply connected (actually, by   \cite[Theorem~A]{Hitchin},
  $\mathcal{C}$ is a cell), this fiber bundle is trivial, i.e.\ 
  $\mathcal{W} = \mathcal{C} \times M$.

  In particular, for each $\rho \in \mathcal{C}$, there is a
  diffeomorphism $f_\rho: M \to W_\rho$.  Hence there is a continuous map
  $\sigma : \mathcal{C} \to \mathcal{D}_{ \PSL( 2n, \RR), \PP^{2n-1}(
    \RR)}(M)\sep \rho \mapsto (W_\rho, f_\rho)$. Moreover the 
  $\Gamma$-cover $\Omega_\rho \to W_\rho$ gives a homomorphism $\pi:
  \pi_1(W_\rho) \cong \pi_1(M) \to \G$ that does not depend on $\rho$
  (again using that $\mathcal{C}$ is simply connected).

  The map $\sigma$ fits in the diagram
\[ \xymatrix{  & \mathcal{D}_{\PSL( 2n, \RR), \PP^{2n-1}(\RR)}(M) \ar[d]^{\hol} \\
    \mathcal{C} \ar@/^1pc/[ru]^-{\sigma} \ar[r] & \hom( \pi_1(M), \PSL(2n,
    \RR))/ \PSL(2n, \RR)} \] 
where the bottom map is $\rho \mapsto \rho \circ \pi$.
Since $\sigma$ is injective and $\hol$ is a local homeomorphism,
the statements of the theorem follow if the map $\beta: \rho
\mapsto \rho \circ \pi$ is onto a connected component
$\mathcal{C}_M$ of $\hom( \pi_1(M), \PSL(2n, \RR))/ \PSL(2n, \RR)$.
For this, it is enough to show that $\beta$ is open. 

When $n \geq 4$, the codimension of $K_\rho = \Sphr^{2n-1}
\moins \Omega_\rho$ is bigger than $3$, hence $\pi_1(\Omega_\rho) =
\pi_1( \Sphr^{2n-1}) = \{1\}$ and $\pi_1(M) = \pi_1( \Sigma)$ and
$\beta$ is the identity and is obviously open.

For $n=2$, one observes (considering a Fuchsian representation and
using Theorem~\ref{thm:DoD_loc_cst}) that
$\Omega_\rho \cong \SL(2, \RR)$ or $\Omega_\rho \cong \SL(2, \RR) /
(\ZZ/3\ZZ)$, $\ZZ/3\ZZ \subset \SO(2)$, depending on which connected component of the domain of discontinuity one considers.  In particular,  $M$ is the total space
of a circle bundle over $\Sigma$ with Euler number $g-1$ or
$3g-3$. 
The set $\{\rho \in \hom(\pi_1(M), \PSL(2n, \RR)) \mid Z(\rho) \text{
  is finite}\}$ is open in $\hom(\pi_1(M), \PSL(2n, \RR))$ and since
$\pi_1(M) \to \pi_1( \Sigma)$ is a central extension, the image by
$\beta$ of  $\{\rho \in \hom(\pi_1( \Sigma),
\PSL(2n, \RR)) \mid Z(\rho) \text{ is finite}\}$ is open. By
\cite[Lemma~10.1]{Labourie_anosov} $\mathcal{C} \subset \{\rho \in
\hom(\pi_1( \Sigma), 
\PSL(2n, \RR)) \mid Z(\rho) \text{ is finite}\}$, 
hence $\beta$ is open. 

For $n=3$ the complement  $K_\rho$ of $\Omega_\rho$ in $\Sphr^{5}$ is
homeomorphic to $(\Sphr^1 \times
\Sphr^2)/ \{\pm 1\}$. Hence $\pi_1( K_\rho) \cong \ZZ$ and $H^3(
K_\rho, \ZZ) \cong \ZZ/2\ZZ$. Alexander duality implies that $H_1(
\Omega_\rho, \ZZ) \cong \ZZ/2\ZZ$. If $U$ denotes a tubular
neighborhood of $K_\rho$, then $U \moins K_\rho$ has abelian
fundamental group. 
The Van Kampen theorem implies now that $\pi( U \moins K_\rho) \to
\pi_1(\Omega_\rho)$ is onto (otherwise, one would get $\pi_1( \Sphr^5)
\neq 0$) and hence that $\pi_1( \Omega_\rho)$ is
abelian. In conclusion, $\pi_1( \Omega_\rho) \cong H_1( \Omega_\rho,
\ZZ) \cong \ZZ/2\ZZ$ is finite. This is enough to show that  $\beta$
is open.

The mapping class group acts naturally on $\mathcal{C}$
and on $\coprod_{\rho \in \mathcal{C}} \Omega_\rho$ and hence on
$\mathcal{W} = \Gamma \backslash \coprod_{\rho \in \mathcal{C}}
\Omega_\rho$. Thus, for each $\psi\in \mathrm{Mod}( \Sigma)$, we get
a bundle automorphism of $\mathcal{W} \cong \mathcal{C} \times M$,
that is to say a family of diffeomorphisms $\{f_{\psi, \rho}\}_{\rho
  \in \mathcal{C}}$.  Since $\mathcal{C}$ is connected, the class
$\theta(\psi)\in \mathrm{Mod}(M)$ of $f_{\psi, \rho}$ is well
defined independently of $\rho$. 
This defines a homomorphism $\theta: \mathrm{Mod}(\Sigma)
  \to \mathrm{Mod}(M)$
satisfying all the wanted properties.
\end{proof}

\begin{remarks}
\noindent
\begin{asparaenum}
\item 
Theorem~\ref{thm_hitchin_sl2n}  and Theorem~\ref{thm:hitchin_other} below solve the problem of giving a geometric
interpretation of Hitchin components\footnote{Hitchin did ask for such a geometric interpretation in  \cite{Hitchin}.}. 
 We obtain an embedding of the Hitchin component into the deformation
 space of geometric structures (e.g.\ real projective structures when
 $G = \PSL(2n,\RR)$), such that the image is a 
connected component of $\mathcal{D}_{G,X}(M)$. This implies that
the Hitchin component parametrizes specific $(G,X)$-structures on a
manifold $M$.  However, in the general case, we do not 
characterize the image in $\mathcal{D}_{G,X}(M)$ in geometric
terms. A geometric characterization had been obtained for $\PSL(3,\RR)$ by Choi and Goldman
\cite{Goldman_Choi, Goldman_convex}, and for $\PSL(4, \RR)$ by the
authors \cite{Guichard_Wienhard_Duke}.

\item In \cite{Guichard_Wienhard_DoDSymp} we determine the
  homeomorphism type of $M$ and show that $M$ is homeomorphic to the
  total space of an $\O(n)/\O(n-2)$-bundle over $\Sigma$.
\end{asparaenum}
\end{remarks}

\medskip
\subsection{Hitchin components for classical groups}
\label{sec:hitch-comp-other}

\begin{thm}\label{thm:hitchin_other} 
  Let $\Sigma$ be a closed connected orientable surface of genus
  $\geq 2$.
  Assume that $G$ is 
$\PSL(2n, \RR)$ ($n \geq 2$), 
 $\PSp(2n, \RR)$ ($n \geq
  2$), or 
 $\PSO(n,n)$ ($n\geq 3$), 
  and $X = \PP^{2n-1}(\RR)$; 
 or that $G$ is 
  $\PSL(2n+1, \RR)$ ($n \geq 1$), or 
 $\PSO(n,n+1)$ ($n\geq
  2$), and  $X = \mathcal{F}_{1,2n}(\RR^{2n+1}) = \{ (D,H) \in
  \PP^{2n}(\RR) \times \PP^{2n}(\RR)^* \mid D \subset H \}$.

  Let $\mathcal{C} \subset \hom( \pi_1(\Sigma), G)/G$ be the
  Hitchin component.

  Then there exists a compact manifold $M$, a homomorphism $\pi:
  \pi_1(M) \to \pi_1( \Sigma)$, and 
   a connected component $\mathcal{D}$
  of the deformation space $\mathcal{D}_{G,X}(M)$ 
  such that
  \[\mathcal{C} \to \mathcal{C}_M \sep \rho \mapsto \rho \circ \pi\] is
  a homeomorphism onto a connected component $\mathcal{C}_M$ of
  $\hom(\pi_1(M),G)/G$ and such that 
  \[\hol |_{\mathcal{D}}:
  \mathcal{D} \to \mathcal{C}_M\] 
  is a homeomorphism.

  Furthermore there is a homomorphism $\theta: \mathrm{Mod}(\Sigma) \to
  \mathrm{Mod}( M)$ such that the identification $\mathcal{C} \cong
  \mathcal{D}$ is $\theta$-equivariant.
\end{thm}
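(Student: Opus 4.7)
The strategy follows the template established in the proof of Theorem~\ref{thm_hitchin_sl2n}. For each group $G$ and each $\rho$ in the Hitchin component $\mathcal{C}$, the plan is to first construct a $\rho(\pi_1(\Sigma))$-invariant open subset $\Omega_\rho \subset X$ (or a lift to a double cover) on which the action is properly discontinuous and cocompact. Then, using contractibility of $\mathcal{C}$ (a cell by \cite{Hitchin}) together with the local-triviality statement of Theorem~\ref{thm:DoD_loc_cst}, one trivializes the total space $\mathcal{W} = \coprod_{\rho \in \mathcal{C}} \G\backslash \Omega_\rho \to \mathcal{C}$, obtaining a compact manifold $M$ together with a continuous family of diffeomorphisms $f_\rho : M \to \G \backslash \Omega_\rho$. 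These assemble into a continuous section $\sigma : \mathcal{C} \to \mathcal{D}_{G,X}(M)$, $\rho \mapsto (\G \backslash \Omega_\rho, f_\rho)$, lifting the map $\beta : \mathcal{C} \to \hom(\pi_1(M),G)/G$ given by $\rho \mapsto \rho \circ \pi$, where $\pi : \pi_1(M) \to \pi_1(\Sigma)$ is the homomorphism coming from the $\G$-cover $\Omega_\rho \to \G \backslash \Omega_\rho$ (well-defined independently of $\rho$ by connectedness of $\mathcal{C}$).

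For the construction of $\Omega_\rho$ we split into cases according to $G$. If $G$ is $\PSp(2n,\RR)$ or $\PSO(n,n)$, the standard embedding $\phi : G \hookrightarrow \PSL(2n,\RR)$ sends Hitchin representations to Hitchin representations, and $\phi \circ \rho$ is $P_n$-Anosov in $\PSL(2n,\RR)$; we take $\Omega_\rho \subset \PP^{2n-1}(\RR)$ to be (a lift to $\Sphr^{2n-1}$, then a connected component of) the domain of Section~\ref{sec:coin}, which is $G$-invariant. If $G$ is $\PSL(2n+1,\RR)$ or $\PSO(n,n+1)$, Hitchin representations are $B$-Anosov, in particular $P_{n,n+1}$-Anosov in $\PSL(2n+1,\RR)$, and the construction of Section~\ref{sec:adjo-repr} (the adjoint representation) yields a domain of discontinuity $\Omega_\rho \subset \mathcal{F}_{1,2n}(\RR^{2n+1})$. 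In both cases nonemptyness is guaranteed by the codimension estimate of Proposition~\ref{prop:codim} together with $\vcd(\pi_1(\Sigma)) = 2$ and the dimension hypotheses on $G$, so that Theorem~\ref{thm:DoD_loc_cst} applies.

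The main obstacle is showing that $\beta$ is an open injection onto a connected component $\mathcal{C}_M$ of $\hom(\pi_1(M),G)/G$. Injectivity will follow from the fact that $\rho \circ \pi$ determines $\rho$ (equivalently $\pi$ has image of finite index, in fact surjective up to a central kernel). For openness the key point is to control $\ker \pi$, which is the fundamental group of the generic fiber of $\Omega_\rho \to \G \backslash \Omega_\rho$ quotiented appropriately; it injects into $\pi_1(\Omega_\rho)$. When the complement of $\Omega_\rho$ in its ambient projective variety has real codimension at least three, $\pi_1(\Omega_\rho)$ is trivial, so $\pi$ is an isomorphism and $\beta$ is the identity. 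In the remaining low-dimensional cases (small $n$) one analyzes $\pi_1(\Omega_\rho)$ by Alexander duality and van Kampen, as in the $\PSL(6,\RR)$ case of Theorem~\ref{thm_hitchin_sl2n}, to show $\ker \pi$ is finite and central in $\pi_1(M)$; openness of $\beta$ then reduces to the standard observation that representations with finite centralizer form an open set, combined with \cite[Lemma~10.1]{Labourie_anosov} applied to Hitchin representations.

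Once openness of $\beta$ is established, identifying $\mathcal{D} := \sigma(\mathcal{C})$ as a connected component of $\mathcal{D}_{G,X}(M)$ follows from the holonomy theorem, since $\sigma$ is continuous, injective, and $\hol \circ \sigma = \beta$ is an open map to a connected component. Finally, the mapping class group equivariance is constructed verbatim as in Theorem~\ref{thm_hitchin_sl2n}: $\mathrm{Mod}(\Sigma)$ acts on $\mathcal{C}$ and on $\coprod_{\rho} \Omega_\rho$, hence on $\mathcal{W} \cong \mathcal{C} \times M$ by bundle automorphisms; the class of the resulting fiber diffeomorphism is independent of $\rho \in \mathcal{C}$ by connectedness, defining the desired $\theta : \mathrm{Mod}(\Sigma) \to \mathrm{Mod}(M)$.
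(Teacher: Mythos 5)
Your proposal is correct and takes essentially the same route as the paper, whose proof simply runs the argument of Theorem~\ref{thm_hitchin_sl2n} verbatim with the domain of Section~\ref{sec:coin} when $X=\PP^{2n-1}(\RR)$ and the adjoint-representation domain $\Omega^{Ad}_\rho\subset\mathcal{F}_{1,2n}(\RR^{2n+1})$ when $X=\mathcal{F}_{1,2n}(\RR^{2n+1})$, the key points being simple connectedness of the Hitchin component and finiteness (or centrality) of the fundamental group of the domain, exactly as you outline. One small correction: the domain in $\mathcal{F}_{1,2n}(\RR^{2n+1})$ comes from regarding $\rho$ as $B$-Anosov and using the construction of Section~\ref{sec:sln_ano_min}, not from the $P_{n,n+1}$-Anosov structure via Section~\ref{sec:adjo-repr}, whose adjoint-representation domain lives in the full flag variety rather than in $X$.
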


\begin{proof}
The proof proceeds along the same lines as the proof of Theorem~\ref{thm_hitchin_sl2n},  considering  the following domains of discontinuity $\Omega_\rho \subset X$.   
\begin{enumerate} 
\item When $X = \PP(\RR^{2n})$, $\Omega_\rho$ is defined in Section~\ref{sec:coin}, regarding $\rho:\pi_1(\Sigma) \to G \to \PSL(2n,\RR)$ as a $P_n$-Anosov representation. 
\item When $X= \mathcal{F}_{1,2n}(\RR^{2n+1})$, $\Omega^{Ad}_\rho$ is defined in Section~\ref{sec:sln_ano_min}, regarding $\rho: \pi_1(\Sigma) \to G \to \PSL(2n+1,\RR)$ as a $B$-Anosov representation. 
\end{enumerate}
The central ingredients are that the Hitchin component $\mathcal{C}$ is simply
connected and that the  fundamental group of the domain of discontinuity is finite (or centralized by $\pi_1(\Sigma)$).
\end{proof}
\subsection{Components of the space of maximal representations}
\label{sec:maxim-comp-sympl}
Components of the space of maximal representations might have nontrivial topology, in particular their fundamental groups can be nontrivial.  

For example, work of Gothen \cite[Propositions~5.11, 5.13 and
5.14]{Gothen} implies that the components of the space of maximal
representations of $\pi_1(\Sigma)$ into $\Sp(4,\RR)$, which are not
Hitchin components, have fundamental groups which are isomorphic to
surface groups,  $(\ZZ/2\ZZ)^{2g}$, or to $\ZZ^{2g}$. 

\begin{thm}\label{thm:compspn}
 Let $\Sigma$ be a
  closed connected orientable surface of genus $\geq 2$. 
   Let $\mathcal{C}\subset \hom(\pi_1(\Sigma), \Sp(2n,
  \RR))/\Sp(2n,\RR))$, 
   $n\geq 2$, be a component of the space of maximal representations.

  Then there exists a compact manifold $M$ of dimension $2n-1$ and 
  a homomorphism $\pi: \pi_1(M) \to \pi_1(\Sigma)$,  such that
   $\rho \mapsto \rho \circ \pi$ gives an identification of $\mathcal{C}$
  with a connected component $\mathcal{C}_M$ of $\hom(\pi_1(M),
  \Sp(2n, \RR)) / \Sp(2n, \RR)$. 

Furthermore there exists a connected component $\mathcal{D}$ of the deformation space 
  $\mathcal{D}_{\Sp(2n, \RR), \PP^{2n-1}(\RR)}(M)$ and a homomorphism $\kappa: \pi_1(
  \mathcal{C}) \to \mathrm{Mod}(M)$ such that $\hol : \mathcal{D} \to
  \mathcal{C}_M \cong \mathcal{C}$ is the Galois cover associated with
  $\ker \kappa$. 
  The corresponding isomorphism of universal covers
  induces a local homeomorphism
   $\widetilde{ \mathcal{D}} \cong \widetilde{ \mathcal{C}} \to \mathcal{D}$, that is 
  equivariant
  with respect to the subgroup $\mathrm{Mod}_{\mathcal{C}}$ of
  $\mathrm{Mod}( \Sigma)$ stabilizing $\mathcal{C}$.
\end{thm}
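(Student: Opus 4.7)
The plan is to follow the strategy of Theorem~\ref{thm_hitchin_sl2n}, but with the additional complication that $\mathcal{C}$ is no longer simply connected, so the relevant bundle does not trivialize over $\mathcal{C}$ itself and one has to work on its universal cover $\widetilde{\mathcal{C}}$.

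First I would note that by Theorem~\ref{thm:maximal_Anosov} every $\rho \in \mathcal{C}$ is $\check{P}$-Anosov (i.e.\ $Q_1$-Anosov in the notation of Section~\ref{sec:orth-sympl-groups_anosov}), and by the construction of Section~\ref{sec:coin} (see also Remark~\ref{rem:upcoming}) one obtains a $\rho$-equivariant cocompact domain of discontinuity $\Omega_\rho \subset \PP^{2n-1}(\RR)$ with quotient $W_\rho = \pi_1(\Sigma) \backslash \Omega_\rho$, which is naturally a $(\Sp(2n,\RR),\PP^{2n-1}(\RR))$-variety of dimension $2n-1$. By Theorem~\ref{thm:DoD_loc_cst} the disjoint union $\mathcal{W} = \coprod_{\rho \in \mathcal{C}} W_\rho$ is a locally trivial fiber bundle over $\mathcal{C}$. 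Pulling back to the universal cover I obtain a trivial bundle $\widetilde{\mathcal{W}} \cong \widetilde{\mathcal{C}} \times M$ for some fixed compact manifold $M$ of dimension $2n-1$, and for each $\tilde\rho \in \widetilde{\mathcal{C}}$ a diffeomorphism $f_{\tilde\rho}: M \to W_{\rho}$ depending continuously on $\tilde\rho$. This yields a continuous map $\tilde\sigma: \widetilde{\mathcal{C}} \to \mathcal{D}_{\Sp(2n,\RR),\PP^{2n-1}(\RR)}(M)$, and the $\pi_1(\Sigma)$-covering $\Omega_\rho \to W_\rho$ induces (independently of $\tilde\rho$, since $\widetilde{\mathcal{C}}$ is connected) a homomorphism $\pi: \pi_1(M) \cong \pi_1(W_\rho) \to \pi_1(\Sigma)$.

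Next I would define $\kappa: \pi_1(\mathcal{C}) \to \mathrm{Mod}(M)$ as the monodromy of the bundle $\mathcal{W}\to \mathcal{C}$: the deck action of $\pi_1(\mathcal{C})$ on $\widetilde{\mathcal{W}} \cong \widetilde{\mathcal{C}} \times M$ is by bundle automorphisms, so each element acts on $M$ up to isotopy. By construction $\tilde\sigma$ descends to $\mathcal{D} := \tilde\sigma(\widetilde{\mathcal{C}})/\ker\kappa \subset \mathcal{D}_{\Sp(2n,\RR),\PP^{2n-1}(\RR)}(M)$, and the composition of $\tilde\sigma$ with the holonomy map lands in $\hom(\pi_1(M),\Sp(2n,\RR))/\Sp(2n,\RR)$ via $[\tilde\rho] \mapsto \rho \circ \pi$. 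As in the proof of Theorem~\ref{thm_hitchin_sl2n}, I would use that $\hol$ is a local homeomorphism (the holonomy theorem) together with the fact that the map $\beta: \rho \mapsto \rho\circ\pi$ is open to conclude that $\rho \mapsto \rho \circ \pi$ is a homeomorphism onto a connected component $\mathcal{C}_M$ and that $\hol|_{\mathcal{D}}: \mathcal{D}\to \mathcal{C}_M$ is the Galois cover associated with $\ker\kappa$. The equivariance with respect to $\mathrm{Mod}_\mathcal{C}$ is then obtained, exactly as in Theorem~\ref{thm_hitchin_sl2n}, by letting the mapping class group act on $\coprod_\rho \Omega_\rho$ over $\mathcal{C}$ and passing to the quotient.

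The main obstacle is the openness of $\beta$: since $\mathcal{C}$ may have nontrivial fundamental group, one cannot invoke simple connectedness to identify $\pi_1(M)$ with $\pi_1(\Sigma)$ directly. Here I would exploit the explicit description of the domain $\Omega_\rho \subset \PP^{2n-1}(\RR)$: its complement $K_\rho$ is a topological $\PP^1(\RR)$-bundle over the circle $\partial_\infty \pi_1(\Sigma)\cong S^1$ (Proposition~\ref{prop:Omega}), so that $\pi_1(\Omega_\rho)$ is either trivial, finite, or central in $\pi_1(M)$ (as shown case by case in the proof of Theorem~\ref{thm_hitchin_sl2n} for small~$n$, using Alexander duality/Van Kampen arguments for $n=2,3$ and high connectedness of $\Omega_\rho$ for $n\geq 4$). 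Combined with the fact that for maximal representations the centralizer $Z(\rho)$ is finite (which for maximal representations follows from the classical tangent space computation, as in \cite{Labourie_anosov, Burger_Iozzi_Labourie_Wienhard}), a standard central extension argument then shows that $\beta$ is open. This completes the identification of $\mathcal{C}\cong \mathcal{C}_M$ and hence of $\mathcal{D}$ as the claimed Galois cover of $\mathcal{C}$.
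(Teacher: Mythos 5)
Your overall strategy is the right one and is exactly how the paper intends Theorem~\ref{thm:compspn} to be proved (the paper only writes out the argument for Theorem~\ref{thm_hitchin_sl2n} and leaves this case as the analogous construction performed on $\widetilde{\mathcal{C}}$, with the monodromy $\kappa$ accounting for $\pi_1(\mathcal{C})\neq 1$). Two small inaccuracies are repairable: a fiber bundle over the simply connected space $\widetilde{\mathcal{C}}$ need not be trivial (in Theorem~\ref{thm_hitchin_sl2n} triviality is justified because the Hitchin component is a \emph{cell}); what you actually need, and what Theorem~\ref{thm:DoD_loc_cst} gives, is parallel transport of markings up to isotopy along paths in $\mathcal{C}$, which simultaneously defines $\tilde\sigma$ on $\widetilde{\mathcal{C}}$ and the monodromy $\kappa$. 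Also, by Proposition~\ref{prop:Omega} the fiber of $K_{\xi(\partial_\infty\Gamma)}$ over $t\in\partial_\infty\Gamma$ is $\PP(\xi(t))\cong\PP^{n-1}(\RR)$, not $\PP^1(\RR)$; this does not hurt you, since the complement has the same shape as in the $\SL(2n,\RR)$ case and the codimension/Alexander duality/Van Kampen computations you import go through verbatim.

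The genuine gap is in your proof of openness of $\beta:\rho\mapsto\rho\circ\pi$, which is the crux of identifying $\beta(\mathcal{C})$ with a full component $\mathcal{C}_M$. For $n\geq 3$ the kernel of $\pi:\pi_1(M)\to\pi_1(\Sigma)$ is finite and openness follows from the no-small-subgroups argument with no hypothesis on centralizers. The delicate case is $n=2$, where $\ker\pi$ is an infinite (central) cyclic group, and there your argument rests on the claim that $Z(\rho)$ is finite for all maximal representations. That claim is false: the representation $\rho_1\oplus\rho_1:\pi_1(\Sigma)\to\Sp(2,\RR)\times\Sp(2,\RR)\subset\Sp(4,\RR)$, with $\rho_1$ Fuchsian, is maximal (Toledo invariants add), and its centralizer in $\Sp(4,\RR)$ contains a circle $\SO(2)$ rotating the two copies (equivalently, writing $\rho=\rho_1\otimes\mathrm{id}_{\RR^2}$ on $\RR^2\otimes\RR^2$ with form $\omega\otimes q$, the factor $1\otimes\O(2)$ centralizes the image). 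Hitchin and Gothen components consist of representations with finite centralizer, but other maximal components of $\Sp(4,\RR)$ do contain such reducible points, and it is precisely at every point of $\beta(\mathcal{C})$ that openness must be checked. So the central-extension argument of Theorem~\ref{thm_hitchin_sl2n} (finite centralizer of $\rho$ forces $\rho'(z)$ to be of finite order, hence trivial by no-small-subgroups) does not apply as you state it, and you have not shown that representations of $\pi_1(M)$ near $\rho\circ\pi$ kill the central element when $Z(\rho)$ is positive-dimensional. Some substitute is needed at these points — for instance an obstruction argument showing that the relation $\prod[\rho'(a_i),\rho'(b_i)]=\rho'(z)^{e}$ is incompatible, for $\rho'(z)\neq 1$ small, with characters of the centralizer $Z_G(\rho'(z))$ (which kill commutators), in the spirit of a relative Toledo/determinant computation — and this would have to cover all possible small central elements $\rho'(z)$, not only those lying in the compact part of $Z(\rho)$. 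As written, the proposal does not establish the $n=2$ case of the theorem.
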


\begin{remark}
Work of Garc\'\i{}a-Prada, Gothen and
Mundet i Riera \cite{GarciaPrada_Gothen_Mundet} seems to imply that components of the space of 
maximal representations of $\pi_1(\Sigma) $ into $\Sp(2n,\RR)$ with $n\geq 3$ are always simply connected. If this holds true the statement of the theorem can be simplified when $n\geq 3$. 
\end{remark}

Of course, there are similar statements for components of the space of  maximal representations of $\pi_1(\Sigma)$ into other Lie groups $G$ of Hermitian type. 
For classical Lie groups we list here the homogeneous space $X$ the geometric structure is modeled on:
\begin{itemize}
\item $G= \SO(2,n)$, $X= \mathcal{F}_1(\RR^{2+n})$ the space of
  isotropic $2$-planes.
\item $G= \SU(p,q)$, $X\subset \PP^{p+q-1}(\CC)$ is the null cone for
  the Hermitian form.
\item $G= \SO^*(2n)$, $X \subset \PP^{n-1}(\HH)$ is the null cone for the skew-Hermitian form.
\end{itemize}

\section{Compactifying quotients of symmetric spaces}
\label{sec:compactify}
Let $\Gamma$ be a word hyperbolic group and $\rho: \Gamma \to G$ an Anosov representation. Then $\rho(\G)<G$ is a discrete subgroup, and the action of $\rho(\G)$ on the symmetric space $\mathcal{H} = G/K$ by isometries is properly discontinuous. In most cases, the quotient $M = \rho(\G) \backslash \mathcal{H}$ will not be compact nor of finite volume. 

In this section we will describe how the construction of domains of discontinuity, together with Proposition~\ref{prop:dod_in_smaller} can be applied in order to describe compactifications of $\G \backslash \mathcal{H}$. More precisely, in a suitable compactification $\overline{\mathcal{H}}$ of
 $\mathcal{H}$ we will describe a $\rho(\G)$-invariant subset
 $\overline{\mathcal{H}}_\rho$ containing $\mathcal{H}$ such that the
 following holds: $\rho(\G)$ acts properly discontinuously on
 $\overline{\mathcal{H}}_\rho$ with compact quotient $\overline{M}= \G
 \backslash \overline{\mathcal{H}}_{\rho}$, and $\overline{M}$ contains $M$ as an open dense set.  

 We will now describe the construction in detail in the case when $\rho:\G \to \Sp(2n,\RR)$ is a $Q_0$-Anosov representation, and $\overline{\mathcal{H}}$ is the bounded symmetric domain compactification of the symmetric space $\mathcal{H}_{\Sp(2n,\RR)}$. We then list other examples, to which an analogous construction applies.

 \subsection{Quotients of the Siegel space}
 \label{sec:siegel}
 Let us recall the geometric realization of the Borel embeddings for the Siegel space $\mathcal{H}_{\Sp(2n,\RR)}$. 
 Let $(\RR^{2n}, \omega)$ be a symplectic vector space and $(\CC^{2n},
 \omega_\CC)$ be its complexification, and $\phi: \Sp(2n,\RR) \to \Sp(2n,\CC)$ the corresponding embedding. 
 Let $h$ be the non-degenerate Hermitian form of signature $(n,n)$ on $\CC^{2n}$ defined by $h(v,w) = i \omega_\CC (\overline{v}, w)$. 
 Then $h$ is preserved by $\phi(\Sp(2n,\RR))$. 
 The symmetric space $\mathcal{H}_{\Sp(2n,\RR)}$ admits a $\phi$-equivariant embedding into the complex Lagrangian Grassmannian $\Ll(\CC^{2n})$, 
 namely 
 \[
 \mathcal{H}_{\Sp(2n,\RR)} \cong \mathcal{H} = \{ W \in \Ll(\CC^{2n} )\,|\, h|_W  > 0\},  
 \] 
 where $h|_W>0$ means that $h$ restricted to $W$ is positive definite. 
The natural compactification 
\[
\overline{\mathcal{H}}_{\Sp(2n,\RR)} =  \overline{\mathcal{H}} = \{ W \in \Ll(\CC^{2n} )\,|\, h|_W  \geq0\}, 
\]
where $h|_W\geq 0$ means that $h$ restricted to $W$ is positive semi-definite, is isomorphic to the bounded symmetric domain compactification  of $\mathcal{H}_{\Sp(2n,\RR)}$.

The compactification $\overline{\mathcal{H}}$ decomposes into
$\Sp(2n,\RR)$-orbits $\mathcal{H}_k$, $k = 0,\dots , n$ with $\mathcal{H}_0 =
\mathcal{H}$ and $\mathcal{H}_n \cong\Ll(\RR^{2n})$. The other $G$-orbits
$\mathcal{H}_k$ have the structure of a fiber bundle over $\Ff_k = \{ V
\subset \RR^{2n}\, |\, \dim(V) = k,\, \omega|_{V} = 0\}$, the space of
isotropic $k$-dimensional subspace in $\RR^{2n}$. The fiber over $V
\in \Ff_k$ is 
\[
 \{ W \in \Ll(\CC^{2n}) \, |\, h|_{W} \geq 0, \, W\cap \overline {W} =
 V \otimes_\RR \CC\}. 
\]

\begin{thm}\label{thm:compactify}
Let $\G \to \Sp(2n,\RR)$ be a $Q_0$-Anosov representation. 
Then there exists a compactification $\overline{M}$ of $M = \G \backslash \mathcal{H}$ such that $\overline{M}$ carries a $(\Sp(2n,\RR), \overline{\mathcal{H}})$-structure and the inclusion $M \subset \overline{M}$ is an $\Sp(2n,\RR)$-map. 

More precisely, there exists an open $\rho(\G)$-invariant subset
$\overline{\mathcal{H}}_\rho$ of $\overline{\mathcal{H}}$ 
 containing $\mathcal{H}$ such that $\G$ acts properly discontinuously and cocompactly on $\overline{\mathcal{H}}_\rho$, and $\overline{M} = \G \backslash \overline{\mathcal{H}}_\rho$.
\end{thm}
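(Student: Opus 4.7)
The plan is to promote $\rho$ to a $Q_0$-Anosov representation into the complexification $\Sp(2n,\CC)$, apply Theorem~\ref{thm:dod_GF} to obtain a cocompact domain of discontinuity $\Omega$ in the complex Lagrangian Grassmannian $\Ll(\CC^{2n})$, and then set $\overline{\mathcal{H}}_\rho := \Omega \cap \overline{\mathcal{H}}$ and invoke Proposition~\ref{prop:dod_in_smaller}.

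Let $\phi : \Sp(2n,\RR) \to \Sp(2n,\CC)$ denote the natural inclusion. First I would verify that $\phi \circ \rho$ is $Q_0$-Anosov, where $Q_0 < \Sp(2n,\CC)$ is the stabilizer of a complex line. This is a direct application of Proposition~\ref{prop:injLieGr}: a maximal $\RR$-split torus of $\Sp(2n,\RR)$ is also a maximal $\RR$-split torus of $\Sp(2n,\CC)$ (both have real rank $n$), and the simple root $\alpha_0$ defining $Q_0$ in $\Sp(2n,\RR)$ is the restriction of the corresponding simple root $\alpha_0'$ of $\Sp(2n,\CC)$, so the hypothesis of that proposition is trivially satisfied with $w' = 1$ and $\Theta' = \Delta' \moins \{\alpha_0'\}$. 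The Anosov map for $\phi\circ\rho$ is then the composition $\xi_\CC : \partial_\infty \G \to \PP(\CC^{2n})$ of the Anosov map $\xi : \partial_\infty \G \to \PP(\RR^{2n})$ of $\rho$ with the inclusion $\PP(\RR^{2n}) \hookrightarrow \PP(\CC^{2n})$.

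Theorem~\ref{thm:dod_GF} then produces an open $\rho(\G)$-invariant subset
\[
\Omega \;=\; \Ll(\CC^{2n}) \moins \bigcup_{t \in \partial_\infty \G} \{\, W \in \Ll(\CC^{2n}) \mid \xi_\CC(t) \subset W\,\}
\]
of $\Ff_1(\CC^{2n}) = \Ll(\CC^{2n})$ on which $\G$ acts properly discontinuously and cocompactly. The key elementary observation is that $\mathcal{H} \subset \Omega$: for any $t \in \partial_\infty \G$, picking $v \in \RR^{2n}$ with $[v] = \xi(t)$ one has
\[
h(v,v) \;=\; i\,\omega_\CC(\bar v, v) \;=\; i\,\omega(v,v) \;=\; 0,
\]
so $v \notin W$ for any $W$ with $h|_W > 0$, hence $\xi_\CC(t) \not\subset W$ and $W \in \Omega$. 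Since $\overline{\mathcal{H}} \subset \Ll(\CC^{2n})$ is a closed $\Sp(2n,\RR)$-invariant subset and $\Omega \cap \overline{\mathcal{H}}$ contains $\mathcal{H}$ and is thus nonempty, Proposition~\ref{prop:dod_in_smaller} applied to $\phi$, $U' = \Omega$, $\Ff' = \Ll(\CC^{2n})$ and $\Ff = \overline{\mathcal{H}}$ gives that $\G$ acts properly discontinuously and cocompactly on $\overline{\mathcal{H}}_\rho := \Omega \cap \overline{\mathcal{H}}$.

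Finally $\overline{M} := \G \backslash \overline{\mathcal{H}}_\rho$ is compact; it inherits a $(\Sp(2n,\RR), \overline{\mathcal{H}})$-structure from the quotient construction (the kernel of $\rho$ being finite by Theorem~\ref{thm:Ano_QIE}, one may pass to a torsion-free finite index subgroup of $\G$ if one wishes a genuine manifold structure); and it contains $M = \G \backslash \mathcal{H}$ as an open $\Sp(2n,\RR)$-invariant subset, dense because $\mathcal{H}$ is dense in $\overline{\mathcal{H}}$ and $\Omega$ is open. The main technical point is the first step: once the promotion to a $Q_0$-Anosov representation in $\Sp(2n,\CC)$ with the explicit Anosov map is in place, the rest is a formal combination of Theorem~\ref{thm:dod_GF}, Proposition~\ref{prop:dod_in_smaller}, and the positivity of $h$ on $\mathcal{H}$.
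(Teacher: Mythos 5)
Your proposal is correct and follows essentially the same route as the paper: complexify to get a $Q_0$-Anosov representation into $\Sp(2n,\CC)$, take the domain of discontinuity in $\Ll(\CC^{2n})$, intersect with $\overline{\mathcal{H}}$, and apply Proposition~\ref{prop:dod_in_smaller}. The only cosmetic difference is that you check $\mathcal{H}\subset\Omega_{\phi\circ\rho}$ directly from $h(v,v)=i\,\omega(v,v)=0$ for a real spanning vector of $\xi_\CC(t)$, whereas the paper phrases the same fact via $W\cap\overline{W}=0$; these are equivalent.
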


In conclusion, $\overline{M}$ is a manifold with corners that is locally modelled on
$\overline{ \mathcal{H}}$. 

\begin{proof}
Let $\rho: \Gamma \to \Sp(2n,\RR)$ be the  $Q_0$-Anosov representation with associated Anosov map $\xi: \partial_\infty \G \to \PP(\RR^{2n})$. 
 Then $\phi\circ \rho: \Gamma \to \Sp(2n,\CC)$ is a $Q_0$-Anosov representation with Anosov map $\xi_\CC: \partial_\infty \G \to \PP(\CC^{2n})$. Let $\Omega_{\phi\circ\rho} \subset \Ll(\CC^{2n})$ be the domain of discontinuity associated to $\phi\circ \rho$ in Section~\ref{sec:orth-sympl-groups_anosov}. 
 
 Then, by Proposition~\ref{prop:dod_in_smaller}, $\rho(\G)$ acts properly discontinuously and cocompactly on 
 \[
 \overline{\mathcal{H}}_\rho:= \overline{\mathcal{H}} \cap \Omega_{\phi\circ\rho}. 
 \]
 
 Recall that 
 \[\Omega_{\phi\circ\rho} = \{ W \in \Ll(\CC^{2n}) \, |\, \forall t \in \partial_\infty \G, W\cap \xi_\CC(t) = 0\}.\]
 Since $\xi(t)$ is a real line, if $\xi_\CC(t) \subset W$, then $\xi_\CC(t) \subset \overline{W}$.  
 This implies that $\Omega_{\phi\circ\rho}$ contains the set $\{ W \in \Ll(\CC^{2n}) \, |\, W\cap \overline{W} = 0\}$, which contains $\mathcal{H}$. 
 \end{proof}
 We can describe $\overline{\mathcal{H}}_\rho$ more explicitly. For this we set 
 \[
 K_k(\xi) := \bigcup_{t\in \partial_\infty\Gamma} \{ V \in \Ff_k\, |\, \xi(t) \subset V\}. 
 \]

 Then 
 \[
\overline{\mathcal{H}}_\rho = \bigcup_{i=0}^{n}( \mathcal{H}_k  \moins \mathcal{H}_k|_{ K_k(\xi)}), 
\] 
where $ \mathcal{H}_k|_{ K_k(\xi)}$ is the restriction of the bundle to $K_k(\xi) \subset \mathcal{F}_k$ . 

\begin{cor}
Let $\rho: \pi_1(\Sigma) \to \Sp(2n,\RR)$ be a Hitchin representation, then there is a natural compactification of $M = \pi_1(\Sigma)\backslash \mathcal{H}$ as  $(\Sp(2n,\RR), \overline{\mathcal{H}})$-manifold.
\end{cor}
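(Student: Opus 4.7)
The plan is simply to verify the hypothesis of Theorem~\ref{thm:compactify} and then invoke it. The main point is to check that any Hitchin representation $\rho:\pi_1(\Sigma)\to\Sp(2n,\RR)$ is $Q_0$-Anosov, where $Q_0<\Sp(2n,\RR)$ is the stabilizer of a line in $\RR^{2n}$ (note that for the symplectic group every line is automatically isotropic).

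First, by the results recalled in Section~\ref{sec:ex_hit}, every Hitchin representation $\rho:\pi_1(\Sigma)\to\Sp(2n,\RR)$ is $B$-Anosov, where $B<\Sp(2n,\RR)$ is a Borel subgroup (equivalently, a minimal parabolic). Since $Q_0$ is a parabolic subgroup containing $B$, Lemma~\ref{lem:ThetaAn} immediately implies that $\rho$ is also $Q_0$-Anosov: indeed in the notation of that lemma, $B=P_\emptyset$ and $Q_0=P_{\Theta}$ for some $\Theta\supset\emptyset$, and $P_\emptyset$-Anosov implies $P_\Theta$-Anosov.

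Having established that $\rho$ is $Q_0$-Anosov, Theorem~\ref{thm:compactify} applies directly and produces an open $\rho(\pi_1(\Sigma))$-invariant subset $\overline{\mathcal{H}}_\rho\subset\overline{\mathcal{H}}$ containing $\mathcal{H}$, on which $\pi_1(\Sigma)$ acts properly discontinuously and cocompactly. The quotient $\overline{M}=\pi_1(\Sigma)\backslash\overline{\mathcal{H}}_\rho$ is a compact $(\Sp(2n,\RR),\overline{\mathcal{H}})$-manifold (with corners) containing $M=\pi_1(\Sigma)\backslash\mathcal{H}$ as an open dense subset, with the inclusion being an $\Sp(2n,\RR)$-map. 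There is no real obstacle here; the entire content of the corollary is the reduction $B$-Anosov $\Rightarrow$ $Q_0$-Anosov via Lemma~\ref{lem:ThetaAn}.
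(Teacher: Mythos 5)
Your proof is correct and is essentially the paper's own (implicit) argument: Hitchin representations are $B$-Anosov by the results recalled in Section~\ref{sec:ex_hit}, hence $Q_0$-Anosov by Lemma~\ref{lem:ThetaAn}, and Theorem~\ref{thm:compactify} then yields the compactification. The only cosmetic point is that the Anosov property for Hitchin representations is stated for the adjoint group $\PSp(2n,\RR)$, so strictly one passes between $\Sp(2n,\RR)$ and $\PSp(2n,\RR)$ using the covering statement in Section~\ref{sec:holon-uniq}, which does not affect the argument.
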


An analogous construction applies for example to: 
\begin{enumerate}
\item $Q_0$-Anosov representations $\rho: \G \to \SU(n,n)$, where $G_\CC = \SL(2n,\CC)$. Then $\phi \circ \rho: \G \to \SL(2n,\CC)$ is a $P_1$-Anosov representation and $\Omega_{\phi\circ \rho} \subset \Gr_n(\CC^{2n})$ is the domain of discontinuity described in Section~\ref{sec:coin}. 
\item $Q_0$-Anosov representations $\rho: \G \to \SO(n,n)$ where we
  consider $\phi: \SO(n,n) \to  \SL(2n,\RR)$.
 Then $\phi \circ \rho:
  \G \to \SL(2n,\RR)$ is a $P_1$-Anosov representation and
  $\Omega_{\phi\circ \rho} \subset \Gr_n(\RR^{2n})$ is the domain of
  discontinuity described in Section~\ref{sec:coin}.  
\end{enumerate}

\begin{remark}
In the case of an arbitrary Anosov representation $\rho: \G \to G$ we do not know how to construct a natural compactification of the quotient manifolds $M = \rho(\G) \backslash \mathcal{H}$, where $\mathcal{H}=G/K$ is the symmetric space. 

However, we propose to investigate the following approach. 
Recall that for a general semisimple Lie group $G$ the symmetric space $\mathcal{H} = G/K$ can be embedded into the space
of probability measures $\mathcal{M}(G/Q)$ on $G/Q$, where $Q< G$ is
any proper parabolic subgroup. The closure of the image $\overline{\mathcal{H}}$ is a
Furstenberg-compactification of $\mathcal{H}$.  

Assume that $\rho: \G \to G$ is an Anosov representation which admits a domain of discontinuity $\Omega _\rho \subset G/Q$ with compact quotient. Let $K_\rho = G/Q \moins \Omega_\rho$, and denote by $\mathcal{K}_\rho \subset \mathcal{M}(G/Q)$ the set of probability measures with support on $K_\rho$. Then the action of $\rho(\G)$ on $\mathcal{M}(G/Q) \moins \mathcal{K}_\rho$ is proper. Furthermore $\mathcal{M}(G/Q) \moins \mathcal{K}_\rho$  contains the image of $\mathcal{H}$. 

When is the action of $\rho(\G)$ on $\overline{\mathcal{H}} \cap (\mathcal{M}(G/Q) \moins \mathcal{K}_\rho)$ cocompact ?
\end{remark}

\section{Compact Clifford-Klein forms}\label{sec:cliffordklein}
In this section we apply the construction of domains of discontinuity
to construct compact Clifford-Klein forms of homogeneous
spaces.  The examples indicate that a more systematic treatment would be very
 interesting. For a recent survey on compact Clifford-Klein forms we
 refer the reader to \cite{Kobayashi_Yoshino}.

\subsection{$Q_1$-Anosov representations}\label{sec:CK_Q_1}
Let 
 $G= \SU(1,n)$, $ \Sp(1,n)$ or $G_\mathcal{O}$  (the isometry group of the
Cayley hyperbolic plane) and let $\Gamma < G$ be a convex cocompact subgroup.
Denote by $G \to \SO(p,q)$ the natural embedding, i.e.\ $(p,q) = (2,2n)$, $(4,4n)$, or $(8,8)$ respectively  (see Remark~\ref{rem:examples_orth}). Let $\rho: \Gamma \to \SO(p,q)$ be the corresponding embedding of $\Gamma$. 
Then $\rho$ is a $Q_1$-Anosov representation. The domain of discontinuity $\Omega_\rho \subset \Ff_0(\RR^{p,q})$ is empty if and only if $\G$ is a uniform lattice, see Remark~\ref{rem:examples_orth}. 
Consider $\phi: \SO(p,q) \to \SO(p, q+1)$. 
The composition $\phi \circ \rho: \Gamma \to \SO(p,q+1)$ is again a $Q_1$-Anosov representation, but now with a domain of discontinuity $\Omega_{\phi\circ \rho} \subset \Ff_0(\RR^{p,q+1})$ that is always nonempty. 
By Theorem~\ref{thm:dod_GF}, the action of $\phi\circ \rho (\Gamma)$ on $\Omega_{\phi\circ \rho}$ is properly discontinuous with compact quotient. 
\begin{prop}\label{prop:CK_one} Let $\G$, $\rho$ and $(p,q)$ be as
  above.

Then the embedding $\rho: \Gamma \to \SO(p,q)$ as well as any sufficiently small deformation of $\rho$ leads to a properly discontinuous action on the homogeneous space $\SO(p,q)/\SO(p-1,q)$. 

\begin{enumerate}
\item If $\G$ is a uniform lattice, then $\G\backslash \SO(p,q)/\SO(p-1,q)$ is compact.
\item If $\G$ is not a uniform lattice the quotient $\G\backslash
  \Omega_{\phi\circ \rho}$ is a compactification of $\G\backslash
  \SO(p,q)/\SO(p-1,q)$.
\end{enumerate}
\end{prop}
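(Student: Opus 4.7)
The approach is to realize $\SO(p,q)/\SO(p-1,q)$ as an $\SO(p,q)$-invariant open subset of $\Ff_0(\RR^{p,q+1})$ and to deduce the proposition from the properties of the domain $\Omega_{\phi\circ\rho}$ supplied by Theorem~\ref{thm:dod_GF}. Concretely, write $\RR^{p,q+1}=\RR^{p,q}\perp \RR e$ with $\langle e,e\rangle=-1$, so that $\phi:\SO(p,q)\hookrightarrow\SO(p,q+1)$ is the stabilizer of $e$. Any isotropic line in $\RR^{p,q+1}$ has the form $\langle v+te\rangle$ with $\|v\|^2=t^2$; the open condition $t\neq 0$ carves out an $\SO(p,q)$-invariant open subset $O\subset\Ff_0(\RR^{p,q+1})$, which decomposes as (at most) two $\SO(p,q)$-orbits $O^{\pm}=\{\langle v\pm e\rangle\mid\|v\|^2=1\}$, each $\SO(p,q)$-equivariantly isomorphic to the pseudo-sphere $\SO(p,q)/\SO(p-1,q)$ (the stabilizer of $\langle v\pm e\rangle$ in $\SO(p,q)$ being the stabilizer of the positive unit vector $v$).

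The crucial observation is that $O\subset\Omega_{\phi\circ\rho}$. Because $\rho(\G)\subset\SO(p,q)$, equivariance forces the Anosov map $\xi$ of $\rho:\G\to\SO(p,q)$ to land in $\Ff_1(\RR^{p,q})$, which sits as a closed subspace of $\Ff_1(\RR^{p,q+1})$ via the natural inclusion; by Proposition~\ref{prop:injLieGr} together with uniqueness (Lemma~\ref{lem:uniqu}), this $\xi$ is also the Anosov map of $\phi\circ\rho$. Hence $K_\xi=\bigcup_{t\in\partial_\infty\G}\PP(\xi(t))\subset\Ff_0(\RR^{p,q})$, which is disjoint from $O$, and Theorem~\ref{thm:dod_GF} gives proper discontinuity of the $\rho(\G)$-action on $\Omega_{\phi\circ\rho}$, hence on the $\rho(\G)$-invariant open subset $O\cong\SO(p,q)/\SO(p-1,q)$. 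For any sufficiently small deformation $\rho'\in\hom(\G,\SO(p,q))$, Theorem~\ref{thm:Ano_open} guarantees that $\rho'$ remains $Q_1$-Anosov with Anosov map $\xi'$; since $\rho'(\G)\subset\SO(p,q)$, the same equivariance and uniqueness arguments force $\xi'$ to take values in $\Ff_1(\RR^{p,q})$, so the reasoning applies verbatim.

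For (1), when $\G$ is a uniform lattice in $G$, Remark~\ref{rem:examples_orth} records that the domain $\Omega_\rho\subset\Ff_0(\RR^{p,q})$ constructed for $\rho$ is empty, i.e.\ $K_\xi=\Ff_0(\RR^{p,q})$. It follows that
\[
\Omega_{\phi\circ\rho}=\Ff_0(\RR^{p,q+1})\setminus\Ff_0(\RR^{p,q})=O^+\sqcup O^-,
\]
a disjoint union of two $\rho(\G)$-invariant open-and-closed orbits, each isomorphic to $\SO(p,q)/\SO(p-1,q)$. The cocompactness of $\G\backslash\Omega_{\phi\circ\rho}$ from Theorem~\ref{thm:dod_GF} then yields cocompactness of $\G\backslash\SO(p,q)/\SO(p-1,q)$. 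For (2), when $\G$ is convex cocompact but not uniform in $G$, $K_\xi$ is a proper subset of $\Ff_0(\RR^{p,q})$, so $\Omega_{\phi\circ\rho}=O\sqcup(\Ff_0(\RR^{p,q})\setminus K_\xi)$ strictly contains $O$, yet $\G\backslash\Omega_{\phi\circ\rho}$ remains compact. Since every isotropic line in $\RR^{p,q}$ arises as a limit $\lim_{t\to 0}\langle v+te\rangle$ with $\|v\|^2=t^2$, $O$ is dense in $\Omega_{\phi\circ\rho}$, and hence $\G\backslash\Omega_{\phi\circ\rho}$ is a compactification of $\G\backslash O\cong\G\backslash\SO(p,q)/\SO(p-1,q)$.

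The main substantive input is Remark~\ref{rem:examples_orth}'s assertion that $K_\xi=\Ff_0(\RR^{p,q})$ whenever $\iota(\G)$ is a uniform lattice; this is the equality case of Proposition~\ref{prop:dod_GF_codim} and depends on the representation-theoretic specifics of the embeddings $(\SU(1,n),\SO(2,2n))$, $(\Sp(1,n),\SO(4,4n))$, and $(G_\mathcal{O},\SO(8,8))$. The remaining work is essentially bookkeeping about $\SO(p,q)$-orbits on $\Ff_0(\RR^{p,q+1})$.
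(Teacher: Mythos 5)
Your proof is correct and follows essentially the same route as the paper's (very terse) argument: identify $\SO(p,q)/\SO(p-1,q)$ with the $\SO(p,q)$-orbit of an isotropic line of $\RR^{p,q+1}$ not contained in $\RR^{p,q}$, note that this orbit lies in $\Omega_{\phi\circ\rho}$ because the Anosov map of $\phi\circ\rho$ is the inclusion of $\xi$ into $\Ff_1(\RR^{p,q+1})$, with equality exactly when $\G$ is a uniform lattice (the complement of the orbit inside $\Omega_{\phi\circ\rho}$ being $\Omega_\rho$), and then quote Theorem~\ref{thm:dod_GF}. The only blemish is the suggestion that $O$ might split into two orbits $O^{\pm}$: since $\langle v-e\rangle=\langle -v+e\rangle$, one has $O^{+}=O^{-}$, so $O$ is a single copy of $\SO(p,q)/\SO(p-1,q)$ --- which is what your part (2) implicitly uses anyway, and which only simplifies part (1).
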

When $\G$ is torsion free, this compactification is a manifold. In general, $\G\backslash
  \SO(p,q)/\SO(p-1,q)$ is an orbifold. 
  
\begin{proof}
Let $v = (v_1, \dots , v_{p+q+1})$ be an isotropic vector in
$\RR^{p,q+1}$ with $v_{p+q+1} \neq 0$. Then, the stabilizer of $\RR v
\in \Ff_0(\RR^{p,q+1})$ in $\SO(p,q)$ is $\SO(p-1, q)$, and the orbit
$\SO(p,q) (\RR v) \subset  \Omega_{\phi\circ \rho}$, with equality if
and only if $\G$ is a uniform lattice. 
\end{proof}

\begin{remark}
The Clifford-Klein forms given by $\rho$ have been studied by Kobayashi, \cite{TKobayashi_deformation}. When $\G$ is a uniform lattice the only deformations that exist are deformations into the normalizer of $G$.
\end{remark}

\subsection{$Q_0$-Anosov representations}
Let $\Gamma < \SO(1,2n)$ be a convex cocompact subgroup, and $\SO(1,2n) \to \SO(2,2n)$ the standard embedding. The corresponding embedding $\rho: \Gamma \to \SO(2,2n)$ is a $Q_0$-Anosov representation. The domain of discontinuity $\Omega_\rho \subset \Ff_1$ is empty if and only if $\G$ is a uniform lattice (Remark~\ref{rem:examples_orth}).
Let $\phi: \SO(2,2n) \to \SO(2n+2, \CC)$ be the embedding into the complexification. 
Then $\phi\circ \rho$ is $Q_0$-Anosov. The construction of
Section~\ref{sec:orth-sympl-groups_anosov} provides a domain of discontinuity
$\Omega_{\phi\circ \rho} \subset \Ff_1(\CC^{2n+2})$, on which
$\phi\circ\rho(\Gamma)$ acts properly discontinuously with compact
quotient.

Barbot
\cite{Barbot_component}
 shows that the entire connected component of  $\rho$ in 
$\hom(\G , \SO(2,2n))$ consists of $Q_0$-Anosov representations. Using this and Theorem~\ref{thm:DoD_loc_cst} 
we deduce:

\begin{thm}\label{thm:CK_two}
If $\G$ is a uniform lattice, then any representation $\rho'$ in the connected component of $\rho$ in 
$\hom(\G , \SO(2,2n))$ 
leads to a properly discontinuous, and cocompact action on the homogeneous space $\SO(2,2n)/\U(1,n)$.
 
Let $\G < \SO(1,2n) $ be a convex cocompact subgroup and $\rho:\G \to \SO(1,2n) \to \SO(2,2n)$ the embedding. Then $\rho$ and any sufficiently small deformation of $\rho$ leads to a properly discontinuous action of $\G$ on $\SO(2,2n)/\U(1,n)$. The quotient $\G \backslash\Omega_{\phi \circ \rho}$ is a compactification of the quotient $\G \backslash\SO(2,2n)/\U(1,n)$.
\end{thm}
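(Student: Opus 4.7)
The plan is to combine Barbot's structure theorem for the connected component with the domain-of-discontinuity machinery of Section~\ref{sec:orth-sympl-groups_anosov}. The standard embedding $\rho = (\G \hookrightarrow \SO(1,2n) \hookrightarrow \SO(2,2n))$ is $Q_0$-Anosov, where $Q_0 < \SO(2,2n)$ stabilizes an isotropic line: the rank-one inclusion is Anosov by Theorem~\ref{thm:Ano_rk_one}, and Proposition~\ref{prop:inj_rk_one} promotes it to $Q_0$-Anosov in $\SO(2,2n)$. By Barbot \cite{Barbot_component}, every $\rho'$ in the connected component $\mathcal{C}$ of $\rho$ in $\hom(\G, \SO(2,2n))$ is still $Q_0$-Anosov, with Anosov map $\xi_{\rho'}: \partial_\infty \G \to \Ff_0(\RR^{2,2n})$.

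Next, embed $\phi: \SO(2,2n) \hookrightarrow \SO(2n+2, \CC)$ and set $\xi_\CC(t) := \xi_{\rho'}(t) \otimes_\RR \CC$. Since $\xi_\CC(t)$ is the complexification of a real line, $\overline{\xi_\CC(t)} = \xi_\CC(t)$. The composition $\phi\circ\rho'$ is $Q_0$-Anosov with Anosov map $\xi_\CC$, so Section~\ref{sec:orth-sympl-groups_anosov} yields a $\G$-invariant open subset $\Omega_{\phi\circ\rho'} \subset \Ff_1(\CC^{2n+2})$ on which $\G$ acts properly discontinuously and cocompactly. Realize $\SO(2,2n)/\U(1,n)$ as the open $\SO(2,2n)$-orbit $\{W \in \Ff_1(\CC^{2n+2}) : W \cap \overline{W} = 0\}$, via the bijection sending a compatible complex structure on $\RR^{2,2n}$ to its $+i$-eigenspace in $\CC^{2n+2}$; these are the maximal $F_\CC$-isotropic subspaces whose intersection with their conjugate is trivial. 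If $\xi_\CC(t) \subset W$ for some $W$ in this orbit, then $\xi_\CC(t) = \overline{\xi_\CC(t)} \subset \overline{W}$ forces $\xi_\CC(t) \subset W \cap \overline{W} = 0$, a contradiction. Hence $\SO(2,2n)/\U(1,n) \subset \Omega_{\phi\circ\rho'}$, and proper discontinuity of $\rho'(\G)$ on this open subset is inherited from the proper action on $\Omega_{\phi\circ\rho'}$.

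For cocompactness, begin with the standard embedding. Since $\SO(1,2n)$ acts transitively on the relevant component of $\SO(2,2n)/\U(1,n)$ with compact isotropy $\U(n) = \U(1,n) \cap \SO(1,2n)$, one has $\SO(2,2n)/\U(1,n) \cong \SO(1,2n)/\U(n)$, and $\G\backslash\SO(2,2n)/\U(1,n)$ is compact because $\G < \SO(1,2n)$ is a uniform lattice (this recovers Kassel's conclusion at $\rho$). Being a compact open subset of the compact Hausdorff quotient $\G\backslash\Omega_{\phi\circ\rho}$, it is also closed, so $\SO(2,2n)/\U(1,n) \cap \Omega_{\phi\circ\rho}$ is a $\G$-invariant union of connected components of $\Omega_{\phi\circ\rho}$.

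To propagate this to all of $\mathcal{C}$, apply Theorem~\ref{thm:DoD_loc_cst}: the quotients $\G\backslash\Omega_{\phi\circ\rho'}$ form a locally trivial bundle over $\mathcal{C}$, trivialized by continuously varying developing maps $\hat\sigma_{\rho'}$. Under this trivialization, the preimage $\hat\sigma_{\rho'}^{-1}(\SO(2,2n)/\U(1,n))$ in the model fiber varies continuously in $\rho'$; the property of being a union of connected components is stable under sufficiently small perturbations, and a convergence argument in the finite set of component-unions of the model fiber shows it is stable under limits as well. Thus the locus $\{\rho'\in\mathcal{C} : \rho'(\G)\backslash\SO(2,2n)/\U(1,n)\text{ is compact}\}$ is open, closed, and nonempty in $\mathcal{C}$, so by connectedness it equals all of $\mathcal{C}$. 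The main difficulty is precisely this step: the inclusion $\SO(2,2n)/\U(1,n) \hookrightarrow \Omega_{\phi\circ\rho'}$ is open but not closed in general, so cocompactness of the smaller piece must be deduced from the global component structure of the larger one rather than from a local Kassel-style perturbation argument alone.
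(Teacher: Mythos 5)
Your setup (Anosov via Barbot, the complexified domain $\Omega_{\phi\circ\rho'}\subset\Ff_1(\CC^{2n+2})$, the identification of $\SO(2,2n)/\U(1,n)$ with an open orbit $\{W : W\cap\overline{W}=0\}$, the inclusion of this orbit in $\Omega_{\phi\circ\rho'}$ and hence proper discontinuity for every $\rho'$) matches the paper, and your base-point cocompactness via transitivity of $\SO(1,2n)$ with compact isotropy $\U(n)$ is a correct, classical substitute for the paper's computation. The genuine gap is in the propagation step over the connected component, precisely at the openness of your locus. Because the union of open $\SO(2,2n)$-orbits $\mathcal{O}=\{W : W\cap\overline{W}=0\}$ is dense in $\Ff_1(\CC^{2n+2})$, ``$\mathcal{O}$ is a union of connected components of $\Omega_{\phi\circ\rho'}$'' is equivalent to the equality $\Omega_{\phi\circ\rho'}=\mathcal{O}$, i.e.\ to the inclusion $K_{\xi_{\rho'}}\supseteq\{W : W\cap\overline{W}\neq 0\}$. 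The compact sets $K_{\xi_{\rho'}}$ vary continuously with $\rho'$ (Theorem~\ref{thm:Ano_open}), and containing a fixed compact set is indeed a closed condition under such variation (your limit argument), but it is \emph{not} an open condition: nothing in the topology prevents $K_{\xi_{\rho'}}$ from being Hausdorff-close to $\{W\cap\overline{W}\neq 0\}$ while missing some of its points. The trivialization coming from Theorem~\ref{thm:DoD_loc_cst} cannot repair this: the identifications $\G\backslash\Omega_{\phi\circ\rho}\cong\G\backslash\Omega_{\phi\circ\rho'}$ it produces are not induced by $G$-maps of $\Ff_1(\CC^{2n+2})$ and do not carry $\mathcal{O}$ to $\mathcal{O}$, so ``being a union of components'' is not transported along the fibers. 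Hence your open-and-closed argument does not close up, exactly at the step you yourself flag as the main difficulty.

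What is missing is a geometric statement about the limit set valid for \emph{every} $\rho'$ in the component, and this is how the paper proceeds: one checks that $W\cap\overline{W}\neq 0$ forces $W\supset U\otimes_\RR\CC$ for a $2$-dimensional $F$-isotropic subspace $U\subset\RR^{2,2n}$ (if $W\cap\overline{W}$ were the complexification of a single real line $D$, one would obtain a maximal isotropic $W'$ in $(D^{\perp}/D)_\CC$ with $W'\cap\overline{W'}=0$, i.e.\ a complex structure compatible with a form of odd signature $(1,2n-1)$, which is impossible). So the needed fact is that every such $U$ (every photon in $\Ff_0(\RR^{2,2n})$) contains a line of $\xi_{\rho'}(\partial_\infty\G)$. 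For the standard $\rho$ this is the elementary observation that $U\cap\RR^{1,2n}$ contains an isotropic line of the limit set; for deformed $\rho'$ it relies on the structure of the limit set as an acausal topological $(2n-1)$-sphere (Barbot--M\'erigot, Barbot), which meets every photon. This is the content of the paper's assertion that $\SO(2,2n)/\U(1,n)\subset\Omega_{\phi\circ\rho'}$ with equality when $\G$ is a cocompact lattice; once that equality is in hand for each $\rho'$, cocompactness is immediate from Theorem~\ref{thm:dod_GF}, with no perturbation argument at all. Your proposal needs this input (or an equivalent argument) to replace the unjustified openness claim; the convex-cocompact half of the statement and the compactification claim (which only requires density of $\mathcal{O}\cap\Omega_{\phi\circ\rho}$ in $\Omega_{\phi\circ\rho}$) are fine.
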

\begin{proof}
Let $L \in \Omega_{\phi\circ \rho} \subset \Ff_1(\CC^{2n+2})$ be a
$n+1$-plane such that $L\cap \overline{L} = 0$. A direct calculation
gives that the stabilizer of $L$ in $\SO(2,2n)$ is $\U(1,n)$, and $
\SO(2,2n)/\U(1,n)\subset \Omega_{\phi \circ \rho}$
with equality if and only if $\G$ is a cocompact lattice.
\end{proof}
Note that Theorem~\ref{thm:CK_two} extends, in the case of
$\SO(2,2n)$, a recent result of Kassel \cite[Theorem
1.1]{Kassel_deformation}, that small deformations of $\rho$ lead to
properly discontinuous 
action on the homogeneous
space $\SO(2,2n)/\U(1,n)$.

In particular, as is noted in \cite{Kassel_deformation},  Johnson and
Millson \cite{Johnson_Millson} constructed explicit bending
deformations with Zariski dense image in $\SO(2,2n)$ when $\Gamma$ is
an arithmetic lattice. This allows to conclude the following
\begin{cor}\cite[Corollary 1.2]{Kassel_deformation}
There exist Zariski dense subgroups $\Lambda< \SO(2,2n)$ acting
properly discontinuously, freely and cocompactly on the homogeneous
space $\SO(2,2n)/\U(1,n)$. 
\end{cor}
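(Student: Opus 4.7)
The plan is to combine Theorem~\ref{thm:CK_two} with the explicit bending deformations of Johnson and Millson \cite{Johnson_Millson}. More precisely, I would start from an arithmetic cocompact lattice $\Gamma < \SO(1,2n)$; passing to a torsion-free finite index subgroup, I can assume $\Gamma$ acts freely on $\SO(1,2n)/\SO(2n)$. Let $\rho_0 : \Gamma \to \SO(1,2n) \hookrightarrow \SO(2,2n)$ be the canonical embedding, and let $\mathcal{C}$ denote the connected component of $\rho_0$ in $\hom(\Gamma, \SO(2,2n))$. By Theorem~\ref{thm:CK_two} every $\rho' \in \mathcal{C}$ yields a properly discontinuous and cocompact action of $\Gamma$ on $\SO(2,2n)/\U(1,n)$.

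Next I would invoke the Johnson--Millson construction: for arithmetic $\Gamma$ (which one can always arrange, since $\SO(1,2n)$ admits arithmetic cocompact lattices), one can find a continuous path $(\rho_t)_{t \in [0,1]} \subset \mathcal{C}$ starting at $\rho_0$ such that $\rho_1$ has Zariski dense image in $\SO(2,2n)$. This is exactly the content of the bending construction in \cite{Johnson_Millson} as used in \cite{Kassel_deformation}. Since $\rho_t$ remains in the connected component $\mathcal{C}$, Theorem~\ref{thm:CK_two} applies and the induced action of $\Gamma$ on $\SO(2,2n)/\U(1,n)$ via $\rho_1$ is properly discontinuous and cocompact.

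It remains to check that $\Lambda = \rho_1(\Gamma)$ is a subgroup of $\SO(2,2n)$ (rather than just a quotient of $\Gamma$) and that the action on $\SO(2,2n)/\U(1,n)$ is free. Injectivity of $\rho_1$ follows from Theorem~\ref{thm:Ano_QIE}: every $\rho' \in \mathcal{C}$ is Anosov, hence has finite kernel, and since $\Gamma$ is torsion-free and the kernel is a normal finite subgroup, it is trivial. Thus $\Lambda \cong \Gamma$ is Zariski dense, torsion-free, and acts properly discontinuously and cocompactly on $\SO(2,2n)/\U(1,n)$. Freeness of the action on the homogeneous space is then automatic: any element $\gamma \in \Lambda \setminus \{1\}$ with a fixed point on $\SO(2,2n)/\U(1,n)$ would be conjugate into the compact isotropy-up-to-properness locus, but the properness of the action together with $\Lambda$ being torsion-free forces $\gamma = 1$.

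The only genuinely nontrivial step is the existence of the Zariski dense deformation $\rho_1$ inside $\mathcal{C}$; this is not proved here but imported from \cite{Johnson_Millson}. Everything else is a direct packaging of Theorem~\ref{thm:CK_two} together with the fact that Anosov representations have finite kernel.
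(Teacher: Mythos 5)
Your proposal follows exactly the route the paper takes: start from an arithmetic cocompact lattice in $\SO(1,2n)$, apply the Johnson--Millson bending deformations (imported, as in the paper, via \cite{Johnson_Millson} and \cite{Kassel_deformation}) to reach a Zariski dense representation in the same connected component, and invoke Theorem~\ref{thm:CK_two} for proper discontinuity and cocompactness. Your additional remarks on faithfulness (finite kernel of Anosov representations plus torsion-freeness) and freeness (finite point stabilizers of a properly discontinuous action plus torsion-freeness) correctly fill in details the paper leaves implicit, so the argument is sound and essentially identical to the paper's.
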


\providecommand{\bysame}{\leavevmode\hbox to3em{\hrulefill}\thinspace}
\providecommand{\MR}{\relax\ifhmode\unskip\space\fi MR }

\end{document}